\documentclass[11pt,reqno]{amsart}
\usepackage{amsmath,amssymb,graphicx,mathrsfs,amsopn,times,stmaryrd,amsthm,color,bm}
\usepackage[colorlinks=true,citecolor=cyan,linkcolor=blue,pagebackref]{hyperref}

%
%

\usepackage{geometry}
\geometry{verbose,margin=2.5cm}
\linespread{1.1}
\usepackage{pgf}
\usepackage{tikz}

\usetikzlibrary{arrows, decorations.pathmorphing, backgrounds, positioning, fit, petri, automata}
\tikzset{font={\fontsize{10pt}{12}\selectfont}}
\numberwithin{equation}{section}
\newtheorem{thm}{Theorem}[section]
\newtheorem{prop}[thm]{Proposition}
\newtheorem{lem}[thm]{Lemma}
\newtheorem{cor}[thm]{Corollary}

\theoremstyle{definition} 
\newtheorem{eg}[thm]{Example}

\theoremstyle{remark}
\newtheorem{rem}[thm]{Remark}

\newcommand{\beq}{\begin{equation}}
\newcommand{\eeq}{\end{equation}}
\newcommand{\be}{\begin{equation*}}
\newcommand{\ee}{\end{equation*}}
\newcommand{\bs}{\boldsymbol}

\newcommand{\C}{\mathbb{C}}

\newcommand{\Z}{\mathbb{Z}}

\newcommand{\bB}{\mathbb{B}}

\newcommand{\mc}{\mathcal}

\newcommand{\cR}{\mathcal{R}}
\newcommand{\cT}{\mathcal{T}}


\newcommand{\gl}{\mathfrak{gl}}
\newcommand{\h}{\mathfrak{h}}

\newcommand{\fkS}{\mathfrak{S}}
\newcommand{\fkT}{\mathfrak{T}}

\newcommand{\Wr}{\mathrm{Wr}}
\newcommand{\rY}{\mathrm{Y}}
\newcommand{\End}{\mathrm{End}}

\newcommand{\sing}{{\mathrm{sing}}}   
\newcommand{\ch}{{\mathrm{ch}}} 
\newcommand{\str}{{\mathrm{str}}} 


\newcommand{\pa}{\partial}

\newcommand{\gge}{\geqslant}
\newcommand{\lle}{\leqslant}
\newcommand{\la}{\lambda}

\newcommand{\bti}{\bm{t_i}}
\newcommand{\btij}{\bm{t_{ij}}}
\newcommand{\bals}{\bm{\alpha_s}}
\newcommand{\bbes}{\bm{\beta_s}}
\newcommand{\bla}{\bm\lambda}

\newcommand{\glMN}{\mathfrak{gl}_{m|n}}

\newcommand{\Uone}{\mathrm{U}(\mathfrak{gl}_{1|1})}
\newcommand{\YglMN}{\mathrm{Y}(\mathfrak{gl}_{m|n})}
\newcommand{\Yone}{\mathrm{Y}(\mathfrak{gl}_{1|1})}
\newcommand{\bmx}{\begin{pmatrix}}    
\newcommand{\emx}{\end{pmatrix}}

\newcommand{\qedd}{\tag*{$\square$}}
\newcommand{\vSi}{\varSigma}

\begin{document}
\pagestyle{myheadings}
\setcounter{page}{1}

\title[Supersymmetric XXX spin chains]{On the supersymmetric XXX spin chains associated to $\gl_{1|1}$}

\author{Kang Lu and Evgeny Mukhin}
\address{K.L.: Department of Mathematical Sciences,
Indiana University-Purdue University\newline
\strut\kern\parindent Indianapolis, 402 N.Blackford St., LD 270,
Indianapolis, IN 46202, USA}\email{lukang@iupui.edu}
\address{E.M.: Department of Mathematical Sciences,
Indiana University-Purdue University\newline
\strut\kern\parindent Indianapolis, 402 N.Blackford St., LD 270,
Indianapolis, IN 46202, USA}\email{emukhin@iupui.edu}

\begin{abstract} We study the $\mathfrak{gl}_{1|1}$ supersymmetric XXX spin chains. We give an explicit description of the algebra of Hamiltonians acting on any cyclic tensor products of polynomial evaluation $\mathfrak{gl}_{1|1}$ Yangian modules. It follows that there exists a bijection between common eigenvectors (up to proportionality) of the algebra of Hamiltonians and monic divisors of an explicit polynomial written in terms of the Drinfeld polynomials. In particular our result implies that each common eigenspace of the algebra of Hamiltonians has dimension one. We also give dimensions of the generalized eigenspaces. We show that when the tensor product is irreducible, then all eigenvectors can be constructed using Bethe ansatz. We express the transfer matrices associated to symmetrizers and anti-symmetrizers of vector representations in terms of the first transfer matrix and the center of the Yangian.
\medskip

\noindent
{\bf Keywords:} supersymmetric spin chains, Bethe ansatz, rational difference operators.
\end{abstract}

\maketitle
\thispagestyle{empty}
\section{Introduction}	
Spin chains have been at the center of the study of the integrable models since the introduction of Heisenberg magnet by H. Bethe back in 1931. The literature on the subject is immense and keeps growing. While the even case is by far the most popular, it is now clear that the spin chains associated to Lie superalgebras are the integral part of the picture. 

The supersymmetric spin chains were introduced back to 1980s, see \cite{KS,K}. They have enjoyed a surge of interest in the recent years, see e.g. \cite{BR,HLM,HLPRS16,PRS,MR14,TZZ}. However, we know much less about these models compared to the even case. The presence of fermionic roots creates a number of new features which are not yet well understood.

This paper is devoted to the study of the supersymmetric spin chains associated to the super Yangian $\Yone$.
This case is remarkable as it is sufficiently simple on one hand and it is complex enough to have all the phenomena of supersymmetry on the other hand. So it provides a perfect testing ground for methods and conjectures. 

The simplicity of the $\Yone$ spin chain is apparent as the model can be written in terms of free fermions and the corresponding Bethe ansatz equations decouple. Unsurprisingly, for generic values of parameters, the Bethe ansatz method gives the complete information about the spectrum of the model. However, when the Bethe equation has roots of non-trivial multiplicity, the situation is more subtle as Hamiltonians develop Jordan blocks. Moreover, the algebra of Hamiltonians apriori is not finitely generated since all anti-symmetric powers of the superspace $\C^{1|1}$ are non-trivial and each power gives a non-zero transfer matrix. Finally, the standard geometric Langlands philosophy asks for a description of the eigenvectors of the Hamiltonians in terms of ``opers". The $\Yone$ ``oper" is expected to be a ratio of two order-one difference operators and have a universal nature.

We are able to clarify all these points. Let us discuss our findings in more detail.

\medskip 

We consider tensor products $L(\bla,\bm b)=\bigotimes_{s=1}^k L_{\la^{(s)}}(b_s)$ of polynomial evaluation modules of $\Yone$, where $\bla=(\la^{(1)},\dots,\la^{(k)})$ is a sequence of polynomial $\gl_{1|1}$-weights and $\bm b=(b_1,\dots,b_k)$ is a sequence of complex numbers. 

We have the action of the commutative algebra of transfer matrices corresponding to polynomial modules which we call the Bethe algebra, see Section \ref{sec ber}. The Bethe algebra commutes with the diagonal action of $\mathfrak{gl}_{1|1}$. It turns out that the image of the Bethe algebra in $\End(L(\bla,\bm b))$ 
is generated by the first transfer matrix $\str\, T(x)=T_{11}(x)-T_{22}(x)$, where $T(x)$ is the matrix generating $\Yone$. More precisely, we show
if one adds to $\Yone$  the inverse of the central element $T_{11}^{(1)} - T_{22}^{(1)}$, then
all transfer matrices corresponding to polynomial modules can be explicitly expressed in terms of the first transfer matrix and the quantum Berezinian (which is central in $\Yone$), see Theorem \ref{thm transfer relations}.

This result is understood as a relation in the Grothendieck ring of the category of finite-dimensional representations of $\Yone$. Namely, the composition factors of a certain tensor product of evaluation vector representations are all isomorphic to the evaluation of a given symmetric power of the vector representation, up to one-dimensional modules, see Example \ref{eg vect rep}. However, instead of discussing the universal R-matrix, we give a Bethe ansatz method proof.

\medskip

Then we need to find the spectrum of the first transfer matrix acting on the subspace $(L(\bla,\bm b))^\sing_{(n-l,l)}$ of singular vectors in $L(\bla,\bm b)$ of a weight $\sum_{i=1}^k \la^{(i)}-l\alpha=(n-l,l)$, $l=1,\dots,k-1$. We manage more than that: we give an explicit description  of the image of the Bethe algebra in the endomorphism ring of the subspace $(L(\bla,\bm b))^\sing_{(n-l,l)}$. Namely, we show that if $L(\bla,\bm b)$ is cyclic, then the image of the Bethe algebra in $\End((L(\bla,\bm b))^\sing_{(n-l,l)})$ has dimension ${k-1}\choose{l}$  and is isomorphic to the algebra
\beq\label{eq bethe alg}
\C[w_1,\dots,w_{k-1}]^{\fkS_{l}\times \fkS_{k-l-1}}/\langle n\prod_{i=1}^{k-1}(x-w_i)-\gamma_{\bla,\bm b}(x)\rangle,
\eeq
where polynomial $\gamma_{\bla,\bm b}(x)$ is constructed from $\bla, \bs b$:
$$
\gamma_{\bla,\bm b}(x)=\prod_{s=1}^k(x-b_s+\la_1^{(s)})-\prod_{s=1}^k(x-b_s-\la_2^{(s)}).
$$
Moreover, the space $(L(\bla,\bm b))^\sing_{(n-l,l)}$ is identified with the regular representation over this algebra as a module over the Bethe algebra, see Theorem \ref{thm tensor irr}.

It follows that the eigenvectors (up to proportionality) of the Bethe algebra in $(L(\bla,\bm b))^\sing_{(n-l,l)}$ are in a bijective correspondence with the monic 
polynomials of degree $l$ which divide $\gamma_{\bla,\bm b}(x)$.  In the generic situation, the roots of polynomial $\gamma_{\bla,\bm b}$ are simple. Therefore, it has exactly ${k-1}\choose {l}$ distinct monic divisors of degree $l$ and the corresponding Bethe vectors form a basis of the space $(L(\bla,\bm b))^\sing_{(n-l,l)}$. This is exactly the standard Bethe ansatz.
However if $\gamma_{\bla,\bm b}(x)$ has multiple roots, the number of monic divisors is smaller. And to each monic divisor $y$ we have exactly one eigenvector of the Bethe algebra and a generalized eigenspace of dimension given by the product of binomial coefficients
$$\prod_{a\in\C} {{\textrm{Mult}_a(\gamma_{\bla,\bm b}(x))}\choose{\textrm{Mult}_a(y(x))}},$$ 
where $\textrm{Mult}_a(f)$ denotes the order of zero of $f(x)$ at $x=a$.
Moreover, when the tensor product is irreducible, then the eigenvalues and eigenvectors of the Bethe algebra can be found by the usual formulas, see \eqref{eq bv gl11}, \eqref{eq gl11 eigenvalue in y} and Theorem \ref{thm completeness general}.

We prove this result by adopting a method of \cite{MTV09}. 

\medskip

Finally, let us discuss the $\Yone$ ``opers".  Given a monic divisor $y$ of $\gamma_{\bla,\bm b}(x)$, or, in other words, an eigenvector $v_y$ of the Bethe algebra, following \cite{HLM}, we have a rational difference operator $\mc {D}_y$, see \eqref{eq diff oper y}. From the explicit formula for the eigenvalue, one sees that the coefficients of the rational difference operator in this case are essentially eigenvalues of the first transfer matrix acting on $v_y$. Again, we improve on that as follows. By \cite{MR14}, the Berezinian  $\textrm{Ber}(1-T^t(x)e^{-\pa_x})$ is a generating function for the transfer matrices, see \eqref{eq diff trans}. We show that $\textrm{Ber}(1-T^t(x)e^{-\pa_x})v_y=\mathcal{D}_y v_y$ by a brute force (and somewhat tedious) computation. It follows that there is a universal formula for the rational difference operator in terms of the first transfer matrix and the quantum Berezinian, which produces $\mc D_y$ when applied to the vector $v_y$ for all $y$, see Corollary \ref{universal oper}.

\medskip

We also describe and prove similar results for the quasi-periodic case, where the monodromy matrix $T(x)$ is twisted by a diagonal invertible two-by-two matrix $Q$, so that the first transfer matrix has the form $q_1T_{11}(x)-q_2 T_{22}(x)$, see Section \ref{sec main thm q} and Theorem \ref{thm tensor irr q}.

\medskip

This paper deals with tensor products of polynomial modules which provide a natural supersymmetric analog of the finite-dimensional modules in the even case.
We expect that for $\gl_{1|1}$ spin chain in the case of arbitrary tensor products of finite-dimensional modules, the image of Bethe algebra still has the form \eqref{eq bethe alg}, except for the case $q_1=q_2=1$, $n=\sum_{s=1}^{k}(\la_1^{(s)}+\la_2^{(s)})= 0$. The exceptional case is even more interesting, since the tensor product is not semisimple as a $\gl_{1|1}$-module and some singular vectors are in the image of the creation operator $e_{21}$. Then the Bethe ansatz is expected to describe the eigenvectors of the transfer matrix which are not in that image, see \cite[Section 8.3]{HMVY}. The methods of this paper are not applicable for arbitrary tensor products and one needs a different approach. 

\medskip 

The paper is organized as follows. In Section \ref{sec notation}, we fix notations and discuss basic facts of the algebraic Bethe ansatz. Then we study the space $\mc V^{\fkS}$ and its properties in Section \ref{sec space VS}. Section \ref{sec main thms} contains the main theorems. Section \ref{sec proof} is dedicated to the proofs of main theorems. We discuss the higher transfer matrices and the relations between higher transfer matrices and the Bethe algebra in Section \ref{sec higher}.

\medskip 

{\bf Acknowledgments.} We thank V. Tarasov for interesting discussions. This work was partially supported by a grant from the Simons Foundation \#353831.

\section{Notation}\label{sec notation}
\subsection{Lie superalgebra $\gl_{1|1}$ and its representations}
A \emph{vector superspace} $V = V_{\bar 0}\oplus V_{\bar 1}$ is a $\Z_2$-graded vector space. Elements of $V_{\bar 0}$ are called \emph{even}; elements of
$V_{\bar 1}$ are called \emph{odd}. We write $|v|\in\{\bar 0,\bar 1\}$ for the parity of a homogeneous element $v\in V$. Set $(-1)^{\bar 0}=1$ and $(-1)^{\bar 1}=-1$.

Consider the vector superspace $\C^{1|1}$, where $\dim(\C^{1|1}_{\bar 0})=1$ and  $\dim(\C^{1|1}_{\bar 1})=1$. We choose a homogeneous basis $v_1,v_2$ of $\C^{1|1}$ such that $|v_1|=\bar 0$ and $|v_2|=\bar 1$. For brevity we shall write their
parities as $|v_i|=|i|$. Denote by $E_{ij}\in\End(\C^{1|1})$ the linear operator of parity $|i|+|j|$ such that $E_{ij}v_r=\delta_{jr}v_i$ for $i,j,r=1,2$. 

The Lie superalgebra $\gl_{1|1}$ is spanned by elements $e_{ij}$, $i,j=1, 2$, with parities $e_{ij}=|i|+|j|$ and the supercommutator relations are given by
\[
[e_{ij},e_{rs}]=\delta_{jr}e_{is}-(-1)^{(|i|+|j|)(|r|+|s|)}\delta_{is}e_{rj}.
\]Let $\h$ be the commutative Lie subalgebra of $\gl_{1|1}$ spanned by $e_{11},e_{22}$. Denote the universal enveloping algebras of $\gl_{1|1}$ and $\h$ by $\Uone$ and $\mathrm{U}(\h)$, respectively.

We call a pair $\la=(\la_1,\la_2)$ of complex numbers a $\gl_{1|1}$-\emph{weight}. A $\gl_{1|1}$-weight $\la$ is \textit{non-degenerate} if $\la_1+\la_2\ne 0$.

Let $M$ be a $\gl_{1|1}$-module. A non-zero vector $v\in M$ is called \emph{singular} if $e_{12}v=0$. Denote the subspace of all singular vectors of $M$ by $(M)^{\rm sing}$.  A non-zero vector $v\in M$ is called \emph{of weight} $\la=(\la_1,\la_2)$ if $e_{11}v=\la_1 v$ and $e_{22}v=\la_2 v$. Denote by $(M)_\la$ the subspace of $M$ spanned by vectors of weight $\la$. Set $(M)^{\sing}_\la=(M)^{\sing}\cap (M)_\la$.

Let $\bla=(\la^{(1)},\dots,\la^{(k)})$ be a sequence of $\gl_{1|1}$-weights, we denote by $|\bla|$ the sum
\[
|\bla|=\sum_{s=1}^k(\la^{(s)}_1+\la^{(s)}_2).
\]
Denote by $L_\la$ the irreducible $\gl_{1|1}$-module generated by an even singular vector $v_\la$ of weight $\la$. Then $L_{\la}$ is two-dimensional if $\la$ is non-degenerate and one-dimensional otherwise. Clearly, $\C^{1|1}\cong L_{\omega_1}$, where $\omega_1=(1,0)$. 

A $\gl_{1|1}$-module $M$ is called a \emph{polynomial module} if $M$ is a submodule of $(\C^{1|1})^{\otimes n}$ for some $n\in \Z_{\gge 0}$. We say that $\la$ is a \emph{polynomial weight} if $L_\la$ is a polynomial module. Weight $\la=(\la_1,\la_2)$ is a polynomial weight if and only if $\la_1,\la_2\in \Z_{\gge 0}$ and either $\la_1>0$ or $\la_1=\la_2=0$. We also write $L_{(\la_1,\la_2)}$ for $L_{\la}$.

For non-degenerate polynomial weights $\la=(\la_1,\la_2)$ and $\mu=(\mu_1,\mu_2)$, we have
\[
L_{(\la_1,\la_2)}\otimes L_{(\mu_1,\mu_2)}=L_{(\la_1+\mu_1,\la_2+\mu_2)}\oplus L_{(\la_1+\mu_1-1,\la_2+\mu_2+1)}.
\]

Define a \emph{supertrace} $\str:\End(\C^{1|1})\to \C$, which is supercyclic,
\[
\str(E_{ij})=(-1)^{|j|}\delta_{ij},\qquad \str([E_{ij},E_{rs}])=0,
\]
where $[\cdot,\cdot]$ is the supercommutator.

Define the supertranspose $t$,
\[
t:\End(\C^{1|1})\to \End(\C^{1|1}),\quad E_{ij}\mapsto (-1)^{|i||j|+|i|}E_{ji}.
\]
The supertranspose is an anti-homomorphism and respects the supertrace,
\beq\label{eq transpose cyclic}
(AB)^t=(-1)^{|A||B|}B^tA^t,\qquad \str(A)=\str(A^t), 
\eeq
for all $2\times 2$ matrices $A$, $B$.

\subsection{Current superalgebra $\gl_{1|1}[t]$}
Denote by $\gl_{1|1}[t]$ the Lie superalgebra $\gl_{1|1}\otimes\C[t]$ of $\gl_{1|1}$-valued polynomials with the point-wise supercommutator. Call $\gl_{1|1}[t]$ the \emph{current superalgebra} of $\gl_{1|1}$. We identify $\gl_{1|1}$ with the subalgebra $\gl_{1|1}\otimes 1$ of constant polynomials in $\gl_{1|1}[t]$.

We write $e_{ij}[r]$ for $e_{ij}\otimes t^r$, $r\in \Z_{\gge 0}$. A basis of $\gl_{1|1}[t]$ is given by $e_{ij}[r]$, $i,j=1,2$ and $r\in \Z_{\gge 0}$. They satisfy the supercommutator relations
\[
[e_{ij}[r],e_{kl}[s]]=\delta_{jk}e_{il}[r+s]-(-1)^{(|i|+|j|)(|k|+|l|)}\delta_{il}e_{kj}[r+s].
\]In particular, one has $(e_{12}[r])^2=(e_{21}[r])^2=0$ and $e_{21}[r]e_{21}[s]=-e_{21}[s]e_{21}[r]$ in the universal enveloping superalgebra $\mathrm{U}(\gl_{1|1}[t])$. 

The universal enveloping superalgebra $\mathrm{U}(\gl_{1|1}[t])$ is a Hopf superalgebra with the coproduct given by
\[
\Delta(X)=X\otimes 1+1\otimes X,\quad \text{for }\ X\in \gl_{1|1}[t].
\]
There is a natural $\Z_{\gge 0}$-gradation on $\mathrm{U}(\gl_{1|1}[t])$ such that $\deg(e_{ij}[r])=r$.

Let $M$ be a $\Z_{\gge 0}$-graded space with finite-dimensional homogeneous components. Let $M_j\subset M$ be the homogeneous component of degree $j$. We call the formal power series in variable $q$,
\beq\label{eq grade ch}
\ch(M)=\sum_{j=0}^\infty \dim(M_j)\,q^j
\eeq
the \emph{graded character} of $M$.

\subsection{Super Yangian $\Yone$}\label{sec yangian}
We recall the standard facts about super Yangian $\Yone$ and fix notation, see e.g. \cite{Naz}.
Let $P\in\End(\C^{1|1})\otimes \End(\C^{1|1})$ be the graded flip operator given by
\[
P=\sum_{i,j=1}^{2}(-1)^{|j|}E_{ij}\otimes E_{ji}.
\]
For two homogeneous vectors $v_1,v_2$ in $\C^{1|1}$, we have $P(v_1\otimes v_2)=(-1)^{|v_1||v_2|}v_2\otimes v_1$. Define the rational R-matrix $\cR(x)\in \End(\C^{1|1})\otimes \End(\C^{1|1})$ by $\cR(x)=1+P/x$. The R-matrix $\cR(x)$ satisfies the Yang-Baxter equation
\beq\label{eq yangbaxter}
\cR^{(1,2)}(x_1-x_2)\cR^{(1,3)}(x_1)\cR^{(2,3)}(x_2)=\cR^{(2,3)}(x_2)\cR^{(1,3)}(x_1)\cR^{(1,2)}(x_1-x_2).
\eeq

The \emph{super Yangian} $\Yone$ is a unital associative superalgebra generated by the generators $T_{ij}^{(r)}$ of parity $|i|+|j|$, $i,j=1,2$ and $r\gge 1$, with the defining relation
\beq\label{eq RTT}
\cR^{(1,2)}(x_1-x_2) T^{(1,3)}(x_1)T^{(2,3)}(x_2)=T^{(2,3)}(x_2)T^{(1,3)}(x_1)\cR^{(1,2)}(x_1-x_2),
\eeq
where $T(x)\in \End(\C^{1|1})\otimes \Yone[[x^{-1}]]$ is the \emph{monodromy matrix}
\[
T(x)=\sum_{i,j=1}^{2}E_{ij}\otimes T_{ij}(x),\qquad T_{ij}(x)=\sum_{r=0}^{\infty}T_{ij}^{(r)}x^{-r},\qquad T_{ij}^{(0)}=\delta_{ij}.
\]
Note that the monodromy matrix is even.

The defining relation \eqref{eq RTT} gives
\beq\label{com relations}
\begin{split}
(x_1-x_2)[T_{ij}(x_1),T_{rs}(x_2)]=&\ (-1)^{|i||r|+|s||i|+|s||r|}\big(T_{rj}(x_2)T_{is}(x_1)-T_{rj}(x_1)T_{is}(x_2)\big)\\
=&\ (-1)^{|i||j|+|s||i|+|s||j|}\big(T_{is}(x_1)T_{rj}(x_2)-T_{is}(x_2)T_{rj}(x_1)\big).
\end{split}
\eeq
Equivalently,
\beq\label{eq com coord}
[T_{ij}^{(a)},T_{rs}^{(b)}]=(-1)^{|i||j|+|s||i|+|s||j|}\sum_{\ell=1}^{\min(a,b)}\left(T_{is}^{(\ell-1)}T_{rj}^{(a+b-\ell)}-T_{is}^{(a+b-\ell)}T_{rj}^{(\ell-1)}\right).
\eeq

The super Yangian $\Yone$ is a Hopf superalgebra with a coproduct and an  opposite coproduct given by 
\beq\label{eq cop}
\Delta: T_{ij}(x)\mapsto \sum_{r=1}^{2} T_{rj}(x)\otimes T_{ir}(x),\quad \Delta(T_{ij}^{(s)})=\sum_{r=1}^{2}\sum_{a=0}^s T_{rj}^{(a)}\otimes T_{ir}^{(s-a)},\qquad i,j=1,2,
\eeq
\[
\Delta^{\rm op}: T_{ij}(x)\mapsto \sum_{r=1}^{2}(-1)^{(|i|+|r|)(|r|+|j|)} T_{ir}(x)\otimes T_{rj}(x),\qquad i,j=1,2.
\]
The super Yangian $\Yone$ contains $\Uone$ as a Hopf subalgebra with the embedding given by $e_{ij}\mapsto (-1)^{|i|}T_{ji}^{(1)}$. By \eqref{com relations}, one has
\beq\label{zero mode com relations}
[T_{ij}^{(1)},T_{rs}(x)]=(-1)^{|i||r|+|s||i|+|s||r|}\big(\delta_{is}T_{rj}(x)-\delta_{rj}T_{is}(x)\big).
\eeq
The relation \eqref{zero mode com relations} implies that for any $r,s=1,2$,
\beq\label{eq com gll}
[E_{rs}\otimes 1+1\otimes e_{rs},T(x)]=0.
\eeq

Define the degree function on $\Yone$ by $\deg(T_{ij}^{(r)})=r-1$, then the super Yangian $\Yone$ is a filtered algebra. Let $\mathscr F_s\Yone$ be the subspace spanned by elements of degree $\lle s$, one has the increasing filtration $\mathscr F_0\Yone\subset \mathscr F_1\Yone\subset\cdots\subset \Yone$. 

The corresponding graded algebra $\mathrm{gr}\Yone$ inherits from $\Yone$ the Hopf structure. It is known $\mathrm{gr}\Yone$ is isomorphic to the universal enveloping superalgebra $\mathrm{U}(\gl_{1|1}[t])$. For instance, the graded product comes from \eqref{eq com coord} while the graded coproduct comes from \eqref{eq cop}.

For any complex number $z\in \C$, there is an automorphism
\[
\zeta_z:\Yone\to \Yone,\qquad T_{ij}(x)\to T_{ij}(x-z),
\]
where $(x-z)^{-1}$ is expanded as a power series in $x^{-1}$. The \emph{evaluation homomorphism} $\mathrm{ev}:\Yone\to \Uone$ is defined by the rule: $$T_{ji}^{(r)}\mapsto (-1)^{|i|}\delta_{1r}e_{ij},$$ for $r\in\Z_{>0}$.

For any $\gl_{1|1}$-module $M$ denote by $M(z)$ the $\Yone$-module obtained by pulling back of $M$ through the homomorphism $\mathrm{ev}\circ\zeta_z$. The module $M(z)$ is called an \emph{evaluation module with the evaluation point} $z$.

The map $\varpi_{\xi}:T(x)\to \xi(x)T(x)$, where
$\xi(x)$ is any formal power series in $x^{-1}$ with the leading term 1,
\beq\label{eq lead 1}
\xi(x)=1+\xi_1x^{-1}+\xi_2x^{-2}+\cdots\in\C[[x^{-1}]],
\eeq
defines an automorphism of $\Yone$. 

There is a one-dimensional module $\C v$ given by $T_{11}(x)v=T_{22}(x)v=\xi(x)v$, $T_{12}(x)v=T_{21}(x)v=0$, where $v$ is a homogeneous vector of parity $\bar i$.
We denote this module by $\C_{\bar i,\xi}$.

Let $\bm a=(a_1,\dots,a_n)$ be a sequence of complex numbers. Let $\bm b=(b_1,\dots,b_k)$ be another sequence of complex numbers, $\bla=(\la^{(1)},\dots,\la^{(k)})$ a sequence of $\gl_{1|1}$-weights. Set $L(\bla,\bm b)=\bigotimes_{s=1}^k L_{\la^{(s)}}(b_s)$, $V(\bm a)=\bigotimes_{i=1}^n\C^{1|1}(a_i)$, and
\beq\label{eq phi psi}
\varphi_{\bla,\bs b}(x)=\prod_{s=1}^k(x-b_s+\la_1^{(s)}),\qquad \psi_{\bla,\bm b}(x)=\prod_{s=1}^k(x-b_s-\la_2^{(s)}).
\eeq
When we write $L(\bla,\bm b)$, we shall always assume the participating $\gl_{1|1}$-weights $\la^{(s)}$ are non-degenerate. We also denote by $|0\rangle$ the vector $v_{\la^{(1)}}\otimes\dots\otimes v_{\la^{(k)}}\in L(\bla,\bm b)$. We call $|0\rangle$ a \emph{vacuum vector}. We call the $\Yone$-module $L(\bla,\bm b)$ \emph{cyclic} if it is generated by $|0\rangle$.

It is known from \cite{Z} that up to twisting by an automorphism $\varpi_{\xi}$ with proper $\xi(x)\in \C[[x^{-1}]]$ as in \eqref{eq lead 1}, every finite-dimensional irreducible representation of $\Yone$ is isomorphic to a tensor product of evaluation $\Yone$-modules. In other words, every finite-dimensional irreducible representation of $\Yone$ is of form $L(\bla,\bm b)\otimes \C_{\bar i,\xi}$. 

The following explicit description of irreducible and cyclic $\Yone$-modules in term of the highest weights
is similar to the one in the quantum affine case, see \cite{Zh}.
\begin{lem}\label{lem cyclic weyl}
The $\Yone$-module $L(\bla,\bm b)$ is irreducible if and only if $\varphi_{\bla,\bs b}(x)$ and $\psi_{\bla,\bs b}(x)$ are relatively prime. 

The $\Yone$-module $L(\bla,\bm b)$ is cyclic if and only if $b_j\ne b_i+\la^{(i)}_2+\la^{(j)}_1$ for $1\lle i< j\lle k$.
\end{lem}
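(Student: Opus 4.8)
The plan is to rephrase the two assertions in terms of Yangian highest weights and then to reduce everything to the two-fold tensor product, where the resonance responsible for the failure of cyclicity and irreducibility can be exhibited by a direct computation.

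First I would analyze the vacuum $|0\rangle=v_{\la^{(1)}}\otimes\cdots\otimes v_{\la^{(k)}}$. Applying the iterated coproduct \eqref{eq cop} together with the evaluation action, and using that each $v_{\la^{(s)}}$ is killed by $T_{21}(x)$ while $T_{11}(x),T_{22}(x)$ act diagonally on it, only the index paths that stay at $2$ and then jump once to $1$ survive. This shows that $|0\rangle$ is a Yangian singular vector, $T_{21}(x)|0\rangle=0$, with
\[
T_{11}(x)|0\rangle=\frac{\varphi_{\bla,\bm b}(x)}{d(x)}\,|0\rangle,\qquad T_{22}(x)|0\rangle=\frac{\psi_{\bla,\bm b}(x)}{d(x)}\,|0\rangle,\qquad d(x)=\prod_{s=1}^k(x-b_s),
\]
so that $\varphi_{\bla,\bm b}$ and $\psi_{\bla,\bm b}$ are precisely the numerators of the highest weight. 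I would also record that $L(\bla,\bm b)$ has a monomial weight basis $w_S$, $S\subseteq\{1,\dots,k\}$, obtained by inserting $e_{21}v_{\la^{(s)}}$ at the sites $s\in S$, that the $\gl_{1|1}$-weight depends only on $l=|S|$, and that the action of $\Yone$ is graded with $T_{12}$ lowering and $T_{21}$ raising $l$ by one.

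Next I would settle the two-fold case $L_\la(b)\otimes L_\mu(c)$ explicitly. Writing $u_\la=e_{21}v_\la$ and $u_\mu=e_{21}v_\mu$, the first creation step reads $T_{12}(x)|0\rangle=c_1(x)\,u_\la\otimes v_\mu+c_2(x)\,v_\la\otimes u_\mu$ with $d(x)c_1(x)=-(x-c+\mu_1)$ and $d(x)c_2(x)=-(x-b-\la_2)$; these become proportional exactly when $c=b+\la_2+\mu_1$. Off this resonance the two vectors span the whole weight-$l=1$ space, and one more application of $T_{12}$ reaches the bottom vector, so $|0\rangle$ is cyclic. At the resonance one must check that $|0\rangle$ cannot generate the missing direction: the weight-preserving operators $T_{11}(x),T_{22}(x)$ are not diagonal on the $l=1$ space, their off-diagonal entries being governed by $T_{21}(x)(e_{21}v_\mu)=(\mu_1+\mu_2)(x-c)^{-1}v_\mu\neq0$, and a short computation shows that at $c=b+\la_2+\mu_1$ these entries conspire to stabilize precisely the line reached by $T_{12}(x)|0\rangle$. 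Hence $L_\la(b)\otimes L_\mu(c)$ is cyclic iff $c\neq b+\la_2+\mu_1$. To pass to arbitrary $k$ I would invoke the standard reduction, adapted to the super setting, that a tensor product of highest weight evaluation modules is cyclic if and only if each ordered pair $L_{\la^{(i)}}(b_i)\otimes L_{\la^{(j)}}(b_j)$ with $i<j$ is cyclic; together with the two-fold computation this gives exactly the condition $b_j\neq b_i+\la_2^{(i)}+\la_1^{(j)}$ for $1\lle i<j\lle k$.

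Finally, for irreducibility I would use that a finite-dimensional module is irreducible if and only if it is generated both by its top vector $|0\rangle$ and by its bottom vector $w_{\{1,\dots,k\}}$; since co-cyclicity of $L(\bla,\bm b)$ is the cyclicity of the reversed tensor product, irreducibility is equivalent to cyclicity in the given order and in the reversed order. By the previous paragraph this amounts to $b_j\neq b_i+\la_2^{(i)}+\la_1^{(j)}$ and $b_i\neq b_j+\la_2^{(j)}+\la_1^{(i)}$ for all $i<j$. Comparing the roots $b_s-\la_1^{(s)}$ of $\varphi_{\bla,\bm b}$ with the roots $b_s+\la_2^{(s)}$ of $\psi_{\bla,\bm b}$ (coincidences with equal $s$ being excluded by non-degeneracy), these two families of inequalities say exactly that $\varphi_{\bla,\bm b}$ and $\psi_{\bla,\bm b}$ have no common root, i.e.\ are relatively prime. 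The main obstacle is the reduction from the $k$-fold to the two-fold situation: establishing the pairwise cyclicity criterion and the equivalence ``irreducible $\Leftrightarrow$ cyclic in both orders'' rigorously in the $\Yone$ setting (correct signs, the right dual/antipode, and the induction over the number of factors) is the real content, whereas the low-rank resonance computations are routine once the off-diagonal terms are accounted for.
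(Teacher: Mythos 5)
The paper itself does not prove this lemma from scratch: it deduces the first statement from \cite[Theorems 4 and 5]{Z} and states that the second follows by an argument ``similar to the proof of \cite[Theorem 4.2]{Zh}''. Your proposal is therefore an attempt to supply the content of those references. The computational core is correct and consistent with the paper's conventions: $|0\rangle$ is indeed killed by $T_{21}(x)$ with $T_{11}$, $T_{22}$ acting by $\varphi_{\bla,\bm b}/\prod_s(x-b_s)$ and $\psi_{\bla,\bm b}/\prod_s(x-b_s)$, and in the two-fold case the coefficients of $T_{12}(x)|0\rangle$ are $-(x-c+\mu_1)$ and $-(x-b-\la_2)$ up to the common denominator, so the resonance is exactly $c=b+\la_2+\mu_1$, matching the stated condition. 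Invoking the pairwise cyclicity criterion without proof is the same level of detail as the paper, which also outsources this induction to \cite{Zh}.

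There is, however, one step that would fail as written: the claim that a finite-dimensional module of this type is irreducible if and only if it is generated by its top vector \emph{and} by its bottom vector. The correct standard criterion is ``cyclic from $|0\rangle$ and co-cyclic with respect to $|0\rangle$'' (every nonzero submodule contains $|0\rangle$), and co-cyclicity is \emph{not} the same as being generated by the bottom vector. For $k=2$ your weight-space argument happens to close the gap, because a proper submodule avoiding both extreme weight spaces would be concentrated in the single middle weight space and hence killed by all $T_{12}(x)$, $T_{21}(x)$, which one can rule out directly. But for $k\gge 3$ a proper submodule could occupy several intermediate weight spaces $(n-l,l)$, $0<l<k$, without touching either extreme one, and cyclicity from both ends does not a priori exclude this. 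The repair is the route you gesture at but then abandon: co-cyclicity of $L(\bla,\bm b)$ with respect to $|0\rangle$ is equivalent, via the anti-automorphism $\iota$ of \eqref{eq anti-in} together with the contravariant form $B_{\bla,\bm b}$ (or equivalently via dualization), to cyclicity of the \emph{reversed} tensor product $L_{\la^{(k)}}(b_k)\otimes\cdots\otimes L_{\la^{(1)}}(b_1)$ from \emph{its own} vacuum vector $v_{\la^{(k)}}\otimes\cdots\otimes v_{\la^{(1)}}$ --- not to cyclicity of the original module from its bottom vector. Applying the cyclicity criterion to the reversed order then yields the complementary inequalities $b_i\ne b_j+\la_2^{(j)}+\la_1^{(i)}$ for $i<j$, and the comparison of the root sets of $\varphi_{\bla,\bm b}$ and $\psi_{\bla,\bm b}$ (with the diagonal coincidences excluded by non-degeneracy) goes through exactly as you state.
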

\begin{proof}
The first statement follows from \cite[Theorems 4 and 5]{Z}. Proof of the second statement is similar to the proof of \cite[Theorem 4.2]{Zh}.
\end{proof}

We give decomposition of some tensor products of evaluation vector representations which are important for Section \ref{sec higher}.

\begin{eg}\label{eg vect rep}
We have the following equality in the Grothendieck ring,
\beq\label{eq tensor 2}
L_{n\omega_1}(z)\otimes L_{\omega_1}(z-n)=L_{(n+1)\omega_1}(z)+\big(\C_{\bar 1,\xi_n}\otimes L_{(n+1)\omega_1}(z)\big),
\eeq
where $\xi_n(x)=(x-z+n-1)/(x-z+n)$. Moreover, $\big(\C_{\bar 1,\xi_n}\otimes L_{(n+1)\omega_1}(z)\big)$ is the unique irreducible $\Yone$-submodule of $L_{n\omega_1}(z)\otimes L_{\omega_1}(z-n)$ while $L_{(n+1)\omega_1}(z)$ is the simple quotient module. 

Inductively using \eqref{eq tensor 2}, we have the equality in the Grothendieck ring,
\beq\label{eq tensor n}
\bigotimes_{i=1}^n L_{\omega_1}(z-i+1)= \sum_{\ell=0}^{n-1}\sum_{1\lle i_1<\dots<i_{\ell}\lle n-1}\C_{\bar \ell,\xi_{i_1}\cdots \xi_{i_{\ell}}}\otimes L_{n\omega_1}(z).
\eeq
Note that $\C_{\overline{n-1},\xi_{1}\cdots \xi_{n-1}}\otimes L_{n\omega_1}(z)$ is the unique irreducible $\Yone$-submodule of $\bigotimes_{i=1}^n L_{\omega_1}(z-i+1)$.\qed
\end{eg}

\subsection{Shapovalov form}
We write $v_1^{(s)}$ for $v_{\la^{(s)}}$. Let $v_2^{(s)}=e_{21}v_1^{(s)}$. Then $v_1^{(s)}$, $v_2^{(s)}$ is a basis of $L_{\la^{(s)}}$. 

Let $E_{ij}$, $i,j=1,2$, be the linear operator in $\End(L_{\la^{(s)}})$ of parity $|i|+|j|$ such that $E_{ij}v_{r}^{(s)}=\delta_{jr}v_i^{(s)}$ for $r=1,2$.

The R-matrix $R(x)\in \End(L_{\la^{(i)}}(b+x))\otimes \End(L_{\la^{(j)}}(b))$ is given by
\begin{align}
R(x)=&\ E_{11}\otimes E_{11}-\frac{\la^{(i)}_1+\la^{(j)}_2-x}{\la^{(j)}_1+\la^{(i)}_2+x}E_{22}\otimes E_{22}+\frac{\la^{(j)}_1-\la^{(i)}_1+x}{\la^{(j)}_1+\la^{(i)}_2+x}E_{11}\otimes E_{22} \nonumber\\
& +\frac{\la^{(i)}_2-\la^{(j)}_2+x}{\la^{(j)}_1+\la^{(i)}_2+x}E_{22}\otimes E_{11}-\frac{\la^{(i)}_1+\la^{(i)}_2}{\la^{(j)}_1+\la^{(i)}_2+x}E_{12}\otimes E_{21}+\frac{\la^{(j)}_1+\la^{(j)}_2}{\la^{(j)}_1+\la^{(i)}_2+x}E_{21}\otimes E_{12}.\label{eq R matrix}
\end{align}

For $(\la^{(i)}_1,\la^{(i)}_2)=(\la^{(j)}_1,\la^{(j)}_2)=(1,0)$, the $R$-matrix is a scalar multiple of the one used in the definition of the super Yangian $\Yone$ in Section \ref{sec yangian}:  $R(x)=\mathcal R(x) x/(1+x)=(x+P)/(1+x)$.

The R-matrix satisfies
\beq\label{R-matrix comult}
\Delta^{\rm op}(X)R(b_i-b_j)=R(b_i-b_j)\Delta(X),
\eeq
for all $X\in\Yone$. The module $L_{\la^{(i)}}(b_i)\otimes L_{\la^{(j)}}(b_j)$ is irreducible if and only if $R(b_i-b_j)$ is well-defined and invertible.

Define an anti-automorphism $\iota:\rY(\gl_{1|1})\to \rY(\gl_{1|1})$ by the rule, \beq\label{eq anti-in}
\iota(T_{ij}(x))=(-1)^{|i||j|+|i|}T_{ji}(x),\qquad i,j=1,2.
\eeq
One has $\iota(X_1X_2)=(-1)^{|X_1||X_2|}\iota(X_2)\iota(X_1)$ for $X_1,X_2\in \mathrm Y(\gl_{1|1})$. Note that for all $X\in \Yone$ we also have
\beq\label{iota comult}
\Delta \circ \iota (X)=(\iota\otimes \iota)\circ \Delta^{\rm op}(X).
\eeq

The \emph{Shapovalov form} $B_{\la^{(s)}}$ on $L_{\la^{(s)}}$ is a unique symmetric bilinear form such that $$B_{\la^{(s)}}(e_{ij}w_1,w_2)=(-1)^{(|i|+|j|)|w_1|}B_{\la^{(s)}}(w_1,(-1)^{|i||j|+|i|}e_{ji}w_2),$$ for all $i,j$ and $w_1,w_2\in L_{\la^{(i)}}$,  and $B_{\la^{(s)}}(v^{(s)}_1,v^{(s)}_1)=1$. Explicitly, it is given by
\[
B_{\la^{(s)}}(v^{(s)}_1,v^{(s)}_1)=1,\quad B_{\la^{(s)}}(v^{(s)}_1,v^{(s)}_2)=B_{\la^{(s)}}(v^{(s)}_2,v^{(s)}_1)=0,\quad B_{\la^{(s)}}(v^{(s)}_2,v^{(s)}_2)=-(\la^{(i)}_1+\la^{(i)}_2).
\]
The Shapovalov forms $B_{\la^{(s)}}$ on $L_{\la^{(s)}}$ induce a bilinear form $B_{\bla}=\bigotimes_{s=1}^kB_{\la^{(s)}}$ (respecting the usual sign convention) on $L(\bla)=\bigotimes_{s=1}^k L_{\la^{(s)}}$.

Let $R_{\bla,\bm b}\in \End(L(\bla))$ be the product of R-matrices,
\[
R_{\bla,\bm b}=\mathop{\overrightarrow\prod}\limits_{1\lle i\lle k}\,\mathop{\overrightarrow\prod}\limits_{i<j\lle k}R^{(i,j)}(b_i-b_j).
\]
Define a bilinear form $B_{\bla,\bm b}$ on $L(\bla,\bm b)$ by
\[
B_{\bla,\bm b}(w_1,w_2)=B_{\bla}(w_1,R_{\bla,\bm b}w_2),
\]
for all $w_1,w_2\in L(\bla,\bm b)$.

The bilinear form $B_{\bla,\bm b}$ satisfies
\[
B_{\bla,\bm b}(|0\rangle,|0\rangle)=1,\qquad B_{\bla,\bm b}(Xw_1,w_2)=(-1)^{|X||w_1|}B_{\bla,\bm b}(w_1,\iota(X)w_2), 
\]
for all $X\in \rY(\gl_{1|1})$, $w_1,w_2\in L(\bla,\bm b)$, see \eqref{R-matrix comult}, \eqref{iota comult}. Note that if $L(\bla,\bm b)$ is irreducible, then $B_{\bla,\bm b}$ is well-defined and non-degenerate.

\subsection{Bethe ansatz}\label{sec BA}
Let $Q=\mathrm{diag}(q_1,q_2)$ be an invertible diagonal matrix, where $q_1,q_2\in\C^\times$. Define the (\textit{twisted}) \emph{transfer matrix} $\cT_Q(x)$ to be the supertrace of $QT(x)$,
\[
\cT_Q(x)=\mathrm{str}(QT(x))=q_1T_{11}(x)-q_2T_{22}(x)=q_1-q_2+\sum_{r=1}^\infty(q_1T_{11}^{(r)}-q_2T_{22}^{(r)})x^{-r}\in\Yone[[x^{-1}]].
\]
Note that  the transfer matrix $\mc T_Q(x)$ essentially depends on the ratio $q_1/q_2$. 

We write simply $\mc T(x)$ for $\mc T_I(x)$, where $I$ is the identity matrix.

The twisted transfer matrices commute, $[\cT_Q(x_1),\cT_Q(x_2)]=0$. Moreover, $\cT_Q(x)$ commutes with $\mathrm U(\h)$ if $q_1\ne q_2$, and $\mathrm{U}(\gl_{1|1})$ if $q_1=q_2$.

Let $\bm b=(b_1,\dots,b_k)$ be a sequence of complex numbers, $\bla=(\la^{(1)},\dots,\la^{(k)})$ a sequence of $\gl_{1|1}$-weights. We are interested in finding eigenvalues and eigenvectors  of $\cT_Q(x)$ in $L(\bla,\bm b)$. To be more precise, we call \beq\label{eq series}
f(x)=q_1-q_2+\sum_{r=1}^\infty f_rx^{-r},\quad f_r\in \C,
\eeq 
an \emph{eigenvalue} of $\mc T_Q(x)$ if there exists a non-zero vector $v\in L(\bla,\bm b)$ such that $(q_1T_{11}^{(r)}-q_2T_{22}^{(r)})v=f_r v$ for all $r\in\Z_{>0}$. If $f(x)$ is a rational function, we consider it as a power series in $x^{-1}$ as \eqref{eq series}. The vector $v$ is called an \textit{eigenvector} of $\mc T_Q(x)$ corresponding to eigenvalue $f(x)$. We also define the \emph{eigenspace of $\mc T_Q(x)$ in $L(\bla,\bm b)$ corresponding to eigenvalue $f(x)$} as $\bigcap_{r=1}^\infty \ker((q_1T_{11}^{(r)}-q_2T_{22}^{(r)})|_{L(\bla,\bm b)}-f_r)$.

\medskip

It is sufficient to consider $L(\bla,\bm b)$ with $\la^{(i)}_2=0$ for all $i$. Indeed, if $L(\bla,\bm b)$ is an arbitrary tensor product and 
\[
\xi(x)=\prod_{s=1}^k\frac{x-b_s}{x-b_s-\la_2^{(s)}},
\]
then 
$$
L(\bla,\bm b)\otimes \C_{\bar{0},\xi}= L(\tilde \bla,\tilde {\bm b}),
$$
where
\[
\tilde{\la}^{(s)}=({\la}^{(s)}_1+{\la}^{(s)}_2,0),\quad \tilde b_s=b_s+\la_2^{(s)},\quad s=1,\dots,k.
\]
Identify $L(\bla,\bm b)\otimes \C_{\bar{0},\xi}$ with $L(\bla,\bm b)$ as vector spaces. 
Then $\cT_Q(x)$ acting on $L(\bla,\bm b)\otimes \C_{\bar{0},\xi}$ coincides with $\xi(x)\cT_Q(x)$ acting on  $L(\bla,\bm b)$ and therefore the problem of diagonalization of the transfer matrix in $L(\bla,\bm b)$ is reduced to diagonalization of the transfer matrix in $L(\tilde \bla,\tilde{\bm b})$.

\medskip

The main method to find eigenvalues and eigenvectors of $\cT_Q(x)$ in $L(\bla,\bm b)$ is the algebraic Bethe ansatz. Here we recall it from \cite{K,BR}.

Fix a non-negative integer $l$. Let $\bm t=(t_1,\dots,t_l)$ be a sequence of complex numbers. Define the polynomial $y_{\bm t}=\prod_{i=1}^l(x-t_i)$. We say that polynomial $y_{\bm t}$ \emph{represents} $\bm t$. 

A sequence of complex numbers $\bm t$ is called a \emph{solution to the Bethe ansatz equation associated to} $\bla$, $\bm b$, $l$ if
\beq\label{eq gl11 BAE}
y_{\bm t} \text{ divides the polynomial } q_1\varphi_{\bla,\bm b}(x)-q_2\psi_{\bla,\bm b}(x),
\eeq see \eqref{eq phi psi}. We do not distinguish solutions which differ by a permutation of coordinates (that is represented by the same polynomial).

\begin{rem}
In the literature, see e.g. \cite{BR}, one often simply calls $\bm t$ a solution of the Bethe ansatz equation if its coordinates satisfy the following system of algebraic equations:
\beq\label{eq gl11 BAE rem}
\frac{q_1}{q_2}\prod_{s=1}^k\frac{t_j-b_s+\la_1^{(s)}}{t_j-b_s-\la_2^{(s)}}=1, \qquad j=1,\dots,l.
\eeq
Note that \eqref{eq gl11 BAE rem} involves a single $t_j$ only, therefore $\bm t$ is a solution of the Bethe ansatz equation if and only if all $t_j$ are roots of the polynomial $q_1\varphi_{\bla,\bm b}(x)-q_2\psi_{\bla,\bm b}(x)$.  In a generic situation, $q_1\varphi_{\bla,\bm b}(x)-q_2\psi_{\bla,\bm b}(x)$ has no multiple roots, and it is sufficient to consider $\bs t$ with distinct coordinates. However, in general we need to allow the same number appear in $\bm t$ several times.
 According to our definition, a root $t_0$ of $q_1\varphi_{\bla,\bm b}(x)-q_2\psi_{\bla,\bm b}(x)$, occurs in $\bm t$ at most as many times as the order of $x-t_0$ in $q_1\varphi_{\bla,\bm b}(x)-q_2\psi_{\bla,\bm b}(x)$, see \cite[Section 3.2]{HLM}.    \qed
\end{rem}

Let $\la^{(\infty)}$ be the $\gl_{1|1}$-weight given by
\[
\la^{(\infty)}_1=\sum_{s=1}^k\la_1^{(s)}-l,\qquad \la^{(\infty)}_2=\sum_{s=1}^k\la_2^{(s)}+l.
\]
Define the \emph{off-shell Bethe vector} $\bB_l(\bm t)\in (L(\bla,\bm b))_{\la^{(\infty)}}$ by
\beq\label{eq bv gl11}
\bB_l(\bm t)=\prod_{i=1}^l\prod_{s=1}^k \frac{t_i-b_s}{t_i-b_s+\la_1^{(s)}}\prod\limits_{1\lle i<j\lle l}\frac{1}{t_j-t_i+1}\,T_{12}(t_1)\cdots T_{12}(t_l)\,|0\rangle.
\eeq
By \eqref{com relations}, $\bB_l(\bm t)$ is symmetric in $t_1,\dots,t_l$. We renormalize $\bB_l(\bm t)$ as follows,
\[
\widehat{\bB}_l(\bm t)=\Big(\prod_{i=1}^l\prod_{s=1}^k(t_i-b_s+\la_1^{(s)})\Big)\bB_l(\bm t).
\]
Then the Bethe vector $\widehat{\bB}_l(\bm t)$ is well-defined for all $\bm b$, $\bm t$.

If $\bm t$ is a solution of the Bethe ansatz equation \eqref{eq gl11 BAE}, we call $\bB_l(\bm t)$ (also $\widehat{\bB}_l(\bm t)$) an \emph{on-shell Bethe vector}.

Let $\bm t$ be a solution of the Bethe ansatz equation associated to $\bla$, $\bm b$, $l$.  The following theorem is well-known, see e.g. \cite{K,BR}.
\begin{thm}\label{thm gl11 eigenvalue in y}
If the on-shell Bethe vector $\widehat{\bB}_l(\bm t)$ is non-zero, then $\widehat{\bB}_l(\bm t)$ is an eigenvector of the transfer matrix $\cT_Q(x)$ with the corresponding eigenvalue 
\beq\label{eq gl11 eigenvalue in y}
\mc E^Q_{y_{\bm t},\bla,\bm b}(x)=\frac{y_{\bm t}(x-1)}{y_{\bm t}(x)}(q_1\varphi_{\bla,\bm b}(x)-q_2\psi_{\bla,\bm b}(x))\prod_{s=1 }^k(x-b_s)^{-1},
\eeq
where $\varphi_{\bla,\bm b}(x)$ and $\psi_{\bla,\bm b}(x)$ are given by \eqref{eq phi psi}.\qed
\end{thm}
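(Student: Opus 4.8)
The plan is to prove the statement by a direct algebraic Bethe ansatz computation, using only the $RTT$-relations \eqref{com relations}, the coproduct \eqref{eq cop}, and the symmetry of $\bB_l(\bm t)$ in $t_1,\dots,t_l$ noted after \eqref{eq bv gl11}. First I would record the action of the diagonal and lower entries on the vacuum. Each factor $L_{\la^{(s)}}(b_s)$ is a highest weight evaluation module, so on $v_{\la^{(s)}}$ one has $T_{21}(x)v_{\la^{(s)}}=0$ and $T_{11}(x),T_{22}(x)$ act as scalars; feeding this into the iterated coproduct \eqref{eq cop} gives $T_{21}(x)|0\rangle=0$ together with $T_{11}(x)|0\rangle=\varphi_{\bla,\bm b}(x)\prod_{s}(x-b_s)^{-1}|0\rangle$ and $T_{22}(x)|0\rangle=\psi_{\bla,\bm b}(x)\prod_{s}(x-b_s)^{-1}|0\rangle$, with $\varphi_{\bla,\bm b},\psi_{\bla,\bm b}$ as in \eqref{eq phi psi}. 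These are the only structural inputs.

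Next I would extract from \eqref{com relations} the relations needed to move $T_{11}(x)$ and $T_{22}(x)$ rightward through the string $T_{12}(t_1)\cdots T_{12}(t_l)$. Here a $\gl_{1|1}$-specific simplification is crucial: if one organizes each commutation so that the surviving diagonal operator with shifted argument is kept to the right of $T_{12}$ (which for $T_{11}$ uses the second presentation in \eqref{com relations} and for $T_{22}$ the first), then both $T_{11}(x)$ and $T_{22}(x)$ produce the \emph{same} ``wanted'' dressing factor $\prod_{i}(x-t_i-1)/(x-t_i)=y_{\bm t}(x-1)/y_{\bm t}(x)$ per pass, while the ``unwanted'' terms exchange the spectral parameter $x$ with some $t_i$. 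Consequently, collecting the wanted contributions of $\cT_Q(x)=q_1T_{11}(x)-q_2T_{22}(x)$ produces the common prefactor responsible for the factored form of \eqref{eq gl11 eigenvalue in y}, namely $(y_{\bm t}(x-1)/y_{\bm t}(x))\,(q_1\varphi_{\bla,\bm b}(x)-q_2\psi_{\bla,\bm b}(x))\prod_s(x-b_s)^{-1}$ times the Bethe vector, which is exactly $\mc E^Q_{y_{\bm t},\bla,\bm b}(x)\,\widehat{\bB}_l(\bm t)$.

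The main work is to show that all unwanted terms cancel. Using the symmetry of $\bB_l(\bm t)$, I would argue that it suffices to compute, for each $i$, the coefficient of the single ``wrong'' vector $T_{12}(x)\prod_{j\ne i}T_{12}(t_j)|0\rangle$: pushing the operator $T_{11}(t_i)$ (respectively $T_{22}(t_i)$) generated in the first unwanted step through the remaining factors and onto the vacuum, one finds this coefficient is proportional, by a common nonzero factor independent of $i$, to $q_1\varphi_{\bla,\bm b}(t_i)-q_2\psi_{\bla,\bm b}(t_i)$. The Bethe condition \eqref{eq gl11 BAE}, equivalently \eqref{eq gl11 BAE rem}, states precisely that each $t_i$ is a root of $q_1\varphi_{\bla,\bm b}-q_2\psi_{\bla,\bm b}$, so every such coefficient vanishes and only the wanted term survives. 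I expect this cancellation to be the main obstacle: it is the only place where the defining relations are used in full, and the delicate point is tracking the super-signs in \eqref{com relations} and verifying that the higher (multiply-unwanted) terms reorganize into the single residue at $x=t_i$.

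Finally, the renormalization $\widehat{\bB}_l(\bm t)=\big(\prod_{i,s}(t_i-b_s+\la_1^{(s)})\big)\bB_l(\bm t)$ only rescales by a scalar and hence does not affect the eigenvalue; it merely makes the vector well-defined for all $\bm b,\bm t$. The computation is cleanest when $t_1,\dots,t_l$ are distinct, which is the essential content of the statement, and for configurations with coinciding coordinates one uses that both $\widehat{\bB}_l(\bm t)$ and $\cT_Q(x)\widehat{\bB}_l(\bm t)$ depend polynomially on $\bm t$, so the eigenvector identity extends from the dense locus of distinct coordinates. Under the hypothesis $\widehat{\bB}_l(\bm t)\ne 0$ this shows that $\widehat{\bB}_l(\bm t)$ is an eigenvector of $\cT_Q(x)$ with eigenvalue $\mc E^Q_{y_{\bm t},\bla,\bm b}(x)$ as in \eqref{eq gl11 eigenvalue in y}.
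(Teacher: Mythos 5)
The paper gives no proof of this theorem: it is stated as well known with a pointer to \cite{K,BR}, and your sketch is exactly the standard algebraic Bethe ansatz computation carried out there, so you are on the paper's intended route. Your structural claims check against \eqref{com relations}: both $T_{11}(x)$ and $T_{22}(x)$ acquire the same dressing factor $(x-t-1)/(x-t)$ when pushed through $T_{12}(t)$, which is the $\gl_{1|1}$-specific feature producing the single common prefactor $y_{\bm t}(x-1)/y_{\bm t}(x)$ in \eqref{eq gl11 eigenvalue in y}, and each unwanted term carries the factor $q_1\varphi_{\bla,\bm b}(t_i)-q_2\psi_{\bla,\bm b}(t_i)$ annihilated by \eqref{eq gl11 BAE}. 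One cosmetic slip: the prefactor of $q_1\varphi_{\bla,\bm b}(t_i)-q_2\psi_{\bla,\bm b}(t_i)$ in the $i$-th unwanted coefficient is not independent of $i$ (it involves $1/(x-t_i)$ and $\prod_{j\ne i}(t_i-t_j-1)/(t_i-t_j)$), but this is immaterial since each coefficient vanishes separately; your continuation argument for coinciding $t_i$ matches the paper's own remark following Theorem \ref{thm norm BV}.
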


Different on-shell Bethe vectors are orthogonal with respect to the bilinear form $B_{\bla,\bm b}(\cdot,\cdot)$.
\begin{cor}
Let $\bm{t_1}$ and $\bm{t_2}$ be two different solutions of the Bethe ansatz equation, then \[B_{\bla,\bm b}(\widehat{\bB}_{l_1}(\bm t_1),\widehat{\bB}_{l_2}(\bm t_2))=0.\]
\end{cor}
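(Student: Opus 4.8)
The plan is to combine the self-adjointness of the transfer matrix with respect to $B_{\bla,\bm b}$ with the fact that distinct solutions of the Bethe ansatz equation produce distinct transfer-matrix eigenvalues.

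First I would check that $\cT_Q(x)$ is fixed by $\iota$ and even. Since $|1|=\bar 0$ and $|2|=\bar 1$, the defining formula $\iota(T_{ij}(x))=(-1)^{|i||j|+|i|}T_{ji}(x)$ from \eqref{eq anti-in} gives exponents $|1||1|+|1|=\bar 0$ and $|2||2|+|2|=\bar 0$, so $\iota(T_{11}(x))=T_{11}(x)$ and $\iota(T_{22}(x))=T_{22}(x)$; hence $\iota(\cT_Q(x))=q_1T_{11}(x)-q_2T_{22}(x)=\cT_Q(x)$. As $T_{11}(x)$ and $T_{22}(x)$ are even, so is $\cT_Q(x)$. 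Plugging $X=\cT_Q(x)$ into the adjunction property $B_{\bla,\bm b}(Xw_1,w_2)=(-1)^{|X||w_1|}B_{\bla,\bm b}(w_1,\iota(X)w_2)$, the sign is trivial and I obtain, coefficient by coefficient in $x^{-1}$, that each $q_1T_{11}^{(r)}-q_2T_{22}^{(r)}$ is self-adjoint for $B_{\bla,\bm b}$ (no non-degeneracy is needed here).

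Next is the standard eigenvalue argument. I may assume both $\widehat{\bB}_{l_1}(\bm{t_1})$ and $\widehat{\bB}_{l_2}(\bm{t_2})$ are nonzero, since otherwise the claim is immediate. By Theorem \ref{thm gl11 eigenvalue in y} they are then eigenvectors of $\cT_Q(x)$ with eigenvalues $\mc E^Q_{y_{\bm{t_1}},\bla,\bm b}(x)$ and $\mc E^Q_{y_{\bm{t_2}},\bla,\bm b}(x)$. Applying the self-adjointness with $w_1=\widehat{\bB}_{l_1}(\bm{t_1})$ and $w_2=\widehat{\bB}_{l_2}(\bm{t_2})$ gives
\[
\big(\mc E^Q_{y_{\bm{t_1}},\bla,\bm b}(x)-\mc E^Q_{y_{\bm{t_2}},\bla,\bm b}(x)\big)\,B_{\bla,\bm b}\big(\widehat{\bB}_{l_1}(\bm{t_1}),\widehat{\bB}_{l_2}(\bm{t_2})\big)=0,
\]
so it remains to show that the two eigenvalues differ whenever $\bm{t_1}\neq\bm{t_2}$.

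The last step is the injectivity of the eigenvalue assignment, which I expect to be the main (though still elementary) obstacle. If $l_1\neq l_2$, the two vectors lie in different weight spaces $\la^{(\infty)}$; since $e_{11}=T_{11}^{(1)}$ is even and $\iota$-fixed, the same adjunction property forces $(\la^{(\infty)}_1\text{-eigenvalue difference})\,B_{\bla,\bm b}=0$, so vectors of different weights are orthogonal and the form vanishes directly. If $l_1=l_2=l$, the common factor $(q_1\varphi_{\bla,\bm b}(x)-q_2\psi_{\bla,\bm b}(x))\prod_{s=1}^k(x-b_s)^{-1}$ in \eqref{eq gl11 eigenvalue in y} cancels, and equality of eigenvalues reduces to $y_{\bm{t_1}}(x-1)/y_{\bm{t_1}}(x)=y_{\bm{t_2}}(x-1)/y_{\bm{t_2}}(x)$. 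Writing $g(x)=y_{\bm{t_1}}(x)/y_{\bm{t_2}}(x)$, this reads $g(x-1)=g(x)$; a rational function of period $1$ is constant, and as both polynomials are monic of degree $l$ we get $g\equiv 1$, forcing $y_{\bm{t_1}}=y_{\bm{t_2}}$ and hence $\bm{t_1}=\bm{t_2}$, a contradiction. Therefore distinct solutions give distinct eigenvalues, and $B_{\bla,\bm b}(\widehat{\bB}_{l_1}(\bm{t_1}),\widehat{\bB}_{l_2}(\bm{t_2}))=0$.
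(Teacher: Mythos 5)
Your proposal is correct and is essentially the paper's own argument: the paper likewise uses $\iota(\cT_Q(x))=\cT_Q(x)$ together with the adjunction property of $B_{\bla,\bm b}$ and then invokes the distinctness of the eigenvalues from Theorem \ref{thm gl11 eigenvalue in y}. The only difference is that you spell out why distinct solutions yield distinct eigenvalues (the weight-space argument for $l_1\neq l_2$ and the period-one rational function argument for $l_1=l_2$), a step the paper leaves implicit.
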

\begin{proof}
It follows from the equality 
\[
B_{\bla,\bm b}(\cT_Q(x)\widehat{\bB}_{l_1}(\bm t_1),\widehat{\bB}_{l_2}(\bm t_2))=B_{\bla,\bm b}(\widehat{\bB}_{l_1}(\bm t_1),\iota(\cT_Q(x))\widehat{\bB}_{l_2}(\bm t_2))=B_{\bla,\bm b}(\widehat{\bB}_{l_1}(\bm t_1),\cT_Q(x)\widehat{\bB}_{l_2}(\bm t_2))
\]and Theorem \ref{thm gl11 eigenvalue in y} since $\cT_Q(x)$ has different eigenvalues corresponding to $\widehat{\bB}_{l_1}(\bm t_1)$ and $\widehat{\bB}_{l_2}(\bm t_2)$.
\end{proof}

\begin{prop}[\cite{HLM}]
If $q_1=q_2$, then the on-shell Bethe vector $\widehat{\bB}_l(\bm t)$ is singular.
\end{prop}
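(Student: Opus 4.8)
The plan is to prove that the renormalized on-shell Bethe vector is annihilated by $e_{12}$. Under the embedding $\Uone\hookrightarrow\Yone$ we have $e_{12}=T_{21}^{(1)}$, so singularity of $\widehat{\bB}_l(\bm t)$ is equivalent to $T_{21}^{(1)}\widehat{\bB}_l(\bm t)=0$; since $\widehat{\bB}_l(\bm t)$ is a scalar multiple of $T_{12}(t_1)\cdots T_{12}(t_l)|0\rangle$, I would work with the latter. The one relation I need is that both $T_{21}^{(1)}$ and $T_{12}(x)$ are odd and, by \eqref{zero mode com relations},
\[
\{T_{21}^{(1)},T_{12}(x)\}=T_{22}(x)-T_{11}(x)=-\mc T(x),
\]
the negative of the untwisted transfer matrix $\mc T(x)=\mc T_I(x)$. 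Moving the odd element $T_{21}^{(1)}$ to the right through the string of creation operators and using $T_{21}^{(1)}|0\rangle=e_{12}|0\rangle=0$ telescopes to
\[
T_{21}^{(1)}\,T_{12}(t_1)\cdots T_{12}(t_l)\,|0\rangle=-\sum_{i=1}^l(-1)^{i-1}\,T_{12}(t_1)\cdots T_{12}(t_{i-1})\,\mc T(t_i)\,T_{12}(t_{i+1})\cdots T_{12}(t_l)\,|0\rangle.
\]
It therefore suffices to show $\mc T(t_i)\,T_{12}(t_{i+1})\cdots T_{12}(t_l)\,|0\rangle=0$ for each $i$.

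This is where $q_1=q_2$ enters. For $q_1=q_2$ the Bethe condition \eqref{eq gl11 BAE} reads $y_{\bm t}\mid \varphi_{\bla,\bm b}-\psi_{\bla,\bm b}$. Hence the sub-tuple $(t_{i+1},\dots,t_l)$, represented by $y_{>i}(x):=\prod_{j>i}(x-t_j)$, is again a solution of the Bethe ansatz equation for $Q=I$ and length $l-i$, because $y_{>i}$ divides $y_{\bm t}$ and thus $\varphi_{\bla,\bm b}-\psi_{\bla,\bm b}$. Consequently $T_{12}(t_{i+1})\cdots T_{12}(t_l)|0\rangle$ is proportional to the on-shell Bethe vector $\widehat{\bB}_{l-i}(t_{i+1},\dots,t_l)$, and by Theorem \ref{thm gl11 eigenvalue in y} applied with $Q=I$ it is either zero or an eigenvector of $\mc T(x)$ with eigenvalue $\mc E^I_{y_{>i},\bla,\bm b}(x)$ as in \eqref{eq gl11 eigenvalue in y}. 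Evaluating that eigenvalue at $x=t_i$ and using $\varphi_{\bla,\bm b}(t_i)=\psi_{\bla,\bm b}(t_i)$ gives $\mc E^I_{y_{>i},\bla,\bm b}(t_i)=0$, so $\mc T(t_i)$ annihilates this vector, and summing over $i$ finishes the proof.

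The step requiring genuine care, and the main obstacle, is the evaluation $\mc E^I_{y_{>i},\bla,\bm b}(t_i)=0$ when $\bm t$ has repeated coordinates: then $y_{>i}(t_i)=0$ and the prefactor $y_{>i}(t_i-1)/y_{>i}(t_i)$ is singular. I would resolve this by treating $\mc E^I_{y_{>i},\bla,\bm b}(x)$ as a rational function and observing that, since $y_{\bm t}=(x-t_i)\,y_{>i}(x)\prod_{j<i}(x-t_j)$ divides $\varphi_{\bla,\bm b}-\psi_{\bla,\bm b}$, the numerator $\varphi_{\bla,\bm b}-\psi_{\bla,\bm b}$ vanishes at $x=t_i$ to order at least $1+\ord_{t_i}(y_{>i})$, so the rational eigenvalue still vanishes at $t_i$; the remaining special values (where $t_i$ is a pole of $\mc T(\cdot)$) are handled by working with the everywhere-defined renormalization $\widehat{\bB}$ and a continuity argument. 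Finally, I would emphasize why $q_1=q_2$ is essential: for $q_1\neq q_2$ the anticommutator still equals $-\mc T_I(x)$, but the sub-tuple $(t_{i+1},\dots,t_l)$ is then on-shell only for $\mc T_Q$ and not for $\mc T_I$, so the eigenvalue identity that forces vanishing at $t_i$ is no longer available.
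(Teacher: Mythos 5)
The paper does not actually prove this proposition: it is imported from \cite{HLM} and stated without proof, so there is no in-paper argument to compare against. Judged on its own merits, your argument is correct and is the standard algebraic-Bethe-ansatz proof of singularity. The anticommutator $\{T_{21}^{(1)},T_{12}(x)\}=T_{22}(x)-T_{11}(x)=-\mc T(x)$ is computed correctly from \eqref{zero mode com relations}, the telescoping identity and the sign $-(-1)^{i-1}$ check out, and the key observation --- that each tail $(t_{i+1},\dots,t_l)$ is again on-shell because any monic divisor of $q_1\varphi_{\bla,\bm b}-q_2\psi_{\bla,\bm b}$ represents a solution, so $\mc T(t_i)$ acts on $T_{12}(t_{i+1})\cdots T_{12}(t_l)|0\rangle$ by the eigenvalue $\mc E^I_{y_{>i},\bla,\bm b}(t_i)=0$ --- is exactly where $q_1=q_2$ enters, and you correctly explain why the argument breaks for $q_1\ne q_2$. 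The remaining weak point is the one you flag yourself: for degenerate parameters (repeated $t_i$, $t_i=b_s$, or $t_j=t_{j'}-1$) the unnormalized vector $T_{12}(t_1)\cdots T_{12}(t_l)|0\rangle$ and the proportionality constant relating it to $\widehat{\bB}$ may be ill-defined or zero, so the telescoping identity must be established for generic parameters and extended to $\widehat{\bB}_l(\bm t)$ by the same rationality/analytic-continuation device the paper invokes in its Remark after Theorem \ref{thm norm BV}; your order-of-vanishing count at $x=t_i$ for repeated coordinates is the right bookkeeping for this. With that caveat made precise, the proof is complete.
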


It is important to know if the on-shell Bethe vectors are non-zero. The following theorem is a particular case of \cite[Theorem 4.1]{PRS} which asserts that the square of the norm of the on-shell Bethe vector is essentially given by the Jacobian of the Bethe ansatz equation. 

Let $\theta_1,\theta_2$ be differentiable functions in $x$. Denote by $\Wr(\theta_1,\theta_2)$ the \emph{Wronskian} of $\theta_1$ and $\theta_2$,
\[
\Wr(\theta_1,\theta_2)=\theta_1\theta_2'-\theta_1'\theta_2.
\]
\begin{thm}[\cite{PRS}]\label{thm norm BV}
The square of the norm of the on-shell Bethe vector $\widehat{\bB}_l(\bm t)$ is given by
\begin{align*}
B_{\bla,\bm b}(\widehat{\bB}_l(\bm t),\widehat{\bB}_l(\bm t))=&\ \Big(\frac{q_2}{q_1}\Big)^l\prod_{i=1}^l\frac{\Wr(\varphi_{\bla,\bm b},\psi_{\bla,\bm b})(t_i)}{y_{\bm t}'(t_i)}.
\end{align*}
\end{thm}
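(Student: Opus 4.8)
First I would convert the norm into a vacuum matrix element. Using the contravariance property $B_{\bla,\bm b}(Xw_1,w_2)=(-1)^{|X||w_1|}B_{\bla,\bm b}(w_1,\iota(X)w_2)$ with $\iota(T_{12}(x))=T_{21}(x)$, I move the $l$ creation operators in the left copy of $\widehat{\bB}_l(\bm t)$ across the form onto the right copy. Since $\iota$ is an anti-automorphism this reverses their order and contributes an overall sign, and because $B_{\bla,\bm b}(|0\rangle,|0\rangle)=1$ the norm equals, up to the explicit scalar factor in \eqref{eq bv gl11}, the scalar product
\beq
S_l(\bm t)=\langle 0|\,T_{21}(t_l)\cdots T_{21}(t_1)\,T_{12}(t_1)\cdots T_{12}(t_l)\,|0\rangle,
\eeq
where $\langle 0|$ denotes $B_{\bla,\bm b}(|0\rangle,\cdot\,)$.

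To evaluate $S_l(\bm t)$ I would commute the annihilation operators through the creation operators using the anticommutation relation read off from \eqref{com relations},
\beq
\{T_{21}(s),T_{12}(t)\}=\frac{T_{11}(s)T_{22}(t)-T_{11}(t)T_{22}(s)}{s-t},
\eeq
together with $T_{21}(x)|0\rangle=0$ and the highest-weight action $T_{11}(x)|0\rangle=\alpha(x)|0\rangle$, $T_{22}(x)|0\rangle=\delta(x)|0\rangle$, where $\alpha(x)=\varphi_{\bla,\bm b}(x)/D(x)$, $\delta(x)=\psi_{\bla,\bm b}(x)/D(x)$ and $D(x)=\prod_{s=1}^k(x-b_s)$. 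Carried out inductively this expresses $S_l(\bm t)$ as a Slavnov-type determinant; equivalently, one may simply invoke \cite[Theorem 4.1]{PRS}, which gives the square of the norm as an explicit scalar prefactor times the Jacobian $\det\big[\partial(\mathrm{BAE}_i)/\partial t_j\big]$ of the Bethe equations.

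The decisive feature of $\gl_{1|1}$ is that the Bethe equation \eqref{eq gl11 BAE rem} for the root $t_j$ involves $t_j$ alone, namely $q_1\alpha(t_j)=q_2\delta(t_j)$. Hence $\alpha$ and $\delta$ are proportional at each Bethe root with common ratio $q_2/q_1$, the Jacobian of \eqref{eq gl11 BAE rem} is diagonal, and its determinant is the product of the single derivatives $\partial_{t_j}(\mathrm{BAE}_j)$; on the scalar-product side this is the statement that the off-diagonal contractions vanish on-shell. Each diagonal derivative reduces, after using $q_1\varphi_{\bla,\bm b}(t_j)=q_2\psi_{\bla,\bm b}(t_j)$ and the scaling identity $\Wr(f/D,g/D)=\Wr(f,g)/D^2$, to $\Wr(\varphi_{\bla,\bm b},\psi_{\bla,\bm b})(t_j)$ up to explicit factors involving $\psi_{\bla,\bm b}(t_j)$ and $q_1/q_2$. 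Assembling the diagonal product with the explicit normalization of \eqref{eq bv gl11} — the $\prod_{i<j}(t_j-t_i+1)$ factors combining with the Vandermonde data into $\prod_i y_{\bm t}'(t_i)$, and the on-shell ratios producing the overall $(q_2/q_1)^l$ — yields the stated right-hand side.

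I expect the main difficulty to lie entirely in the bookkeeping of this last assembly: verifying that the scalar product is given by the claimed determinant (the creation operators are not free fermions but are dressed by the factor $(x_2-x_1+1)/(x_1-x_2+1)$ coming from \eqref{com relations}, a dressing compensated precisely by the normalization in \eqref{eq bv gl11}), justifying that the off-diagonal contributions vanish on-shell, and then tracking the interplay of the signs, the powers of $q_2/q_1$, and the shift factors $(t_j-t_i+1)$ with $y_{\bm t}'$ so that they collapse to the clean product. None of these steps is conceptually hard, but getting the prefactor exactly right is the tedious heart of the proof, which is why it is natural to deduce the statement as the $\gl_{1|1}$ specialization of \cite[Theorem 4.1]{PRS}.
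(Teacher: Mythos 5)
Your proposal is correct and ultimately takes the same route as the paper: both reduce the statement to \cite[Theorem 4.1]{PRS}, using the fact that the decoupled $\gl_{1|1}$ Bethe equations \eqref{eq gl11 BAE rem} make the Jacobian diagonal so that the norm collapses to the product of Wronskians. The paper's proof is just the citation plus the remark that for $q_1\neq q_2$ one must modify \cite[Lemma 7.1]{PRS} and the Korepin criteria by the appropriate multiple --- precisely the $(q_2/q_1)^l$ bookkeeping you carry out via the on-shell ratio $q_1\varphi_{\bla,\bm b}(t_j)=q_2\psi_{\bla,\bm b}(t_j)$.
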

\begin{proof}
The statement for the case of $q_1=q_2$ is proved in \cite{PRS}. For the case of $q_1\ne q_2$, the proof is similar. One only needs to change the Bethe ansatz equation used in the proof of \cite[Lemma 7.1]{PRS} to the twisted case and alter the Korepin criteria (iii) and (iv) with appropriate multiple.
\end{proof}

\begin{rem}
Theorems \ref{thm gl11 eigenvalue in y} and \ref{thm norm BV} were proved for the case that $t_i\ne t_j$ for $i\ne j$. However, they still hold with the definition \eqref{eq gl11 BAE} which can be seen by analytic continuation, c.f. \cite[Theorem 3.2]{T}.
\end{rem}

\subsection{Completeness of Bethe ansatz}\label{sec complete}
We say that the Bethe ansatz is \emph{complete} if the following conditions are sastified,
\begin{enumerate}
    \item[(1)] on-shell Bethe vectors $\widehat{\bB}_l(\bm t)\ne 0$ for all $\bm t$ such that $y_{\bm t}$ divides $q_1\varphi_{\bla,\bm b}-q_2\psi_{\bla,\bm b}$ are non-zero;
    \item[(2)] all eigenvectors of the transfer matrix $\mc T_Q(x)$ in $L(\bla,\bm b)$ if $q_1\ne q_2$ and in $(L(\bla,\bm b))^\sing$ if $q_1=q_2$ are of the form $c\,\widehat{\bB}_l(\bm t)$ where $c\in\C^\times$ and $y_{\bm t}$ divides $q_1\varphi_{\bla,\bm b}-q_2\psi_{\bla,\bm b}$.
\end{enumerate}

Bethe vectors are obtained from the action of the Yangian on the vacuum vector $|0\rangle$. Therefore we restrict our interest to the case when $L(\bla,\bm b)$ is cyclic. The cyclic modules are described in Lemma \ref{lem cyclic weyl}. Note that $\dim L(\bla,\bm b)=2^k$ and $\dim (L(\bla,\bm b))^\sing=2^{k-1}$. For generic $\bm b$, the polynomial $q_1\varphi_{\bla,\bm b}(x)-q_2\psi_{\bla,\bm b}(x)$ has no multiple roots and hence has the desired number of distinct monic divisors. Thus the algebraic Bethe ansatz works well for generic situation. The following theorem is a minor generalization of \cite[Theorem A.6]{HLM}.  

\begin{thm}\label{thm complete}
Suppose $L(\bla,\bm b)$ is an irreducible $\Yone$-module. In the case of $q_1=q_2$ we assume
in addition that $|\bla|\ne 0$. If $q_1\varphi_{\bla,\bm b}-q_2\psi_{\bla,\bm b}$ has no multiple roots, then the transfer matrix $\mc T_Q(x)$ is diagonalizable and the Bethe ansatz is complete. In particular, for any given $\bla$ and generic $\bm b$, the transfer matrix $\mc T_Q(x)$ is diagonalizable and the Bethe ansatz is complete. 
\end{thm}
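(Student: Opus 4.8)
The plan is to build, under the stated hypotheses, a basis of $L(\bla,\bm b)$ (respectively of $(L(\bla,\bm b))^{\sing}$ when $q_1=q_2$) out of the on-shell Bethe vectors, matching their number against the dimension by a count of monic divisors. Since $\varphi_{\bla,\bm b}$ and $\psi_{\bla,\bm b}$ are monic of degree $k$, the polynomial $f:=q_1\varphi_{\bla,\bm b}-q_2\psi_{\bla,\bm b}$ has degree $k$ when $q_1\ne q_2$; when $q_1=q_2$ the leading coefficients cancel and the next coefficient equals $q_1|\bla|$, so $f$ has degree $k-1$ exactly because $|\bla|\ne 0$. As $f$ has only simple roots, its monic divisors are parametrized by subsets of its root set, so there are $2^k=\dim L(\bla,\bm b)$ of them when $q_1\ne q_2$ and $2^{k-1}=\dim (L(\bla,\bm b))^{\sing}$ when $q_1=q_2$; each monic divisor $y_{\bm t}$ of degree $l$ is exactly a solution $\bm t$ of the Bethe ansatz equation \eqref{eq gl11 BAE}.

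The crux of the argument, and the step I expect to be the main obstacle, is to show that every on-shell Bethe vector is non-zero; I would read this off the norm formula of Theorem \ref{thm norm BV}. Let $t_i$ be a root of $f$, so that $q_1\varphi_{\bla,\bm b}(t_i)=q_2\psi_{\bla,\bm b}(t_i)$. Since $L(\bla,\bm b)$ is irreducible, Lemma \ref{lem cyclic weyl} gives that $\varphi_{\bla,\bm b}$ and $\psi_{\bla,\bm b}$ are coprime, so neither can vanish at $t_i$ (if one did, invertibility of $q_1,q_2$ would force the other to vanish there as well). A direct computation using $q_1\varphi_{\bla,\bm b}(t_i)=q_2\psi_{\bla,\bm b}(t_i)$ then gives
\[
\Wr(\varphi_{\bla,\bm b},\psi_{\bla,\bm b})(t_i)=-\frac{\varphi_{\bla,\bm b}(t_i)}{q_2}\,f'(t_i),
\]
and, $f$ having simple roots, $f'(t_i)\ne 0$; hence $\Wr(\varphi_{\bla,\bm b},\psi_{\bla,\bm b})(t_i)\ne 0$. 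As $y_{\bm t}$ divides $f$ it too has simple roots, so $y_{\bm t}'(t_i)\ne 0$, and Theorem \ref{thm norm BV} exhibits $B_{\bla,\bm b}(\widehat{\bB}_l(\bm t),\widehat{\bB}_l(\bm t))$ as a product of non-zero factors. Thus each on-shell Bethe vector has non-zero norm and, in particular, is non-zero, which is condition (1) of completeness.

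Next I would prove that distinct monic divisors yield distinct eigenvalues, and deduce linear independence. In \eqref{eq gl11 eigenvalue in y} the only $\bm t$-dependent factor is $y_{\bm t}(x-1)/y_{\bm t}(x)$, and the assignment $y\mapsto y(x-1)/y(x)$ is injective on monic polynomials: if two monic $y_1,y_2$ had the same image, then $\mathrm{div}(y_1)-\mathrm{div}(y_2)$ would be a finitely supported divisor invariant under $a\mapsto a+1$, hence zero. Therefore different solutions give different eigenvalues of $\cT_Q(x)$, so by the orthogonality of on-shell Bethe vectors (the Corollary to Theorem \ref{thm gl11 eigenvalue in y}) the on-shell Bethe vectors are pairwise orthogonal; combined with the non-vanishing of their self-pairings just established, they are linearly independent.

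Finally I would assemble the statement. If $q_1\ne q_2$, the $2^k$ linearly independent on-shell Bethe vectors form a basis of the $2^k$-dimensional space $L(\bla,\bm b)$ consisting of eigenvectors of $\cT_Q(x)$ with pairwise distinct eigenvalues; hence $\cT_Q(x)$ is diagonalizable, every joint eigenspace is one-dimensional, and every eigenvector is proportional to some $\widehat{\bB}_l(\bm t)$, which is condition (2). If $q_1=q_2$, the on-shell Bethe vectors are singular (by the Proposition quoted from \cite{HLM}), so the $2^{k-1}$ of them form a basis of $(L(\bla,\bm b))^{\sing}$ and give completeness there as above; moreover $\cT(x)$ commutes with $\gl_{1|1}$, and since $|\bla|\ne 0$ the central element $e_{11}+e_{22}$ acts on $L(\bla,\bm b)$ by the non-zero scalar $|\bla|$, making $L(\bla,\bm b)$ a semisimple $\gl_{1|1}$-module with two-dimensional summands, so the basis of $\cT(x)$-eigenvectors of $(L(\bla,\bm b))^{\sing}$ extends through $e_{21}$ to a basis of eigenvectors of $L(\bla,\bm b)$ and $\cT(x)$ is diagonalizable. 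For the last assertion, for fixed $\bla$ and generic $\bm b$ the polynomials $\varphi_{\bla,\bm b},\psi_{\bla,\bm b}$ are coprime, so $L(\bla,\bm b)$ is irreducible by Lemma \ref{lem cyclic weyl}, while the discriminant of $f$ does not vanish identically as a function of $\bm b$; hence both hypotheses hold on a non-empty Zariski-open set of $\bm b$.
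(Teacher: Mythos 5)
Your proof is correct and follows essentially the same route as the paper: the key step in both is the non-vanishing of on-shell Bethe vectors via the norm formula of Theorem \ref{thm norm BV}, reduced to the non-vanishing of a Wronskian at a simple root of $q_1\varphi_{\bla,\bm b}-q_2\psi_{\bla,\bm b}$ using coprimality of $\varphi_{\bla,\bm b}$ and $\psi_{\bla,\bm b}$ (your identity $\Wr(\varphi_{\bla,\bm b},\psi_{\bla,\bm b})(t_i)=-\varphi_{\bla,\bm b}(t_i)f'(t_i)/q_2$ is just the paper's $\Wr(q_1\varphi_{\bla,\bm b}-q_2\psi_{\bla,\bm b},\psi_{\bla,\bm b})=q_1\Wr(\varphi_{\bla,\bm b},\psi_{\bla,\bm b})$ in a different guise). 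The only difference is that you spell out the divisor-counting and dimension-matching argument that the paper delegates to the proof of \cite[Theorem A.6]{HLM}, and like the paper you leave the non-vanishing of the discriminant for generic $\bm b$ as an assertion.
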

\begin{proof}
The proof is similar to that of \cite[Theorem A.6]{HLM}. Here we only show that the on-shell Bethe vectors $\widehat{\bB}_l(\bm t)$ are non-zero by using Theorem \ref{thm norm BV}. Note that
\[
B_{\bla,\bm z}(\widehat{\bB}_l(\bm t),\widehat{\bB}_l(\bm t))=\frac{q_2^l}{q_1^{2l}}\ \prod_{i=1}^l \frac{\Wr(q_1\varphi_{\bla,\bm b}-q_2\psi_{\bla,\bm b},\psi_{\bla,\bm b})(t_i)}{y'(t_i)}.
\]
It suffices to show $\Wr(q_1\varphi_{\bla,\bm b}-q_2\psi_{\bla,\bm b},\psi_{\bla,\bm b})(t_i)\ne 0$. Since $L(\bla,\bm b)$ is an irreducible $\Yone$-module, $\varphi_{\bla,\bm b}$ and $\psi_{\bla,\bm b}$ are relatively prime. So are $q_1\varphi_{\bla,\bm b}-q_2\psi_{\bla,\bm b}$ and $\psi_{\bla,\bm b}$. Note that $\big(q_1\varphi_{\bla,\bm b}-q_2\psi_{\bla,\bm b}\big)(t_i)=0$ and $q_1\varphi_{\bla,\bm b}-q_2\psi_{\bla,\bm b}$ has no multiple roots, we have $\big(q_1\varphi_{\bla,\bm b}-q_2\psi_{\bla,\bm b}\big)'(t_i)\ne 0$ and $\psi_{\bla,\bm b}(t_i)\ne 0$.
Hence $\Wr(q_1\varphi_{\bla,\bm b}-q_2\psi_{\bla,\bm b},\psi_{\bla,\bm b})(t_i)\ne 0$.

The second statement follows from the fact that the discriminant of $q_1\varphi_{\bla,\bm b}-q_2\psi_{\bla,\bm b}$ is a non-zero polynomial in $\bm b=(b_1,\dots,b_k)$. It is not hard to prove this fact by considering the leading coefficient of $b_1$ and using induction on $k$.
\end{proof}

Now we study what happens if polynomial $q_1\varphi_{\bla,\bm b}-q_2\psi_{\bla,\bm b}$ has multiple roots. 

\begin{lem}\label{lem nonzero}
If $L(\bla,\bm b)$ is an irreducible $\Yone$-module, then all on-shell Bethe vectors are non-zero.
\end{lem}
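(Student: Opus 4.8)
The plan is to prove non-vanishing by exhibiting, for each on-shell $\bm t$, an explicit non-zero coordinate of $\widehat{\bB}_l(\bm t)$ in a weight basis, rather than relying on the norm. First I would reduce to the case $\la_2^{(s)}=0$ for all $s$. Twisting by the one-dimensional module as in Section \ref{sec BA} replaces $L(\bla,\bm b)$ by $L(\tl\bla,\tl{\bm b})$ with $\tl\la^{(s)}=(\la_1^{(s)}+\la_2^{(s)},0)$; this leaves $\varphi_{\bla,\bm b}$ and $\psi_{\bla,\bm b}$ unchanged, hence preserves both irreducibility by Lemma \ref{lem cyclic weyl} and the Bethe ansatz equation \eqref{eq gl11 BAE}. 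The twist multiplies $\widehat{\bB}_l(\bm t)$ by $\prod_i\xi(t_i)$, and since a root $t_i$ of $q_1\varphi_{\bla,\bm b}-q_2\psi_{\bla,\bm b}$ can be neither a zero of $\varphi_{\bla,\bm b}$ nor of $\psi_{\bla,\bm b}$ (coprimality), each $\xi(t_i)$ is finite and non-zero. Thus non-vanishing is unaffected, and I may assume $\la^{(s)}=(n_s,0)$ with $n_s\in\Z_{>0}$, so that $\varphi_{\bla,\bm b}=\prod_s(x-b_s+n_s)$, $\psi_{\bla,\bm b}=\prod_s(x-b_s)$, and coprimality reads $b_s-n_s\ne b_{s'}$ for all $s,s'$.

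Next I would delineate precisely where the norm formula of Theorem \ref{thm norm BV} is and is not available. If $\bm t$ \emph{saturates} the multiplicities, i.e. every repeated coordinate $a$ occurs in $\bm t$ with exactly its multiplicity in $q_1\varphi_{\bla,\bm b}-q_2\psi_{\bla,\bm b}$, then a confluent evaluation of the right-hand side of Theorem \ref{thm norm BV} stays finite and non-zero, reproducing the argument of Theorem \ref{thm complete}. The difficulty is the non-saturated case: when some $a$ occurs with multiplicity $m_a$ strictly less than its multiplicity in $q_1\varphi_{\bla,\bm b}-q_2\psi_{\bla,\bm b}$, both $\Wr(\varphi_{\bla,\bm b},\psi_{\bla,\bm b})(t_i)$ and $y_{\bm t}'(t_i)$ vanish at $t_i=a$, the confluent limit of the norm is $0$, and $\widehat{\bB}_l(\bm t)$ becomes \emph{isotropic}. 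Since $L(\bla,\bm b)$ is irreducible, $B_{\bla,\bm b}$ is non-degenerate, so isotropy carries no information about vanishing; the norm method is genuinely unusable here, forcing the coordinate approach.

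For the coordinate approach I would expand $\widehat{\bB}_l(\bm t)$ in the weight basis $\{v_S\}_{|S|=l}$, where $v_S$ carries $v_2^{(s)}$ at the sites $s\in S$ and $v_1^{(s)}$ elsewhere. Writing the evaluation action of $T_{12}(x)$ through the iterated coproduct and using $(e_{21})^2=0$ on each factor, every $T_{12}(t)$ creates exactly one fermion, and the free-fermionic nature of $\gl_{1|1}$ reduces the coordinate $C_S(\bm t)$ to a determinant of single-particle amplitudes (after absorbing the $\prod_{i<j}(t_j-t_i+1)^{-1}$ normalization of \eqref{eq bv gl11}). The case $l=1$ already shows the mechanism: the renormalized single-particle coordinates are $\widehat\beta_s(t)=-\prod_{s'<s}(t-b_{s'})\prod_{s''>s}(t-b_{s''}+n_{s''})$, and $\widehat\beta_1$ and $\widehat\beta_k$ have no common zero precisely because $b_s-n_s\ne b_{s'}$; hence $\widehat{\bB}_1(\bm t)\ne 0$ for every $t$, with coprimality entering exactly as expected.

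The hard part will be the confluent (non-saturated) case. Here I would read $C_S(\bm t)$ at a coordinate $a$ of multiplicity $m_a$ as a Wronskian-type determinant whose rows are $\widehat\beta_s(a),\widehat\beta_s'(a),\dots,\widehat\beta_s^{(m_a-1)}(a)$, and the main obstacle is to show that this determinant is non-zero for at least one $S$, i.e. to rule out accidental cancellations among the derivative amplitudes. The structural input is again coprimality of $\varphi_{\bla,\bm b}$ and $\psi_{\bla,\bm b}$, which should force the amplitudes and their derivatives to stay linearly independent, so that the relevant confluent determinant is a non-zero rational function taking a non-zero value at $a$. I expect the genuinely tedious step to be the careful bookkeeping of the fermionic signs and of the normalization factor needed to match the symmetric vector $\widehat{\bB}_l(\bm t)$ with the antisymmetric determinant, and to verify that coprimality suffices to exclude all cancellations.
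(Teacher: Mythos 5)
Your reduction to $\la_2^{(s)}=0$ and your diagnosis of exactly where Theorem \ref{thm norm BV} breaks down (the non-saturated case, where the confluent norm degenerates to $0$ and the vector becomes isotropic) are both correct and match the setup of the problem. But the proposal does not actually prove the lemma in that hard case: the entire content is deferred to the claim that the confluent Wronskian-type determinant of single-particle amplitudes is non-zero for at least one weight coordinate $S$, and this is only asserted (``I expect... coprimality suffices to exclude all cancellations''). That non-cancellation statement \emph{is} the lemma in the non-saturated case, so as written the argument is circular at its crucial step. Moreover, it is not evident that coprimality of $\varphi_{\bla,\bm b}$ and $\psi_{\bla,\bm b}$ alone controls the higher derivative amplitudes $\widehat\beta_s^{(j)}(a)$ entering the confluent determinant; pinning this down would require a genuine computation that the proposal does not attempt.

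The paper sidesteps the confluent analysis entirely with a short monotonicity argument that you missed. Take $\bm t^{\max}$ to be the solution represented by the full polynomial $q_1\varphi_{\bla,\bm b}-q_2\psi_{\bla,\bm b}$ itself (the largest monic divisor). This solution \emph{saturates} every multiplicity, so it falls into precisely the case where you already know the confluent evaluation of Theorem \ref{thm norm BV} is finite and non-zero; hence $\widehat{\bB}_{k}(\bm t^{\max})$ (or $\widehat{\bB}_{k-1}$ when $q_1=q_2$) is non-zero. Now any other on-shell Bethe vector $\widehat{\bB}_l(\bm t)$ corresponds to a divisor $y_{\bm t}$ of $y_{\bm t^{\max}}$, and $\widehat{\bB}_{k}(\bm t^{\max})$ is obtained from $\widehat{\bB}_l(\bm t)$ by applying further operators $T_{12}(x_0)$ (adjoining the missing roots), up to scalar. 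If $\widehat{\bB}_l(\bm t)$ were zero, the top vector would be zero too, a contradiction. You should replace the determinant programme with this argument: it uses only the saturated case of the norm formula, which you have already established, plus the creation-operator structure of the Bethe vectors \eqref{eq bv gl11}.
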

\begin{proof}
Let $\bm t$ be the solution of the Bethe ansatz equation represented by the polynomial $q_1\varphi_{\bla,\bm b}(x)-q_2\psi_{\bla,\bm b}(x)$ itself  (the largest monic divisor). By Theorem \ref{thm norm BV} and comparing the order of zeros and poles, one shows that the norm of the on-shell Bethe vector $\widehat{\bB}_{k}(\bm t)$ ($\widehat{\bB}_{k-1}(\bm t)$ if $q_1=q_2$) is non-zero, c.f. the proof of Theorem \ref{thm complete}. Since $\widehat{\bB}_{k}(\bm t)$ is obtained from all other on-shell Bethe vectors by applying a sequence of $T_{12}(x_0)$ for proper $x_0$'s with some scalar, all on-shell Bethe vectors are also non-zero.
\end{proof}

Since different on-shell Bethe vectors correspond to different eigenvalues of $\mc T_Q(x)$, see Theorem \ref{thm gl11 eigenvalue in y}, they are linearly independent. To show the completeness of Bethe ansatz, it suffices to show that all eigenvalues of $\cT_Q(x)$ have the form \eqref{eq gl11 eigenvalue in y} with a monic divisor $y_{\bm t}$ of $q_1\varphi_{\bla,\bm b}(x)-q_2\psi_{\bla,\bm b}(x)$ and that the corresponding eigenspaces have dimension one. 

Our first main theorem asserts that the Bethe ansatz is complete for irreducible tensor products of polynomial evaluation modules. 
\begin{thm}\label{thm completeness general}
Let $\bla$ be a sequence of polynomial $\gl_{1|1}$-weights. If $L(\bla,\bm b)$ is an irreducible $\Yone$-module, then the Bethe ansatz is complete.
\end{thm}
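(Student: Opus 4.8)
The plan is to deduce completeness from the structural description of the image of the Bethe algebra, reducing the whole problem to a soft statement about Frobenius algebras. Part~(1) of completeness, the non-vanishing of the on-shell Bethe vectors, is precisely Lemma~\ref{lem nonzero}, so only part~(2) requires work. First I would reduce to the case $\la^{(s)}_2=0$ for all $s$, as in Section~\ref{sec BA}: tensoring with a suitable one-dimensional module multiplies $\mc T_Q(x)$ by a scalar series and leaves both the irreducibility of $L(\bla,\bm b)$ and the polynomials $\varphi_{\bla,\bm b}$, $\psi_{\bla,\bm b}$ (hence the whole set of monic divisors indexing the Bethe vectors) unchanged. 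I record two facts for later use: by Theorem~\ref{thm gl11 eigenvalue in y} distinct monic divisors of $q_1\varphi_{\bla,\bm b}-q_2\psi_{\bla,\bm b}$ yield on-shell Bethe vectors with pairwise distinct eigenvalues \eqref{eq gl11 eigenvalue in y}; and by the transfer-matrix relations of Theorem~\ref{thm transfer relations} the image of the Bethe algebra on any weight space is generated by the coefficients of $\mc T_Q(x)$, so that being a simultaneous eigenvector of the Bethe algebra is the same as being an eigenvector of $\mc T_Q(x)$.

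The key input is the structural Theorem~\ref{thm tensor irr} (and, in the twisted case, Theorem~\ref{thm tensor irr q}). Since $L(\bla,\bm b)$ is irreducible it is cyclic, so these theorems apply: the image $\mc B$ of the Bethe algebra in the endomorphisms of the relevant weight space $W$ --- namely $W=(L(\bla,\bm b))^{\sing}_{(n-l,l)}$ when $q_1=q_2$, and the full weight space $(L(\bla,\bm b))_{(n-l,l)}$ when $q_1\ne q_2$ --- is a commutative Frobenius algebra isomorphic to an algebra of the form~\eqref{eq bethe alg}, and $W$ is isomorphic to the regular representation of $\mc B$. In particular the characters of $\mc B$ are in bijection with the monic divisors of degree $l$ of $q_1\varphi_{\bla,\bm b}-q_2\psi_{\bla,\bm b}$; let $N_l$ be their number.

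Granting this, part~(2) becomes purely algebraic. Decompose the commutative Artinian algebra $\mc B$ into its local factors; since $\mc B$ is Frobenius, each local factor has a one-dimensional socle. In the regular representation a genuine simultaneous eigenvector attached to a character $\chi$ is supported on the single local factor cut out by $\chi$ and is annihilated by the maximal ideal of that factor, hence lies in its socle. Thus the genuine eigenvectors of $\mc B$ --- equivalently, by the previous paragraph, of $\mc T_Q(x)$ --- on $W$ form exactly $N_l$ one-dimensional eigenspaces, carrying $N_l$ distinct eigenvalues (distinct because $\mc T_Q(x)$ generates $\mc B$). On the other hand, for each of the $N_l$ monic divisors $y$ of degree $l$, with corresponding solution $\bm t$, the vector $\widehat{\bB}_l(\bm t)\in W$ is non-zero by Lemma~\ref{lem nonzero} and is an eigenvector of $\mc T_Q(x)$ with eigenvalue \eqref{eq gl11 eigenvalue in y}, and these $N_l$ eigenvalues are pairwise distinct. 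A count then forces the on-shell Bethe vectors to exhaust the genuine eigenvectors: every eigenvector of $\mc T_Q(x)$ in $W$ is proportional to some $\widehat{\bB}_l(\bm t)$ and every eigenspace is one-dimensional. Running over all $l$ (and, for $q_1=q_2$, over the singular weight spaces, which is all that part~(2) demands) yields the completeness of the Bethe ansatz.

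The genuine difficulty is not this deduction but the structural Theorem~\ref{thm tensor irr}/\ref{thm tensor irr q} on which it rests, in particular the Frobenius property of $\mc B$ and the identification of $W$ with the regular representation. I would establish it by the method of \cite{MTV09}: realize $W$ as a quotient of the universal module $\mc V^{\fkS}$ of Section~\ref{sec space VS}, in which the evaluation points are formal variables and the image of the Bethe algebra is manifestly a free polynomial algebra with its weight space equal to the corresponding regular representation; exhibit an explicit ideal annihilating $L(\bla,\bm b)$ to obtain a surjection onto $\mc B$; and finally match dimensions to upgrade this surjection to an isomorphism, which simultaneously pins down $\mc B$ as the algebra~\eqref{eq bethe alg} and transports the regular-representation structure to $W$. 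The delicate points there are the flatness underlying the dimension count and the verification that the annihilating ideal is exactly the predicted one; once these are in place the present theorem follows formally.
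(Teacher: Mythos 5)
Your proposal is correct and follows essentially the same route as the paper: its proof of Theorem \ref{thm completeness general} simply cites parts (iv) and (v) of Theorem \ref{thm tensor irr} (resp.\ Theorem \ref{thm tensor irr q}) --- precisely the one-dimensionality of eigenspaces and the bijection with monic divisors that you re-derive from the Frobenius/regular-representation structure --- together with Lemma \ref{lem nonzero}, and the counting argument is the same. The only cosmetic slip is your appeal to Theorem \ref{thm transfer relations} to identify $\mc B$-eigenvectors with $\mc T_Q(x)$-eigenvectors; the correct reason is that the remaining generators of $\mc B^Q$, the coefficients of $\mathrm{Ber}^Q(x)$, are central and act by scalars on the cyclic module $L(\bla,\bm b)$.
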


We will prove Theorem \ref{thm completeness general} in Section \ref{sec third}.

\section{Space $\mc{V}^\fkS$}\label{sec space VS}
In this section, we discuss the super-analog of $\mc{V}^+$ in \cite[Section 2]{GRTV}, c.f. \cite[Section 2]{MTV14}. Fix $n\in\Z_{>0}$. 

The symmetric group $\fkS_n$ acts naturally on $\C[z_1,\dots,z_n]$ by permuting variables. We call it the {\it standard} action of $\fkS_n$ on $\C[z_1,\dots,z_n]$. Denote by $\sigma_i(\bm z)$ the $i$-th elementary symmetric polynomial in $z_1,\dots,z_n$. The algebra of symmetric polynomials $\C[z_1,\dots,z_n]^\fkS$ is freely generated by $\sigma_1(\bm z),\dots,\sigma_n(\bm z)$. 

Fix $\ell\in \{0,1,\dots,n\}$. We have a subgroup $\fkS_{\ell}\times \fkS_{n-\ell}\subset \fkS_n$.
Then $\fkS_{\ell}$ permutes the first $\ell$ variables while $\fkS_{n-\ell}$ permutes the last $n-\ell$ variables. Denote by $\C[z_1,\dots,z_n]^{\fkS_{\ell}\times \fkS_{n-\ell}}$ the subalgebra of $\C[z_1,\dots,z_n]$ consisting of $\fkS_{\ell}\times \fkS_{n-\ell}$-invariant polynomials. It is known that $\C[z_1,\dots,z_n]^{\fkS_{\ell}\times \fkS_{n-\ell}}$ is a free $\C[z_1,\dots,z_n]^\fkS$-module of rank $n\choose{\ell}$.

\subsection{Definition of $\mc{V}^\fkS$}
Let $V=(\C^{1|1})^{\otimes n}$ be the tensor power of the vector representation of $\gl_{1|1}$. The $\gl_{1|1}$-module $V$ has weight decomposition
\[
V=\bigoplus_{\ell=0}^{n}(V)_{(n-\ell,\ell)}.
\]
The space $V$ has a basis $v_{i_1}\otimes \dots\otimes v_{i_n}$, where $i_j\in \{1,2\}$. Define $I_1=\{j\,|\,i_j=1\}$ and $I_2=\{j\,|\,i_j=2\}$. Then $I=(I_1,I_2)$ gives a two-partition of the set $\{1,2,\dots,n\}$. We simply write $v_I$ for the vector $v_{i_1}\otimes \dots\otimes v_{i_n}$. Denote by $\mc{I}_{\ell}$ the set of all two-partitions $I$ of $\{1,2,\dots,n\}$ such that $|I_{2}|=\ell$. Then the set of vectors $\{v_{I}~|~I\in \mc{I}_\ell\}$ forms a basis of $(V)_{(n-\ell,\ell)}$.

Let $\mc V$ be the space of polynomials in variables $\bm z=(z_1,z_2,\dots,z_n)$ with coefficients in $V$,
\[
\mc V=V\otimes \C[z_1,z_2,\dots,z_n].
\]The space $V$ is identified with the subspace $V\otimes 1$ of constant polynomials in $\mc V$. The space $\mc V$ has a natural grading induced from the grading on $\C[z_1,\dots,z_n]$ with $\deg(z_i)=1$. Namely, the degree of an element $v\otimes p$ in $\mc V$ is given by the degree of the polynomial $p$, $\deg(v\otimes p)=\deg\,p$. Denote by $\mathscr F_s\mc V$ the subspace spanned by all elements of degree $\lle s$. One has the increasing filtration $\mathscr F_0\mc V\subset\mathscr F_1\mc V\subset \dots\subset \mc V$. Clearly, the space $\End(\mc V)$ has a filtration structure induced from that on $\mc V$.

Let $P^{(i,j)}$ be the graded flip operator which acts on the $i$-th and $j$-th factors of $V$. Let $s_1,s_2,\dots,s_{n-1}$ be the simple permutations of the symmetric group $\fkS_n$. Define the \emph{modified} $\fkS_n$-action on $\mc V$ by the rule:
\begin{align}
\hat{s}_i:\bm f(z_1,\dots,z_n)\mapsto P^{(i,i+1)}&\bm f(z_1,\dots,z_{i+1},z_i,\dots,z_n)\nonumber\\+&\frac{\bm f(z_1,\dots,z_n)-\bm f(z_1,\dots,z_{i+1},z_i,\dots,z_n)}{z_i-z_{i+1}},\label{eq sn action}
\end{align}
for $\bm f(z_1,\dots,z_n)\in \mc V$. Note that the modified $\fkS_n$-action respects the filtration on $\mc V$. Denote the subspace of all vectors in  $\mc V$ invariant with respect to the modified $\fkS_n$-action by $\mc V^\fkS$.

Clearly, the $\gl_{1|1}$-action on $\mc V$ commutes with the modified $\fkS_n$-action on $\mc V$ and preserves the grading. Therefore, $\mc V^\fkS$ is a filtered $\gl_{1|1}$-module. Hence we have the weight decomposition for both $\mc V^\fkS$ and $(\mc V^\fkS)^\sing$,
\[
\mc V^\fkS=\bigoplus_{\ell=0}^{n}(\mc V^\fkS)_{(n-\ell,\ell)},\qquad (\mc V^\fkS)^\sing=\bigoplus_{\ell=0}^{n}(\mc V^\fkS)^\sing_{(n-\ell,\ell)}.
\]
Note that $(\mc V^\fkS)_{(n-\ell,\ell)}$ and $(\mc V^\fkS)^\sing_{(n-\ell,\ell)}$ are also filtered $\C[z_1,\dots,z_n]^\fkS$-modules.

\subsection{Properties of $\mc{V}^\fkS$ and $(\mc V^\fkS)^\sing$}
In this section, we describe properties of $\mc{V}^\fkS$ and $(\mc V^\fkS)^\sing$.
\begin{lem}\label{lem free}
The space $(\mc V^\fkS)_{(n-\ell,\ell)}$ is a free $\C[z_1,\dots,z_n]^\fkS$-module of rank $n\choose{\ell}$. In particular, the space $\mc V^\fkS$ is a free $\C[z_1,\dots,z_n]^\fkS$-module of rank $2^n$.
\end{lem}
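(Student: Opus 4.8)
The plan is to prove the freeness by exhibiting an explicit $\C[\bm z]^{\fkS}$-basis of $(\mc V^{\fkS})_{(n-\ell,\ell)}$, indexed by the two-partitions $I\in\mc I_\ell$, whose number is exactly $\binom{n}{\ell}$. The modified $\fkS_n$-action in \eqref{eq sn action} is a deformation of the naive permutation action that permutes variables while twisting by the graded flip $P^{(i,j)}$ on the tensor factors; it preserves both the grading and the $\gl_{1|1}$-weight decomposition, so it suffices to work in one weight space. First I would recall that the modified action makes $\mc V=V\otimes\C[\bm z]$ into a module for the (degenerate/graded) affine Hecke-type algebra, and that the base ring of invariants $\C[\bm z]^{\fkS}$ acts on $\mc V^{\fkS}$ by multiplication by symmetric polynomials, which commute with all $\hat s_i$.

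The key step is to produce, for each $I\in\mc I_\ell$, a symmetrized element $w_I\in(\mc V^{\fkS})_{(n-\ell,\ell)}$ by averaging $v_I$ over the modified action of a suitable coset, and to show these $w_I$ form a free basis over $\C[\bm z]^{\fkS}$. Concretely, since the stabilizer of $v_I$ under the naive action is $\fkS_{I_1}\times\fkS_{I_2}$, I would set $w_I=\sum_{\sigma}\hat\sigma\,(v_I\otimes p_I)$ where $\sigma$ runs over coset representatives of $\fkS_n/(\fkS_{I_1}\times\fkS_{I_2})$ and $p_I$ is an appropriate monomial making the element $\fkS_n$-invariant and of controlled degree. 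The freeness is then most cleanly seen at the level of the associated graded: because the action respects the filtration $\mathscr F_s\mc V$, passing to $\mathrm{gr}$ replaces the difference quotient term in \eqref{eq sn action} by zero, so the modified action degenerates to the naive action $\sigma\cdot(v\otimes p)=(P^\sigma v)\otimes(\sigma p)$. For this naive action the invariants in each weight space are visibly the free $\C[\bm z]^{\fkS}$-module on the orbit sums $\sum_{\sigma\in\fkS_n/\mathrm{Stab}(I)}P^\sigma v_I\otimes(\sigma\cdot 1)$, one per orbit $I\in\mc I_\ell$, giving rank $\binom{n}{\ell}$.

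I would then invoke the standard filtered-to-graded transfer: a set of elements whose symbols form a free basis of the associated graded module (over the graded ring $\mathrm{gr}\,\C[\bm z]^{\fkS}\cong\C[\bm z]^{\fkS}$, which is already graded) lifts to a free basis of the filtered module itself, since $\C[\bm z]^{\fkS}$ is a polynomial (hence Noetherian, connected graded) ring and the ranks agree. Summing over $\ell$ and using that $\sum_{\ell=0}^n\binom{n}{\ell}=2^n=\dim V$ gives the rank $2^n$ statement for $\mc V^{\fkS}$ itself.

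The main obstacle I expect is the filtered-to-graded comparison: one must verify that the associated graded of the modified action is genuinely the naive action (i.e.\ that the difference-quotient term in \eqref{eq sn action} strictly lowers degree, so it vanishes in $\mathrm{gr}$), and that taking $\fkS_n$-invariants commutes with passing to $\mathrm{gr}$ in this weight space. The latter is where care is needed: invariants of a filtered module need not in general surject onto invariants of the graded module, so I would either argue directly that the lifted orbit-sums $w_I$ are manifestly invariant and $\C[\bm z]^{\fkS}$-independent (independence being inherited from their leading symbols), or appeal to the complete reducibility/averaging over the finite group $\fkS_n$ to split the short exact sequences of filtered pieces. Once invariance and leading-symbol independence are in hand, freeness of rank $\binom{n}{\ell}$ follows formally.
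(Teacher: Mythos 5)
Your overall strategy --- pass to the associated graded module, where the modified action \eqref{eq sn action} degenerates to the standard (naive) action, compute the invariants there, and lift a homogeneous free basis back through the filtration using complete reducibility of $\fkS_n$ --- is viable and is in fact close in spirit to how the paper relates $\mc V^{\fkS}$ to the graded space $\mc V^{S}$ (the paper, however, proves the filtered statement directly by exhibiting an explicit isomorphism $\vartheta_\ell:\C[z_1,\dots,z_n]^{\fkS_\ell\times\fkS_{n-\ell}}\to(\mc V^{\fkS})_{(n-\ell,\ell)}$, $f\mapsto\sum_{I}\hat\sigma_I(\check f)\,v_I$ with $\check f=f\cdot\prod_{1\lle r<s\lle\ell}(z_s-z_r-1)$, and reduces to the known freeness of $\C[z_1,\dots,z_n]^{\fkS_\ell\times\fkS_{n-\ell}}$ over $\C[z_1,\dots,z_n]^{\fkS}$). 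The parts of your argument you flag as delicate --- that the difference-quotient term strictly lowers degree, and that taking invariants commutes with $\mathrm{gr}$ by Maschke --- are indeed fine.

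The genuine gap is in the step you call ``visible'': the claim that the invariants of the naive action in weight $(n-\ell,\ell)$ are freely generated over $\C[z_1,\dots,z_n]^{\fkS}$ by the orbit sums $\sum_{\sigma}P^{\sigma}v_I\otimes(\sigma\cdot 1)$, one per $I\in\mc I_\ell$. This fails for $\ell\gge 2$. First, $\fkS_n$ acts transitively on $\mc I_\ell$, so there is only one orbit, not $\binom{n}{\ell}$ of them. More seriously, the stabilizer of $I$ contains transpositions of two indices in $I_2$, and the graded flip acts on $v_2\otimes v_2$ by $-1$; hence the sum over the stabilizer of $v_I\otimes 1$ vanishes, the coset sum is not well defined (it depends on the choice of representatives), and in any case there are \emph{no} nonzero degree-zero invariants in this weight space: by Proposition \ref{prop ch} the graded character starts in degree $\ell(\ell-1)/2$. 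The correct identification, via Frobenius reciprocity for $\mathrm{Ind}_{\fkS_\ell\times\fkS_{n-\ell}}^{\fkS_n}(\mathrm{sgn}\boxtimes\mathrm{triv})$, is that the naive invariants are isomorphic to $\prod_{1\lle r<s\lle\ell}(z_s-z_r)\cdot\C[z_1,\dots,z_n]^{\fkS_\ell\times\fkS_{n-\ell}}$; this is free of rank $\binom{n}{\ell}$ over $\C[z_1,\dots,z_n]^{\fkS}$, but its free generators are indexed by a basis of $\C[z_1,\dots,z_n]^{\fkS_\ell\times\fkS_{n-\ell}}$ over the symmetric polynomials (equivalently by cosets of $\fkS_\ell\times\fkS_{n-\ell}$), all attached to the single vector $v_{I^{\max}}$ via suitable polynomials $p_I\neq 1$, not by orbit sums of the $v_I\otimes 1$. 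Without supplying this computation (and the resulting homogeneous generators to lift), your filtered-to-graded transfer has nothing to act on, so the proof as written does not go through; with it, your route closes and recovers the same rank $\binom{n}{\ell}$, with the shifted factor $z_s-z_r-1$ of the paper appearing as the natural lift of the Vandermonde $z_s-z_r$ to the modified action.
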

\begin{proof}
The lemma is proved in Section \ref{sec proof vs}.
\end{proof}

\begin{lem}\label{lem free sing}
The space $(\mc V^\fkS)^\sing_{(n-\ell,\ell)}$ is a free $\C[z_1,\dots,z_n]^\fkS$-module of rank ${n-1}\choose{\ell}$. In particular, the space $(\mc V^\fkS)^\sing$ is a free $\C[z_1,\dots,z_n]^\fkS$-module of rank $2^{n-1}$.
\end{lem}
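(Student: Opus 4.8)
The plan is to exhibit the singular subspace as the kernel of a single $R$-linear operator, where $R=\C[z_1,\dots,z_n]^\fkS$, and then split that kernel off as a direct summand of the free module provided by Lemma \ref{lem free}. First I would record the algebraic input. The diagonal $\gl_{1|1}$-action on $\mc V$ touches only the $V$-factor and, as already noted, commutes with the modified $\fkS_n$-action of \eqref{eq sn action}; hence $e_{12}$ and $e_{21}$ descend to $R$-linear endomorphisms of $\mc V^\fkS$, and $e_{12}$ restricts to an $R$-linear map $(\mc V^\fkS)_{(n-\ell,\ell)}\to(\mc V^\fkS)_{(n-\ell+1,\ell-1)}$ whose kernel is precisely $(\mc V^\fkS)^\sing_{(n-\ell,\ell)}$. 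From the $\gl_{1|1}$ relations one has $e_{12}^2=e_{21}^2=0$ and $e_{12}e_{21}+e_{21}e_{12}=e_{11}+e_{22}$; since $V=(\C^{1|1})^{\otimes n}$, the central element $e_{11}+e_{22}$ acts as $n\,\id$.

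Because $n>0$, these relations make $e_{21}/n$ a contracting homotopy for $e_{12}$. Concretely, $p:=e_{12}e_{21}/n$ is an $R$-linear idempotent on $(\mc V^\fkS)_{(n-\ell,\ell)}$ satisfying $e_{12}p=0$ and $p+q=\id$ with $q:=e_{21}e_{12}/n$. A direct check gives $\operatorname{im}p=\ker e_{12}=(\mc V^\fkS)^\sing_{(n-\ell,\ell)}$: if $e_{12}v=0$ then $qv=0$, so $v=pv\in\operatorname{im}p$, while $v=pv$ forces $e_{12}v=e_{12}^2e_{21}v/n=0$. Moreover $e_{12}$ is injective on $\operatorname{im}q$, with image the singular subspace $(\mc V^\fkS)^\sing_{(n-\ell+1,\ell-1)}$ (using $\{e_{12},e_{21}/n\}=\id$ for exactness). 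This yields the $R$-module splitting
\[
(\mc V^\fkS)_{(n-\ell,\ell)}\;\cong\;(\mc V^\fkS)^\sing_{(n-\ell,\ell)}\;\oplus\;(\mc V^\fkS)^\sing_{(n-\ell+1,\ell-1)}.
\]

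By Lemma \ref{lem free} the left-hand side is a free $R$-module, so each summand is finitely generated projective over $R$. Since $R\cong\C[\sigma_1(\bm z),\dots,\sigma_n(\bm z)]$ is a polynomial ring, Quillen--Suslin shows every projective $R$-module is free; hence $(\mc V^\fkS)^\sing_{(n-\ell,\ell)}$ is free. (Alternatively one passes to the associated graded for the degree filtration and uses the elementary freeness of graded projectives over a connected graded polynomial ring.) For the rank, comparing ranks in the displayed splitting gives the recursion $r_\ell+r_{\ell-1}=\binom{n}{\ell}$, where $r_\ell=\operatorname{rank}(\mc V^\fkS)^\sing_{(n-\ell,\ell)}$; the base case is $r_0=1$ since $(\mc V^\fkS)_{(n,0)}$ is entirely singular, and negative-weight spaces vanish. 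By Pascal's identity $r_\ell=\binom{n-1}{\ell}$ is the unique solution. Summing over $\ell$ gives total rank $\sum_{\ell}\binom{n-1}{\ell}=2^{n-1}$, which is the ``in particular'' assertion.

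The only genuinely external ingredient is the passage from projective to free over $R$; everything else is the formal idempotent-splitting argument together with the already-established freeness of the full weight spaces. I therefore expect the main point to be organizing the homotopy identity $\{e_{12},e_{21}/n\}=\id$ cleanly and justifying freeness (rather than mere projectivity), after which the rank bookkeeping via Pascal's identity is immediate.
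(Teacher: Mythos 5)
Your argument is correct, but it takes a genuinely different route from the paper's. You split $(\mc V^\fkS)_{(n-\ell,\ell)}$ by the $R$-linear idempotent $p=e_{12}e_{21}/n$ (legitimate since $e_{11}+e_{22}$ acts by $n>0$ and the $\gl_{1|1}$-action commutes with both the modified $\fkS_n$-action and multiplication by $R=\C[z_1,\dots,z_n]^\fkS$), identify $\operatorname{im}p=\ker e_{12}$ and $\operatorname{im}q\cong(\mc V^\fkS)^\sing_{(n-\ell+1,\ell-1)}$, and then get freeness from projectivity (Quillen--Suslin, or the graded variant) and the rank from the Pascal recursion $r_\ell+r_{\ell-1}=\binom{n}{\ell}$ with $r_0=1$. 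All steps check out: $p$ is idempotent because $e_{21}e_{12}e_{21}=ne_{21}$, and $e_{12}$ surjects onto the adjacent singular space because any singular $w$ equals $e_{12}(e_{21}w/n)$. The paper instead passes to the associated graded module $\mc V^{S}$ under the current algebra $\gl_{1|1}[t]$, uses the explicit free basis $e_{21}[r_1]\cdots e_{21}[r_\ell]v^+$ of the full weight space, and exhibits an explicit free generating set $e_{12}[0]e_{21}[0]e_{21}[r_1]\cdots e_{21}[r_\ell]v^+$ with $1\lle r_1<\dots<r_\ell\lle n-1$ of the singular subspace; the identity $e_{12}[0]e_{21}[0]w=nw$ on singular vectors is exactly your idempotent in disguise. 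What the paper's route buys is the explicit basis itself, which is reused downstream (the graded character computation in Proposition \ref{prop ch} and the distinguished cyclic vector $\mathfrak u_l$ in Theorem \ref{thm VS}); what your route buys is a shorter, basis-free proof of the lemma as stated, at the cost of invoking projective-implies-free and of not producing generators. If you adopt your version, note that the filtration-compatibility of $p$ (since $e_{12},e_{21}$ have degree zero) lets you replace Quillen--Suslin by the elementary graded argument, which fits the paper's framework better.
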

\begin{proof}
The statement is proved in Section \ref{sec proof vs}.
\end{proof}

For a $\Z_{\gge 0}$-filtered space $M$ with finite-dimensional graded components $\mathscr F_{r}M/\mathscr F_{r-1}M$. Let $\mathrm{gr}(M)$ be the $\Z_{\gge 0}$-grading on $M$ induced from this filtration. Then the graded character of $\mathrm{gr}(M)$, see \eqref{eq grade ch}, is given by
\[
\ch(\mathrm{gr}(M))=\sum_{r=0}^{\infty} (\dim(\mathscr F_{r}M/\mathscr F_{r-1}M))q^r.
\]
Set $(q)_r=\prod_{i=1}^r(1-q^i)$.

\begin{prop}\label{prop ch}
We have
\[
\ch\big(\mathrm{gr}((\mc V^\fkS)_{(n-\ell,\ell)})\big)=\frac{q^{\ell(\ell-1)/2}}{(q)_{\ell}(q)_{n-\ell}},\qquad \ch\big(\mathrm{gr}((\mc V^\fkS)^\sing_{(n-\ell,\ell)})\big)=\frac{q^{\ell(\ell+1)/2}}{(q)_\ell(q)_{n-1-\ell}(1-q^n)}.
\]
\end{prop}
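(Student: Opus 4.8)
The plan is to compute the graded characters by passing to the associated graded module $\mathrm{gr}((\mc V^\fkS)_{(n-\ell,\ell)})$, which by Lemma \ref{lem free} is a free $\C[z_1,\dots,z_n]^\fkS$-module of rank $n\choose{\ell}$, and then pin down the degrees of a homogeneous basis. Recall that $\C[z_1,\dots,z_n]^\fkS$ is a graded polynomial algebra freely generated by $\sigma_1(\bm z),\dots,\sigma_n(\bm z)$ with $\deg\sigma_i=i$, so its graded character is $\prod_{i=1}^n(1-q^i)^{-1}=1/(q)_n$. If I can exhibit a homogeneous basis of $(\mc V^\fkS)_{(n-\ell,\ell)}$ over $\C[z_1,\dots,z_n]^\fkS$ with prescribed degrees $d_1,\dots,d_{\binom n\ell}$, then
\[
\ch\big(\mathrm{gr}((\mc V^\fkS)_{(n-\ell,\ell)})\big)=\frac{1}{(q)_n}\sum_{j} q^{d_j}.
\]
Comparing with the claimed answer, I would need to verify the identity $\sum_j q^{d_j}=q^{\ell(\ell-1)/2}(q)_n/\big((q)_\ell(q)_{n-\ell}\big)$, whose right-hand side is $q^{\ell(\ell-1)/2}$ times a Gaussian binomial coefficient $\binom{n}{\ell}_q$. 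This strongly suggests the basis should be indexed by the two-partitions $I\in\mc I_\ell$, i.e.\ by length-$n$ binary words with $\ell$ ones, with the degree of the basis vector attached to $I$ given by the number of inversions plus a fixed shift $\ell(\ell-1)/2$; this matches the classical $q$-counting $\binom n\ell_q=q^{-\ell(\ell-1)/2}\sum_{I\in\mc I_\ell}q^{\mathrm{inv}(I)}\cdot q^{\ell(\ell-1)/2}$ formula for Gaussian binomials.

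Concretely, I would construct the basis by a Gram--Schmidt-type procedure adapted to the modified $\fkS_n$-action \eqref{eq sn action}. For each $I\in\mc I_\ell$, the plain vector $v_I\in V\subset\mc V$ is not $\fkS_n$-invariant, but one can symmetrize it: define $b_I$ to be the result of applying a suitable product of the operators $\hat s_i$ (following the reduced-word/Schubert-polynomial recipe of \cite{GRTV}) to $v_I$ multiplied by an appropriate monomial $\prod_{i}(z_i-\cdots)$ to ensure invariance. Because the modified action respects the filtration and the leading term of $\hat s_i$ on the polynomial part contributes degree one via the divided-difference denominator $z_i-z_{i+1}$, each such $b_I$ is homogeneous of degree equal to the number of inversions of $I$ (relative to the sorted word) plus the base shift. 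The key structural input is that these $b_I$, for $I\in\mc I_\ell$, project to a $\C[z_1,\dots,z_n]^\fkS$-basis of the free module $(\mc V^\fkS)_{(n-\ell,\ell)}$; freeness and the rank are already guaranteed by Lemma \ref{lem free}, so I only need linear independence of leading terms, which follows from triangularity of the construction with respect to the inversion order on $\mc I_\ell$.

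For the singular part, I would run the analogous argument using Lemma \ref{lem free sing}, which gives a free module of rank ${n-1}\choose{\ell}$. The target generating function $q^{\ell(\ell+1)/2}/\big((q)_\ell(q)_{n-1-\ell}(1-q^n)\big)$ rewrites as $q^{\ell(\ell+1)/2}\binom{n-1}{\ell}_q/(q)_n$, so the homogeneous basis of singular vectors should be indexed by $\mc I_\ell$ for the reduced word set of size $n-1$, with the shift now $\ell(\ell+1)/2$ reflecting one extra divided difference needed to land in the kernel of $e_{12}$. I would produce these singular basis vectors by applying the creation pattern to the highest-weight vector and then again reading off degrees from inversions. \textbf{The main obstacle} I anticipate is precisely the bookkeeping that identifies the degree of each homogeneous basis vector with the correct inversion statistic and verifies the resulting polynomial equals the Gaussian binomial $\binom n\ell_q$ (respectively $\binom{n-1}\ell_q$) with the stated prefactor; this is the step where the modified action's denominators must be tracked carefully so that no spurious degree shifts appear, and where one invokes the standard $q$-binomial identity
\[
\binom n\ell_q=\frac{(q)_n}{(q)_\ell(q)_{n-\ell}}=\sum_{I\in\mc I_\ell}q^{\mathrm{inv}(I)}
\]
to match the two sides. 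Since freeness and ranks are already available, everything reduces to this degree computation, which I expect to be a moderately technical but ultimately routine induction on $n$ or on the length of reduced words.
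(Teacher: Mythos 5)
Your overall shape (free module of known rank over $\C[z_1,\dots,z_n]^\fkS$, plus a homogeneous generating set whose degree generating function is a shifted Gaussian binomial) is the right one, but two of your key steps are not workable as stated.

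First, your mechanism for the degree shift $q^{\ell(\ell-1)/2}$ is wrong: the divided-difference part of the modified action, $\bigl(f-s_if\bigr)/(z_i-z_{i+1})$, \emph{lowers} degree by one rather than contributing $+1$, so "tracking the denominators" cannot produce the prefactor. In the paper the shift comes from a different place: the isomorphism $\vartheta_\ell\colon\C[z_1,\dots,z_n]^{\fkS_{\ell}\times\fkS_{n-\ell}}\to(\mc V^\fkS)_{(n-\ell,\ell)}$ of Lemma \ref{lem free} sends $f$ to $\sum_I\hat\sigma_I(\check f)v_I$ with $\check f=f\cdot\prod_{1\lle r<s\lle\ell}(z_s-z_r-1)$, and it is this anti-symmetrizing factor of degree $\ell(\ell-1)/2$ (forced by the sign rule of Lemma \ref{lem sn inv case} when $i_j=i_{j+1}=2$) that shifts the character of $\C[z_1,\dots,z_n]^{\fkS_\ell\times\fkS_{n-\ell}}$, namely $1/\bigl((q)_\ell(q)_{n-\ell}\bigr)$, by $q^{\ell(\ell-1)/2}$. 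Your proposed vectors $b_I$ obtained by applying $\hat s_i$'s to $v_I$ are not invariant and the triangularity you invoke is not established; you would in effect have to rediscover $\vartheta_\ell$.

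Second, and more seriously, the singular case has no actual construction behind it: "one extra divided difference needed to land in the kernel of $e_{12}$" does not identify a basis or its degrees. The paper's route is different and is what makes both formulas immediate: it identifies $\mathrm{gr}\bigl((\mc V^\fkS)_{(n-\ell,\ell)}\bigr)$ and $\mathrm{gr}\bigl((\mc V^\fkS)^\sing_{(n-\ell,\ell)}\bigr)$ with $(\mc V^S)_{(n-\ell,\ell)}$ and $(\mc V^S)^\sing_{(n-\ell,\ell)}$, the invariants for the \emph{standard} $\fkS_n$-action viewed as a $\gl_{1|1}[t]$-module, and exhibits explicit free generating sets $e_{21}[r_1]\cdots e_{21}[r_\ell]v^+$ with $0\lle r_1<\dots<r_\ell\lle n-1$ (Lemma \ref{lem explicit basis}) and $e_{12}[0]e_{21}[0]e_{21}[r_1]\cdots e_{21}[r_\ell]v^+$ with $1\lle r_1<\dots<r_\ell\lle n-1$ (Lemma \ref{lem free sing graded}). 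The fermionic relation $e_{21}[r]e_{21}[s]=-e_{21}[s]e_{21}[r]$ forces the strict inequalities, and the degree sums then give $q^{\ell(\ell-1)/2}\binom{n}{\ell}_q$ and $q^{\ell(\ell+1)/2}\binom{n-1}{\ell}_q$ directly (the singular shift is $1+2+\dots+\ell$ because the indices now start at $1$). Without an analogue of these explicit generators, your argument does not close.
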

\begin{proof}
The statement is proved in Section \ref{sec proof vs}.
\end{proof}

Consider $\C^{1|1}\otimes V=\C^{1|1}\otimes(\C^{1|1})^{\otimes n}$ and label the factors by $0,1,2,\dots,n$. Define a polynomial $L(x)\in \End(\C^{1|1}\otimes V)\otimes \C[x,z_1,\dots,z_n]$ by
\[
L(x)=(x-z_n+P^{(0,n)})\cdots(x-z_1+P^{(0,1)}).
\]
Consider $L(x)$ as a $2\times 2$ matrix with entries $L_{ij}(x)\in \End(V)\otimes \C[x,z_1,\dots,z_n]$, $i,j=1,2$.

Define the assignment
\beq\label{eq gamma}
\Gamma:T_{ij}(x)\mapsto L_{ij}(x)\prod_{r=1}^n(x-z_r)^{-1}, \quad i,j=1,2.
\eeq
Here we consider $L_{ij}(x)\prod_{r=1}^n(x-z_r)^{-1}$ as a formal power series in $x^{-1}$ whose coefficients are in $\End(V)\otimes \C[z_1,\dots,z_n]$.

\begin{lem}	
The map $\Gamma$ defines a $\Yone$-action on $\mc V$. Moreover, the $\Yone$-action on $\mc V$ preserves the filtration, $\mathscr F_r\Yone\times \mathscr F_s \mc V\to \mathscr F_{r+s}\mc V$ for any $r,s\in\Z_{\gge 0}$.
\end{lem}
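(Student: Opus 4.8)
The plan is to verify that the assignment $\Gamma$ respects the defining RTT-relation \eqref{eq RTT} of $\Yone$, and then to check the filtration claim by a degree count. First I would observe that the operator $L(x)$ is built as an ordered product of the elementary Lax matrices $L^{(r)}(x)=x-z_r+P^{(0,r)}$, each acting on the auxiliary space (labeled $0$) and the $r$-th tensor factor of $V$. Each factor $L^{(r)}(x)$ is itself an R-matrix: indeed $x-z_r+P^{(0,r)}=(x-z_r)\,\mathcal R^{(0,r)}(x-z_r)$ in the notation of Section \ref{sec yangian}, so each satisfies the Yang--Baxter equation \eqref{eq yangbaxter}. The key step is therefore to show that the product $L(x)=L^{(n)}(x)\cdots L^{(1)}(x)$ satisfies
\[
\mathcal R^{(0,0')}(x_1-x_2)\,L^{(0)}(x_1)\,L^{(0')}(x_2)=L^{(0')}(x_2)\,L^{(0)}(x_1)\,\mathcal R^{(0,0')}(x_1-x_2),
\]
where $0,0'$ are two copies of the auxiliary space and $L^{(0)},L^{(0')}$ denote $L$ acting through the respective auxiliary space on the common quantum space $V$. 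This is the standard train-argument: one inserts the Yang--Baxter relation between $\mathcal R^{(0,0')}$ and each pair $L^{(r),(0)}$, $L^{(r),(0')}$ and pushes $\mathcal R^{(0,0')}$ through the product one factor at a time. Because the overall scalar $\prod_r(x-z_r)^{-1}$ is symmetric and commutes with everything, dividing by it converts this into exactly \eqref{eq RTT} for the images $\Gamma(T_{ij}(x))$, so $\Gamma$ extends to an algebra homomorphism $\Yone\to\End(\mc V)$. I must also confirm the normalization $T_{ij}^{(0)}=\delta_{ij}$, which follows since the leading term of $L(x)\prod_r(x-z_r)^{-1}$ in $x^{-1}$ is the identity.

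The main obstacle is bookkeeping the signs coming from the $\Z_2$-grading: the graded flip $P$ carries the sign factor $(-1)^{|j|}$, and the train-argument must be carried out in the braided (super) tensor category so that every interchange of factors produces the correct Koszul sign. I would handle this by working throughout with the graded R-matrix $\mathcal R(x)=1+P/x$ and invoking the super Yang--Baxter equation \eqref{eq yangbaxter} as a black box, rather than manipulating matrix entries by hand; this keeps the signs automatically consistent and reduces the verification to the purely formal train-argument above.

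For the filtration statement, I would examine the degree in the variables $z_r$. Each Lax factor $L^{(r)}(x)=x-z_r+P^{(0,r)}$ has entries that are degree-one polynomials in $z_r$ (with the leading $x$-behavior carrying no $z$-dependence), and expanding the product $L(x)\prod_r(x-z_r)^{-1}$ as a series in $x^{-1}$, the coefficient of $x^{-r}$ is a polynomial in $\bm z$ of degree at most $r$. Recalling that $\deg(T_{ij}^{(r)})=r-1$ defines $\mathscr F_s\Yone$ while $\deg(z_i)=1$ defines $\mathscr F_s\mc V$, one checks that $\Gamma(\mathscr F_r\Yone)$ raises the polynomial degree by at most $r$, giving $\mathscr F_r\Yone\times\mathscr F_s\mc V\to\mathscr F_{r+s}\mc V$. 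The cleanest route is to note that under the identification $\mathrm{gr}\,\Yone\cong\mathrm U(\gl_{1|1}[t])$ the associated graded map sends $e_{ij}[r]$ to the degree-$r$ part of the expansion, and then the filtered compatibility is immediate from the explicit polynomial degrees of the coefficients of $L(x)$.
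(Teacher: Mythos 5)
Your argument for the first statement is essentially the paper's own (the paper simply cites the Yang--Baxter equation \eqref{eq yangbaxter} and the analogous Lemma 3.1 of \cite{MTV14}): each factor $x-z_r+P^{(0,r)}$ equals $(x-z_r)\,\mathcal R^{(0,r)}(x-z_r)$, the train argument yields the RTT relation for the ordered product $L(x)$, and dividing by the central symmetric scalar $\prod_r(x-z_r)$ gives \eqref{eq RTT} together with the normalization $T_{ij}^{(0)}=\delta_{ij}$. This part is fine.

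For the filtration claim (which the paper dismisses as ``clear'') your explicit degree count is off by one. You assert that the coefficient of $x^{-r}$ in $L_{ij}(x)\prod_s(x-z_s)^{-1}$ has $z$-degree at most $r$; but $T_{ij}^{(r)}$ has filtration degree $r-1$, so to obtain $\mathscr F_r\Yone\times\mathscr F_s\mc V\to\mathscr F_{r+s}\mc V$ you need that coefficient to have $z$-degree at most $r-1$ --- the bound $\lle r$ only gives $\mathscr F_{r+s+1}\mc V$. The missing ingredient is a cancellation of the top-degree terms: in the expansion of $L(x)$ the part of maximal $z$-degree (every non-$x$ factor contributing $-z_k$ rather than $P^{(0,k)}$) is exactly $\prod_k(x-z_k)\cdot\mathrm{id}$, so after multiplying by $\prod_k(x-z_k)^{-1}$ the top-degree contribution to the coefficient of $x^{-r}$ is $\delta_{ij}\sum_{m+k=r}(-1)^m e_m(\bm z)h_k(\bm z)=0$ for $r\gge 1$, leaving $z$-degree at most $r-1$ as required. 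Your closing appeal to $\mathrm{gr}\,\Yone\cong\mathrm U(\gl_{1|1}[t])$, with $e_{ij}[r]$ acting by multiplication by $z_s^r$, does encode the correct count, but identifying the associated graded of the action presupposes that the action is filtered, which is the point at issue; so it cannot substitute for the cancellation argument.
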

\begin{proof}
The first statement follows from the Yang-Baxter equation \eqref{eq yangbaxter}, c.f. \cite[Lemma 3.1]{MTV14}. The second statement is clear.
\end{proof}

\begin{lem}\label{lem com sn}
The $\Yone$-action on $\mc V$ defined by $\Gamma$ commutes with the modified  $\fkS_n$-action \eqref{eq sn action} on $\mc V$ and with multiplication by the elements of $\C[z_1,\dots, z_n]$.
\end{lem}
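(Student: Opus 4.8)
The plan is to prove the two commutativity statements separately: commutation with multiplication by $\C[z_1,\dots,z_n]$ is essentially formal, while commutation with the modified $\fkS_n$-action reduces, after two easy reductions, to a single Yang--Baxter type identity for two adjacent Lax factors.

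First I would record the shape of the action. Writing $\ell_r(x)=x-z_r+P^{(0,r)}$, we have $L(x)=\ell_n(x)\cdots\ell_1(x)$, and $\Gamma(T_{ij}(x))$ acts on $\mc V$ as multiplication by the $(i,j)$-entry of $L(x)\prod_{r=1}^n(x-z_r)^{-1}$. Every coefficient of this power series is a sum of products of flip operators $P^{(0,r)}$, which act only on the tensor factor $V$, and operators of multiplication by polynomials in $\bm z$. Since multiplication operators on $\C[z_1,\dots,z_n]$ all commute and the $P^{(0,r)}$ act trivially on the polynomial factor, the $\Yone$-action commutes with multiplication by every $g\in\C[z_1,\dots,z_n]$. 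This settles the second assertion.

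For the modified $\fkS_n$-action it suffices to treat the generators $\hat s_1,\dots,\hat s_{n-1}$. Two reductions simplify the task. On one hand, $\hat s_i$ commutes with multiplication by any polynomial symmetric in $z_i,z_{i+1}$: this is immediate from \eqref{eq sn action}, since $P^{(i,i+1)}$ commutes with such a multiplication and the divided-difference part is compatible with it. In particular $\hat s_i$ commutes with the scalar factor $\prod_{r}(x-z_r)^{-1}$, so it is enough to prove $\hat s_i L(x)=L(x)\hat s_i$. On the other hand, for $r\notin\{i,i+1\}$ the factor $\ell_r(x)$ commutes with $\hat s_i$, because multiplication by $z_r$ is symmetric in $z_i,z_{i+1}$ and $P^{(0,r)}$ is disjoint from $P^{(i,i+1)}$ and unaffected by the transposition of $z_i,z_{i+1}$. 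Since these outer factors can be pulled across $\hat s_i$, the identity $\hat s_i L(x)=L(x)\hat s_i$ reduces to the single relation
\[
\hat s_i\,\ell_{i+1}(x)\ell_i(x)=\ell_{i+1}(x)\ell_i(x)\,\hat s_i .
\]

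The verification of this last relation is the heart of the argument and the main obstacle, as one must transport the flip, the permutation of variables, and the divided difference simultaneously through the product. Writing $\hat s_i=P^{(i,i+1)}s_i+D_i$, where $s_i$ permutes $z_i,z_{i+1}$ and $D_i=(1-s_i)/(z_i-z_{i+1})$, the braid relations among the flips (for instance $P^{(i,i+1)}P^{(0,i)}=P^{(0,i+1)}P^{(i,i+1)}$) give the intertwining identities $P^{(i,i+1)}s_i\,\ell_i(x)=\ell_{i+1}(x)\,P^{(i,i+1)}s_i$ and $P^{(i,i+1)}s_i\,\ell_{i+1}(x)=\ell_i(x)\,P^{(i,i+1)}s_i$, whence $[P^{(i,i+1)}s_i,\,\ell_{i+1}(x)\ell_i(x)]=[P^{(0,i)},P^{(0,i+1)}]\,P^{(i,i+1)}s_i$. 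The Leibniz rule for the divided difference, together with the operator identities $[D_i,\,x-z_i]=-s_i$ and $[D_i,\,x-z_{i+1}]=s_i$, yields $[D_i,\,\ell_{i+1}(x)\ell_i(x)]=s_i\,(P^{(0,i)}-P^{(0,i+1)})$. Finally the $\fkS_3$-relation on the factors $\{0,i,i+1\}$ gives $[P^{(0,i)},P^{(0,i+1)}]\,P^{(i,i+1)}=P^{(0,i+1)}-P^{(0,i)}$, so the two commutators cancel and the relation follows. This computation is precisely a rational form of the Yang--Baxter equation \eqref{eq yangbaxter} and is the analogue of \cite[Lemma 3.1]{MTV14}.
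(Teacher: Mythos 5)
Your proposal is correct and follows essentially the same route as the paper, which simply notes that the first statement follows from the Yang--Baxter equation \eqref{eq yangbaxter} (c.f.\ \cite[Lemma 3.3]{MTV14}) and that the second is straightforward. Your reduction to the adjacent pair $\ell_{i+1}(x)\ell_i(x)$ and the verification via $\hat s_i=P^{(i,i+1)}s_i+D_i$, the identities $[D_i,x-z_i]=-s_i$, $[D_i,x-z_{i+1}]=s_i$, and the $\fkS_3$-relation $[P^{(0,i)},P^{(0,i+1)}]P^{(i,i+1)}=P^{(0,i+1)}-P^{(0,i)}$ all check out, so you have merely made explicit the Yang--Baxter computation the paper leaves to the reference.
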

\begin{proof}
The first statement follows again from the Yang-Baxter equation \eqref{eq yangbaxter}, c.f. \cite[Lemma 3.3]{MTV14}. The second statement is straightforward.
\end{proof}

Therefore, it follows from Lemma \ref{lem com sn} that the space $\mc V^\fkS$ is a filtered $\Yone$-module.

\begin{lem}\label{lem cyclic sym}
The $\Yone$-module $\mc V^\fkS$ is a cyclic module generated by $v_1^{\otimes n}=v_1\otimes\dots\otimes v_1$.
\end{lem}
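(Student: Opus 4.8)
The plan is to show that the $\Yone$-submodule $U=\Yone\cdot v_1^{\otimes n}$ coincides with $\mc V^\fkS$. First I would record that $v_1^{\otimes n}$ indeed lies in $\mc V^\fkS$: it is a constant (degree $0$) element, so the divided-difference term in \eqref{eq sn action} vanishes, while $P^{(i,i+1)}v_1^{\otimes n}=v_1^{\otimes n}$ because $v_1$ is even; hence $\hat s_i(v_1^{\otimes n})=v_1^{\otimes n}$ for all $i$. Set $A=\C[z_1,\dots,z_n]^\fkS$. The two structural facts I would exploit are that the $\Yone$-action commutes with multiplication by $\C[z_1,\dots,z_n]$ (Lemma \ref{lem com sn}) and that, by Lemma \ref{lem free}, each weight space $(\mc V^\fkS)_{(n-\ell,\ell)}$ is a free $A$-module of rank $\binom{n}{\ell}$.

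The first step is the diagonal action. Since $P^{(0,j)}v_1^{\otimes n}=v_1^{\otimes n}$, the definitions of $L(x)$ and $\Gamma$ in \eqref{eq gamma} give
\[
T_{11}(x)\,v_1^{\otimes n}=\prod_{j=1}^n\frac{x-z_j+1}{x-z_j}\,v_1^{\otimes n}.
\]
Expanding in $x^{-1}$, the coefficient of $x^{-r}$ is a symmetric polynomial $c_r(\bm z)$ whose top-degree part is the power sum $\sum_j z_j^{\,r-1}$; since the power sums generate $A$, so do the $c_r$. Because $T_{11}^{(r)}$ commutes with multiplication by $\C[z_1,\dots,z_n]$, iterating shows $P(\bm z)\,v_1^{\otimes n}\in U$ for every $P\in A$, and moreover that $U$ is stable under multiplication by $A$ (if $P\,v_1^{\otimes n}=Y v_1^{\otimes n}$ then $P\,(Xv_1^{\otimes n})=X(P\,v_1^{\otimes n})=XY\,v_1^{\otimes n}\in U$). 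A direct inspection of \eqref{eq sn action} on the weight-$(n,0)$ space, where $P^{(i,i+1)}$ acts trivially so that $\hat s_i$ reduces to $s_i$ plus a divided difference, shows its invariants are exactly $A\,v_1^{\otimes n}$; combined with Lemma \ref{lem free} this identifies $(\mc V^\fkS)_{(n,0)}=A\,v_1^{\otimes n}\subseteq U$.

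The second step is to descend in weight using the lowering entry $T_{12}(x)$, which corresponds to $e_{21}$ under the embedding $\Uone\hookrightarrow\Yone$. Computing $L_{12}(x)v_1^{\otimes n}$ from the definition of $L(x)$ yields $\sum_{j=1}^n P_j(x,\bm z)\,u_j$, where $u_j=v_1^{\otimes(j-1)}\otimes v_2\otimes v_1^{\otimes(n-j)}$ is the natural basis of $(V)_{(n-1,1)}$ and $\deg_x P_j=n-1$; reading off the vectors $T_{12}^{(r)}v_1^{\otimes n}$ for $r=1,\dots,n$ produces $n$ elements of $U\cap(\mc V^\fkS)_{(n-1,1)}$ whose leading symbols in $\bm z$ span $(V)_{(n-1,1)}$. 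Iterating with products $T_{12}(x_1)\cdots T_{12}(x_\ell)$ applied to $A\,v_1^{\otimes n}$ lands in every weight space $(\mc V^\fkS)_{(n-\ell,\ell)}$. To conclude that these vectors fill the space, I would pass to the associated graded: $U$ is a filtered $A$-submodule, so $\mathrm{gr}\,U\subseteq\mathrm{gr}(\mc V^\fkS)$ is a $\mathrm{U}(\gl_{1|1}[t])$-submodule, and the symbols of the $T_{12}^{(r)}$ act as the current-algebra lowering operators $e_{21}[r-1]$ (up to sign). Comparing the graded dimensions of the submodule built from these vectors with the explicit graded characters of $(\mc V^\fkS)_{(n-\ell,\ell)}$ in Proposition \ref{prop ch}, equivalently matching the free ranks $\binom{n}{\ell}$ of Lemma \ref{lem free}, forces $\mathrm{gr}\,U=\mathrm{gr}(\mc V^\fkS)$ in every degree and weight, whence $U=\mc V^\fkS$.

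The main obstacle is exactly this final counting/leading-term step: one must verify that the images $T_{12}(x_1)\cdots T_{12}(x_\ell)\,A\,v_1^{\otimes n}$ are not merely contained in, but actually span, the free $A$-module $(\mc V^\fkS)_{(n-\ell,\ell)}$ of rank $\binom{n}{\ell}$. The freeness results (Lemmas \ref{lem free} and \ref{lem free sing}) together with the closed-form characters of Proposition \ref{prop ch} reduce this to a finite-dimensional check on leading symbols, so the difficulty is the bookkeeping of the triangular structure of the coefficients $P_j$ rather than anything conceptual; this is also where the argument most closely follows the $\gl_N$ treatment of \cite{MTV14}.
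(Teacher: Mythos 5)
Your proposal is correct and follows essentially the same route as the paper: reduce to the associated graded, where the symbols of the $T_{ij}^{(r)}$ act as the current algebra $\gl_{1|1}[t]$ on $\mc V^S$, generate the symmetric functions from the diagonal action, descend with the lowering operators $e_{21}[r]$, and close the argument by matching the graded characters/free ranks of Lemma \ref{lem free} and Proposition \ref{prop ch} (the paper packages this last step as Lemmas \ref{lem free graded}, \ref{lem cyclic sym graded} and \ref{lem explicit basis}). The "main obstacle" you identify is exactly the content of Lemma \ref{lem explicit basis}, and your character-count resolution is the one the paper uses.
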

\begin{proof}
	The lemma is proved in Section \ref{sec proof vs}.
\end{proof}

Given $\bm a=(a_1,\dots,a_n)\in\C^n$, let $I^\fkS_{\bm a}$ be the ideal of $\C[z_1,\dots,z_n]^\fkS$ generated by $\sigma_i(\bm z)-\sigma_i(\bs a)$, $i=1,\dots,n$. Then for any $\bm a$, by Lemmas \ref{lem free} and \ref{lem com sn}, the quotient space $\mc V^\fkS/I^\fkS_{\bm a}\mc V^\fkS$ is a $\Yone$-module of dimension $2^n$ over $\C$. 

\begin{prop}\label{prop local weyl}
Assume that $\bm{a}$ is ordered such that $a_i\ne a_j+1$ for $i>j$. Then the $\Yone$-module $\mc V^\fkS/I^\fkS_{\bm a}\mc V^\fkS$ is isomorphic to $\Yone$-module $V(\bm{a})=\C^{1|1}(a_1)\otimes\dots\otimes\C^{1|1}(a_n)$.
\end{prop}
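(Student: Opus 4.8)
The plan is to produce a surjective homomorphism of $\Yone$-modules from $\mc V^\fkS/I^\fkS_{\bm a}\mc V^\fkS$ onto $V(\bm a)$ and then finish by a dimension count. First I would record the two dimensions. Since $\C[z_1,\dots,z_n]^\fkS=\C[\sigma_1(\bm z),\dots,\sigma_n(\bm z)]$ is a polynomial ring and $I^\fkS_{\bm a}$ is the maximal ideal cutting out the point $(\sigma_1(\bm a),\dots,\sigma_n(\bm a))$, we have $\C[z_1,\dots,z_n]^\fkS/I^\fkS_{\bm a}\cong\C$. By Lemma \ref{lem free} the space $\mc V^\fkS$ is free of rank $2^n$ over $\C[z_1,\dots,z_n]^\fkS$, so $\dim_\C\big(\mc V^\fkS/I^\fkS_{\bm a}\mc V^\fkS\big)=2^n$; on the other side $\dim_\C V(\bm a)=(\dim\C^{1|1})^n=2^n$.

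Next I would construct the map. By Lemma \ref{lem com sn} the $\Gamma$-action on $\mc V$ is $\C[z_1,\dots,z_n]$-linear, so for the maximal ideal $\mc J_{\bm a}=\langle z_1-a_1,\dots,z_n-a_n\rangle\subset\C[z_1,\dots,z_n]$ the quotient $\mc V/\mc J_{\bm a}\mc V=V\otimes(\C[z_1,\dots,z_n]/\mc J_{\bm a})\cong V$ inherits a $\Yone$-action, namely the $\Gamma$-action specialized at $\bm z=\bm a$. Specializing the Lax matrix gives $L(x)|_{\bm z=\bm a}=(x-a_n+P^{(0,n)})\cdots(x-a_1+P^{(0,1)})$, which is exactly $\prod_{r=1}^n(x-a_r)$ times the monodromy matrix of the tensor product of evaluation vector modules; hence $\mc V/\mc J_{\bm a}\mc V\cong V(\bm a)$ as $\Yone$-modules, for any $\bm a$. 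Restricting the quotient projection to $\mc V^\fkS\hookrightarrow\mc V$ yields a $\Yone$-linear map $\pi_{\bm a}\colon\mc V^\fkS\to V(\bm a)$.

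Then I would check that $\pi_{\bm a}$ kills $I^\fkS_{\bm a}\mc V^\fkS$. Each generator $\sigma_i(\bm z)-\sigma_i(\bm a)$ of $I^\fkS_{\bm a}$ vanishes at $\bm z=\bm a$, hence lies in $\mc J_{\bm a}$; therefore $I^\fkS_{\bm a}\mc V^\fkS\subseteq\mc J_{\bm a}\mc V=\ker(\mc V\to V(\bm a))$, and $\pi_{\bm a}$ descends to a $\Yone$-module map $\bar\pi_{\bm a}\colon\mc V^\fkS/I^\fkS_{\bm a}\mc V^\fkS\to V(\bm a)$. Surjectivity is where the hypothesis enters: the even constant vector $v_1^{\otimes n}\in\mc V^\fkS$ satisfies $\pi_{\bm a}(v_1^{\otimes n})=v_1^{\otimes n}$, the vacuum of $V(\bm a)$, so the image of $\bar\pi_{\bm a}$ is a $\Yone$-submodule of $V(\bm a)$ containing $v_1^{\otimes n}$. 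The ordering assumption $a_i\ne a_j+1$ for $i>j$ is precisely the cyclicity criterion of Lemma \ref{lem cyclic weyl} applied to $V(\bm a)=\bigotimes_{s}L_{\omega_1}(a_s)$ (where $\la^{(s)}_1=1$, $\la^{(s)}_2=0$), so $\Yone\cdot v_1^{\otimes n}=V(\bm a)$ and $\bar\pi_{\bm a}$ is onto. A surjection between $\C$-spaces of the same finite dimension $2^n$ is an isomorphism, which proves the claim. (Alternatively, surjectivity follows by observing that, by Lemma \ref{lem cyclic sym}, $\mc V^\fkS/I^\fkS_{\bm a}\mc V^\fkS$ is cyclic generated by $v_1^{\otimes n}$, which maps to the vacuum.)

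The main obstacle I anticipate is the clean identification $\mc V/\mc J_{\bm a}\mc V\cong V(\bm a)$: one must match the product order in $L(x)$ against the coproduct \eqref{eq cop} so that the specialized Lax operator reproduces the evaluation-module monodromy matrix with the correct tensor ordering and no scalar twist beyond the $\prod_r(x-a_r)^{-1}$ already built into $\Gamma$. A single-site check shows $L_{ij}(x)|_{z_s=a_s}(x-a_s)^{-1}=\mathrm{ev}\circ\zeta_{a_s}(T_{ij}(x))$, and the full identification is then the standard correspondence underlying the proof that $\Gamma$ defines a $\Yone$-action; everything else is bookkeeping.
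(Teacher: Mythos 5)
Your proof is correct and follows essentially the same route as the paper, which simply invokes Lemmas \ref{lem cyclic weyl}, \ref{lem free}, and \ref{lem cyclic sym} and defers to the analogous argument in \cite[Proposition 3.5]{MTV14}: evaluation at $\bm z=\bm a$ gives a $\Yone$-map killing $I^\fkS_{\bm a}\mc V^\fkS$, the ordering hypothesis gives cyclicity of $V(\bm a)$ and hence surjectivity, and the rank-$2^n$ freeness gives the dimension count. You have just written out the details the paper leaves implicit.
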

\begin{proof}
Thanks to Lemmas \ref{lem cyclic weyl}, \ref{lem free}, and \ref{lem cyclic sym}, the proof is similar to that of \cite[Proposition 3.5]{MTV14}.
\end{proof}

\subsection{Proofs of properties}\label{sec proof vs}
We start with Lemma \ref{lem free}.

Define the {\it modified} $\fkS_n$-action on scalar functions in $z_1,\dots,z_n$ by the rule, see e.g. \cite[formula (2.1)]{GRTV}, c.f. also \eqref{eq sn action},
\[
\hat{s}_i:f(z_1,\dots,z_n)\mapsto f(z_1,\dots,z_{i+1},z_i,\dots,z_n)-\frac{f(z_1,\dots,z_n)-f(z_1,\dots,z_{i+1},z_i,\dots,z_n)}{z_i-z_{i+1}}.
\]
Let $\bm f(z_1,\dots,z_n)$ be a $(V)_{(n-\ell,\ell)}$-valued function with coordinates $\{f_I(z_1,\dots,z_n)~|~I\in \mc{I}_\ell\}$,
\[
\bm f(z_1,\dots,z_n)=\sum_{I\in \mc{I}_\ell}f_I(z_1,\dots,z_n)\, v_I.
\]

For $\sigma\in\fkS_n$ and $I=(i_1,\dots,i_n)$, define the natural $\fkS_n$-action on $\mc I_\ell$
\[
\sigma: \mc I_\ell \to \mc I_\ell,\qquad I=(i_1,\dots,i_n)\mapsto \sigma (I)=(i_{\sigma^{-1}(1)},\dots,i_{\sigma^{-1}(n)}).
\]

\begin{lem}\label{lem sn inv case}
The function $\bm f(z_1,\dots,z_n)$ is invariant under modified $\fkS_n$-action \eqref{eq sn action} if and only if for any simple reflection $s_j$ and $I=(i_1,\dots,i_n)$, we have
\[
f_{s_j(I)}=\begin{cases}
-\hat{s}_jf_I,\quad &\text{\emph{if} }i_{j}=i_{j+1}=2,\\
\hat{s}_jf_I,\quad &\text{\emph{otherwise}}.
\end{cases}\qedd
\]
\end{lem}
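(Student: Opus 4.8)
The plan is to compute directly how the modified $\fkS_n$-action \eqref{eq sn action} acts on coordinates, and then read off the invariance condition coordinate by coordinate. First I would fix a simple reflection $s_j$ and expand $\hat{s}_j\bm f$ using the definition \eqref{eq sn action}, keeping track of the graded flip operator $P^{(j,j+1)}$. The key point is that $P^{(j,j+1)}$ acts on the basis vectors $v_I$ by swapping the labels $i_j$ and $i_{j+1}$, which is exactly the natural permutation action $v_I\mapsto (-1)^? v_{s_j(I)}$; the sign comes from the graded (super) nature of the flip operator. Explicitly, for $v_{i_j}\otimes v_{i_{j+1}}$ one has $P(v_a\otimes v_b)=(-1)^{|a||b|}v_b\otimes v_a$, so the only nontrivial sign $-1$ appears precisely when $i_j=i_{j+1}=2$, i.e.\ when both transposed factors are odd.

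Next I would substitute the coordinate expansion $\bm f=\sum_{I}f_I\,v_I$ into \eqref{eq sn action} and collect the coefficient of each fixed basis vector $v_J$. Writing $\tau$ for the transposition of the $j$-th and $(j+1)$-th arguments of a scalar function, the flip term $P^{(j,j+1)}\bm f(\dots,z_{j+1},z_j,\dots)$ contributes $\pm f_{s_j(J)}(\dots,z_{j+1},z_j,\dots)\,v_J$ to the $v_J$-component, while the divided-difference term $\big(\bm f-\bm f^{\,\tau}\big)/(z_j-z_{j+1})$ contributes to the $v_J$-component the scalar divided difference of $f_J$ itself. Setting $\hat{s}_j\bm f=\bm f$ and equating the $v_J$-coefficients gives, for each $J=(i_1,\dots,i_n)$, a scalar equation relating $f_{s_j(J)}$ to the scalar modified action $\hat{s}_j$ applied to $f_J$. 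Matching signs, the equation reads $f_{s_j(J)}=\varepsilon\,\hat{s}_j f_J$ where $\varepsilon=-1$ exactly when $i_j=i_{j+1}=2$ and $\varepsilon=+1$ otherwise, which is the claimed statement. Relabeling $J$ as $I$ finishes the derivation.

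The main obstacle, and the step I would be most careful about, is the bookkeeping of the super sign $\varepsilon$ and the precise alignment between the matrix-level divided difference in \eqref{eq sn action} and the scalar-level modified action recorded just before the lemma: note that the scalar $\hat{s}_j$ defined there already carries its own sign convention (the divided difference enters with a minus sign), so I must verify that these two conventions combine consistently rather than introducing a spurious sign. Concretely I expect the cleanest route is to treat separately the three cases $(i_j,i_{j+1})\in\{(1,1),(1,2)\text{ or }(2,1),(2,2)\}$, since $P^{(j,j+1)}$ fixes $v_1\otimes v_1$, swaps $v_1\otimes v_2\leftrightarrow v_2\otimes v_1$ with no sign, and sends $v_2\otimes v_2\mapsto -v_2\otimes v_2$; in each case the coefficient comparison is a one-line identity once the flip sign is in hand. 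The equivalence ``if and only if'' is immediate because the simple reflections $s_1,\dots,s_{n-1}$ generate $\fkS_n$, so invariance under all of $\fkS_n$ is equivalent to invariance under each $\hat{s}_j$, and the coefficient comparison is reversible.
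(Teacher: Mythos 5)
Your proposal is correct; the paper states this lemma without proof (it is marked as evident), and the direct coefficient computation you describe is exactly the intended argument. The sign reconciliation you flag does work out: writing $f^\tau$ for $f$ with $z_j,z_{j+1}$ swapped, equating $v_J$-coefficients in $\hat s_j\bm f=\bm f$ gives $(-1)^{|i_j||i_{j+1}|}f_{s_j(J)}^\tau=f_J-\frac{f_J-f_J^\tau}{z_j-z_{j+1}}=(\hat s_j f_J)^\tau$, and applying the swap once more to both sides yields the stated identity with the sign $-1$ exactly when $i_j=i_{j+1}=2$.
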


A two-partition $I\in \mc I_\ell$ corresponds to a permutation $\sigma_{I}\in\fkS_{n}$ as follows:
\beq\label{eq sigma}
(\sigma_{I})^{-1}(j)=\begin{cases}
\#\{r~|~r\lle j,~i_r=2\},&\text{ if } i_j=2,\\
\ell+\#\{r~|~r\lle j,~i_r=1\},&\text{ if } i_j=1.
\end{cases}
\eeq
Set $I^{\max}=(2,\dots,2,1,\dots,1)\in \mc I_\ell$. Then $\sigma_I(I^{\max})=I$.

For any $f\in \C[z_1,\dots,z_n]^{\fkS_{\ell}\times \fkS_{n-\ell}}$, let
\[
\check f=f\cdot\prod_{1\lle r<s\lle \ell}(z_s-z_r-1).
\]
Clearly, $\check f$ is anti-symmetric with respect to the modified action of the subgroup $\fkS_\ell\times 1\subset \fkS_n$ and symmetric with respect to the modified action of the subgroup $1\times\fkS_{n-\ell}\subset \fkS_n$.
\begin{proof}[Proof of Lemma \ref{lem free}]
Define the map
\[
\vartheta_\ell:\C[z_1,\dots,z_n]^{\fkS_{\ell}\times \fkS_{n-\ell}}\to (\mc V^\fkS)_{(n-\ell,\ell)},\qquad
\vartheta_\ell(f)=\sum_{I\in\mc I_\ell} \hat{\sigma}_I(\check f)\, v_{I}.
\]
We show that the map $\vartheta_\ell$ is a well-defined $\C[z_1,\dots,z_n]^\fkS$-module isomorphism. 

We first show that $\vartheta_\ell$ is well-defined. Clearly, $\vartheta_\ell(f)\in (\mc V)_{(n-\ell,\ell)}$. Hence it suffices to show that $\vartheta_\ell(f)\in\mc V^\fkS$. By Lemma \ref{lem sn inv case}, it reduces to show that for any $1\lle j\lle n-1$,
\beq\label{eq needshow}
\hat{\sigma}_{s_j(I)}(\check f)=\begin{cases}
	-\hat{s}_j\hat{\sigma}_{I}(\check f),\quad &\text{if }i_{j}=i_{j+1}=2,\\
	\hat{s}_j\hat{\sigma}_{I}(\check f),\quad &\text{otherwise}.
\end{cases}
\eeq

If $s_j(I)=I$, then $i_j=i_{j+1}$ and $\sigma_{s_j(I)}=\sigma_I$. By \eqref{eq sigma} we have
\[
(\sigma_I)^{-1}\cdot s_j\cdot \sigma_I =((\sigma_I)^{-1}(j),(\sigma_I)^{-1}(j+1))\in\begin{cases}
\fkS_{\ell}\times 1,&\text{ if }i_{j}=i_{j+1}=2,\\
1\times\fkS_{n-\ell},&\text{ if }i_{j}=i_{j+1}=1.
\end{cases}
\]Since $\check f$ is anti-symmetric with respect to the modified $\fkS_\ell\times 1$-action and symmetric with respect to the modified $1\times\fkS_{n-\ell}$-action, the equation \eqref{eq needshow} follows. If $s_j(I)\ne I$, then $i_j\ne i_{j+1}$. Clearly, we have $\sigma_{s_j(I)}=s_j\sigma_I$. Hence the equation \eqref{eq needshow} also follows. Thus the map $\vartheta_\ell$ is well-defined.

Let $\bm f(z_1,\dots,z_n)\in(\mc V^\fkS)_{(n-\ell,\ell)}$,
\[
\bm f(z_1,\dots,z_n)=\sum_{I\in \mc{I}_\ell}f_I(z_1,\dots,z_n)\, v_I.
\]It follows from Lemma \ref{lem sn inv case} that $\bm f$ is uniquely determined by $f_{I^{\max}}$. Moreover, $f_{I^{\max}}$ is anti-symmetric with respect to the modified $\fkS_\ell\times 1$-action and symmetric with respect to the modified $1\times\fkS_{n-\ell}$-action. A direct computation implies that
\[
f_{I^{\max}}\cdot\prod_{1\lle r<s\lle \ell}\frac{1}{z_s-z_r-1}\in \C[z_1,\dots,z_n]^{\fkS_{\ell}\times \fkS_{n-\ell}}.
\]
Therefore $\vartheta_\ell$ is a $\C[z_1,\dots,z_n]^\fkS$-module isomorphism. The lemma now follows from the fact that the algebra $\C[z_1,\dots,z_n]^{\fkS_{\ell}\times \fkS_{n-\ell}}$ is a free $\C[z_1,\dots,z_n]^\fkS$-module of rank $n\choose{\ell}$.
\end{proof}

Consider Lemma \ref{lem cyclic sym}. By the proof of Lemma \ref{lem free}, it suffices to show that for any polynomial $f\in\C[z_1,\dots,z_n]^{\fkS_{\ell}\times \fkS_{n-\ell}}$ one can obtain the vector $\vartheta_\ell(f)$ from the vector $v_1^{\otimes n}$. We proceed by induction on the degree of $f$.
Since $\mathrm{gr}\Yone\cong \mathrm{U}(\gl_{1|1}[t])$, see Section \ref{sec yangian}, 
it is not hard to see it is sufficient to prove the statement for the action of current algebra $\gl_{1|1}[t]$ on $\mc V$.

Define the {\it standard} $\fkS_n$-action on $\mc V$ by the rule:
\[
s_i:\bm f(z_1,\dots,z_n)\mapsto P^{(i,i+1)}\bm f(z_1,\dots,z_{i+1},z_i,\dots,z_n).
\]Denote the subspace of all vectors in $\mc V$ invariant with respect to the standard $\fkS_n$-action by $\mc V^S$. 

The space $\mc V$ is a $\gl_{1|1}[t]$-module where $e_{ij}[r]$ acts by
\begin{align*}
e_{ij}[r](p(z_1,\dots,z_n)&w_1\otimes\dots\otimes w_n)\\=\ &p(z_1,\dots,z_n)\sum_{s=1}^n(-1)^{(|w_1|+\cdots+|w_{s-1}|)(|i|+|j|)}z_s^r\, w_1\otimes\dots\otimes e_{ij}w_s\otimes\dots\otimes w_n,
\end{align*}
for $p(z_1,\dots,z_n)\in\C[z_1,\dots,z_n]$ and $w_s\in \C^{1|1}$.

The $\gl_{1|1}[t]$-action on $\mc V$ commutes with the standard $\fkS_n$-action on $\mc V$, therefore $\mc V^S$ is a $\gl_{1|1}[t]$-module.

The proof of the following lemma is similar to that of Lemma \ref{lem free}.
\begin{lem}\label{lem free graded}
The space $(\mc V^S)_{(n-\ell,\ell)}$ is a free $\C[z_1,\dots,z_n]^\fkS$-module of rank $n\choose{\ell}$. In particular, the space $\mc V^S$ is a free $\C[z_1,\dots,z_n]^\fkS$-module of rank $2^n$.\qed
\end{lem}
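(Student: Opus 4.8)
The plan is to mirror the proof of Lemma~\ref{lem free}, replacing the modified scalar action $\hat s_j$ by the ordinary permutation of variables throughout. First I would record the standard-action analog of Lemma~\ref{lem sn inv case}. Writing $\bm f=\sum_{I\in\mc I_\ell}f_I\,v_I$ and using that $P^{(j,j+1)}v_I=\epsilon_j(I)\,v_{s_j(I)}$ with $\epsilon_j(I)=-1$ when $i_j=i_{j+1}=2$ and $\epsilon_j(I)=+1$ otherwise, the condition $s_j\bm f=\bm f$ unravels to
\[
f_{s_j(I)}(z_1,\dots,z_n)=
\begin{cases}
-f_I(z_1,\dots,z_{j+1},z_j,\dots,z_n), & i_j=i_{j+1}=2,\\
\ \ f_I(z_1,\dots,z_{j+1},z_j,\dots,z_n), & \text{otherwise},
\end{cases}
\]
for every simple reflection $s_j$ and every $I=(i_1,\dots,i_n)$; here one uses that $\epsilon_j$ is unchanged under the swap of positions $j,j+1$.

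Next I would define, for $f\in\C[z_1,\dots,z_n]^{\fkS_\ell\times\fkS_{n-\ell}}$,
\[
\check f=f\cdot\!\!\prod_{1\lle r<s\lle \ell}(z_s-z_r),\qquad
\vartheta^S_\ell(f)=\sum_{I\in\mc I_\ell}\sigma_I(\check f)\,v_I,
\]
where $\sigma_I$ is the permutation attached to $I$ in \eqref{eq sigma} and now acts by the ordinary permutation of variables. The only substantive change from Lemma~\ref{lem free} is that the shifted product $\prod(z_s-z_r-1)$ is replaced by the genuine Vandermonde $\prod(z_s-z_r)$, so that $\check f$ is antisymmetric in $z_1,\dots,z_\ell$ and symmetric in $z_{\ell+1},\dots,z_n$ in the ordinary sense. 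Well-definedness of $\vartheta^S_\ell$, i.e.\ that $\vartheta^S_\ell(f)\in(\mc V^S)_{(n-\ell,\ell)}$, then reduces to the displayed invariance relations, which I would check exactly as in Lemma~\ref{lem free}: when $s_j(I)=I$ the element $(\sigma_I)^{-1}s_j\sigma_I$ is the transposition of $(\sigma_I)^{-1}(j)$ and $(\sigma_I)^{-1}(j+1)$, lying in $\fkS_\ell\times 1$ if $i_j=i_{j+1}=2$ and in $1\times\fkS_{n-\ell}$ if $i_j=i_{j+1}=1$, so the antisymmetry (resp.\ symmetry) of $\check f$ produces the required sign; when $s_j(I)\ne I$ one uses $\sigma_{s_j(I)}=s_j\sigma_I$. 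For injectivity and surjectivity, any $\bm f\in(\mc V^S)_{(n-\ell,\ell)}$ is determined by its coordinate $f_{I^{\max}}$ at $I^{\max}=(2,\dots,2,1,\dots,1)$, which is antisymmetric in $z_1,\dots,z_\ell$ and symmetric in $z_{\ell+1},\dots,z_n$; dividing by $\prod_{1\lle r<s\lle \ell}(z_s-z_r)$ returns an element of $\C[z_1,\dots,z_n]^{\fkS_\ell\times\fkS_{n-\ell}}$, which is the preimage under $\vartheta^S_\ell$. Hence $\vartheta^S_\ell$ is a $\C[z_1,\dots,z_n]^\fkS$-module isomorphism, and since $\C[z_1,\dots,z_n]^{\fkS_\ell\times\fkS_{n-\ell}}$ is free of rank $\binom{n}{\ell}$ over $\C[z_1,\dots,z_n]^\fkS$, the first assertion follows; summing over $\ell$ and using $\sum_\ell\binom{n}{\ell}=2^n$ gives the second.

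The only point needing genuine attention is the divisibility used in the surjectivity step: I must confirm that an $\fkS_\ell$-antisymmetric (ordinary) polynomial $f_{I^{\max}}$ is divisible by the Vandermonde $\prod_{1\lle r<s\lle\ell}(z_s-z_r)$ with quotient that is simultaneously $\fkS_\ell$-symmetric and $\fkS_{n-\ell}$-symmetric in the remaining block. This is the classical fact that the $\fkS_\ell$-antisymmetric polynomials are exactly the Vandermonde times the $\fkS_\ell$-symmetric ones, and in the present standard-action setting it is strictly cleaner than the shifted computation in Lemma~\ref{lem free}; so I expect no real obstacle here, only careful sign bookkeeping in the analog of Lemma~\ref{lem sn inv case}.
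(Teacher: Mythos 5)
Your proposal is correct and follows exactly the route the paper intends: the paper's proof of Lemma \ref{lem free graded} is simply "similar to that of Lemma \ref{lem free}," and you have fleshed out precisely that similarity, correctly identifying that the only substantive changes are replacing the shifted product $\prod(z_s-z_r-1)$ by the ordinary Vandermonde $\prod(z_s-z_r)$ and the modified scalar $\fkS_n$-action by the standard one. The sign bookkeeping in your standard-action analog of Lemma \ref{lem sn inv case} and the divisibility step via the classical Vandermonde factorization are both right.
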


\begin{lem}\label{lem cyclic sym graded}
The $\gl_{1|1}[t]$-module $\mc V^S$ is a cyclic module generated by $v_1^{\otimes n}=v_1\otimes\dots\otimes v_1$.
\end{lem}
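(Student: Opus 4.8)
The plan is to show that the $\gl_{1|1}[t]$-submodule $M\subseteq\mc V^S$ generated by $v_1^{\otimes n}$ exhausts every weight space $(\mc V^S)_{(n-\ell,\ell)}$. Two structural facts drive the argument. First, the even element $e_{11}[r]+e_{22}[r]$ acts on $\mc V$ through $(e_{11}+e_{22})=\mathrm{id}$ and hence as multiplication by the power sum $p_r(\bm z)=\sum_{s=1}^n z_s^r$; since the $p_r$ generate $A:=\C[z_1,\dots,z_n]^{\fkS}$ and $M$ is stable under the commuting operators $e_{11}[r]+e_{22}[r]$, the submodule $M$ is automatically an $A$-module. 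Second, by the standard-action analogue of the proof of Lemma \ref{lem free} underlying Lemma \ref{lem free graded}, there is an $A$-module isomorphism $\vartheta_\ell^S\colon \C[z_1,\dots,z_n]^{\fkS_\ell\times\fkS_{n-\ell}}\to(\mc V^S)_{(n-\ell,\ell)}$ given by $g\mapsto\sum_{I\in\mc I_\ell}\sigma_I(\check g)\,v_I$ with $\check g=g\prod_{1\lle r<s\lle\ell}(z_s-z_r)$, and a standard-invariant vector is determined by its $v_{I^{\max}}$-coordinate, where $I^{\max}=(2,\dots,2,1,\dots,1)$. Thus it suffices to realize inside $M$ a set of $A$-module generators of $\C[z_1,\dots,z_n]^{\fkS_\ell\times\fkS_{n-\ell}}$.

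The key computation produces such generators from the odd creation operators $e_{21}[a]$. Because $e_{21}$ is odd with $(e_{21}[a])^2=0$ and $e_{21}[a]e_{21}[b]=-e_{21}[b]e_{21}[a]$, the vector $e_{21}[a_1]\cdots e_{21}[a_\ell]\,v_1^{\otimes n}$ is alternating in $(a_1,\dots,a_\ell)$; fixing $a_1>\dots>a_\ell\gge 0$ and extracting the $v_{I^{\max}}$-coordinate amounts to summing over the ways the operators fill the first $\ell$ tensor slots, which, after tracking the Koszul signs, gives the alternating sum $\sum_{\tau\in\fkS_\ell}\mathrm{sgn}(\tau)\prod_q z_{\tau(q)}^{a_q}=\det(z_p^{a_q})_{1\lle p,q\lle\ell}$. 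By the bialternant formula this determinant equals $s_\mu(z_1,\dots,z_\ell)\prod_{1\lle r<s\lle\ell}(z_s-z_r)$ with $\mu_q=a_q-(\ell-q)$, i.e. it is exactly $\check g$ for $g=s_\mu(z_1,\dots,z_\ell)$. Since $e_{21}[a]$ commutes with the standard $\fkS_n$-action, the vector lies in $(\mc V^S)_{(n-\ell,\ell)}$, so matching the $v_{I^{\max}}$-coordinate yields $e_{21}[a_1]\cdots e_{21}[a_\ell]\,v_1^{\otimes n}=\pm\,\vartheta_\ell^S\big(s_\mu(z_1,\dots,z_\ell)\big)$. As $(a_1,\dots,a_\ell)$ ranges over strictly decreasing sequences, $\mu$ ranges over all partitions with at most $\ell$ parts, so $M$ contains $\vartheta_\ell^S(s_\mu(z_1,\dots,z_\ell))$ for all such $\mu$.

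It remains to check that these generate $(\mc V^S)_{(n-\ell,\ell)}$ over $A$. Writing $R=\C[z_1,\dots,z_\ell]^{\fkS_\ell}$, a generating-function comparison of the elementary symmetric polynomials in $z_1,\dots,z_\ell$, in $z_{\ell+1},\dots,z_n$, and in all $n$ variables shows that every elementary symmetric polynomial in $z_{\ell+1},\dots,z_n$ lies in the subalgebra generated by $A$ and $R$, whence $\C[z_1,\dots,z_n]^{\fkS_\ell\times\fkS_{n-\ell}}=A\cdot R$. As the Schur polynomials $s_\mu(z_1,\dots,z_\ell)$ with at most $\ell$ parts span $R$ over $\C$, the vectors $\vartheta_\ell^S(s_\mu)$ generate $(\mc V^S)_{(n-\ell,\ell)}$ as an $A$-module, and since $\vartheta_\ell^S$ intertwines the $A$-actions (multiplication by a symmetric polynomial commutes with each $\sigma_I$). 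Combining with the $A$-stability of $M$ established above gives $(\mc V^S)_{(n-\ell,\ell)}\subseteq M$ for every $\ell$, and therefore $\mc V^S=M$.

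The main obstacle is the middle paragraph: the sign bookkeeping needed to see that the $v_{I^{\max}}$-coordinate of $e_{21}[a_1]\cdots e_{21}[a_\ell]\,v_1^{\otimes n}$ comes out precisely as the Slater determinant $\det(z_p^{a_q})$, and then recognizing this determinant as a Schur polynomial times the Vandermonde so that it matches $\check g$. The two supporting ingredients, namely that $e_{11}[r]+e_{22}[r]$ realizes multiplication by $p_r$ and the elementary identity $\C[z_1,\dots,z_n]^{\fkS_\ell\times\fkS_{n-\ell}}=A\cdot R$, are routine consequences of the $\gl_{1|1}[t]$-action formula and of symmetric function theory, respectively.
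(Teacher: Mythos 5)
Your proof is correct. The paper itself gives no argument beyond a citation of \cite[Lemma 2.11]{MTV09}, but your route --- realizing multiplication by $\C[z_1,\dots,z_n]^{\fkS}$ through the operators $e_{11}[r]+e_{22}[r]$, and identifying the $v_{I^{\max}}$-coordinate of $e_{21}[a_1]\cdots e_{21}[a_\ell]\,v_1^{\otimes n}$ as the Slater determinant, i.e.\ a Schur polynomial times the Vandermonde, so that these vectors generate the image of $\vartheta_\ell^S$ over the symmetric polynomials --- is essentially the intended argument, and the very same vectors reappear in the paper as the explicit free generating set of Lemma \ref{lem explicit basis}.
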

\begin{proof}
The proof is similar to that of \cite[Lemma 2.11]{MTV09}.
\end{proof}

\begin{lem}\label{lem explicit basis}
The set 
\beq\label{eq explicit basis}
\{e_{21}[r_1]e_{21}[r_2]\cdots e_{21}[r_\ell]v^+~|~0\lle r_1<r_2<\dots<r_{\ell}\lle n-1\}
\eeq
is a free generating set of $(\mc V^S)_{(n-\ell,\ell)}$ over $\C[z_1,\dots,z_n]^\fkS$.\qed
\end{lem}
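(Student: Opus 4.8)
The plan is to make the proof parallel to that of Lemma~\ref{lem free}, realizing the vectors in \eqref{eq explicit basis} as the image of a Schur-polynomial basis under the explicit isomorphism underlying Lemma~\ref{lem free graded}. First I would record the shape of the candidate vectors. Write $v^+=v_1^{\otimes n}$ and use $e_{12}v_1=0$, $e_{11}v_1=v_1$, $e_{22}v_1=0$ together with the relations $(e_{21}[r])^2=0$ and $e_{21}[r]e_{21}[s]=-e_{21}[s]e_{21}[r]$. A straightforward induction on $\ell$ from the $\gl_{1|1}[t]$-action formula, tracking the Koszul signs, gives
\[
e_{21}[r_1]\cdots e_{21}[r_\ell]\,v^+=\pm\sum_{\substack{I\in\mc I_\ell\\ I_2=\{t_1<\dots<t_\ell\}}}\det\big(z_{t_a}^{\,r_b}\big)_{a,b=1}^{\ell}\;v_I .
\]
Since each $e_{21}[r]$ changes the weight by $(-1,1)$, this vector lies in $(V)_{(n-\ell,\ell)}\otimes\C[\bm z]$; and because the $\gl_{1|1}[t]$-action commutes with the standard $\fkS_n$-action while $v^+$ is standard-invariant, it in fact lies in $(\mc V^S)_{(n-\ell,\ell)}$. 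Note there are exactly $n\choose{\ell}$ such vectors, matching the rank in Lemma~\ref{lem free graded}.

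Next I would identify these with a Schur basis. Running the argument of Lemma~\ref{lem free} for the standard action (the shifted product $\prod_{r<s}(z_s-z_r-1)$ is replaced by the Vandermonde $\prod_{r<s}(z_s-z_r)$, since in $\mc V^S$ the operators $e_{21}[r]$ genuinely anticommute and the analogue of Lemma~\ref{lem sn inv case} forces ordinary, rather than modified, antisymmetry in the first $\ell$ variables) yields a $\C[\bm z]^\fkS$-module isomorphism
\[
\vartheta_\ell^S:\C[z_1,\dots,z_n]^{\fkS_\ell\times\fkS_{n-\ell}}\xrightarrow{\ \sim\ }(\mc V^S)_{(n-\ell,\ell)},\qquad \vartheta_\ell^S(f)=\sum_{I\in\mc I_\ell}\sigma_I\Big(f\cdot\!\!\prod_{1\lle r<s\lle\ell}\!\!(z_s-z_r)\Big)v_I ,
\]
where $\sigma_I$ denotes the permutation of variables. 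By the bialternant formula for Schur polynomials, the coefficient $\det(z_{t_a}^{r_b})$ equals $s_\mu(z_{t_1},\dots,z_{t_\ell})\prod_{a<b}(z_{t_b}-z_{t_a})$, so that $e_{21}[r_1]\cdots e_{21}[r_\ell]\,v^+=\pm\,\vartheta_\ell^S\big(s_\mu(z_1,\dots,z_\ell)\big)$, under the standard bijection $\bm r\mapsto\mu$ between sequences $0\lle r_1<\dots<r_\ell\lle n-1$ and partitions $\mu\subseteq(n-\ell)^\ell$.

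Finally, I would invoke the known fact that $\{s_\mu(z_1,\dots,z_\ell)\mid\mu\subseteq(n-\ell)^\ell\}$ is a free $\C[\bm z]^\fkS$-basis of $\C[\bm z]^{\fkS_\ell\times\fkS_{n-\ell}}$: these are the Schubert classes forming a basis of $H^*(\mathrm{Gr}(\ell,n))$, and they lift to a free basis by graded Nakayama since $\C[\bm z]^{\fkS_\ell\times\fkS_{n-\ell}}$ is free of rank $n\choose{\ell}$ over $\C[\bm z]^\fkS$. Applying $\vartheta_\ell^S$ then turns this basis into precisely the set \eqref{eq explicit basis}, which is therefore a free generating set. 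The main obstacle is the sign-careful inductive derivation of the explicit alternant formula and verifying the unshifted (Vandermonde) modification appropriate to the standard action; the symmetric-function input is standard but must be cited explicitly. As an alternative to the Schur-basis input, one can instead use cyclicity (Lemma~\ref{lem cyclic sym graded}) to show the set spans over $\C[\bm z]^\fkS$ — pushing every $e_{12}[r]$ to the right (where it kills $v^+$) and every $e_{11}[r],e_{22}[r]$ to the right (where they act by $p_r(\bm z)\in\C[\bm z]^\fkS$ and $0$, with $e_{21}$-corrections from the commutators $[e_{11}[r],e_{21}[s]]=-e_{21}[r+s]$, $[e_{22}[r],e_{21}[s]]=e_{21}[r+s]$) — after which a spanning set of cardinality equal to the rank $n\choose{\ell}$ is automatically a free basis.
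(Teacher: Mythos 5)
Your proposal is correct, but it takes a genuinely different route from the paper's. The paper's own proof is a direct mode--reduction argument: it computes $e_{21}[r]v^+=\sum_{s}z_s^r\,v_1\otimes\dots\otimes v_2\otimes\dots\otimes v_1$, uses the Cayley--Hamilton-type relation $z_s^n=\sum_{i=1}^n(-1)^{i-1}\sigma_i(\bm z)z_s^{n-i}$ to show that every $e_{21}[r]v^+$ with $r\gge n$ lies in the $\C[\bm z]^\fkS$-span of $e_{21}[0]v^+,\dots,e_{21}[n-1]v^+$, deduces linear independence of the latter from that of $1,z_s,\dots,z_s^{n-1}$ over $\C[\bm z]^\fkS$, and then invokes the rank from Lemma \ref{lem free graded}; the general $\ell$ is dispatched via the anticommutativity $e_{21}[r]e_{21}[s]=-e_{21}[s]e_{21}[r]$, essentially an exterior-power version of the $\ell=1$ case. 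Your main route instead writes the candidate vectors in closed form as alternants $\pm\sum_I\det(z_{t_a}^{r_b})v_I$ (this formula is correct; the sign bookkeeping works out as you describe), identifies them via the bialternant formula with the images of the Schur polynomials $s_\mu$, $\mu\subseteq(n-\ell)^\ell$, under the standard-action analogue $\vartheta_\ell^S$ of the isomorphism from the proof of Lemma \ref{lem free} (correctly noting that the shifted product $\prod(z_s-z_r-1)$ becomes the Vandermonde for the standard action), and imports the classical fact that these Schur polynomials freely generate $\C[\bm z]^{\fkS_\ell\times\fkS_{n-\ell}}$ over $\C[\bm z]^\fkS$. What your route buys is explicit formulas for the basis vectors and a concrete realization of the ``similar'' step behind Lemma \ref{lem free graded}, at the cost of an external symmetric-function input; the paper's argument is shorter and self-contained. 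Your fallback argument (cyclicity gives spanning by PBW-pushing $e_{12}[r]$, $e_{11}[r]$, $e_{22}[r]$ to the right, and a spanning set of cardinality equal to the rank of a free module over a commutative ring is automatically a basis) is also valid and is closest in spirit to the paper; in fact it supplies the spanning step that the paper's proof leaves implicit when it passes from linear independence plus the known rank to the conclusion that the set is a free generating set.
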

\begin{proof}
We use the shorthand notation $v^+$ for $v_1^{\otimes n}$. Clearly, we have
\[
e_{21}[r]v^+=\sum_{s=1}^n z_s^r v_1\otimes \dots\otimes v_2\otimes\dots\otimes v_1,
\]where $v_2$ is in the $s$-th factor. Note that
\[
z_s^n+\sum_{i=1}^n(-1)^i\sigma_i(\bm z)z_s^{n-i}=0
\]
and $\{1,z_s,\dots,z_s^{n-1}\}$ is linearly independent over $\C[z_1,\dots,z_n]^\fkS$, therefore
\[
e_{21}[n]v^+=\sum_{i=1}^n(-1)^{i-1}\sigma_i(\bm z)e_{21}[n-i]v^+
\] and $\{e_{21}[0]v^+,e_{21}[1]v^+,\dots,e_{21}[n-1]v^+\}$ is linearly independent over $\C[z_1,\dots,z_n]^\fkS$. Similarly, one shows that each $e_{21}[r]v^+$ is spanned by $e_{21}[0]v^+,e_{21}[1]v^+,\dots,e_{21}[n-1]v^+$ over $\C[z_1,\dots,z_n]^\fkS$. By Lemma \ref{lem free graded}, $(\mc V^S)_{(n-1,1)}$ is free over $\C[z_1,\dots,z_n]^\fkS$ of rank $n$, therefore
\[
e_{21}[0]v^+,e_{21}[1]v^+,\dots,e_{21}[n-1]v^+
\]
are free generators of $(\mc V^S)_{(n-1,1)}$ over $\C[z_1,\dots,z_n]^\fkS$. 

Since $e_{21}[r]e_{21}[s]=-e_{21}[s]e_{21}[r]$, the case of general $\ell$ is proved similarly.
\end{proof}

\begin{lem}\label{lem free sing graded}
The space $(\mc V^S)^\sing_{(n-\ell,\ell)}$ is a free $\C[z_1,\dots,z_n]^\fkS$-module of rank ${n-1}\choose{\ell}$ with a free generating set given by
\beq\label{eq basis}
\{e_{12}[0]e_{21}[0]e_{21}[r_1]\cdots e_{21}[r_{\ell}]v^+,\quad 1\lle r_1<r_2<\dots<r_{\ell}\lle n-1\}.
\eeq
In particular, the space $(\mc V^S)^\sing$ is a free $\C[z_1,\dots,z_n]^\fkS$-module of rank $2^{n-1}$.
\end{lem}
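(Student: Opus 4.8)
The plan is to identify $(\mc V^S)_{(n-\ell,\ell)}$ with an exterior power and to recognize $e_{12}[0]$ as a contraction on it. Write $A=\C[z_1,\dots,z_n]^\fkS$ and $v^+=v_1^{\otimes n}$. By Lemma \ref{lem explicit basis} the assignment $e_r\mapsto e_{21}[r]v^+$ identifies the free module $A^n=\bigoplus_{r=0}^{n-1}Ae_r$ with $(\mc V^S)_{(n-1,1)}$, and, using $(e_{21}[r])^2=0$ and $e_{21}[r]e_{21}[s]=-e_{21}[s]e_{21}[r]$, the assignment $e_{r_1}\wedge\cdots\wedge e_{r_\ell}\mapsto e_{21}[r_1]\cdots e_{21}[r_\ell]v^+$ (for $0\lle r_1<\cdots<r_\ell\lle n-1$) identifies $\bigwedge^\ell(A^n)$ with $(\mc V^S)_{(n-\ell,\ell)}$. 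First I would record the relation in $\mathrm U(\gl_{1|1}[t])$,
\[
[e_{12}[0],e_{21}[r]]=e_{11}[r]+e_{22}[r]=:c[r],
\]
verify that $c[r]$ is central, and note that since $e_{11}+e_{22}$ acts as the identity on $\C^{1|1}$, the element $c[r]$ acts on $\mc V$ as multiplication by the power sum $p_r=\sum_{s=1}^n z_s^r\in A$.

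Next, using that $e_{12}[0]$ is odd with $e_{12}[0]v^+=0$ and that each $c[r]$ is central and even, I would move $e_{12}[0]$ to the right through a product to obtain the telescoping formula
\[
e_{12}[0]\,e_{21}[r_1]\cdots e_{21}[r_\ell]v^+=\sum_{j=1}^\ell(-1)^{j-1}p_{r_j}\,e_{21}[r_1]\cdots\widehat{e_{21}[r_j]}\cdots e_{21}[r_\ell]v^+.
\]
Under the identification above this says exactly that $e_{12}[0]$ acts as the contraction $\iota_\xi$ on $\bigwedge^\bullet(A^n)$, where $\xi=\sum_{r=0}^{n-1}p_r\,e_r^*$. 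The crucial observation is that $\xi(e_0)=p_0=n$ is a \emph{unit} in $A$. Hence $(\mc V^S)^\sing_{(n-\ell,\ell)}=\ker\big(\iota_\xi\colon\bigwedge^\ell(A^n)\to\bigwedge^{\ell-1}(A^n)\big)$, and the problem becomes computing this kernel.

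To compute it I would split $A^n=Ae_0\oplus A^{n-1}$ with $A^{n-1}=\bigoplus_{r=1}^{n-1}Ae_r$, write $\xi=p_0e_0^*+\xi'$ with $\xi'=\sum_{r\gge 1}p_r e_r^*$, and decompose $\bigwedge^\ell(A^n)=\bigwedge^\ell(A^{n-1})\oplus e_0\wedge\bigwedge^{\ell-1}(A^{n-1})$. A direct block computation using $\iota_\xi(e_0\wedge\mu)=p_0\mu-e_0\wedge\iota_{\xi'}\mu$ and $\iota_{\xi'}^2=0$ shows that $\eta+e_0\wedge\mu\in\ker\iota_\xi$ if and only if $\mu=-p_0^{-1}\iota_{\xi'}\eta$; thus $\eta\mapsto\eta-p_0^{-1}e_0\wedge\iota_{\xi'}\eta$ is an $A$-linear isomorphism from the free module $\bigwedge^\ell(A^{n-1})$ of rank $\binom{n-1}{\ell}$ onto $\ker\iota_\xi$. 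This already gives freeness of rank $\binom{n-1}{\ell}$.

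To match the explicit generators, I would finally compute, for $\eta=e_{r_1}\wedge\cdots\wedge e_{r_\ell}$ with $1\lle r_1<\cdots<r_\ell\lle n-1$,
\[
\iota_\xi(e_0\wedge\eta)=p_0\eta-e_0\wedge\iota_{\xi'}\eta=p_0\big(\eta-p_0^{-1}e_0\wedge\iota_{\xi'}\eta\big),
\]
so that the proposed vectors $e_{12}[0]e_{21}[0]e_{21}[r_1]\cdots e_{21}[r_\ell]v^+$ are precisely $p_0=n$ times the images of the standard basis of $\bigwedge^\ell(A^{n-1})$; since $n$ is a unit in $A$ they form a free generating set, proving the claim. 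The statement about $(\mc V^S)^\sing$ then follows by summing $\binom{n-1}{\ell}$ over $\ell$. The conceptual crux is recognizing $e_{12}[0]$ as a contraction whose leading component $p_0=n$ is invertible, after which the result is essentially the acyclicity of a Koszul-type complex; the only genuinely delicate point is the sign bookkeeping in the telescoping formula and in the contraction Leibniz identity, which I would check carefully but which presents no structural difficulty.
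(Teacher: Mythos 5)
Your proof is correct. It rests on the same two pillars as the paper's argument --- the free basis of $(\mc V^S)_{(n-\ell,\ell)}$ from Lemma \ref{lem explicit basis}, and the expansion of $e_{12}[0]$ past a product of $e_{21}[r_j]$'s using $[e_{12}[0],e_{21}[r]]=e_{11}[r]+e_{22}[r]$, which acts by the power sum $p_r(\bm z)$ --- but it organizes them differently. The paper gets spanning from the identity $e_{12}[0]e_{21}[0]w=(e_{11}[0]+e_{22}[0])w=nw$ for any singular $w$, so that $w=\tfrac1n e_{12}[0]e_{21}[0]w$ visibly lies in the span of the claimed set, and gets independence by reading off the leading terms $n\,e_{21}[r_1]\cdots e_{21}[r_\ell]v^+$ (distinct free generators) in the displayed expansion of $e_{12}[0]e_{21}[0]e_{21}[r_1]\cdots e_{21}[r_\ell]v^+$. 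You instead identify $(\mc V^S)_{(n-\ell,\ell)}$ with $\bigwedge^{\ell}(A^n)$, recognize $e_{12}[0]$ as the contraction $\iota_\xi$ with $\xi=\sum_r p_r e_r^*$, and compute $\ker\iota_\xi$ explicitly via the block decomposition along $e_0$; the invertibility of $\xi(e_0)=p_0=n$ plays exactly the role that the identity $e_{12}[0]e_{21}[0]w=nw$ plays in the paper. Your route is marginally longer but buys a cleaner structural statement --- an explicit $A$-linear isomorphism $\bigwedge^\ell(A^{n-1})\to(\mc V^S)^\sing_{(n-\ell,\ell)}$, $\eta\mapsto\tfrac1n\,\iota_\xi(e_0\wedge\eta)$ --- and makes transparent why the generators in the statement are the natural ones; your telescoping formula and signs agree with the paper's displayed expansion, so there is no gap.
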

\begin{proof}
Let $w$ be a non-zero singular vector in  $(\mc V^S)^\sing_{(n-\ell,\ell)}$, then 
\[
e_{12}[0]e_{21}[0]w=(e_{11}[0]+e_{22}[0])w-e_{21}[0]e_{12}[0]w=(e_{11}[0]+e_{22}[0])w=nw.
\]
By Lemma \ref{lem explicit basis}, we have that $w$ is a linear combination of vectors in \eqref{eq basis} over $\C[z_1,\dots,z_n]^\fkS$. 

Now it suffices to show that the set given in \eqref{eq basis} is linearly independent over $\C[z_1,\dots,z_n]^\fkS$. Note that
\begin{align*}
e_{12}[0]e_{21}[0]e_{21}[r_1]\cdots e_{21}[r_{\ell}]v^+=\ &ne_{21}[r_1]\cdots e_{21}[r_{\ell}]v^+ \\+&\sum_{i=1}^\ell (-1)^i p_{r_i}(\bm z)e_{21}[0]e_{21}[r_1]\cdots \widehat{e_{21}[r_i]}\cdots e_{21}[r_{\ell}]v^+,
\end{align*}
where $p_j(\bm z)=\sum_{s=1}^n z_s^j$ and $\widehat{e_{21}[r_i]}$ means the factor $e_{21}[r_i]$ is skipped. Therefore the statement follows from Lemma \ref{lem explicit basis}.
\end{proof}

We are ready to prove Proposition \ref{prop ch}. Note that 
\[
\mathrm{gr}((\mc V^\fkS)_{(n-\ell,\ell)})=(\mc V^S)_{(n-\ell,\ell)}, \qquad \mathrm{gr}((\mc V^\fkS)^\sing_{(n-\ell,\ell)})=(\mc V^S)^\sing_{(n-\ell,\ell)}.
\]
\begin{proof}[Proof of Proposition \ref{prop ch}]
By Lemma \ref{lem free graded}, the graded character $\ch\big((\mc V^S)_{(n-\ell,\ell)}\big)$ is equal to the graded character of the space spanned by the set \eqref{eq explicit basis} over $\C$ multiplying with the graded character of $\C[z_1,\dots,z_n]^\fkS$. We have $\ch\big(\C[z_1,\dots,z_n]^\fkS\big)=1/(q)_n$,
therefore 
\[
\ch\big(\mathrm{gr}((\mc V^\fkS)_{(n-\ell,\ell)})\big)=\ch\big((\mc V^S)_{(n-\ell,\ell)}\big)=q^{\ell(\ell-1)/2}\cdot\frac{(q)_n}{(q)_\ell(q)_{n-\ell}}\cdot\frac{1}{(q)_n}=\frac{q^{\ell(\ell-1)/2}}{(q)_{\ell}(q)_{n-\ell}}.
\]
Similarly, by Lemma \ref{lem free sing graded},
\[
\ch\big(\mathrm{gr}((\mc V^\fkS)^\sing_{(n-\ell,\ell)})\big)=\ch\big((\mc V^S)^\sing_{(n-\ell,\ell)}\big)=q^{\ell(\ell+1)/2}\cdot\frac{(q)_{n-1}}{(q)_\ell(q)_{n-1-\ell}}\cdot\frac{1}{(q)_n}=\frac{q^{\ell(\ell+1)/2}}{(q)_\ell(q)_{n-1-\ell}(1-q^n)}.\qedhere
\]
\end{proof}

\section{Main theorems}\label{sec main thms}
\subsection{The algebra $\mc{O}_{l}$}\label{sec o_l}
Let $\Omega_{l}$ be the $n$-dimensional affine space with coordinates $f_1,\dots,f_l$, $g_1$, $\dots$, $g_{n-l-1}$ and $\vSi_n$. Introduce two polynomials 
\beq\label{eq poly f g}
f(x)=x^l+\sum_{i=1}^{l}f_ix^{l-i},\quad g(x)=x^{n-l-1}+\sum_{i=1}^{n-l-1}g_ix^{n-l-i-1}.
\eeq
Denote by $\mc O_l$ the algebra of regular functions on $\Omega_{l}$, namely $\mc O_l=\C[f_1,\dots,f_l,g_1,\dots,g_{n-l-1},\vSi_n]$. Define the degree function by
\[
\deg f_i=i,\qquad \deg g_j=j,\qquad \deg\vSi_n=n,
\]for all $i=1,\dots,l$ and $j=1,\dots,n-l-1$. The algebra $\mc O_l$ is graded with the graded character given by
\beq\label{eq ch O}
\ch(\mc O_l)=\frac{1}{(q)_l(q)_{n-l-1}(1-q^n)}.
\eeq
Let $\mathscr F_0\mc O_l\subset \mathscr F_1\mc O_l\subset\dots\subset \mc O_l$ be the increasing filtration corresponding to this grading, where $\mathscr F_s\mc O_l$ consists of elements of degree at most $s$.

Let $\varSigma_1,\dots,\varSigma_{n-1}$ be the elements of $\mc O_l$ such that
\beq\label{eq si q=1}
nf(x)g(x)=\Big((x+1)^n+\sum_{i=1}^{n-1}(-1)^i\varSigma_i(x+1)^{n-i}\Big)-\Big(x^n+\sum_{i=1}^{n-1}(-1)^i\varSigma_ix^{n-i}\Big).
\eeq
The homomorphism
\beq\label{eq inj sym}
\pi_l:\C[z_1,\dots,z_n]^\fkS\to \mc O_l,\qquad \sigma_i(\bm z)\mapsto \vSi_i,\qquad i=1,\dots,n.
\eeq
is injective and induces a $\C[z_1,\dots,z_n]^\fkS$-module structure on $\mc O_l$.


Express $nf(x-1)g(x)$ as follows,
\beq\label{eq G q=1}
nf(x-1)g(x)=nx^{n-1}+\sum_{i=1}^{n-1}G_{i}x^{n-1-i},
\eeq
where $G_{i}\in\mc O_l$. 

\begin{lem}
The elements $G_{i}$ and $\varSigma_j$, $i=1,\dots,n-1$, $j=1,\dots, n$, generate the algebra $\mc O_l$.\qed
\end{lem}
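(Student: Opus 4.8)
The plan is to show that the subalgebra $A\subseteq\mc O_l$ generated by the $G_i$ and $\varSigma_j$ is all of $\mc O_l$. Since $\vSi_n$ is itself one of the generators, and $\mc O_l=\C[f_1,\dots,f_l,g_1,\dots,g_{n-l-1},\vSi_n]$ with $\vSi_n$ algebraically independent from the remaining coordinates, it suffices to prove that $A$ contains every $f_i$ and $g_j$; equivalently, that the coefficients of $f(x)g(x)$ and of $f(x-1)g(x)$ generate $\C[f_1,\dots,f_l,g_1,\dots,g_{n-l-1}]$. First I would record that, by the triangular form of \eqref{eq si q=1} (the coefficient of $x^{n-1-j}$ in $nf(x)g(x)$ equals $(-1)^j(n-j)\varSigma_j$ plus a combination of $\varSigma_1,\dots,\varSigma_{j-1}$, and $n-j\neq 0$ for $j\lle n-1$), the algebra $\C[\varSigma_1,\dots,\varSigma_{n-1}]$ coincides with the algebra generated by the coefficients of $f(x)g(x)$. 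By \eqref{eq G q=1} the $G_i$ are, up to the scalar $n$, the coefficients of $f(x-1)g(x)$. Hence $A=\C[\,\text{coeff.\ of }f(x)g(x),\ \text{coeff.\ of }f(x-1)g(x),\ \vSi_n\,]$, and the problem reduces to the displayed claim.

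Next I would pass to the associated graded algebra for the filtration $\mathscr F_\bullet\mc O_l$ (with $\deg f_i=i$, $\deg g_j=j$, $\deg x=1$), for which $\mathrm{gr}\,\mc O_l=\mc O_l$. By the standard principle that a family of filtered elements generates a filtered algebra as soon as their symbols generate the associated graded algebra, it is enough to compute the symbols of the generators of $A$ and check that they generate. The symbols of the coefficients of $f(x)g(x)$ are the homogeneous elements $c_j=\sum_{a+b=j}f_ag_b$ (with $f_0=g_0=1$). The coefficients of $f(x-1)g(x)$ have the same symbols $c_j$, so by themselves they contribute nothing new; the new information sits in the differences. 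Since $f(x-1)-f(x)=-f'(x)+\tfrac12 f''(x)-\cdots$ has top-degree part $-f'(x)$, the symbol of the coefficient of $x^{n-1-j}$ in $f(x-1)g(x)-f(x)g(x)$ is $-s_{j-1}$, where $s_m=\sum_{i+i'=m}(l-i)f_ig_{i'}$ is the coefficient of $x^{n-2-m}$ in $f'(x)g(x)$, homogeneous of degree $m$. Thus it suffices to show that the $c_j$ and the $s_m$ together generate $\C[f_1,\dots,f_l,g_1,\dots,g_{n-l-1}]$.

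The core step is this purely graded statement, which I would prove by induction on $r$. Assume $l\gge 1$ and $p:=n-l-1\gge 1$ (the cases $l=0$ or $p=0$ are immediate, since then $f(x)g(x)$ equals $g$ or $f$ and its coefficients are the $g_j$, resp.\ the $f_i$, outright). Interpreting $f_r=0$ for $r>l$ and $g_r=0$ for $r>p$, one has
\[
c_r=f_r+g_r+K_r,\qquad s_r=(l-r)f_r+l\,g_r+L_r,
\]
where $K_r,L_r$ are polynomials in $f_1,\dots,f_{r-1},g_1,\dots,g_{r-1}$. For $1\lle r\lle\min(l,p)$ these two relations form a linear system in $f_r,g_r$ with determinant
\[
\det\begin{pmatrix}1&1\\ l-r&l\end{pmatrix}=r\neq 0,
\]
so $f_r,g_r$ lie in the algebra generated by $c_1,\dots,c_r,s_1,\dots,s_r$; in the remaining range (say $p<r\lle l$ when $p<l$) the relation $c_r=f_r+K_r$ already yields $f_r$, and symmetrically $c_r=g_r+K_r$ yields $g_r$ when $l<r\lle p$. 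Induction then places every $f_i,g_j$ in the algebra generated by the $c$'s and $s$'s, proving the graded claim and hence the lemma.

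The main obstacle is conceptual rather than computational: the coefficients of $f(x-1)g(x)$ carry exactly the same top-degree data as those of $f(x)g(x)$, so no single generator from the given list separates the roots of $f$ from those of $g$. One cannot simply write $g=\gcd(f(x)g(x),f(x-1)g(x))$ and $f=f(x)g(x)/g$, since a gcd is not a polynomial operation on coefficients and would only give a birational statement. Passing to the associated graded algebra and extracting the second-order term $f'(x)g(x)$ is precisely what upgrades the qualitative assertion ``the two polynomials determine $f$ and $g$'' into the exact generation statement, through the nondegenerate $2\times2$ systems above.
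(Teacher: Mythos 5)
Your proof is correct. Note that the paper states this lemma with a bare \verb|\qed| and supplies no argument at all, so there is no official proof to compare against; you have filled in a gap the authors left to the reader. Your reduction is the natural one: the triangular relation \eqref{eq si q=1} (with the invertible factor $n-j\neq 0$ for $j\lle n-1$) identifies $\C[\varSigma_1,\dots,\varSigma_{n-1}]$ with the algebra generated by the coefficients $c_j=\sum_{a+b=j}f_ag_b$ of $f(x)g(x)$, the $G_i$ are the coefficients of $nf(x-1)g(x)$, and $\vSi_n$ is a free generator adjoined separately; passing to the associated graded algebra then isolates the genuinely new data in the symbols $-s_{j-1}$ of the differences, and the $2\times 2$ systems with determinant $r\neq 0$ recover $f_r,g_r$ inductively. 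Two small points, neither of which is a gap: the symbol of the coefficient of $x^{n-1-j}$ in $f(x-1)g(x)-f(x)g(x)$ equals $-s_{j-1}$ only when $s_{j-1}\neq 0$ as a polynomial, but this is harmless because you only invoke $s_r$ for $r\lle\min(l,p)$ with $l\gge 1$, where $s_r$ contains the monomial $lg_r$ and so is nonzero (and if $s_{j-1}=0$ the generation statement needs nothing from that index anyway); and the coefficient of $x^{n-1-j}$ of the right-hand side of \eqref{eq si q=1} also carries the constant $\binom{n}{j+1}$ from $(x+1)^n-x^n$, which your phrase ``a combination of $\varSigma_1,\dots,\varSigma_{j-1}$'' should be read as including. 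Your degree bookkeeping is also consistent with the adjacent Lemma \ref{lem deg O}, which records exactly that $G_i$ and $\varSigma_j$ have symbols of degrees $i$ and $j$.
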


\begin{lem}\label{lem deg O}
We have $G_{i}\in \mathscr F_{i}\mc O_l\setminus \mathscr F_{i-1}\mc O_l$ and $\varSigma_j\in \mathscr F_{j}\mc O_l\setminus \mathscr F_{j-1}\mc O_l$ for $i=1,\dots,n-1$, $j=1,\dots, n$.\qed
\end{lem}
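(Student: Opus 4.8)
The plan is to reduce both claims to one computation of leading (top-degree) terms. First I would record the product, writing
\[
f(x)g(x)=x^{n-1}+\sum_{k=1}^{n-1}c_kx^{n-1-k},\qquad c_k=\sum_{a+b=k}f_ag_b,
\]
with the conventions $f_0=g_0=1$, $f_a=0$ for $a>l$, and $g_b=0$ for $b>n-l-1$, where $f,g$ are as in \eqref{eq poly f g}. Since $\deg f_a=a$ and $\deg g_b=b$, each monomial $f_ag_b$ with $a+b=k$ is homogeneous of degree $k$, and for distinct admissible pairs $(a,b)$ these are pairwise distinct monomials of $\mc O_l$; moreover an admissible pair with $a+b=k$, $0\lle a\lle l$, $0\lle b\lle n-l-1$ always exists because $1\lle k\lle n-1=l+(n-l-1)$. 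Hence $c_k$ is a nonzero homogeneous element lying in $\mathscr F_k\mc O_l\setminus\mathscr F_{k-1}\mc O_l$ for every $1\lle k\lle n-1$. This single fact drives the whole argument.

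For the $G_i$ I would expand $f(x-1)=\sum_{a=0}^{l}\tilde f_a x^{l-a}$ and observe from the binomial expansion that $\tilde f_a=f_a+(\C$-linear combination of $f_0,\dots,f_{a-1})$, so that the degree-$a$ part of $\tilde f_a$ is exactly $f_a$ and $\tilde f_a\in\mathscr F_a\mc O_l$. Multiplying by $g(x)$ and reading off the coefficient of $x^{n-1-i}$ in \eqref{eq G q=1} gives $G_i=n\sum_{a+b=i}\tilde f_a g_b$. Since each $\tilde f_ag_b\in\mathscr F_{a+b}\mc O_l=\mathscr F_i\mc O_l$, we have $G_i\in\mathscr F_i\mc O_l$, and its degree-$i$ component is $n\sum_{a+b=i}f_ag_b=nc_i\ne 0$. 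Therefore $G_i\in\mathscr F_i\mc O_l\setminus\mathscr F_{i-1}\mc O_l$.

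For the $\varSigma_j$ I would match coefficients in the defining relation \eqref{eq si q=1}. Writing $P(x)=x^n+\sum_{i=1}^{n-1}(-1)^i\varSigma_ix^{n-i}$ and using $(x+1)^m-x^m=\sum_{r=0}^{m-1}\binom{m}{r}x^r$, the coefficient of $x^{n-1-k}$ on the right-hand side is $\binom{n}{k+1}+\sum_{i=1}^{k}(-1)^i\binom{n-i}{k+1-i}\varSigma_i$, which must equal $nc_k$ for $1\lle k\lle n-1$. Isolating the $i=k$ term $(-1)^k(n-k)\varSigma_k$ yields the recursion
\[
\varSigma_k=\frac{(-1)^k}{n-k}\Big(nc_k-\binom{n}{k+1}-\sum_{i=1}^{k-1}(-1)^i\binom{n-i}{k+1-i}\varSigma_i\Big),
\]
which is well-defined precisely because $k\lle n-1$ forces $n-k\ne 0$. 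An induction on $k$ then shows $\deg\varSigma_k=k$: by the inductive hypothesis the sum over $i<k$ lies in $\mathscr F_{k-1}\mc O_l$ and $\binom{n}{k+1}$ is a scalar, so the degree-$k$ part of $\varSigma_k$ equals $\tfrac{(-1)^kn}{n-k}c_k\ne 0$. Finally, $\varSigma_n=\vSi_n$ is a generator of degree $n$ by definition, which settles the case $j=n$.

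The only genuinely delicate point is the non-vanishing of the leading coefficient at every step: I must be sure that $c_k\ne 0$ throughout the range (guaranteed by the existence of an admissible pair and the distinctness of the monomials $f_ag_b$), and that in the recursion the lower-order contributions cannot cancel the $c_k$-term. The latter is automatic, since those contributions lie in strictly lower filtration degree and the scalar $\tfrac{(-1)^kn}{n-k}$ is nonzero for $1\lle k\lle n-1$. Everything else is a routine expansion of binomials and bookkeeping of filtration degrees.
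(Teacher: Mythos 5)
Your proof is correct: the key observation that $c_k=\sum_{a+b=k}f_ag_b$ is a nonzero homogeneous element of degree $k$ (being a sum of pairwise distinct monomials in the free generators, with at least one admissible pair since $k\lle l+(n-l-1)$), together with the triangular recursion for $\varSigma_k$ with invertible pivot $n-k$, gives exactly the claimed filtration degrees. The paper states this lemma without proof as a routine verification, and your argument is a complete and accurate write-up of that omitted computation.
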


\subsection{The algebra $\mc{O}_{l}^{Q}$}
We rework Section \ref{sec o_l} for the case of $q_1\neq q_2$.

Let $q_1\ne q_2$. Let $\Omega_{l}^Q$ be the $n$-dimensional affine space with coordinates $f_1,\dots,f_l$ and $g_1$, $\dots$, $g_{n-l}$. Introduce two polynomials 
\[
f(x)=x^l+\sum_{i=1}^{l}f_ix^{l-i},\quad g(x)=x^{n-l}+\sum_{i=1}^{n-l}g_ix^{n-l-i}.
\]
Denote by $\mc O^Q_l$ the algebra of regular functions on $\Omega_{l}^Q$, namely $\mc O^Q_l=\C[f_1,\dots,f_l,g_1,\dots,g_{n-l}]$. Let $\deg f_i=i$ and $\deg g_j=j$ for all $i=1,\dots,l$ and $j=1,\dots,n-l$. The algebra $\mc O_l^Q$ is graded with the graded character
\beq\label{eq char Q}
\ch(\mc O_l^Q)=\frac{1}{(q)_l(q)_{n-l}}.
\eeq
Let $\mathscr F_0\mc O_l^Q\subset \mathscr F_1\mc O_l^Q\subset\dots\subset \mc O_l^Q$ be the increasing filtration corresponding to this grading, where $\mathscr F_s\mc O_l^Q$ consists of elements of degree no greater than $s$.

Let $\varSigma_1,\dots,\varSigma_n$ be the elements of $\mc O_l^Q$ such that
\[
(q_1-q_2)f(x)g(x)=q_1\Big((x+1)^n+\sum_{i=1}^n(-1)^i\varSigma_i(x+1)^{n-i}\Big)-q_2\Big(x^n+\sum_{i=1}^n(-1)^i\varSigma_ix^{n-i}\Big).
\]
The homomorphism
\[
\pi^Q_l:\C[z_1,\dots,z_n]^\fkS\to \mc O^Q_l,\qquad \sigma_i(\bm z)\mapsto \vSi_s,\qquad i=1,\dots,n.
\]
is injective and induces a $\C[z_1,\dots,z_n]^\fkS$-module structure on $\mc O_l^Q$.


Express $(q_1-q_2)f(x-1)g(x)$ as follows,
\[
(q_1-q_2)f(x-1)g(x)=(q_1-q_2)x^{n}+\sum_{i=1}^{n}G_{i}^Qx^{n-i},
\]
where $G_{i}^Q\in\mc O_l^Q$.

\begin{lem}
The elements $G^Q_{i}$ and $\varSigma_i$, $i=1,\dots, n$, generate the algebra $\mc O^Q_l$.\qed
\end{lem}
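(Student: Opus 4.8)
The plan is to reduce the statement to a clean assertion about symmetric functions and then to extract the factors $f$ and $g$ separately by a power-sum argument. First I would note that, since $q_1\ne q_2$, the linear operator $\tilde\varSigma(x)\mapsto q_1\tilde\varSigma(x+1)-q_2\tilde\varSigma(x)$ is invertible on polynomials of degree $\lle n$: it is upper triangular in the monomial basis with every diagonal entry equal to $q_1-q_2$. Hence the coefficients of $\tilde\varSigma(x)=x^n+\sum_{i=1}^n(-1)^i\varSigma_i x^{n-i}$ and the coefficients of $f(x)g(x)$ determine one another triangularly, so $\C[\varSigma_1,\dots,\varSigma_n]$ equals the subalgebra generated by the coefficients of $f(x)g(x)$. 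Likewise each $G^Q_i$ is $(q_1-q_2)$ times a coefficient of $f(x-1)g(x)$. Writing $\mc A$ for the subalgebra generated by all the $\varSigma_i$ and $G^Q_i$, the claim becomes: the coefficients of $A(x):=f(x)g(x)$ and of $B(x):=f(x-1)g(x)$ generate $\mc O^Q_l$.

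To handle these I would pass to roots. Identify $\mc O^Q_l=\C[f_1,\dots,f_l,g_1,\dots,g_{n-l}]$ with $\C[\alpha_1,\dots,\alpha_l,\beta_1,\dots,\beta_{n-l}]^{\fkS_l\times\fkS_{n-l}}$, where $f(x)=\prod_{i=1}^l(x-\alpha_i)$ and $g(x)=\prod_{j=1}^{n-l}(x-\beta_j)$. The roots of $A$ are $\{\alpha_i\}\cup\{\beta_j\}$ and those of $B$ are $\{\alpha_i+1\}\cup\{\beta_j\}$. Consequently $\mc A$ contains the fully symmetric power sums $p_k:=\sum_i\alpha_i^k+\sum_j\beta_j^k$ (symmetric in the roots of $A$) and $p_k':=\sum_i(\alpha_i+1)^k+\sum_j\beta_j^k$ (symmetric in the roots of $B$) for all $k\gge 1$, since power sums are polynomials in the elementary symmetric functions, i.e.\ in the coefficients of $A$ and $B$.

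The crux is to isolate the $\alpha$'s. Expanding the binomial, $p_k'-p_k=\sum_{m=0}^{k-1}\binom{k}{m}u_m$, where $u_m:=\sum_{i=1}^l\alpha_i^m$ and $u_0=l$. Thus $p_k'-p_k=k\,u_{k-1}+\sum_{m=0}^{k-2}\binom{k}{m}u_m$, and an induction on $k$ (legitimate in characteristic zero, where $k$ is invertible) yields $u_1,u_2,\dots\in\mc A$. By Newton's identities $\C[\alpha_1,\dots,\alpha_l]^{\fkS_l}=\C[u_1,\dots,u_l]$, so all coefficients of $f(x)$ lie in $\mc A$; and then, since $f$ is monic, polynomial division gives $g(x)=A(x)/f(x)$ with coefficients again in $\mc A$. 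Hence $f_1,\dots,f_l,g_1,\dots,g_{n-l}\in\mc A$, so $\mc A=\mc O^Q_l$.

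I expect the main obstacle to be precisely this isolation step. A naive leading-term argument in the grading $\deg f_i=i$, $\deg g_j=j$ does \emph{not} work: the top-graded-degree parts of $G^Q_i$ and of $\varSigma_i$ agree up to a nonzero scalar, because the shift $x\mapsto x-1$ perturbs only lower-order terms, so the $G^Q_i$ supply no new information at leading order. The power-sum difference $p_k'-p_k$ is exactly the device that extracts the sub-leading data distinguishing $f$ from $g$; everything else is routine. The same mechanism, with the operator $\tilde\varSigma(x)\mapsto\tilde\varSigma(x+1)-\tilde\varSigma(x)$ now acquiring a one-dimensional kernel, explains why in the untwisted case one must append the extra generator $\vSi_n$, matching the companion lemma for $\mc O_l$.
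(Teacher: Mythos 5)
Your proof is correct and complete. The paper in fact offers no argument for this lemma at all -- it is stated with an immediate \qed as if self-evident -- so there is nothing to compare your route against; you have supplied the missing proof. The reduction to the coefficients of $f(x)g(x)$ and $f(x-1)g(x)$ via the invertible triangular substitution (using $q_1\ne q_2$) is clean, and the passage to roots followed by the telescoping power-sum identity $p_k'-p_k=k\,u_{k-1}+\sum_{m\le k-2}\binom{k}{m}u_m$ genuinely isolates $\C[f_1,\dots,f_l]$, after which monic division recovers $g$. Your side remark is also on point and worth keeping: since the top graded parts of $G^Q_i$ and $(q_1-q_2)(-1)^i\varSigma_i$ coincide, the statement is \emph{not} a formal consequence of a leading-term count (already for $n=2$, $l=1$ the leading terms only generate $\C[f_1+g_1,f_1g_1]$), so some device extracting the sub-leading data -- such as your difference of power sums -- is actually required; and the appearance of a one-dimensional kernel of $\tilde\varSigma(x)\mapsto\tilde\varSigma(x+1)-\tilde\varSigma(x)$ at $q_1=q_2$ correctly accounts for the extra generator $\varSigma_n$ in the companion lemma for $\mc O_l$.
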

\begin{lem}\label{lem deg O Q}
	We have $G^Q_{i}\in \mathscr F_{i}\mc O^Q_l\setminus \mathscr F_{i-1}\mc O^Q_l$ and $\varSigma_i\in \mathscr F_{i}\mc O^Q_l\setminus \mathscr F_{i-1}\mc O^Q_l$ for $i=1,\dots, n$.\qed
\end{lem}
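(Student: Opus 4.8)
The plan is to read off both $\varSigma_i$ and $G^Q_i$ as coefficients of powers of $x$ in the two defining relations and then to control their degrees in $\mc O^Q_l$ by an elementary induction. The key preliminary observation is that, with the convention $f_0=g_0=1$, the coefficient of $x^{n-i}$ in $f(x)g(x)=\sum_{a,b}f_ag_b\,x^{n-a-b}$ is $\sum_{a+b=i}f_ag_b$, which under $\deg f_i=i$, $\deg g_j=j$ is homogeneous of degree exactly $i$. Indeed it is a sum of pairwise distinct monomials, and for every $1\lle i\lle n$ there is at least one admissible pair $(a,b)$ with $0\lle a\lle l$, $0\lle b\lle n-l$ and $a+b=i$, so this degree-$i$ part is nonzero; this nonvanishing is exactly what will rule out membership in $\mathscr F_{i-1}\mc O^Q_l$.

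For the elements $\varSigma_i$, I would set $c_j=(-1)^j\varSigma_j$ and $h(x)=x^n+\sum_{j=1}^n c_j x^{n-j}$, so that the defining relation becomes $(q_1-q_2)f(x)g(x)=q_1h(x+1)-q_2h(x)$. Matching the coefficient of $x^{n-i}$ on both sides and expanding $(x+1)^{n-j}=\sum_m\binom{n-j}{m}x^{n-j-m}$ gives
\[
(q_1-q_2)c_i+q_1\sum_{j=0}^{i-1}\binom{n-j}{i-j}c_j=(q_1-q_2)\sum_{a+b=i}f_ag_b .
\]
Since $q_1\ne q_2$, I can solve for $c_i$ to obtain
\[
c_i=\sum_{a+b=i}f_ag_b-\frac{q_1}{q_1-q_2}\sum_{j=0}^{i-1}\binom{n-j}{i-j}c_j .
\]
Then I would induct on $i$, with base case $c_0=1$: assuming $c_j\in\mathscr F_j\mc O^Q_l$ for all $j<i$, the correction sum lies in $\mathscr F_{i-1}\mc O^Q_l$, whereas $\sum_{a+b=i}f_ag_b$ is homogeneous of degree $i$ and nonzero by the preliminary observation. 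Hence $c_i\in\mathscr F_i\mc O^Q_l\setminus\mathscr F_{i-1}\mc O^Q_l$, and the same conclusion holds for $\varSigma_i=(-1)^i c_i$.

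For the elements $G^Q_i$ the argument is more direct: the relation $(q_1-q_2)f(x-1)g(x)=(q_1-q_2)x^n+\sum_{i=1}^n G^Q_i x^{n-i}$ identifies $G^Q_i$ with $(q_1-q_2)$ times the coefficient of $x^{n-i}$ in $f(x-1)g(x)$. Expanding $f(x-1)$ shows that its coefficient of $x^{l-a}$ equals $f_a$ plus a $\C$-combination of $f_0,\dots,f_{a-1}$, hence lies in $\mathscr F_a\mc O^Q_l$ with degree-$a$ part exactly $f_a$. Multiplying by $g(x)$ and extracting $x^{n-i}$ then produces an element of $\mathscr F_i\mc O^Q_l$ whose degree-$i$ part is again $(q_1-q_2)\sum_{a+b=i}f_ag_b\ne0$, which gives $G^Q_i\in\mathscr F_i\mc O^Q_l\setminus\mathscr F_{i-1}\mc O^Q_l$.

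I do not expect a genuine obstacle here, as the computation is routine; the only two points that require attention are the use of $q_1\ne q_2$ to invert the leading coefficient $(q_1-q_2)$ in the recursion for $c_i$ (this is precisely where the hypothesis of the section enters, and is why all of $\varSigma_1,\dots,\varSigma_n$ are determined rather than only $\varSigma_1,\dots,\varSigma_{n-1}$), and the nonvanishing of $\sum_{a+b=i}f_ag_b$, which follows from the distinctness of the monomials $f_ag_b$ together with the existence of an admissible pair for each $1\lle i\lle n$.
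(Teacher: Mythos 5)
Your proof is correct: the paper states this lemma without proof (it is marked with a \qed as a routine degree count), and your argument — solving the triangular recursion for $c_i=(-1)^i\varSigma_i$ using $q_1\ne q_2$, and reading off the top-degree part $\sum_{a+b=i}f_ag_b\ne 0$ of both $\varSigma_i$ and $G^Q_i$ — is exactly the verification the authors intend. Your side remark about why $q_1\ne q_2$ lets the relation determine all of $\varSigma_1,\dots,\varSigma_n$ (in contrast to the $q_1=q_2$ case, where $\varSigma_n$ must be kept as an independent coordinate) is also accurate.
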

\subsection{Berezinian and Bethe algebra}\label{sec ber}
We follow the convention of \cite{MR14}. Let $\mc A$ be a superalgebra. Consider the operators of the form
\[
\mc K=
\sum_{i,j=1}^2(-1)^{|i||j|+|j|}E_{ij}\otimes K_{ij}\in \End(\C^{1|1})\otimes \mc A,
\]
where $K_{ij}$ are elements of $\mc A$ of parity $|i|+|j|$. We say that $\mc K$ is a \emph{Manin matrix} if
\[
[K_{ij},K_{rs}]=(-1)^{|i||j|+|i||r|+|j||r|}[K_{rj},K_{is}]
\]
for all $i,j,r,s=1,2$. In other words, $\mc K$ is Manin if and only if
$$
[K_{11},K_{21}]=[K_{22},K_{21}]=0,\qquad   [K_{11},K_{22}]=[K_{12},K_{21}].
$$

Assume that $K_{11},K_{22}$, and $K_{22}-K_{21}K_{11}^{-1}K_{12}$ are all invertible. 
Define the \emph{Berezinian} of $\mc K$ by
\[
\mathrm{Ber}(\mc K)=K_{11}(K_{22}-K_{21}K_{11}^{-1}K_{12})^{-1}.
\]
Then it is a straightforward check, see also \cite[Corollary 2.16]{MR14}, that 
we have three other ways to write the Berezinian:
\beq\label{ber}
\mathrm{Ber}(\mc K)=(K_{22}+K_{12}K_{11}^{-1}K_{21})^{-1}K_{11}=
K_{22}^{-1}(K_{11}-K_{12}K_{22}^{-1}K_{21})=(K_{11}+K_{21}K_{22}^{-1}K_{12})K_{22}^{-1}.
\eeq

Let $\tau$ be the difference operator, $(\tau f)(x)=f(x-1)$ for any function $f$ in $x$. Let $\mc A$ be the superalgebra $\Yone[\tau]$, where $\tau$ has parity $\bar 0$. Consider the operator $Z^Q(x,\tau)$,
\[
Z^Q(x,\tau)=T^t(x)Q^t\tau=T^t(x)Q\tau\in \End(\C^{1|1})\otimes\Yone[\tau].
\]
It follows from \eqref{eq RTT} or \eqref{com relations} that $Z^Q(x,\tau)$ is a Manin matrix, see e.g. \cite[Remark 2.12]{MR14}. Note that our generating series $T_{ij}(x)$ corresponds to $z_{ji}(u)$ in \cite{MR14}.

Denote $\mathrm{Ber}^Q(x)=\mathrm{Ber}(Z^Q(x,\tau))$. Then $\mathrm{Ber}^Q(x)$ is a scalar operator (does not contain $\tau$), moreover, $\mathrm{Ber}^Q(x)q_2/q_1$ does not depend on $Q$.

Expand $\mathrm{Ber}^Q(x)$ as a power series in $x^{-1}$ with coefficients in $\Yone$. The following proposition was conjectured by Nazarov \cite[Conjecture 2]{Naz} and proved by Gow \cite[Theorem 4]{Gow} for the general case of $\YglMN$.
\begin{prop}\label{prop central}
The coefficients of $\mathrm{Ber}^Q(x)$ generate the center of $\Yone$.\qed
\end{prop}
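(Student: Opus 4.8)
The plan is to prove simultaneously that the coefficients of $\mathrm{Ber}^Q(x)$ are central and that they generate all of $Z(\Yone)$. Since $q_2\,\mathrm{Ber}^Q(x)/q_1$ is independent of $Q$, I may take $Q=I$ and work with the scalar series $\mathrm{Ber}^I(x)=\mathrm{Ber}(T^t(x)\tau)$. First I would make $\mathrm{Ber}^I(x)$ explicit: pushing every $\tau$ to the right by means of $(\tau f)(x)=f(x-1)$ cancels the $\tau$'s against the inverses appearing in the Berezinian and yields the Schur-complement form
\[
\mathrm{Ber}^I(x)=T_{11}(x)\big(T_{22}(x)-T_{12}(x)T_{11}(x)^{-1}T_{21}(x)\big)^{-1},
\]
a genuinely $\tau$-free power series $1+\sum_{r\gge 1}B_r\,x^{-r}$ with $B_r\in\Yone$. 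I write $B_r$ for these coefficients below.

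For centrality I would show $[\mathrm{Ber}^I(x),T_{rs}(y)]=0$ for all $r,s$ and all $y$. The cleanest route is the Gauss decomposition $T(x)=F(x)D(x)E(x)$ with $D(x)=\mathrm{diag}(d_1(x),d_2(x))$, where $d_1(x)=T_{11}(x)$ and $d_2(x)$ is the Schur complement above, so that $\mathrm{Ber}^I(x)=d_1(x)d_2(x)^{-1}$. The relations \eqref{com relations} translate into simple commutation relations among the Drinfeld-type generators contained in $d_1,d_2$ and the off-diagonal currents, from which the vanishing of all commutators of $d_1d_2^{-1}$ with the generators follows by a direct (if tedious) check; alternatively one invokes the Manin-matrix identities for $Z^I(x,\tau)$ recorded after \eqref{ber}. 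This centrality is exactly the content established in \cite{Gow} for general $\YglMN$.

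For generation I would pass to the associated graded via the filtration $\deg(T_{ij}^{(r)})=r-1$ and the isomorphism $\mathrm{gr}\,\Yone\cong \mathrm{U}(\gl_{1|1}[t])$. The key input is the center of the target. Since the center of the Lie superalgebra $\gl_{1|1}$ is spanned by $e_{11}+e_{22}$, the center of $\gl_{1|1}[t]$ is spanned by $(e_{11}+e_{22})[s]$, $s\gge 0$; moreover the quadratic Casimir of $\gl_{1|1}$ does \emph{not} remain central in $\mathrm{U}(\gl_{1|1}[t])$ (for instance its bracket with $e_{12}[1]$ is nonzero), so $Z(\mathrm{U}(\gl_{1|1}[t]))=\C[(e_{11}+e_{22})[s]\,:\,s\gge 0]$. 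I then compute the symbols of the $B_r$: in the expansion of $\mathrm{Ber}^I(x)$ the top-degree part of $B_r$ comes only from the single-factor terms $T_{11}^{(r)}-T_{22}^{(r)}$ (all product terms have at least two factors and hence strictly smaller degree), and under the above isomorphism its symbol is $(e_{11}+e_{22})[r-1]$. Thus $\C[\bar B_r]=\C[(e_{11}+e_{22})[s]]=Z(\mathrm{gr}\,\Yone)$.

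Finally I close the argument by a standard filtered comparison. Centrality gives $\langle B_1,B_2,\dots\rangle\subseteq Z(\Yone)$, whence
\[
\C[(e_{11}+e_{22})[s]]=\C[\bar B_r]\subseteq \mathrm{gr}\langle B_r\rangle\subseteq \mathrm{gr}\,Z(\Yone)\subseteq Z(\mathrm{gr}\,\Yone)=\C[(e_{11}+e_{22})[s]],
\]
so all inclusions are equalities; a subalgebra of $Z(\Yone)$ whose associated graded equals that of $Z(\Yone)$ must coincide with $Z(\Yone)$, which proves the claim. The \textbf{main obstacle} is the generation half, and within it the genuinely super-specific point is the assertion that the graded center is no larger than the abelian current subalgebra $\C[(e_{11}+e_{22})[s]]$---i.e.\ that no quadratic or higher Casimir family survives in $\mathrm{U}(\gl_{1|1}[t])$---together with the careful bookkeeping of super-signs needed both to extract the symbols of $B_r$ and to verify centrality directly. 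This is precisely where the presence of fermionic generators makes the $\gl_{1|1}$ case more delicate than the even Yangian.
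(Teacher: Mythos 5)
First, note that the paper does not prove this proposition at all: it is stated with a citation, being Nazarov's conjecture proved by Gow \cite[Theorem 4]{Gow} for general $\YglMN$. Your outline in fact reproduces the architecture of Gow's argument (reduce to $Q=I$, establish centrality via the Gauss decomposition, then compare associated graded algebras through $\mathrm{gr}\,\Yone\cong\mathrm{U}(\gl_{1|1}[t])$ and a symbol computation showing $\bar B_r=(e_{11}+e_{22})[r-1]$). The symbol computation, the chain of inclusions, and the final filtered comparison are all fine, and the reduction to $Q=I$ is legitimate since $\mathrm{Ber}^Q(x)q_2/q_1$ is $Q$-independent.

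The genuine gap is the step you yourself flag as the main obstacle: the assertion that $Z(\mathrm{U}(\gl_{1|1}[t]))=\C[(e_{11}+e_{22})[s]\,:\,s\gge 0]$. Your justification is that the center of the Lie superalgebra $\gl_{1|1}[t]$ is spanned by the $(e_{11}+e_{22})[s]$ and that one particular lift of the quadratic Casimir fails to commute with $e_{12}[1]$. That does not follow: the center of an enveloping algebra is in general far larger than the enveloping algebra of the Lie-algebra center. Indeed, already for $t=0$ one has
\[
(e_{11}+e_{22})(e_{11}-e_{22}+1)+2e_{21}e_{12}\in Z(\mathrm{U}(\gl_{1|1}))\setminus\C[e_{11}+e_{22}],
\]
even though $Z(\gl_{1|1})=\C(e_{11}+e_{22})$. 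So to identify $Z(\mathrm{U}(\gl_{1|1}[t]))$ you must rule out \emph{every} central element outside $\C[(e_{11}+e_{22})[s]]$ (equivalently, bound the adjoint invariants of $S(\gl_{1|1}[t])$), not merely check that the obvious candidate $C$ does not survive; the computation $[C,e_{12}[1]]=2\big((e_{11}+e_{22})e_{12}[1]-(e_{11}+e_{22})[1]\,e_{12}\big)\ne 0$ excludes one element, not a whole family of potential higher-degree invariants. This identification is precisely the hard half of the Nazarov--Gow theorem, and without it your chain of equalities collapses at the last link $Z(\mathrm{gr}\,\Yone)=\C[(e_{11}+e_{22})[s]]$. (A smaller remark: your centrality half is likewise deferred to \cite{Gow}, so as written the proposal does not actually bypass the citation the paper relies on.)
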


We call the subalgebra of $\Yone$ generated by the coefficients of $\mathrm{Ber}^Q(x)$ and $\mc T_Q(x)$ the \emph{Bethe algebra associated to $Q$}, c.f. Remark \ref{two Bethe algebras}. We denote the Bethe algebra associated to $Q$ by $\mc B^Q$. We simply write $\mc B$ for $\mc B^I$, where $I$ is the identity matrix.

Let $\bla=(\la^{(1)},\dots,\la^{(k)})$ be a sequence of polynomial $\gl_{1|1}$-weights. Let $\bm a=(a_1,\dots,a_n)$ and $\bm b=(b_1,\dots,b_k)$ be two sequences of complex numbers. Recall that we have three kinds of modules, $\mc V^\fkS$,  $V(\bm{a})=\C^{1|1}(a_1)\otimes\dots\otimes\C^{1|1}(a_n)$, and $L(\bla,\bm b)=L_{\la^{(1)}}(b_1)\otimes \dots\otimes L_{\la^{(k)}}(b_k)$. 

Our main problem is to understand the spectrum of the Bethe algebra acting on $L(\bla,\bm b)$, when $L(\bla,\bm b)$ is a cyclic $\Yone$-module.

If $q_1\ne q_2$, let $\mc B_l^Q(\bm z)$, $\mc B_l^Q(\bm a)$, and $\mc B_l^Q(\bla,\bm b)$ denote, respectively, the images of the Bethe algebra $\mc B^Q$ in $\End((\mc V^\fkS)_{(n-l,l)})$, $\End((V(\bm{a}))_{(n-l,l)})$, and $\End((L(\bla,\bm b))_{(n-l,l)})$. For any element $X^Q\in\mc B^Q$, we denote by $X^Q(\bm z),X^Q(\bm a),X^Q(\bla,\bm b)$ the respective linear operators. 

If $q_1=q_2=1$, let $\mc B_l(\bm z)$, $\mc B_l(\bm a)$, and $\mc B_l(\bla,\bm b)$ denote, respectively, the images of the Bethe algebra $\mc B$ in $\End((\mc V^\fkS)^\sing_{(n-l,l)})$, $\End((V(\bm{a}))^\sing_{(n-l,l)})$, and $\End((L(\bla,\bm b))^\sing_{(n-l,l)})$. For any element $X\in \mc B$, we denote by $X(\bm z),X(\bm a),X(\bla,\bm b)$ the respective linear operators.

By abuse of language, we call the image of Bethe algebra again Bethe algebra.

Since by Lemma \ref{lem cyclic sym} the $\Yone$-module $\mc V^\fkS$ is generated by $v_1^{\otimes n}=v_1\otimes\dots\otimes v_1$, the series $\mathrm{Ber}^Q(x)$ acts on $\mc V^\fkS$ by multiplication by the series
\[
\frac{q_1}{q_2}\cdot\frac{(x-z_1+1)\cdots(x-z_n+1)}{(x-z_1)\cdots(x-z_n)}.
\]
Therefore there exist uniquely central elements $C_1,\dots,C_n$ of $\Yone$ of minimal degrees such that each $C_i$ acts on $\mc V^\fkS$ by multiplication by $\sigma_i(\bm z)$. 

Define $B_{i}^Q\in \mc B^Q$ by
\beq\label{eq diff poly}
\Big(x^n+\sum_{i=1}^n(-1)^iC_ix^{n-i}\Big)\mc T_Q(x)=x^{n}\Big((q_1-q_2)+\sum_{i=1}^{\infty} B_i^Qx^{-i}\Big).
\eeq
We write simply $B_i$ for $B_i^I$, where $I$ is the identity matrix.

Recall the $\Yone$-action on $\mc V^\fkS$ from \eqref{eq gamma}, we have $B_i^Q(\bm z)=0$ for $i> n$. When $Q=I$, we have $B_1(\bm z)=n$.

\begin{lem}\label{lem gen B}
The elements $B_{i}^Q(\bm z)$ and $C_i(\bm z)$, $i=1,\dots, n$, generate the algebra $\mc B_l^Q(\bm z)$.\qed
\end{lem}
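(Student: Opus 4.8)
The plan is to exploit that $\mc B^Q$ is generated, as a subalgebra of $\Yone$, by the coefficients of $\mathrm{Ber}^Q(x)$ and of $\mc T_Q(x)$; consequently $\mc B_l^Q(\bm z)$ is generated by the images of these two families of coefficients in $\End((\mc V^\fkS)_{(n-l,l)})$. I would first pin down the image of the center (coming from $\mathrm{Ber}^Q(x)$) and then use the defining relation \eqref{eq diff poly} to trade the infinitely many coefficients of $\mc T_Q(x)$ for the finitely many operators $B_1^Q(\bm z),\dots,B_n^Q(\bm z)$. Since $q_1\ne q_2$ here, $\mc T_Q(x)$ commutes with $\mathrm{U}(\h)$ and so preserves the weight space $(\mc V^\fkS)_{(n-l,l)}$, while the central elements act by scalars on it; thus every operator in sight genuinely restricts to this space.

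For the image of the center I would use that, as recalled before the lemma, $\mathrm{Ber}^Q(x)$ acts on $\mc V^\fkS$ by multiplication by $\tfrac{q_1}{q_2}\prod_{r=1}^n\frac{x-z_r+1}{x-z_r}$, whose coefficients in $x^{-1}$ are symmetric polynomials in $\bm z$. Since by Proposition \ref{prop central} the center of $\Yone$ is generated by the coefficients of $\mathrm{Ber}^Q(x)$, every central element acts on $(\mc V^\fkS)_{(n-l,l)}$ by multiplication by an element of $\C[\sigma_1(\bm z),\dots,\sigma_n(\bm z)]$; conversely the $C_i$ are central and act by $\sigma_i(\bm z)$. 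Hence the image of the center is exactly $\C[C_1(\bm z),\dots,C_n(\bm z)]$.

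Next I would evaluate the defining relation \eqref{eq diff poly} on $\mc V^\fkS$. Because $C_i$ acts by $\sigma_i(\bm z)$, the polynomial factor on the left collapses to $P(x)=\prod_{r=1}^n(x-z_r)$, and using $B_i^Q(\bm z)=0$ for $i>n$ this gives
\[
P(x)\,\mc T_Q(x)\big|_{(\mc V^\fkS)_{(n-l,l)}}=x^n(q_1-q_2)+\sum_{i=1}^n B_i^Q(\bm z)\,x^{n-i}.
\]
Reading off the coefficient of $x^{n-i}$ expresses each $B_i^Q(\bm z)$ as a polynomial in the $C_j(\bm z)$ and the coefficients $\mc T_Q^{(r)}(\bm z)$ of $\mc T_Q(x)$; conversely, since $P(x)$ is monic it is invertible as a power series in $x^{-1}$ with coefficients in $\C[\sigma_j(\bm z)]=\C[C_j(\bm z)]$, so dividing by $P(x)$ expresses each $\mc T_Q^{(r)}(\bm z)$ as a polynomial in the $C_j(\bm z)$ and $B_i^Q(\bm z)$. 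Therefore the subalgebra generated by $\{C_i(\bm z),\mc T_Q^{(r)}(\bm z)\}$ coincides with the one generated by $\{C_i(\bm z),B_i^Q(\bm z)\}_{i=1}^n$, which by the first two paragraphs is all of $\mc B_l^Q(\bm z)$.

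The main obstacle is the second step, namely showing that the image of the center is neither larger nor smaller than $\C[C_i(\bm z)]$: this is precisely where the non-trivial input of Proposition \ref{prop central} is essential, together with the observation that a central element must act by a symmetric polynomial in $\bm z$. The remaining points — verifying $B_i^Q(\bm z)=0$ for $i>n$ from the polynomiality of $L(x)$ in the realization \eqref{eq gamma}, and inverting the monic series $P(x)$ — are routine.
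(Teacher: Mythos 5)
Your argument is correct and is exactly the reasoning the paper intends (the lemma is stated with no proof, as an immediate consequence of the definitions): the image of $\mc B^Q$ is generated by the images of the coefficients of $\mathrm{Ber}^Q(x)$ and $\mc T_Q(x)$, the former reduce on $\mc V^\fkS$ to symmetric polynomials in $\bm z$, i.e.\ to polynomials in the $C_i(\bm z)$, and the latter are traded for the finitely many $B_i^Q(\bm z)$ by multiplying by the monic polynomial $\prod_{r}(x-z_r)$ and inverting it as a power series in $x^{-1}$. No gaps; the only inessential overkill is invoking Proposition \ref{prop central} for the full center when you only need the subalgebra generated by the coefficients of $\mathrm{Ber}^Q(x)$.
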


\begin{lem}\label{lem deg B}
We have $C_i\in \mathscr F_i \mc B^Q\setminus \mathscr F_{i-1} \mc B^Q$, $B_{i}\in \mathscr F_{i-1}\mc B\setminus \mathscr F_{i-2} \mc B$, and $B^Q_{i}\in \mathscr F_{i}\mc B^Q\setminus \mathscr F_{i-1} \mc B^Q$ for $i=1,\dots, n$.\qed
\end{lem}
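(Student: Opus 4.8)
The plan is to reduce the whole statement to the single fact that $\deg C_i=i$ (i.e.\ $C_i\in\mathscr F_i\mc B^Q\setminus\mathscr F_{i-1}\mc B^Q$), and then read off the degrees of $B_i$ and $B^Q_i$ from the defining relation \eqref{eq diff poly}. Writing $\mc T^{(r)}_Q=q_1T^{(r)}_{11}-q_2T^{(r)}_{22}$ (so $\mc T^{(0)}_Q=q_1-q_2$) and $C_0=1$, comparison of the coefficients of $x^{n-i}$ in \eqref{eq diff poly} gives, for $1\le i\le n$,
\beq
B^Q_i=\sum_{j=0}^{i}(-1)^jC_j\,\mc T^{(i-j)}_Q=\mc T^{(i)}_Q+(-1)^i(q_1-q_2)C_i+\sum_{j=1}^{i-1}(-1)^jC_j\,\mc T^{(i-j)}_Q.
\eeq
Since $\deg\mc T^{(r)}_Q=r-1$ and (granting $\deg C_j=j$) $\deg\bigl(C_j\mc T^{(i-j)}_Q\bigr)=i-1$, every summand lies in $\mathscr F_{i-1}$ except the term $(-1)^i(q_1-q_2)C_i\in\mathscr F_i$, which is present exactly when $q_1\ne q_2$. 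Thus for $q_1\ne q_2$ we get $B^Q_i\in\mathscr F_i$, while for $Q=I$ the exceptional term drops out and $B_i\in\mathscr F_{i-1}$; this yields the upper bounds in the last two assertions.

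Next I would establish $\deg C_i=i$. For the lower bound I use that the action $\Gamma$ preserves the filtration and $v_1^{\otimes n}\in\mathscr F_0\mc V$: since $C_iv_1^{\otimes n}=\sigma_i(\bm z)v_1^{\otimes n}$ has $\bm z$-degree $i$, we cannot have $C_i\in\mathscr F_{i-1}\Yone$, so $\deg C_i\ge i$. The \emph{main obstacle} is the matching upper bound $\deg C_i\le i$, which by minimality of $C_i$ reduces to exhibiting \emph{some} central element of filtration degree $\le i$ acting on $\mc V^\fkS$ by $\sigma_i(\bm z)$. Here I would use the quantum Berezinian. Say a series $f(x)=\sum_{r\ge0}f_rx^{-r}\in\Yone[[x^{-1}]]$ has property $(\star)$ if $f_0\in\C$ and $\deg f_r\le r-1$ for all $r\ge1$. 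Each $T_{ij}(x)$ has $(\star)$ because $\deg T^{(r)}_{ij}=r-1$, and a direct check shows $(\star)$ is preserved under (super)products, under inversion of series whose constant term is an invertible scalar, and under the shifts $x\mapsto x\pm1$ induced by $\tau$. Since $\mathrm{Ber}^Q(x)$ is assembled from the entries of $T^t(x)Q\tau$ by precisely these operations (see \eqref{ber}), its coefficient $\beta_r$ of $x^{-r}$ is a central element with $\deg\beta_r\le r-1$.

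To convert the $\beta_r$ into the $C_i$, recall that $\mathrm{Ber}^Q(x)$ acts on $\mc V^\fkS$ by $\tfrac{q_1}{q_2}\,B(x+1)/B(x)$, where $B(x)=\prod_s(x-z_s)$. Expanding $\log\bigl(B(x+1)/B(x)\bigr)=\sum_{k\ge1}\tfrac1k\bigl(\textstyle\sum_s(z_s^k-(z_s-1)^k)\bigr)x^{-k}$, one checks that the top $\bm z$-degree part of the $x^{-r}$-coefficient of $B(x+1)/B(x)$ is the power sum $p_{r-1}(\bm z)=\sum_s z_s^{r-1}$, of degree $r-1$; hence $\beta_r$ acts by a symmetric polynomial with leading part $p_{r-1}$, and the lower-bound argument forces $\deg\beta_r=r-1$ exactly. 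Newton's identities express $\sigma_i$ as a weighted-homogeneous polynomial of degree $i$ in $p_1,\dots,p_i$; substituting the central elements $\beta_2,\dots,\beta_{i+1}$ produces a central $\hat C_i$ with $\deg\hat C_i\le i$ acting by $\sigma_i+h$, where $h\in\C[z_1,\dots,z_n]^\fkS$ has degree $\le i-1$. Writing $h=H(\sigma_1,\dots,\sigma_{i-1})$ with each monomial of weighted degree $\le i-1$ and replacing $\sigma_j$ by the already-constructed $C_j$ gives a central element $H(C_1,\dots,C_{i-1})$ of degree $\le i-1$ acting by $h$; then $\hat C_i-H(C_1,\dots,C_{i-1})$ is central, of degree $\le i$, and acts exactly by $\sigma_i$. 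By induction on $i$ and minimality this gives $\deg C_i\le i$, hence $\deg C_i=i$.

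Finally, with $\deg C_i=i$ in hand I would pin down the two non-vanishing statements for the $B$'s again by evaluating on $v_1^{\otimes n}$, where everything acts by a scalar: using $T_{11}(x)v_1^{\otimes n}=\bigl(B(x+1)/B(x)\bigr)v_1^{\otimes n}$, $T_{22}(x)v_1^{\otimes n}=v_1^{\otimes n}$ and $C_jv_1^{\otimes n}=\sigma_j(\bm z)v_1^{\otimes n}$, the expansion of the first paragraph gives $B^Q_iv_1^{\otimes n}=\bigl(q_1\,\delta_i+(-1)^i(q_1-q_2)\sigma_i\bigr)v_1^{\otimes n}$, where $\delta_i$ is symmetric of degree $\le i-1$ with leading part $(-1)^{i-1}(n-i+1)\sigma_{i-1}$. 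For $q_1\ne q_2$ the $\sigma_i$-term dominates, so $B^Q_iv_1^{\otimes n}$ has $\bm z$-degree exactly $i$ and $B^Q_i\notin\mathscr F_{i-1}$; for $Q=I$ the $\sigma_i$-term disappears and $B_iv_1^{\otimes n}$ has degree exactly $i-1$ (nonzero since $1\le i\le n$), so $B_i\notin\mathscr F_{i-2}$. Together with the upper bounds from the first paragraph this proves all three assertions. I expect the degree bound $(\star)$ for the Berezinian coefficients together with the Newton-identity bookkeeping to be the only genuinely delicate point; everything else is filtration arithmetic and evaluation at the cyclic vector $v_1^{\otimes n}$.
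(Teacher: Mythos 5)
Your proof is correct. The paper itself offers no argument for this lemma --- it is stated with a \qed as if evident --- so there is nothing to compare against; your write-up supplies a complete justification. The two genuinely nontrivial points are exactly the ones you isolate: (a) the upper bound $\deg C_i\le i$, which you get by showing every coefficient $\beta_r$ of $\mathrm{Ber}^Q(x)$ lies in $\mathscr F_{r-1}\Yone$ (your property $(\star)$ is indeed stable under products, inversion of series with invertible scalar constant term, and the shifts, and by \eqref{ber} the Berezinian is built from the $T_{ij}(x)$ by exactly these operations) and then converting power sums to elementary symmetric functions via Newton's identities with an inductive correction by $C_1,\dots,C_{i-1}$; and (b) the lower bounds, which follow from evaluating on the cyclic vector $v_1^{\otimes n}$ and using that the action preserves the filtration, noting $T_{11}(x)v_1^{\otimes n}=\bigl(\prod_s(x-z_s+1)/\prod_s(x-z_s)\bigr)v_1^{\otimes n}$ and $T_{22}(x)v_1^{\otimes n}=v_1^{\otimes n}$, so that $B_i^Qv_1^{\otimes n}$ is the coefficient of $x^{n-i}$ in $q_1\prod_s(x-z_s+1)-q_2\prod_s(x-z_s)$, whose leading symmetric part is $(-1)^i(q_1-q_2)\sigma_i$ in the twisted case and $(-1)^{i-1}(n-i+1)\sigma_{i-1}$ when $q_1=q_2$. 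The filtration arithmetic deducing the bounds on $B_i$, $B_i^Q$ from \eqref{eq diff poly} is also as it should be.
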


\subsection{Main theorems for the case $q_1=q_2$}\label{sec main thm}
Recall from Proposition \ref{prop ch} that there exists a unique vector (up to proportionality) of degree $l(l+1)/2$ in $(\mc V^\fkS)^\sing_{(n-l,l)}$ explicitly given by $\mathfrak u_l:=T_{21}^{(1)}T_{12}^{(1)}T_{12}^{(2)}\cdots T_{12}^{(l+1)}v^+$, see Lemma \ref{lem free sing graded}.

Any commutative algebra $\mc A$ is a module over itself induced by left multiplication. We call it the \emph{regular representation of} $\mc A$. The dual space 
$\mc A^*$ is naturally an $\mc A$-module which is called the \emph{coregular representation}. A bilinear form $(\cdot|\cdot):\mc A\otimes \mc A\to \C$ is called \emph{invariant} if $(ab|c)=(a|bc)$ for all $a,b,c\in\mc A$. A finite-dimensional commutative algebra $\mc A$ admitting an invariant non-degenerate symmetric bilinear form $(\cdot|\cdot):\mc A\otimes \mc A\to \C$ is called a \emph{Frobenius algebra}. The regular and coregular representations of a  Frobenius algebra are isomorphic.

Let $M$ be an $\mathcal A$-module and $\mathcal E:\mathcal A\to \C$ a character, then the {\it $\mc A$-eigenspace associated to $\mc E$} in $M$ is defined by $\bigcap_{a\in \mathcal A}\ker(a|_M-\mathcal E(a))$. The {\it generalized $\mc A$-eigenspace associated to $\mc E$} in $M$ is defined by $\bigcap_{a\in \mathcal A}\big(\bigcup_{m=1}^\infty\ker(a|_M-\mathcal E(a))^m\big)$.

\begin{thm}\label{thm VS}
The action of the  Bethe algebra $\mc B_l(\bm z)$ on $(\mc V^\fkS)_{(n-l,l)}^\sing$ has the following properties.
\begin{enumerate}
\item The map $\eta_l:G_{i}\mapsto B_{i+1}(\bm z)$, $\varSigma_j\mapsto C_j(\bm z)$, $i=1,\dots,n-1$, $j=1,\dots,n$, extends uniquely to an isomorphism $\eta_l:\mc O_l\to \mc B_l(\bm z)$ of filtered algebras. Moreover, the isomorphism $\eta_l$ is an isomorphism of $\C[z_1,\dots,z_n]^\fkS$-modules.
	
\item The map $\rho_l:\mc O_l\mapsto (\mc V^\fkS)_{(n-l,l)}^\sing$, $F\mapsto \eta_l(F)\mathfrak u_l$, is an isomorphism of filtered vector spaces identifying the $\mc B_l(\bm z)$-module $(\mc V^\fkS)_{(n-l,l)}^\sing$ with the regular representation of $\mc O_l$.
\end{enumerate}
\end{thm}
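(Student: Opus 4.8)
The plan is to prove (1) and (2) together, extracting the algebra isomorphism (1) largely as a formal consequence of the module isomorphism (2). I would first record the two cheap facts: $\eta_l$ is surjective because $B_{i+1}(\bm z)$ and $C_j(\bm z)$ generate $\mc B_l(\bm z)$ by Lemma \ref{lem gen B}, and the two $\C[z_1,\dots,z_n]^\fkS$-module structures are compatible because $\vSi_j\mapsto C_j(\bm z)$ while $C_j(\bm z)$ acts by multiplication by $\sigma_j(\bm z)$. Granting that $\eta_l$ is a well-defined homomorphism and that $\rho_l$ is an isomorphism, injectivity of $\eta_l$ is automatic (if $\eta_l(F)=0$ then $\rho_l(F)=\eta_l(F)\mathfrak u_l=0$, hence $F=0$), and the identity $\rho_l(FH)=\eta_l(F)\rho_l(H)$ then identifies $(\mc V^\fkS)^\sing_{(n-l,l)}$ with the regular representation of $\mc O_l\cong\mc B_l(\bm z)$; the filtered statements follow from the degree bookkeeping of Lemmas \ref{lem deg O} and \ref{lem deg B}.

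To see that $\eta_l$ is well defined I would argue by generic specialization, so that no relation among the abstract generators must be checked by hand. A polynomial relation $R(\{G_i\},\{\vSi_j\})=0$ in $\mc O_l$ yields an operator $R(\{B_{i+1}(\bm z)\},\{C_j(\bm z)\})$ on $(\mc V^\fkS)^\sing_{(n-l,l)}$ with entries in $\C[z_1,\dots,z_n]^\fkS$ (the $\Yone$-action is $\C[\bm z]$-linear by Lemma \ref{lem com sn}), so it is enough to check its vanishing at generic $\bm z=\bm a$. For such $\bm a$ the module $V(\bm a)$ is irreducible and $\gamma_{\bm a}(x)=\prod_r(x-a_r+1)-\prod_r(x-a_r)$ has simple roots; by Proposition \ref{prop local weyl} and Lemma \ref{lem free sing} the specialization of the free module is $(V(\bm a))^\sing_{(n-l,l)}$, and by Theorem \ref{thm complete} it has a basis of on-shell Bethe vectors $v_y$ indexed by the monic degree-$l$ divisors $y$ of $\gamma_{\bm a}$. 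On $v_y$ the operator $\prod_r(x-z_r)\mc T(x)=\str L(x)$, whose coefficients are the $B_i(\bm z)$, acts by the scalar $n\,y(x-1)g(x)$ with $g=\gamma_{\bm a}/(ny)$ (Theorem \ref{thm gl11 eigenvalue in y}); comparing with \eqref{eq G q=1} this is exactly $nx^{n-1}+\sum_iG_i x^{n-1-i}$ evaluated at the point $(y,g,\sigma_n(\bm a))\in\Omega_l$, while $C_j(\bm a)$ acts by $\sigma_j(\bm a)=\vSi_j$ at the same point. Hence each $v_y$ is killed by $R(\{B_{i+1}(\bm a)\},\{C_j(\bm a)\})$, so this operator vanishes for generic $\bm a$ and therefore identically.

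The heart of the matter is (2). Both $\mc O_l$ and $(\mc V^\fkS)^\sing_{(n-l,l)}$ are free $\C[z_1,\dots,z_n]^\fkS$-modules of rank $\binom{n-1}{l}$ — the target by Lemma \ref{lem free sing}, and $\mc O_l$ because $\ch(\mc O_l)/\ch(\C[\bm z]^\fkS)=(q)_{n-1}/\big((q)_l(q)_{n-l-1}\big)$ has value $\binom{n-1}{l}$ at $q=1$ — and $\rho_l$ is $\C[z_1,\dots,z_n]^\fkS$-linear. Since a surjection between free modules of equal finite rank over the Noetherian domain $\C[z_1,\dots,z_n]^\fkS$ is an isomorphism, it suffices to prove $\rho_l$ surjective, that is, that $\mathfrak u_l$ is a cyclic vector for $\mc B_l(\bm z)$. \textbf{This cyclicity is the main obstacle.} I would establish it after passing to the associated graded $\mathrm{gr}((\mc V^\fkS)^\sing_{(n-l,l)})=(\mc V^S)^\sing_{(n-l,l)}$, where the symbol of $\mathfrak u_l$ is, up to a scalar, the minimal-degree basis vector $e_{12}[0]e_{21}[0]e_{21}[1]\cdots e_{21}[l]v^+$ of Lemma \ref{lem free sing graded}. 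Using the explicit $\gl_{1|1}[t]$-action and the identities in the proofs of Lemmas \ref{lem explicit basis} and \ref{lem free sing graded}, I would verify that the symbols of the $B_{i+1}(\bm z)$, together with multiplication by the $\sigma_j(\bm z)$, carry this vector onto the full basis $\{e_{12}[0]e_{21}[0]e_{21}[r_1]\cdots e_{21}[r_l]v^+\}$; this is the one genuinely computational step. The character identity $\ch((\mc V^S)^\sing_{(n-l,l)})=q^{l(l+1)/2}\ch(\mc O_l)$ from Proposition \ref{prop ch} and \eqref{eq ch O} confirms that the degree shift by $\deg\mathfrak u_l=l(l+1)/2$ loses nothing, so that graded surjectivity lifts to surjectivity of $\rho_l$.

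Assembling the pieces, $\eta_l$ is a well-defined surjective filtered $\C[z_1,\dots,z_n]^\fkS$-algebra homomorphism and $\rho_l$ is an isomorphism, whence the formal consequences recorded in the first paragraph yield injectivity of $\eta_l$, the algebra isomorphism (1), and the identification of $(\mc V^\fkS)^\sing_{(n-l,l)}$ with the regular representation of $\mc O_l$ in (2). The only step demanding real work is the graded cyclicity of $\mathfrak u_l$; the rest is either formal or a specialization argument anchored by the completeness of the Bethe ansatz for generic evaluation parameters.
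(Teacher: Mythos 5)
Your treatment of part (1) — well-definedness and injectivity of $\eta_l$ by specializing to generic $\bm a$ and evaluating on a basis of on-shell Bethe vectors via Theorems \ref{thm complete} and \ref{thm gl11 eigenvalue in y} — is exactly the paper's argument, and your formal reductions (surjectivity of $\eta_l$ from Lemma \ref{lem gen B}, injectivity of $\eta_l$ from bijectivity of $\rho_l$, the filtration bookkeeping) are fine. The problem is the step you yourself flag as the heart of the matter: proving surjectivity of $\rho_l$ by showing that the \emph{symbols} of the $B_{i+1}(\bm z)$ move the symbol of $\mathfrak u_l$ onto the full basis of $(\mc V^S)^\sing_{(n-l,l)}$. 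This cannot work. The symbol of $B_{i+1}(\bm z)$ in degree $i$ is $\sum_{j=0}^{i}(-1)^j\sigma_j(\bm z)\,(e_{11}[i-j]+e_{22}[i-j])$, and $e_{11}[r]+e_{22}[r]$ acts on $\mc V$ as multiplication by the power sum $p_r(\bm z)$ (since $e_{11}+e_{22}$ is the identity on $\C^{1|1}$). So every symbol you propose to use acts on $(\mc V^S)^\sing_{(n-l,l)}$ as a \emph{scalar} in $\C[z_1,\dots,z_n]^\fkS$ (e.g.\ the symbol of $B_2(\bm z)$ acts by $(1-n)\sigma_1(\bm z)$), and together with the $\sigma_j(\bm z)$ they carry the symbol of $\mathfrak u_l$ only into the rank-one submodule $\C[z_1,\dots,z_n]^\fkS\cdot e_{12}[0]e_{21}[0]e_{21}[1]\cdots e_{21}[l]v^+$, not onto a module of rank $\binom{n-1}{l}$. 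The non-scalar part of the Bethe algebra is invisible at the level of symbols of these generators: $\mathrm{gr}\,\mc B_l(\bm z)$ is strictly larger than the algebra generated by those symbols, and it only appears through combinations $P(B,C)$ whose leading scalar terms cancel — identifying it is essentially equivalent to the theorem itself.

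The paper avoids this entirely by running the argument in the opposite direction. It first shows $\rho_l$ is \emph{injective} by a soft argument: the kernel of $\rho_l$ is an ideal of the domain $\mc O_l$; it meets $\C[z_1,\dots,z_n]^\fkS$ (embedded via \eqref{eq inj sym}) trivially because $(\mc V^\fkS)^\sing_{(n-l,l)}$ is a free, hence torsion-free, $\C[z_1,\dots,z_n]^\fkS$-module and $\mathfrak u_l\neq 0$; and since $\mc O_l$ is integral over $\C[z_1,\dots,z_n]^\fkS$, any nonzero ideal would meet that subring nontrivially, so the kernel is zero. Surjectivity then comes for free from the equality of graded characters $\ch\big(\mathrm{gr}((\mc V^\fkS)^\sing_{(n-l,l)})\big)=q^{l(l+1)/2}\ch(\mc O_l)$ (Proposition \ref{prop ch} versus \eqref{eq ch O}), comparing finite-dimensional filtered pieces. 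If you want to keep your structure, you must replace your graded-cyclicity computation by this injectivity-plus-character-count argument (or by some genuinely different proof of cyclicity that does not rely on symbols of the generators $B_{i+1}(\bm z)$). A second, minor, point: you justify that $\mc O_l$ is free of rank $\binom{n-1}{l}$ over $\C[z_1,\dots,z_n]^\fkS$ only by evaluating a character ratio at $q=1$; that computes the rank assuming freeness but does not prove it (one needs, e.g., that $\vSi_1,\dots,\vSi_n$ are, up to a triangular change, the coefficients of $f(x)g(x)$ together with $\vSi_n$, and the standard freeness of $\C[w_1,\dots,w_{n-1}]^{\fkS_l\times\fkS_{n-l-1}}$ over the full ring of symmetric functions).
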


Theorem \ref{thm VS} is proved in Section \ref{sec proof}.

\medskip

Let $\bla=(\la^{(1)},\dots,\la^{(k)})$ be a sequence of polynomial $\gl_{1|1}$-weights such that $|\bla|=n$. Let $\bm b=(b_1,\dots,b_k)$ be a sequence of complex numbers. Suppose the $\Yone$-module $L(\bla,\bm b)$ is cyclic.
\begin{thm}\label{thm tensor irr}
The action of the Bethe algebra $\mc B_l(\bla,\bm b)$ on $(L(\bla,\bm b))^\sing_{(n-l,l)}$ has the following properties.
\begin{enumerate}
\item The Bethe algebra $\mc B_l(\bla,\bm b)$ is isomorphic to\[
\C[w_1,\dots,w_{k-1}]^{\fkS_{l}\times \fkS_{k-l-1}}/\langle \sigma_i(\bm w)-\varepsilon_i\rangle_{i=1,\dots,k-1},\]
where $\varepsilon_i$ are given by $\varphi_{\bla,\bm b}(x)-\psi_{\bla,\bm b}(x)=n(x^{k-1}+\sum_{i=1}^{k-1}(-1)^i\varepsilon_ix^{k-1-i})$ and $\sigma_i(\bm w)$ are elementary symmetric functions in $w_1,\dots,w_{k-1}$. Under this isomorphism, $\prod_{s=1}^k(x-b_s)\mc T(x)$ corresponds to $n\prod_{i=1}^{l}(x-w_i-1)\prod_{j=l+1}^{k-1}(x-w_j)$.
\item The Bethe algebra $\mc B_l(\bla,\bm b)$ is a Frobenius algebra. Moreover, the $\mc B_l(\bla,\bm b)$-module $(L(\bla,\bm b))^\sing_{(n-l,l)}$ is isomorphic to the regular representation of $\mc B_l(\bla,\bm b)$.
\item The Bethe algebra $\mc B_l(\bla,\bm b)$ is a maximal commutative subalgebra in $\End((L(\bla,\bm b))^\sing_{(n-l,l)})$ of dimension $\binom{k-1}{l}$.
\item Every $\mc B$-eigenspace in $(L(\bla,\bm b))^\sing_{(n-l,l)}$ has dimension one.
\item The $\mc B$-eigenspaces in $(L(\bla,\bm b))^\sing_{(n-l,l)}$ bijectively correspond to the monic degree $l$ divisors $y(x)$ of the polynomial $\varphi_{\bla,\bm b}(x)-\psi_{\bla,\bm b}(x)$. Moreover, the eigenvalue of $\cT(x)$ corresponding to the monic divisor $y$ is described by $\mc E^I_{y,\bla,\bm b}(x)$, see \eqref{eq gl11 eigenvalue in y}.
\item Every generalized $\mc B$-eigenspace in $(L(\bla,\bm b))^{\sing}_{(n-l,l)}$ is a cyclic $\mc B$-module.
\item The dimension of the generalized $\mc B$-eigenspace associated to $\mc E^I_{y,\bla,\bm b}(x)$ is 
\[
\prod_{a\in \C}\binom{\mathrm{Mult}_a(\varphi_{\bla,\bm b}-\psi_{\bla,\bm b})}{\mathrm{Mult}_a(y)},
\]where $\mathrm{Mult}_a(p)$ is the multiplicity of $a$ as a root of the polynomial $p$.
\end{enumerate}
\end{thm}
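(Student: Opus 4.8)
The plan is to deduce all seven statements from the universal description of Theorem~\ref{thm VS} by realizing $(L(\bla,\bm b))^\sing_{(n-l,l)}$ as a specialization of $(\mc V^\fkS)^\sing_{(n-l,l)}$ and then invoking the formalism of Frobenius algebras. After the reduction to $\la^{(s)}_2=0$ (so that $\psi_{\bla,\bm b}(x)=\prod_s(x-b_s)$ and $n=\sum_s\la_1^{(s)}$), I would first present $L(\bla,\bm b)$ as a quotient of the cyclic module $\mc V^\fkS$. Grouping the evaluation points into blocks $\bm a=(b_s-j)_{0\le j<\la_1^{(s)},\,1\le s\le k}$ and using Proposition~\ref{prop local weyl} identifies $V(\bm a)$ with $\mc V^\fkS/I^\fkS_{\bm a}\mc V^\fkS$; since each $L_{\la_1^{(s)}\omega_1}(b_s)$ is the simple quotient of its block (Example~\ref{eg vect rep}) and $L(\bla,\bm b)$ is cyclic by hypothesis, the vacuum $|0\rangle$ is the image of $v^+$, giving a surjection $\mc V^\fkS\twoheadrightarrow L(\bla,\bm b)$ of $\Yone$-modules. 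As $e_{11}+e_{22}$ acts by $n\neq 0$, the module $L(\bla,\bm b)$ is semisimple over $\gl_{1|1}$, so this surjection restricts to a surjection $(\mc V^\fkS)^\sing_{(n-l,l)}\twoheadrightarrow(L(\bla,\bm b))^\sing_{(n-l,l)}$ and induces a surjection $\mc B_l(\bm z)\cong\mc O_l\twoheadrightarrow\mc B_l(\bla,\bm b)$ of Bethe algebras, whose kernel I denote $\mc I$.

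The next step is to identify the quotient $\mc O_l/\mc I$ explicitly. Since the quantum Berezinian is central (Proposition~\ref{prop central}) and acts on the generator $|0\rangle$ of $L(\bla,\bm b)$ by $\prod_s\frac{x-b_s+\la_1^{(s)}}{x-b_s}=\varphi_{\bla,\bm b}(x)/\psi_{\bla,\bm b}(x)$, the central elements $C_i$ must act by the scalars $\sigma_i(\bm a)$; equivalently the degree-$n$ polynomial $P(x)=\prod_r(x-z_r)$ specializes to the block polynomial $\prod_i(x-a_i)$, which satisfies the telescoping identity $P(x+1)/P(x)=\varphi_{\bla,\bm b}(x)/\psi_{\bla,\bm b}(x)$. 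Hence in $\mc O_l/\mc I$ the defining relation \eqref{eq si q=1} becomes $nf(x)g(x)=P(x+1)-P(x)=\bigl(P(x)/\psi_{\bla,\bm b}(x)\bigr)\gamma_{\bla,\bm b}(x)$, where $\gamma_{\bla,\bm b}=\varphi_{\bla,\bm b}-\psi_{\bla,\bm b}$ has degree $k-1$ with leading coefficient $n$. On the other hand, because $\mc T(x)$ has only simple poles at the $b_s$ and leading behaviour $n/x$, the operator $\psi_{\bla,\bm b}(x)\mc T(x)$ is a genuine polynomial of degree $k-1$; combined with $P(x)\mc T(x)=nf(x-1)g(x)$ this forces the fixed factor $P(x)/\psi_{\bla,\bm b}(x)$ (of degree $n-k$) to divide $f(x-1)g(x)$ and to be absorbed entirely into $g$. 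Writing $f(x)=\prod_{i=1}^l(x-w_i)$ and $g(x)=\bigl(P(x)/\psi_{\bla,\bm b}(x)\bigr)\prod_{j=l+1}^{k-1}(x-w_j)$, the relation $nfg=(P/\psi_{\bla,\bm b})\gamma_{\bla,\bm b}$ collapses to $\prod_{m=1}^{k-1}(x-w_m)=\gamma_{\bla,\bm b}(x)/n$, i.e. $\sigma_i(\bm w)=\varepsilon_i$, identifying $\mc O_l/\mc I$ with $\C[w_1,\dots,w_{k-1}]^{\fkS_l\times\fkS_{k-l-1}}/\langle\sigma_i(\bm w)-\varepsilon_i\rangle$ and $\psi_{\bla,\bm b}(x)\mc T(x)$ with $n\prod_{i=1}^l(x-w_i-1)\prod_{j=l+1}^{k-1}(x-w_j)$. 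This is statement~(1).

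To promote the surjection $\mc O_l/\mc I\twoheadrightarrow\mc B_l(\bla,\bm b)$ to an isomorphism I would compare dimensions. The invariant ring $\C[\bm w]^{\fkS_l\times\fkS_{k-l-1}}$ is free of rank $\binom{k-1}{l}$ over $\C[\bm w]^{\fkS_{k-1}}$, so $\dim_\C(\mc O_l/\mc I)=\binom{k-1}{l}$. Iterating the fusion rule $L_{(\la_1,0)}\otimes L_{(\mu_1,0)}=L_{(\la_1+\mu_1,0)}\oplus L_{(\la_1+\mu_1-1,1)}$ gives a Pascal recursion showing that the number of summands with highest weight $(n-l,l)$, hence $\dim(L(\bla,\bm b))^\sing_{(n-l,l)}$, equals $\binom{k-1}{l}$. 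Since this space is cyclic over $\mc B_l(\bla,\bm b)$ (it is the image of the cyclic generator $\mathfrak u_l$ of $(\mc V^\fkS)^\sing_{(n-l,l)}$), we get $\binom{k-1}{l}=\dim(L(\bla,\bm b))^\sing_{(n-l,l)}\le\dim\mc B_l(\bla,\bm b)\le\dim(\mc O_l/\mc I)=\binom{k-1}{l}$. Equality forces the surjection to be an isomorphism and the cyclic map $\mc B_l(\bla,\bm b)\to(L(\bla,\bm b))^\sing_{(n-l,l)}$ to be bijective, i.e. the module is the regular representation; as a graded complete intersection $\mc O_l/\mc I$ is Frobenius. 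This yields (2), and the maximality in (3) follows since a commutative subalgebra of $\End(M)$ that acts faithfully on a cyclic module $M$ and has dimension $\dim_\C M$ is maximal commutative.

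Statements (4)--(7) are then the standard dictionary for the regular representation of a commutative Artinian Frobenius algebra $A=\mc B_l(\bla,\bm b)\cong\prod_{\mathfrak m}A_{\mathfrak m}$. Its characters are the $\fkS_l\times\fkS_{k-l-1}$-orbits of assignments of the $k-1$ roots of $\gamma_{\bla,\bm b}$ to the two groups, hence correspond bijectively to the monic degree-$l$ divisors $y$ of $\gamma_{\bla,\bm b}$, with the eigenvalue of $\mc T(x)$ read off from (1) as $\mc E^I_{y,\bla,\bm b}(x)$ in \eqref{eq gl11 eigenvalue in y}; this is (5). In the regular representation each genuine eigenspace is the socle of the corresponding local factor, one-dimensional because $A$ is Frobenius, giving (4); the generalized eigenspace is the whole local factor $A_{\mathfrak m}$, a cyclic $A$-module, giving (6); and $\dim_\C A_{\mathfrak m}$ is the local multiplicity of the finite flat ``choose an $l$-subset of the roots'' map over the point, which distributes coincident roots and evaluates to $\prod_{a\in\C}\binom{\mathrm{Mult}_a(\gamma_{\bla,\bm b})}{\mathrm{Mult}_a(y)}$, giving (7). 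The main obstacle is the identification in the second paragraph: pinning down $\mc I$ precisely enough to recognize $\mc O_l/\mc I$ as the stated invariant-ring quotient, and in particular justifying that the fixed factor $P(x)/\psi_{\bla,\bm b}(x)$ is absorbed entirely into $g$ so that only the $k-1$ roots of $\gamma_{\bla,\bm b}$ survive as moduli; the dimension squeeze of the third paragraph is what ultimately rigidifies this identification, while everything else is representation-theoretic bookkeeping or formal Frobenius-algebra theory.
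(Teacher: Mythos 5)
Your overall architecture is the paper's: surject $\mc V^\fkS\twoheadrightarrow L(\bla,\bm b)$ (the paper's Lemma \ref{lem quotient cyc} plus Proposition \ref{prop local weyl}), identify the induced quotient of $\mc O_l$ with the invariant-ring quotient, squeeze dimensions using cyclicity of $(L(\bla,\bm b))^\sing_{(n-l,l)}$, and finish with Frobenius-algebra formalism. Parts (2)--(7) of your write-up are fine, and your computation of the generalized eigenspace dimension in (7) via the flat ``choose an $l$-subset of roots'' map is a legitimate (arguably cleaner) substitute for the paper's deformation argument. But there is a genuine gap at the crucial step, which in the paper is Lemma \ref{lem ann ideal}: you must show that the relations cutting $\mc O_l$ down to something of dimension $\binom{k-1}{l}$ --- namely that $h(x)=P(x)/\psi_{\bla,\bm b}(x)$ divides $g(x)$ and that $np(x)f(x)=\gamma_{\bla,\bm b}(x)$ --- actually hold in $\mc B_l(\bla,\bm b)$. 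Your proposed mechanism, the two identities $nf(x)g(x)=h(x)\gamma_{\bla,\bm b}(x)$ and $nf(x-1)g(x)=h(x)\cdot\psi_{\bla,\bm b}(x)\mc T(x)$ with $\psi_{\bla,\bm b}(x)\mc T(x)$ of degree $k-1$, does not force $h\mid g$. Already at the level of a single character (so over $\C$): if $c$ is a root of $h$ and $\gamma_{\bla,\bm b}(c-1)=0$, then $f=(x-c)(x-c+1)\tilde f$ with $(x-c+1)\tilde f$ dividing $\gamma_{\bla,\bm b}/n$ satisfies both identities while $g(c)\neq 0$; so the degree constraints admit spurious solutions for non-generic $\bm b$, and over the non-reduced algebra $\mc B_l(\bla,\bm b)$ divisibility is even less controlled.

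Your fallback --- that ``the dimension squeeze rigidifies this identification'' --- is circular: the upper bound $\dim\mc B_l(\bla,\bm b)\le\binom{k-1}{l}$ is available only \emph{after} you know $\mc B_l(\bla,\bm b)$ is a quotient of $\C[\bm w]^{\fkS_l\times\fkS_{k-l-1}}/\langle\sigma_i(\bm w)-\varepsilon_i\rangle$; without the absorption relations you only get the quotient $\mc O_{l,\bm a}$ of dimension $\binom{n-1}{l}$, which is too big for the squeeze against $\dim(L(\bla,\bm b))^\sing_{(n-l,l)}=\binom{k-1}{l}$ to close. The paper fills exactly this hole by a deformation argument: treat $\bm b$ as variables, note the candidate relations act on $L(\bla,\bm b)$ polynomially in $\bm b$, and verify them for generic $\bm b$ where Theorem \ref{thm complete} gives an eigenbasis of on-shell Bethe vectors on which $f(x)$ acts by $y_{\bm t}(x)$, a divisor of $\varphi_{\bla,\bm b}-\psi_{\bla,\bm b}$ by the very form \eqref{eq gl11 BAE} of the Bethe equation. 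Some version of this genericity/continuity input (or an equivalent algebraic identity beyond the first transfer matrix) is needed; your argument as written omits it. A second, much smaller, omission: the surjection $V(\bm a)\twoheadrightarrow L(\bla,\bm b)$ requires reordering the concatenated blocks into a sequence with $a_i\ne a_j+1$ for $i>j$ and checking the intertwiners $P\circ R(a-b)$ preserve the vacuum, which is the content of Lemma \ref{lem quotient cyc}; quoting Example \ref{eg vect rep} blockwise does not by itself produce the map from the globally reordered $V(\bm a)$.
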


Theorem \ref{thm tensor irr} is proved in Section \ref{sec proof}.

\subsection{Main theorems for the case $q_1\ne q_2$}\label{sec main thm q}
Recall from Proposition \ref{prop ch} that there exists a unique vector (up to proportionality) of degree $l(l-1)/2$ in $(\mc V^\fkS)_{(n-l,l)}$ explicitly given by $\mathfrak u^Q_l:=T_{12}^{(1)}T_{12}^{(2)}\cdots T_{12}^{(l)}v^+$, see Lemma \ref{lem explicit basis}.

\begin{thm}\label{thm VS q}
The action of the Bethe algebra $\mc B^Q_l(\bm z)$ on $(\mc V^\fkS)_{(n-l,l)}$ has the following properties.
\begin{enumerate}
	\item The map $\eta^Q_l:G^Q_{i}\mapsto B^Q_{i}(\bm z)$, $\varSigma_i\mapsto C_i(\bm z)$, $i=1,\dots, n$, extends uniquely to an isomorphism $\eta^Q_l:\mc O^Q_l\to \mc B_l^Q(\bm z)$ of filtered algebras. Moreover, the isomorphism $\eta^Q_l$ is an isomorphism of $\C[z_1,\dots,z_n]^\fkS$-modules.
	\item The map $\rho^Q_l:\mc O^Q_l\mapsto (\mc V^\fkS)_{(n-l,l)}$, $F\mapsto \eta_l(F)\mathfrak u^Q_l$, is an isomorphism of filtered vector spaces identifying the $\mc B^Q_l(\bm z)$-module $(\mc V^\fkS)_{(n-l,l)}$ with the regular representation of $\mc O^Q_l$.
\end{enumerate}
\end{thm}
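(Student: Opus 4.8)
The plan is to repeat the proof of Theorem~\ref{thm VS} in the twisted setting, replacing the $q_1=q_2$ data of Section~\ref{sec o_l} by their $q_1\ne q_2$ analogues: the algebra $\mc O_l^Q$ with graded character \eqref{eq char Q}, the lowest-degree vector $\mathfrak u_l^Q=T_{12}^{(1)}T_{12}^{(2)}\cdots T_{12}^{(l)}v^+$ of degree $l(l-1)/2$ in $(\mc V^\fkS)_{(n-l,l)}$, and the generators $B_i^Q(\bm z),C_i(\bm z)$ for $i=1,\dots,n$. The only structural change is that I now work on the whole weight space $(\mc V^\fkS)_{(n-l,l)}$ rather than on its singular part, so the freeness input is Lemma~\ref{lem free} and the character input is the first identity of Proposition~\ref{prop ch}.

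First I would build $\eta_l^Q$ as a surjective homomorphism of filtered algebras. The assignment $\sigma_i(\bm z)\mapsto C_i(\bm z)$ is automatically a homomorphism $\C[z_1,\dots,z_n]^\fkS\to\mc B_l^Q(\bm z)$, since the $C_i$ are central and act on the cyclic generator $v^+$ by $\sigma_i(\bm z)$. To extend it to $\mc O_l^Q$ I must check that $B_i^Q(\bm z)$ and $C_i(\bm z)$ obey the relations cutting out $\{G_i^Q,\varSigma_i\}$ inside $\mc O_l^Q$. Using the explicit action \eqref{eq gamma} on the cyclic vector $v^+$, one has $\bigl(x^n+\sum_i(-1)^iC_i(\bm z)x^{n-i}\bigr)\mc T_Q(x)=\str(QL(x))$, so the coefficients $B_i^Q(\bm z)$ read off from \eqref{eq diff poly} are the coefficients of $\str(QL(x))$; matching these with the coefficients of $(q_1-q_2)f(x-1)g(x)$ and the factorization imposed on $f(x)g(x)$ by the $\varSigma_i$ yields the required identities. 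A cleaner variant, which also handles well-definedness, is to reduce modulo the ideals $I^\fkS_{\bm a}$: since $\mc O_l^Q$ is a polynomial (hence reduced) ring it embeds into the product of its fibers over generic $\bm a$, and in each such fiber the images $C_i(\bm a),B_i^Q(\bm a)$ are the honest evaluations of $\varSigma_i,G_i^Q$ at the Bethe data diagonalizing $\mc T_Q(x)$, so every relation holds. Surjectivity is then immediate from Lemma~\ref{lem gen B}, and the filtered property, together with the fact that $\mathrm{gr}\,\eta_l^Q$ preserves leading symbols, follows from the degree bookkeeping of Lemmas~\ref{lem deg O Q} and~\ref{lem deg B}.

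Next I would upgrade $\eta_l^Q$ to an isomorphism and prove part~(2) simultaneously by a character count anchored at a generic specialization. Surjectivity of $\eta_l^Q$ gives $\ch(\mathrm{gr}\,\mc B_l^Q(\bm z))\preceq\ch(\mc O_l^Q)=1/((q)_l(q)_{n-l})$ coefficientwise, and since $\mathfrak u_l^Q$ has degree $l(l-1)/2$ the composite $\rho_l^Q\colon F\mapsto\eta_l^Q(F)\mathfrak u_l^Q$ is a filtered map whose source, shifted by $q^{l(l-1)/2}$, has exactly the character $q^{l(l-1)/2}/((q)_l(q)_{n-l})$ of $\mathrm{gr}((\mc V^\fkS)_{(n-l,l)})$ from Proposition~\ref{prop ch}. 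It therefore suffices to show $\rho_l^Q$ is surjective, i.e. that $\mathfrak u_l^Q$ generates $(\mc V^\fkS)_{(n-l,l)}$ over $\mc B_l^Q(\bm z)$. For this I specialize $\bm z$ to a generic $\bm a$: by Proposition~\ref{prop local weyl} the quotient $\mc V^\fkS/I^\fkS_{\bm a}\mc V^\fkS$ is $V(\bm a)$, on which Theorem~\ref{thm complete} makes $\mc T_Q(x)$ diagonalizable with $\binom{n}{l}$ distinct eigenvalues on the $(n-l,l)$ weight space, so $\mc B_l^Q(\bm a)$ is the full $\binom{n}{l}$-dimensional algebra of polynomials in $\mc T_Q(x)$ and the specialized $\mathfrak u_l^Q$ is a cyclic vector. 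Freeness of $(\mc V^\fkS)_{(n-l,l)}$ over $\C[z_1,\dots,z_n]^\fkS$ (Lemma~\ref{lem free}) lets me lift this generic statement to the universal level. The matching characters then force $\rho_l^Q$ to be an isomorphism of filtered spaces identifying $(\mc V^\fkS)_{(n-l,l)}$ with the regular representation of $\mc O_l^Q$; and since $\eta_l^Q(F)=0$ implies $\rho_l^Q(F)=0$, hence $F=0$, the map $\eta_l^Q$ is injective and therefore the desired isomorphism.

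I expect the main obstacle to be the well-definedness of $\eta_l^Q$, namely matching the explicit images furnished by \eqref{eq diff poly} and the Berezinian against the polynomial data defining $\mc O_l^Q$. Unlike the $\varSigma_i$, the elements $G_i^Q$ come from the shifted product $f(x-1)g(x)$ and hence record the individual degree-$l$ factor $f$, not merely the product $f(x)g(x)$; tracking how this one-sided shift interacts with the difference structure of the transfer matrix is the delicate computation, and it is exactly here that the hypothesis $q_1\ne q_2$ keeps the leading coefficient $q_1-q_2$ of $\str(QL(x))$ invertible and simplifies the bookkeeping relative to the $q_1=q_2$ case. A secondary technical point is the flat descent from the generic fiber $\bm a$ to the base $\C[z_1,\dots,z_n]^\fkS$, which rests on the freeness of Lemma~\ref{lem free}.
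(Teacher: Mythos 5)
Your overall strategy is exactly the paper's: the published proof of Theorem~\ref{thm VS q} consists of the single remark that one repeats the proof of Theorem~\ref{thm VS} with the character formula \eqref{eq char Q} in place of \eqref{eq ch O}, and your treatment of part~(1) -- well-definedness and injectivity of $\eta^Q_l$ via specialization to generic $\bm a$, completeness of the Bethe ansatz from Theorem~\ref{thm complete}, surjectivity from Lemma~\ref{lem gen B}, and the filtration from Lemmas~\ref{lem deg O Q} and~\ref{lem deg B} -- matches it step for step. The one place you deviate is part~(2), where you argue surjectivity of $\rho^Q_l$ first and deduce injectivity from the character count. Your surjectivity argument rests on the claim that the specialized vector $\mathfrak u^Q_l$ is cyclic for $\mc B^Q_l(\bm a)$ at generic $\bm a$, i.e.\ that $T_{12}^{(1)}\cdots T_{12}^{(l)}v^+$ has non-zero projection onto every Bethe eigenspace; you assert this but do not prove it, and it is not obvious. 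The paper's order avoids this entirely: $\ker\rho_l$ is an ideal of $\mc O^Q_l$, and since $(\mc V^\fkS)_{(n-l,l)}$ is a \emph{free} $\C[z_1,\dots,z_n]^\fkS$-module (Lemma~\ref{lem free}) and $\mathfrak u^Q_l\ne 0$, this ideal meets $\C[z_1,\dots,z_n]^\fkS$ trivially, hence is zero because $\mc O^Q_l$ is a domain finite over $\C[\varSigma_1,\dots,\varSigma_n]$; the equality $\ch\bigl(\mathrm{gr}((\mc V^\fkS)_{(n-l,l)})\bigr)=q^{l(l-1)/2}\ch(\mc O^Q_l)$ from Proposition~\ref{prop ch} and \eqref{eq char Q} then upgrades the injection to an isomorphism of filtered spaces. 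I recommend you swap your part~(2) to this order, which eliminates the unproved cyclicity claim; with that change the argument is complete.
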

Theorem \ref{thm VS q} is proved in Section \ref{sec proof}.

\medskip

Let $\bla=(\la^{(1)},\dots,\la^{(k)})$ be a sequence of polynomial $\gl_{1|1}$-weights such that $|\bla|=n$. Let $\bm b=(b_1,\dots,b_k)$ be a sequence of complex numbers. Suppose the $\Yone$-module $L(\bla,\bm b)$ is cyclic.

\begin{thm}\label{thm tensor irr q}
The action of the Bethe algebra $\mc B^Q_l(\bla,\bm b)$ on $(L(\bla,\bm b))_{(n-l,l)}$ has the following properties.
\begin{enumerate}
	\item The Bethe algebra $\mc B^Q_l(\bla,\bm b)$ is isomorphic to\[
\C[w_1,\dots,w_{k}]^{\fkS_{l}\times \fkS_{k-l}}/\langle \sigma_i(\bm w)-\varepsilon_i\rangle_{i=1,\dots,k},\]
where $\varepsilon_i$ are given by $q_1\varphi_{\bla,\bm b}(x)-q_2\psi_{\bla,\bm b}(x)=(q_1-q_2)(x^{k}+\sum_{i=1}^{k}(-1)^i\varepsilon_ix^{k-i})$ and $\sigma_i(\bm w)$ are elementary symmetric functions in $w_1,\dots,w_{k}$. Under this isomorphism, $\prod_{s=1}^k(x-b_s)\mc T_Q(x)$ corresponds to $ (q_1-q_2)\prod_{i=1}^{l}(x-w_i-1)\prod_{j=l+1}^{k}(x-w_j)$.
	\item The Bethe algebra $\mc B^Q_l(\bla,\bm b)$ is a Frobenius algebra. Moreover, the $\mc B^Q_l(\bla,\bm b)$-module $(L(\bla,\bm b))_{(n-l,l)}$ is isomorphic to the regular representation of $\mc B^Q_l(\bla,\bm b)$.
	\item The Bethe algebra $\mc B^Q_l(\bla,\bm b)$ is a maximal commutative subalgebra in $\End((L(\bla,\bm b))_{(n-l,l)})$ of dimension $\binom{k}{l}$.
    \item Every $\mc B^Q$-eigenspace in $(L(\bla,\bm b))_{(n-l,l)}$ has dimension one.
	\item The  $\mc B^Q$-eigenspaces in $(L(\bla,\bm b))_{(n-l,l)}$ bijectively correspond to the monic degree $l$ divisors $y(x)$ of the polynomial $q_1\varphi_{\bla,\bm b}(x)-q_2\psi_{\bla,\bm b}(x)$. Moreover, the eigenvalue of $\mc T_Q(x)$ corresponding to the monic divisor $y(x)$ is described by $\mc E^Q_{y,\bla,\bm b}(x)$, see \eqref{eq gl11 eigenvalue in y}.
	\item Every generalized $\mc B^Q$-eigenspace in $(L(\bla,\bm b))_{(n-l,l)}$ is a cyclic $\mc B^Q$-module.
\item The dimension of the generalized $\mc B^Q$-eigenspace associated to $\mc E^Q_{y,\bla,\bm b}(x)$ is 
\[
\prod_{a\in \C}\binom{\mathrm{Mult}_a(q_1\varphi_{\bla,\bm b}-q_2\psi_{\bla,\bm b})}{\mathrm{Mult}_a(y)},
\]where $\mathrm{Mult}_a(p)$ is the multiplicity of $a$ as a root of the polynomial $p$.
\end{enumerate}
\end{thm}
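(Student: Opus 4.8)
The plan is to deduce Theorem \ref{thm tensor irr q} from the universal statement Theorem \ref{thm VS q} by presenting $L(\bla,\bm b)$ as a cyclic quotient of $\mc V^\fkS$, exactly parallel to the untwisted Theorem \ref{thm tensor irr}. First I would reduce to $\la_2^{(s)}=0$ for all $s$: by the argument of Section \ref{sec BA}, twisting by $\C_{\bar 0,\xi}$ with $\xi(x)=\prod_s(x-b_s)/(x-b_s-\la_2^{(s)})$ only multiplies $\cT_Q(x)$ by a scalar series and leaves the subalgebra of $\End((L(\bla,\bm b))_{(n-l,l)})$ generated by $\cT_Q(x)$ and the central Berezinian unchanged, so the Bethe algebra and all its invariants are unaffected; after this $\psi_{\bla,\bm b}(x)=\prod_s(x-b_s)$ and $n=|\bla|=\sum_s\la_1^{(s)}$. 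Since $L(\bla,\bm b)$ is cyclic (Lemma \ref{lem cyclic weyl}), Proposition \ref{prop local weyl} together with the realization of each $L_{\la_1^{(s)}\omega_1}(b_s)$ as the simple quotient of the string $\bigotimes_i L_{\omega_1}(b_s-i+1)$ (Example \ref{eg vect rep}) exhibits $L(\bla,\bm b)$ as a cyclic quotient of $\mc V^\fkS$, the specialization sending the symmetric variables to the multiset $\bm a$ built from the clusters $b_s,b_s-1,\dots,b_s-\la_1^{(s)}+1$. This quotient map is $\Yone$-equivariant and weight-preserving, so it yields a surjection $\mc B^Q_l(\bm z)\twoheadrightarrow\mc B^Q_l(\bla,\bm b)$; composing with $\eta^Q_l$ of Theorem \ref{thm VS q} reduces everything to identifying the kernel ideal $\mc I\subset\mc O^Q_l$.

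The central elements $C_i$ act on $L(\bla,\bm b)$ by the scalars $\sigma_i(\bm a)$, i.e.\ the polynomial $P(x)=x^n+\sum_i(-1)^i\varSigma_i x^{n-i}$ specializes to $P_0(x)=\prod_s\prod_{j=0}^{\la_1^{(s)}-1}(x-b_s+j)$. A telescoping factorization gives $P_0(x)=R(x)\psi_{\bla,\bm b}(x)$ and $P_0(x+1)=R(x)\varphi_{\bla,\bm b}(x)$ with $R(x)=\prod_s\prod_{j=1}^{\la_1^{(s)}-1}(x-b_s+j)$ of degree $n-k$, so the defining relation of $\mc O^Q_l$ specializes to
\[
(q_1-q_2)f(x)g(x)=q_1P_0(x+1)-q_2P_0(x)=R(x)\big(q_1\varphi_{\bla,\bm b}(x)-q_2\psi_{\bla,\bm b}(x)\big).
\]
The step I expect to be the main obstacle is to show that the annihilator $\mc I$ forces $R(x)\mid g(x)$; granting this and writing $g(x)=R(x)\tl g(x)$, the relation becomes $f(x)\tl g(x)=(q_1\varphi_{\bla,\bm b}-q_2\psi_{\bla,\bm b})/(q_1-q_2)=x^k+\sum_i(-1)^i\varepsilon_i x^{k-i}$, so that $\mc O^Q_l/\mc I\cong\C[w_1,\dots,w_k]^{\fkS_l\times\fkS_{k-l}}/\langle\sigma_i(\bm w)-\varepsilon_i\rangle$, with $w_1,\dots,w_l$ the roots of $f$ and $w_{l+1},\dots,w_k$ the roots of $\tl g$. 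I would prove the divisibility by constructing $\mc I$ explicitly in the spirit of \cite{MTV09}: the generators beyond $\langle C_i-\sigma_i(\bm a)\rangle$ arise from relations among the Bethe generators that hold on the cyclic vector of the quotient. Tracking $\cT_Q(x)$, whose image is $(q_1-q_2)f(x-1)g(x)/P(x)$, through $g=R\tl g$ and $P_0=R\psi_{\bla,\bm b}$ gives $\prod_s(x-b_s)\cT_Q(x)\mapsto(q_1-q_2)f(x-1)\tl g(x)=(q_1-q_2)\prod_{i=1}^l(x-w_i-1)\prod_{j=l+1}^k(x-w_j)$, proving the last clause of (1).

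To upgrade the surjection to an isomorphism I would count dimensions. The algebra $\C[\bm w]^{\fkS_l\times\fkS_{k-l}}/\langle\sigma_i(\bm w)-\varepsilon_i\rangle$ has dimension $\binom{k}{l}$, equal to $\dim(L(\bla,\bm b))_{(n-l,l)}$, since after the reduction $\la_2=0$ a weight-$(n-l,l)$ basis vector amounts to a choice of the $l$ factors in the odd state. As $L(\bla,\bm b)$ is cyclic over the commutative algebra $\mc B^Q_l(\bla,\bm b)$, the latter has dimension at least $\binom{k}{l}$; combined with the surjection from the $\binom{k}{l}$-dimensional quotient this forces an isomorphism, which yields (1), (3), and the identification of $(L(\bla,\bm b))_{(n-l,l)}$ with the regular representation. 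Part (2) follows because $\C[\bm w]^{\fkS_l\times\fkS_{k-l}}=\C[e_1,\dots,e_l,e'_1,\dots,e'_{k-l}]$ is a polynomial ring in $k$ variables, so its quotient by the $k$ elements $\sigma_i(\bm w)-\varepsilon_i$ is an Artinian complete intersection, hence Gorenstein, hence Frobenius, and a module isomorphic to the regular representation of a Frobenius algebra is isomorphic to the coregular one.

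Parts (4)--(7) are then formal. Writing $\mc A=\mc B^Q_l(\bla,\bm b)=\prod_{\mathfrak m}\mc A_{\mathfrak m}$ as a product of local algebras, its characters correspond to maximal ideals, i.e.\ to splittings of the root multiset of $q_1\varphi_{\bla,\bm b}-q_2\psi_{\bla,\bm b}$ into parts of sizes $l$ and $k-l$, equivalently to the monic degree-$l$ divisors $y$ of $q_1\varphi_{\bla,\bm b}-q_2\psi_{\bla,\bm b}$ (taking $y=f$); matching with the transfer-matrix image above recovers the eigenvalue $\mc E^Q_{y,\bla,\bm b}(x)$ of Theorem \ref{thm gl11 eigenvalue in y}, proving (5). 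In the regular representation the generalized eigenspace at $\mathfrak m$ is the cyclic $\mc A$-module $\mc A_{\mathfrak m}$ (part (6)), whose socle, the ordinary eigenspace, is one-dimensional because each local factor of a Frobenius algebra is Gorenstein (part (4)). Finally, the dimension of $\mc A_{\mathfrak m}$ is the local multiplicity of the corresponding point of the factorization scheme $\{f\tl g=(q_1\varphi_{\bla,\bm b}-q_2\psi_{\bla,\bm b})/(q_1-q_2)\}$, which the explicit description computes to be $\prod_{a\in\C}\binom{\mathrm{Mult}_a(q_1\varphi_{\bla,\bm b}-q_2\psi_{\bla,\bm b})}{\mathrm{Mult}_a(y)}$, giving (7).
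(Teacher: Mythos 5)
Your proposal follows the same route as the paper: reduce to $\la_2^{(s)}=0$, realize $L(\bla,\bm b)$ as a cyclic quotient of $\mc V^\fkS$ specialized at the multiset $\bm a$ of shifted evaluation points, push the isomorphism $\eta^Q_l$ of Theorem \ref{thm VS q} through the resulting surjection $\mc B^Q_l(\bm z)\twoheadrightarrow\mc B^Q_l(\bla,\bm b)$, identify the relevant ideal of $\mc O^Q_l$ using the factorizations $P_0(x)=h(x)\psi_{\bla,\bm b}(x)$ and $P_0(x+1)=h(x)\varphi_{\bla,\bm b}(x)$, and close the argument by the dimension count $\binom{k}{l}$ against cyclicity. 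However, there is a genuine gap at exactly the step you flag and then ``grant'': showing that the ideal forcing $h(x)\mid g(x)$ and $f\,\tilde g=(q_1\varphi_{\bla,\bm b}-q_2\psi_{\bla,\bm b})/(q_1-q_2)$ actually annihilates $(L(\bla,\bm b))_{(n-l,l)}$. Saying the extra generators ``arise from relations among the Bethe generators that hold on the cyclic vector of the quotient'' is circular --- it does not explain why those relations hold. The paper's Lemma \ref{lem ann ideal} supplies the mechanism: the candidate generators act on $(L(\bla,\bm b))_{(n-l,l)}$ polynomially in $\bm b$, so it suffices to check vanishing for generic $\bm b$; there Theorem \ref{thm complete} gives an eigenbasis of on-shell Bethe vectors, and on each such vector the image of $f$ is $y_{\bm t}$, which by Theorem \ref{thm gl11 eigenvalue in y} divides $q_1\varphi_{\bla,\bm b}-q_2\psi_{\bla,\bm b}$, forcing the required divisibility and hence the vanishing of the remainder and of the coefficients of $q_1\varphi_{\bla,\bm b}-q_2\psi_{\bla,\bm b}-(q_1-q_2)p(x)f(x)$. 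Without some version of this genericity-plus-completeness argument your proof of part (i) does not go through. A second, smaller omission: Proposition \ref{prop local weyl} needs the evaluation points ordered so that $a_i\ne a_j+1$ for $i>j$, while the natural quotient map from Example \ref{eg vect rep} lives on the interleaved ordering; the paper's Lemma \ref{lem quotient cyc} bridges these with an explicit chain of R-matrix intertwiners preserving the vacuum, which your sketch elides.

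One point where you genuinely diverge, to your benefit: for part (vii) the paper invokes a deformation/continuity argument on the roots together with Theorem \ref{thm complete}, whereas you compute the dimension of the local factor $\mc A_{\mathfrak m}$ directly as the local multiplicity of the fiber of the finite flat map $\C^l/\fkS_l\times\C^{k-l}/\fkS_{k-l}\to\C^k/\fkS_k$, which yields the product of binomial coefficients purely algebraically. Both are valid; yours is arguably cleaner once part (i) is in place. The remaining parts (ii)--(vi) are handled in both treatments by the standard facts about regular and coregular representations of Frobenius algebras and are fine as you state them.
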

Theorem \ref{thm tensor irr q} is proved in Section \ref{sec proof}.

\section{Proof of main theorems}\label{sec proof}
In this section, we prove the main theorems. 
\subsection{The first isomorphism}
\begin{proof}[Proof of Theorem \ref{thm VS}]
We first show the homomorphism defined by $\eta_l$ is well-defined. 

Consider the tensor product $V(\bs a)=\C^{1|1}(a_1)\otimes\dots\otimes \C^{1|1}(a_n)$, where $a_i\in \C$, and the corresponding Bethe ansatz equation associated to weight $(n-l,l)$.
Let $\bs t$ be a solution with distinct coordinates and $\widehat{\bB}_l(\bm t)$ the corresponding on-shell Bethe vector. Denote $\mc E_{i,\bs t}$ the eigenvalues of $B_i$ acting on $\widehat{\bB}_l(\bm t)$, see Theorem \ref{thm gl11 eigenvalue in y}. 

Define a character $\pi:\mc O_l\to \C$ by sending  
$$ f(x)\mapsto y_{\bs t}(x),\qquad g(x)\mapsto\frac1{ny_{\bs t}(x)}\ \Big(\prod_{i=1}^n(x-a_i+1)-\prod_{i=1}^n(x-a_i)\Big),\qquad\varSigma_n\mapsto \prod_{i=1}^n a_i.$$

Then
\beq\label{equal on bethe vector} 
\pi(\varSigma_i)= \sigma_i(\bs a), \qquad \pi(G_i)=\mc E_{i,\bs t},
\eeq
by \eqref{eq si q=1} and by \eqref{eq gl11 eigenvalue in y}, \eqref{eq G q=1},  respectively.

\medskip

Let now $P(G_{i},\varSigma_j)$ be a polynomial in $G_{i},\varSigma_j$ such that $P(G_{i},\varSigma_j)$ is equal to zero in $\mc O_{l}$. It suffices to show $P(B_i(\bm z),C_j(\bm z))$ is equal to zero in $\mc B_l(\bm z)$. 

Note that $P(B_i(\bm z),C_j(\bm z))$ is a polynomial in $z_1,\dots,z_n$ with values in $\End((V)_{(n-l,l)}^\sing)$. For any sequence $\bm a$ of complex numbers, we can evaluate $P(B_i(\bm z),C_j(\bm z))$ at $\bs z=\bs a$ to an operator on $V(\bs a)_{(n-l,l)}^\sing$. By Theorem \ref{thm complete}, the transfer matrix $\mc T(x)$ is diagonalizable and the Bethe ansatz is complete for $(V(\bm a))_{(n-l,l)}^\sing$ when $\bm a\in\C^n$ is generic. Hence by \eqref{equal on bethe vector} the value of $P(B_i(\bm z),C_j(\bm z))$ at $\bs z=\bs a$ is also equal to zero for generic $\bs a$. Therefore $P(B_i(\bm z),C_j(\bm z))$ is identically zero and the map $\eta_l$ is well-defined.

\medskip

Let us now show that the map $\eta_l$ is injective. Let $P(G_{i},\varSigma_j)$ be a polynomial in $G_{i},\varSigma_j$ such that $P(G_{i},\varSigma_j)$ is non-zero in $\mc O_{l}$. Then the value at a generic point of $\Omega_l$ (e.g. the non-vanishing points of $P(G_{i},\varSigma_j)$ such that $f$ and $g$ are relatively prime and have only simple zeros) is not equal to zero. Moreover, at those points the transfer matrix $\mc T(x)$ is diagonalizable and the Bethe ansatz is complete again by Theorem \ref{thm complete}. Therefore, again by \eqref{equal on bethe vector}, the polynomial $P(B_i(\bm z),C_j(\bm z))$ is a non-zero element in $\mc B_l(\bm z)$. Thus the map $\eta_l$ is injective.

The surjectivity of $\eta_l$ follows from Lemma \ref{lem gen B}. Hence $\eta_l$ is an isomorphism of algebras. 

The fact that $\eta_l$ is an isomorphism of filtered algebra respecting the filtration follows from Lemmas \ref{lem deg O} and \ref{lem deg B}. This completes the proof of part (i).

\medskip

The kernel of $\rho_l$ is an ideal of $\mc O_l$. Note that the algebra $\mc O_l$ contains the algebra $\C[z_1,\dots,z_n]^\fkS$ if we identify $\sigma_i(\bm z)$ with $\vSi_i$, see \eqref{eq inj sym}. The kernel of $\rho_l$ intersects $\C[z_1,\dots,z_n]^\fkS$ trivially. Therefore the kernel of $\rho_l$ is trivial as well. Hence $\rho_l$ is an injective map. Comparing \eqref{eq ch O} and Proposition \ref{prop ch}, we have $\ch\big(\mathrm{gr}((\mc V^\fkS)^\sing_{(n-\ell,\ell)})\big)=q^{l(l+1)/2}\ch(\mc O_l)$. Thus $\rho_l$ is an isomorphism of filtered vector spaces which shifts the degree by $l(l+1)/2$, completing the proof of part (ii).
\end{proof}

\begin{proof}[Proof of Theorem \ref{thm VS q}]The proof of Theorem \ref{thm VS q} is similar to that of Theorem \ref{thm VS} with the help of \eqref{eq char Q}.
\end{proof}


\subsection{The second isomorphism}\label{sec sec-iso}

Let $\bm a=(a_1,\dots,a_n)$ be a sequence of complex numbers such that $a_i\ne a_j+1$ for $i>j$. Let $I_{l,\bm a}^{\mc O}$ be the ideal of $\mc O_l$ generated by the elements $\vSi_i-\sigma_i(\bs a)$, $i=1,\dots,n$, where $\vSi_1,\dots,\vSi_{n-1}$ are defined in
\eqref{eq si q=1}. Let $\mc O_{l,\bm a}$ be the quotient algebra
\[
\mc O_{l,\bm a}=\mc O_{l}/I_{l,\bm a}^{\mc O}.
\]

Let $I_{l,\bm a}^{\mc B}$ be the ideal of $\mc B_l(\bm z)$ generated by $C_i(\bm z)-\sigma_i(\bs a)$, $i=1,\dots,n$. Consider the subspace 
\[
I_{l,\bm a}^{\mc M}=I_{l,\bm a}^{\mc B}(\mc V^\fkS)_{(n-l,l)}^\sing=(I^\fkS_{\bm a}\mc V^\fkS)_{(n-l,l)}^\sing,
\]
where $I^\fkS_{\bm a}$ as before is the ideal of $\C[z_1,\dots,z_n]^\fkS$ generated by $\sigma_i(\bm z)-\sigma_i(\bs a)$. 

\begin{lem}\label{lem eta rho weyl}
We have
\[
\eta_l(I_{l,\bm a}^{\mc O})=I_{l,\bm a}^{\mc B},\quad \rho_l(I_{l,\bm a}^{\mc O})=I_{l,\bm a}^{\mc M},\quad \mc B_{l}(\bm a)=\mc B_{l}(\bm z)/I_{l,\bm a}^{\mc B},\quad (V(\bm a))^\sing_{(n-l,l)}=(\mc V^\fkS)^\sing_{(n-l,l)}/ I_{l,\bm a}^{\mc M}.
\]
\end{lem}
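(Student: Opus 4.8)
The plan is to prove the four equalities in the order (1), (2), (4), (3), since the last one reduces to the first three. Throughout write $R=\C[z_1,\dots,z_n]^\fkS$ and recall from Section~\ref{sec ber} that $C_i$ acts on $\mc V^\fkS$ by multiplication by $\sigma_i(\bm z)$. The first equality is immediate from Theorem~\ref{thm VS}(i): $\eta_l$ is an algebra isomorphism with $\vSi_i\mapsto C_i(\bm z)$, so it carries the generators $\vSi_i-\sigma_i(\bs a)$ of $I_{l,\bm a}^{\mc O}$ to the generators $C_i(\bm z)-\sigma_i(\bs a)$ of $I_{l,\bm a}^{\mc B}$ (scalars being fixed). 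Since an algebra isomorphism sends the ideal generated by a set to the ideal generated by its image, $\eta_l(I_{l,\bm a}^{\mc O})=I_{l,\bm a}^{\mc B}$.

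For (2), I use that by Theorem~\ref{thm VS}(ii) the map $\rho_l(F)=\eta_l(F)\mathfrak{u}_l$ identifies $(\mc V^\fkS)^\sing_{(n-l,l)}$ with the regular $\mc O_l$-module. Writing a general element of $I_{l,\bm a}^{\mc O}$ as $\sum_{i=1}^n(\vSi_i-\sigma_i(\bs a))H_i$ with $H_i\in\mc O_l$ and applying $\rho_l$ gives $\sum_i(C_i(\bm z)-\sigma_i(\bs a))\rho_l(H_i)$; as the $H_i$ range over $\mc O_l$ the vectors $\rho_l(H_i)$ range over all of $(\mc V^\fkS)^\sing_{(n-l,l)}$, so $\rho_l(I_{l,\bm a}^{\mc O})=\sum_i(C_i(\bm z)-\sigma_i(\bs a))(\mc V^\fkS)^\sing_{(n-l,l)}=I_{l,\bm a}^{\mc B}(\mc V^\fkS)^\sing_{(n-l,l)}=I_{l,\bm a}^{\mc M}$, where commutativity of $\mc B_l(\bm z)$ together with cyclicity of the regular module absorbs the Bethe-algebra factors.

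The main work is (4), and I expect this to be the only genuinely nontrivial step. By Proposition~\ref{prop local weyl}, $V(\bm a)\cong\mc V^\fkS/I^\fkS_{\bm a}\mc V^\fkS$ as $\Yone$-modules; since this is $\gl_{1|1}$-equivariant and $I^\fkS_{\bm a}$ preserves weights, it yields $(V(\bm a))_{(n-l,l)}=(\mc V^\fkS)_{(n-l,l)}\otimes_R R/I^\fkS_{\bm a}$. The obstacle is that taking singular vectors is only left exact a priori, so I must show that singular vectors of the quotient lift. The key observation is that $e_{12}e_{21}+e_{21}e_{12}=e_{11}+e_{22}$ acts on $(\mc V^\fkS)_{(n-l+1,l-1)}$ as the scalar $n\ne 0$; hence any $w$ there with $e_{12}w=0$ satisfies $w=e_{12}(n^{-1}e_{21}w)$, so that $\mathrm{image}\big(e_{12}:(\mc V^\fkS)_{(n-l,l)}\to(\mc V^\fkS)_{(n-l+1,l-1)}\big)=(\mc V^\fkS)^\sing_{(n-l+1,l-1)}$. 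This produces a short exact sequence of $R$-modules
\[
0\to (\mc V^\fkS)^\sing_{(n-l,l)}\to (\mc V^\fkS)_{(n-l,l)}\xrightarrow{\,e_{12}\,}(\mc V^\fkS)^\sing_{(n-l+1,l-1)}\to 0,
\]
whose three terms are all free over $R$ by Lemmas~\ref{lem free} and~\ref{lem free sing}. Because the right-hand term is flat, the sequence stays exact after $\otimes_R R/I^\fkS_{\bm a}$; reading off the kernel of $e_{12}$ on $(V(\bm a))_{(n-l,l)}$ then identifies $(V(\bm a))^\sing_{(n-l,l)}$ with $(\mc V^\fkS)^\sing_{(n-l,l)}/I^\fkS_{\bm a}(\mc V^\fkS)^\sing_{(n-l,l)}=(\mc V^\fkS)^\sing_{(n-l,l)}/I_{l,\bm a}^{\mc M}$, which is (4). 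The injectivity half of the tensored sequence simultaneously confirms $I_{l,\bm a}^{\mc M}=I^\fkS_{\bm a}(\mc V^\fkS)^\sing_{(n-l,l)}=(I^\fkS_{\bm a}\mc V^\fkS)^\sing_{(n-l,l)}$.

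Finally, (3) follows formally. The surjection $\mc V^\fkS\to V(\bm a)$ is $\mc B$-equivariant, so it induces a surjective algebra homomorphism $\bar q:\mc B_l(\bm z)\to\mc B_l(\bm a)$, $X(\bm z)\mapsto X(\bm a)$, whose kernel by (4) is $\{X(\bm z):X(\bm z)(\mc V^\fkS)^\sing_{(n-l,l)}\subseteq I_{l,\bm a}^{\mc M}\}$. One inclusion $I_{l,\bm a}^{\mc B}\subseteq\ker\bar q$ is clear from (2). Conversely, if $X(\bm a)=0$ then $X(\bm z)\mathfrak{u}_l\in I_{l,\bm a}^{\mc M}=\rho_l(I_{l,\bm a}^{\mc O})$; writing $X(\bm z)=\eta_l(F)$ with $F\in\mc O_l$ gives $\rho_l(F)=X(\bm z)\mathfrak{u}_l\in\rho_l(I_{l,\bm a}^{\mc O})$, so injectivity of $\rho_l$ forces $F\in I_{l,\bm a}^{\mc O}$ and hence $X(\bm z)=\eta_l(F)\in\eta_l(I_{l,\bm a}^{\mc O})=I_{l,\bm a}^{\mc B}$ by (1). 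Thus $\ker\bar q=I_{l,\bm a}^{\mc B}$ and $\mc B_l(\bm a)=\mc B_l(\bm z)/I_{l,\bm a}^{\mc B}$.
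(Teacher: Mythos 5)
Your proof is correct. The paper's own proof is a one-line citation of Theorem \ref{thm VS} and Proposition \ref{prop local weyl}, so your argument is essentially an elaboration of the intended route; the one genuinely nontrivial point --- that passing to singular weight spaces commutes with the specialization $\bm z=\bm a$, which the paper silently builds into the unproved identity $I_{l,\bm a}^{\mc B}(\mc V^\fkS)^\sing_{(n-l,l)}=(I^\fkS_{\bm a}\mc V^\fkS)^\sing_{(n-l,l)}$ when defining $I_{l,\bm a}^{\mc M}$ --- is correctly settled by your short exact sequence with free (hence flat) third term, using $e_{12}e_{21}+e_{21}e_{12}=n\ne 0$ together with Lemmas \ref{lem free} and \ref{lem free sing}.
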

\begin{proof}
The lemma follows from Theorem \ref{thm VS} and Proposition \ref{prop local weyl}.
\end{proof}

We prove part (ii) of Theorem \ref{thm tensor irr} for the special case $V(\bm a)$.

By Lemma \ref{lem eta rho weyl}, the maps $\eta_l$ and $\rho_l$ induce the maps
\[
\eta_{l,\bm a}:\mc O_{l,\bm a}\to \mc B_{l}(\bm a),\qquad \rho_{l,\bm a}:\mc O_{l,\bm a}\to (V(\bm a))_{(n-l,l)}^\sing.
\]The map $\eta_{l,\bm a}$ is an isomorphism of algebras. Since $\mc B_{l}(\bm a)$ is finite-dimensional, by e.g. \cite[Lemma 3.9]{MTV09}, $\mc O_{l,\bm a}$ is a Frobenius algebra, so is 
$\mc B_{l}(\bm a)$. The map $\rho_{l,\bm a}$ is an isomorphism of vector spaces. Moverover, it follows from Theorem \ref{thm VS} and Lemma \ref{lem eta rho weyl} that $\rho_{l,\bm a}$ identifies the regular representation of $\mc O_{l,\bm a}$ with the $\mc B_{l}(\bm a)$-module $(V(\bm a))_{(n-l,l)}^\sing$. Therefore part (ii) is proved for the case of $V(\bs a)$.

\subsection{The third isomorphism}\label{sec third}
Recall from Section \ref{sec BA}, that without loss of generality, 
we can assume that $\la_2^{(s)}=0$, $s=1,\dots,k$. Rearrange the sequences 
$$
\{b_s,b_s-1,\dots,b_s-\la_1^{(s)}+1\},\qquad s=1,\dots,k,
$$
to a single sequence in decreasing order with respect to the real parts. Denote this new sequence by $\bm a=(a_1,\dots,a_n)$ (recall that $|\bla|=n$). Then by Lemma \ref{lem cyclic weyl}, $V(\bm a)$ is cyclic. 

\begin{lem}\label{lem quotient cyc}
If $L(\bla,\bm b)$ is cyclic, then there exists a surjective $\Yone$-module homomorphism from $V(\bm a)$ to $L(\bla,\bm b)$ which maps vacuum vector to vacuum vector.
\end{lem}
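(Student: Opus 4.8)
The plan is to build the homomorphism as a composition $V(\bm a)\xrightarrow{\ \Psi\ }W\xrightarrow{\ \Phi\ }L(\bla,\bm b)$, where $W$ is the tensor product of the \emph{same} evaluation vector modules as $V(\bm a)$ but taken in a convenient order, $\Phi$ comes from Example \ref{eg vect rep}, and $\Psi$ is a product of R-matrices. Recall we may assume $\la^{(s)}=\la_1^{(s)}\omega_1$. First I would record the per-block surjections: since $L_{m\omega_1}(z)$ is the simple quotient of $\bigotimes_{i=1}^{m}L_{\omega_1}(z-i+1)$ (induct on $m$ using \eqref{eq tensor 2}), for each $s$ there is a surjective $\Yone$-homomorphism
\[
p_s\colon \C^{1|1}(b_s)\otimes\C^{1|1}(b_s-1)\otimes\dots\otimes\C^{1|1}(b_s-\la_1^{(s)}+1)\ \twoheadrightarrow\ L_{\la^{(s)}}(b_s),\qquad v^+\mapsto v_{\la^{(s)}}.
\]
Tensoring, $\Phi:=p_1\otimes\dots\otimes p_k$ is a surjective $\Yone$-homomorphism $\Phi\colon W\twoheadrightarrow L(\bla,\bm b)$ with $\Phi(v^+)=|0\rangle$, where $W$ is the tensor product of the factors $\C^{1|1}(b_s-j)$, $1\le s\le k$, $0\le j\le \la_1^{(s)}-1$, arranged in this block order.

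Next, $W$ and $V(\bm a)$ are tensor products of the same multiset of evaluation vector modules, in block order and in decreasing-real-part order respectively. I would define $\Psi\colon V(\bm a)\to W$ as a composition of elementary intertwiners $P^{(i,i+1)}R^{(i,i+1)}(a_i-a_{i+1})$, each of which interchanges two adjacent tensor factors and is a $\Yone$-homomorphism by \eqref{R-matrix comult}. For the vector representation $R(x)=(x+P)/(1+x)$, so $P R(x)=(1+xP)/(1+x)$ has a pole only at $x=-1$, fails to be invertible only at $x=1$, and acts by the scalar $1$ on the symmetric part. Since $v^+$ is totally symmetric, every elementary intertwiner fixes it, whence $\Psi(v^+)=v^+$ as soon as $\Psi$ is regular.

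The one genuine obstacle is to order the transpositions so that $\Psi$ is pole-free, i.e. so that each elementary swap is applied at an argument $a_i-a_{i+1}\ne-1$. This is exactly where the decreasing-real-part ordering of $\bm a$ enters: I would realize the permutation taking $\bm a$ to block order as a stable sort by block index, moving first the factors of block $1$ to the front, then those of block $2$, and so on. In each elementary swap the factor being moved to the left passes a factor that lay to its left in the original sorted order, hence of real part $\ge$ its own; thus $a_i-a_{i+1}$ has nonnegative real part and in particular is never $-1$, so no pole occurs. Non-invertibility of the factors with $a_i-a_{i+1}=1$ is irrelevant, since we only need a homomorphism. I expect verifying this pole-free routing (and that it does reach exactly the block order) to be the main technical point.

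Finally, $\Phi\circ\Psi\colon V(\bm a)\to L(\bla,\bm b)$ is a $\Yone$-homomorphism with $(\Phi\circ\Psi)(v^+)=|0\rangle$. Because $V(\bm a)$ is cyclic on $v^+$, its image is $\Yone\cdot|0\rangle$, which is all of $L(\bla,\bm b)$ precisely by the cyclicity hypothesis; hence $\Phi\circ\Psi$ is the desired surjection carrying vacuum to vacuum. As a consistency check, the coproduct together with the telescoping identity $\prod_i\frac{x-a_i+1}{x-a_i}=\prod_s\frac{x-b_s+\la_1^{(s)}}{x-b_s}=\varphi_{\bla,\bm b}(x)/\psi_{\bla,\bm b}(x)$ (and $T_{22}(x)$ acting by $1$ on each vacuum) shows that $v^+\in V(\bm a)$ and $|0\rangle\in L(\bla,\bm b)$ are highest-weight vectors of the same $\Yone$-weight, as they must be for such a map to exist.
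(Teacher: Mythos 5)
Your proposal is correct and follows essentially the same route as the paper: factor the map through the block-ordered tensor product $V(\bm{a_0})=W$, obtain $W\twoheadrightarrow L(\bla,\bm b)$ from the per-block quotients coming from \eqref{eq tensor 2}, and build $V(\bm a)\to W$ as a pole-free composition of the intertwiners $P\circ R(a-b)$, which are regular and fix $v^+$ whenever $\Re a-\Re b\gge 0$ — exactly the paper's argument, with the decreasing-real-part ordering of $\bm a$ guaranteeing regularity of each elementary swap.
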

\begin{proof}
Rearrange the sequences
$$
\{b_s,b_s-1,\dots,b_s-\la_1^{(s)}+1\},\qquad s=1,\dots,k,
$$
to a single sequence in the same order displayed as $s$ runs from $1$ to $k$. Denote this new sequence by $\bm{a_0}$. Clearly, we have a surjective $\Yone$-module homomorphism $V(\bm{a_0})\twoheadrightarrow L(\bla,\bm b)$ which maps vacuum vector to vacuum vector. Therefore it suffices to show that there is a $\Yone$-module homomorphism $V(\bm a)\to V(\bm{a_0})$ which preserves the vacuum vector.

We have the $\Yone$-module homomorphism
\[
P\circ R(a-b):\C^{1|1}(a)\otimes \C^{1|1}(b)\to \C^{1|1}(b)\otimes \C^{1|1}(a).
\]Here $P$ is the graded flip operator, $R(a-b)$ is the R-matrix, see \eqref{eq R matrix}, and $a,b$ are complex numbers. Note that if $\Re a-\Re b\gge 0$, then $P\circ R(a-b)$ is well-defined and preserves the vacuum vector. Since $\Re a_i\gge \Re a_j$ for $1\lle i<j\lle n$, we obtain $\bm{a_0}$ by permuting elements in $\bm a$ via a sequence of simple reflections which move numbers with larger real parts through numbers with smaller real parts from left to right. Therefore a $\Yone$-module homomorphism $V(\bm a)\to V(\bm{a_0})$ which preserves the vacuum vector exists.

Since $L(\bla,\bs b)$ is cyclic, the map we constructed is surjective.
\end{proof}
By Proposition \ref{prop local weyl}, the surjective $\Yone$-module homomorphism $V(\bm a)\twoheadrightarrow L(\bla,\bm b)$ induces a surjective $\Yone$-module homomorphism $\mc V^\fkS\twoheadrightarrow L(\bla,\bm b)$. The second map then induces a projection of the Bethe algebras $\mc B_{l}(\bm z)\twoheadrightarrow \mc B_{l}(\bla,\bm b)$. We describe the kernel of this projection. We consider the corresponding ideal in the algebra $\mc O_l$.

Suppose $l\lle k-1$. Define the polynomial $h(x)$ by
\[
h(x)=\prod_{s=1}^k\prod_{i=1}^{\la_1^{(s)}-1}(x-b_s+i).
\]
If $L(\bla,\bs b)$ is irreducible then $h(x)$ is the greatest common divisor of
$\prod_{i=1}^n(x-a_i+1)$ and $\prod_{i=1}^n(x-a_i)$.
 
Divide the polynomial $g(x)$ in \eqref{eq poly f g} by $h(x)$ and let
\beq\label{eq coeff p}
p(x)=x^{k-l-1}+p_1x^{k-l-2}+\dots+p_{k-l-2}x+p_{k-l-1},
\eeq
\beq\label{eq coeff r}
r(x)=r_{1}x^{n-k-1}+r_2x^{n-k-2}+\dots+r_{n-k-1}x+r_{n-k}
\eeq
be the quotient and the remainder, respectively. Clearly, $p_i,r_j\in\mc O_l$. 

Denote by $I_{l,\bla,\bm b}^{\mc O}$ the ideal of $\mc O_l$ generated by $r_1,\dots,r_{n-k}$, $\vSi_n-a_1\cdots a_n$, and the coefficients of polynomial
\[
\prod_{s=1}^k(x-b_s+\la_1^{(s)})-\prod_{s=1}^k(x-b_s)-np(x)f(x).
\]
Let $\mc O_{l,\bla,\bm b}$ be the quotient algebra
\[
\mc O_{l,\bla,\bm b}=\mc O_l/I_{l,\bla,\bm b}^{\mc O}.
\]
Clearly, if $\mc O_{l,\bla,\bm b}$ is finite-dimensional, then it is a Frobenius algebra. 

Let $I_{l,\bla,\bm b}^{\mc B}$ be the image of $I_{l,\bla,\bm b}^{\mc O}$ under the isomorphism $\eta_{l}$.

\begin{lem}\label{lem ann ideal}
The ideal $I_{l,\bla,\bm b}^{\mc B}$ is contained in the kernel of the projection $\mc B_{l}(\bm z)\twoheadrightarrow \mc B_{l}(\bla,\bm b)$.
\end{lem}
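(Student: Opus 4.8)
The plan is to show that each of the three families of generators of $I_{l,\bla,\bm b}^{\mc O}$ is sent by $\eta_l$ into the kernel of $\mc B_l(\bm z)\twoheadrightarrow\mc B_l(\bla,\bm b)$; since that kernel is an ideal, this suffices. Concretely I must check that the images under $\eta_l$ of the remainder coefficients $r_1,\dots,r_{n-k}$, of $\vSi_n-a_1\cdots a_n$, and of the coefficients of $\varphi_{\bla,\bm b}(x)-\psi_{\bla,\bm b}(x)-np(x)f(x)$ all act by zero on $(L(\bla,\bm b))^\sing_{(n-l,l)}$. I would first prove this for generic $\bm b$ by evaluating on on-shell Bethe vectors, and then remove the genericity by a polynomiality argument, in the same spirit as the reduction to generic evaluation parameters used for the well-definedness of $\eta_l$ in Theorem \ref{thm VS}.

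For generic $\bm b$ the module $L(\bla,\bm b)$ is irreducible and $\varphi_{\bla,\bm b}-\psi_{\bla,\bm b}$ has simple roots, so by Theorem \ref{thm complete} the transfer matrix is diagonalizable and $(L(\bla,\bm b))^\sing_{(n-l,l)}$ has a basis of on-shell Bethe vectors $\widehat{\bB}_l(\bm t)$ indexed by the monic degree-$l$ divisors $y=y_{\bm t}$ of $\varphi_{\bla,\bm b}-\psi_{\bla,\bm b}$. Each such vector is a common eigenvector of $\mc B_l(\bla,\bm b)$, hence through $\eta_l$ it yields a character $\pi_y\colon\mc O_l\to\C$ for which $\eta_l(F)$ acts on $\widehat{\bB}_l(\bm t)$ by the scalar $\pi_y(F)$, so it is enough to see that $\pi_y$ annihilates every generator. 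Since the $C_i$ are central and $L(\bla,\bm b)$ is a quotient of $V(\bm a)$ (Lemma \ref{lem quotient cyc}, Proposition \ref{prop local weyl}), each $C_i$ acts by the scalar $\sigma_i(\bm a)$, so $\pi_y(\vSi_i)=\sigma_i(\bm a)$; this kills $\vSi_n-a_1\cdots a_n$ and, via \eqref{eq si q=1} together with $\prod_i(x-a_i+1)=h\varphi_{\bla,\bm b}$ and $\prod_i(x-a_i)=h\psi_{\bla,\bm b}$, gives $\pi_y(nf(x)g(x))=h(x)\big(\varphi_{\bla,\bm b}(x)-\psi_{\bla,\bm b}(x)\big)$. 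Comparing the defining relation \eqref{eq diff poly}--\eqref{eq G q=1} with the eigenvalue \eqref{eq gl11 eigenvalue in y} then forces $\pi_y(f(x))=y(x)$, whence $\pi_y(g(x))=h(x)\big(\varphi_{\bla,\bm b}-\psi_{\bla,\bm b}\big)/\big(ny(x)\big)$. Because $y$ divides $\varphi_{\bla,\bm b}-\psi_{\bla,\bm b}$, this is a polynomial divisible by $h$, so the remainder of $\pi_y(g)$ by $h$ vanishes, i.e. $\pi_y(r_j)=0$, and $\pi_y(p(x))=\big(\varphi_{\bla,\bm b}-\psi_{\bla,\bm b}\big)/\big(ny(x)\big)$, giving $\pi_y(npf)=\varphi_{\bla,\bm b}-\psi_{\bla,\bm b}$ and killing the coefficients of $\varphi_{\bla,\bm b}-\psi_{\bla,\bm b}-npf$. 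As the $\widehat{\bB}_l(\bm t)$ span, this proves $I_{l,\bla,\bm b}^{\mc B}\subseteq\ker$ for generic $\bm b$.

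To pass to an arbitrary cyclic $\bm b$ I would use that the underlying vector space $\bigotimes_s L_{\la^{(s)}}$ and its $\gl_{1|1}$-action are independent of $\bm b$, since the evaluation shift $\zeta_{b_s}$ alters only the higher modes of $T(x)$ and not the zero modes $e_{ij}$. Hence $W:=(L(\bla,\bm b))^\sing_{(n-l,l)}$ is a fixed subspace, preserved by every element of $\mc B$ (which commutes with $\gl_{1|1}$ when $q_1=q_2$). In a fixed basis of $W$ the operator $\eta_l(F)|_W$ depends polynomially on $\bm b$: each $C_i$ acts by $\sigma_i(\bm a)$, while $\prod_s(x-b_s)\mc T(x)$ is a polynomial in $x$ of degree at most $k-1$ with $\bm b$-polynomial coefficients coming from the evaluation form of $T(x)$, so $\sum_j B_jx^{n-j}=h(x)\prod_s(x-b_s)\mc T(x)$ has $\bm b$-polynomial coefficients, and $F$ is a polynomial in the $G_i,\vSi_j$ with $\bm b$-polynomial coefficients. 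Since $\eta_l(F)|_W$ vanishes on the Zariski-dense set of generic $\bm b$, it vanishes identically, in particular at the given cyclic $\bm b$.

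The computational core is the short character evaluation of the second paragraph, which is routine once $\pi_y(f)=y$ and $\pi_y(\vSi_i)=\sigma_i(\bm a)$ are in hand. The genuine obstacle is the passage from generic to arbitrary cyclic $\bm b$: one must first recognize that all the spaces $(L(\bla,\bm b))^\sing_{(n-l,l)}$ sit inside a single $\bm b$-independent space $W$, and then verify that the relevant Bethe-algebra operators have matrix entries polynomial in $\bm b$, so that vanishing on a dense open set propagates to every cyclic $\bm b$.
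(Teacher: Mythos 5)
Your proposal is correct and follows essentially the same route as the paper: reduce to generic $\bm b$ by observing that the relevant operators depend polynomially on $\bm b$, then verify the vanishing on the eigenbasis of on-shell Bethe vectors using the eigenvalue formula \eqref{eq gl11 eigenvalue in y} together with the fact that the central elements $C_i$ act by $\sigma_i(\bm a)$. Your version merely makes explicit the character computation (deriving $\pi_y(f)=y$ and then evaluating each generator) that the paper compresses into the statement that the vanishing of $I_{l,\bla,\bm b}^{\mc B}$ is equivalent to $\mathfrak f(x)$ dividing $\varphi_{\bla,\bm b}-\psi_{\bla,\bm b}$, and spells out the polynomiality argument the paper only asserts.
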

\begin{proof}
We treat $\bm b=(b_1,\dots,b_k)$ as variables. Note that the elements of $I_{l,\bla,\bm b}^{\mc B}$ act on $(L(\bla,\bm b))^\sing_{(n-l,l)}$ polynomially in $\bm b$. Therefore it suffices to show it for generic $\bm b$. Let $\mathfrak f(x)$ be the image of $f(x)$ under $\eta_{l}$. The condition that $I_{l,\bla,\bm b}^{\mc B}$ vanishes is equivalent to the condition that $\varphi_{\bla,\bm b}(x)-\psi_{\bla,\bm b}(x)$ is divisible by $\mathfrak f(x)$.

By Theorems \ref{thm complete}, there exists a common eigenbasis of the transfer matrix $\cT(x)$ in $(L(\bla,\bm b))^\sing_{(n-l,l)}$ for generic $\bm b$. Let $\bm{\omega_1}=(\omega_1,\dots,\omega_1)$ consisting of $n$ of $\omega_1=(1,0)$. Clearly, a solution of Bethe ansatz equation associated to $\bla,\bm b,\l$ is also a solution to Bethe ansatz equation associated to $\bm{\omega_1},\bm a,l$. Moreover, the expressions of corresponding on-shell Bethe vectors differ by a scalar multiple (with different vacuum vectors). By Lemma \ref{lem quotient cyc} and Theorem \ref{thm gl11 eigenvalue in y}, $\varphi_{\bla,\bm b}(x)-\psi_{\bla,\bm b}(x)$ is divisible by $\mathfrak f(x)$ for generic $\bm b$ since the eigenvalue of $\mathfrak f(x)$ corresponding to $y_{\bm t}(x)$ in \eqref{eq gl11 eigenvalue in y}. Therefore $I_{l,\bla,\bm b}^{\mc B}$ vanishes for generic $\bm b$, completing the proof.
\end{proof}

Therefore, we have the epimorphism
\beq\label{eq epi new}
\mc O_{l,\bla,\bm b}\cong \mc B_{l}(\bm z)/I_{l,\bla,\bm b}^{\mc B}\twoheadrightarrow \mc B_{l}(\bla,\bm b).
\eeq
We claim that the surjection in \eqref{eq epi new} is an isomorphism by checking $\dim \mc O_{l,\bla,\bm b}=\dim \mc B_{l}(\bla,\bm b)$.

\begin{lem}\label{lem dimen count}
We have $\dim \mc O_{l,\bla,\bm b}=\displaystyle\binom{k-1}{l}$.
\end{lem}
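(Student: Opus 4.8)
The plan is to turn $\mc O_{l,\bla,\bm b}$ into the factorization algebra appearing in Theorem \ref{thm tensor irr}(i) by a linear change of generators, and then count dimensions using the freeness of a ring of partially symmetric polynomials over the symmetric ones. First I would record the degree bookkeeping. Since we have arranged $\la_2^{(s)}=0$, we have $\varphi_{\bla,\bm b}(x)=\prod_{s=1}^k(x-b_s+\la_1^{(s)})$ and $\psi_{\bla,\bm b}(x)=\prod_{s=1}^k(x-b_s)$, while $h(x)=\prod_{s=1}^k\prod_{i=1}^{\la_1^{(s)}-1}(x-b_s+i)$ is monic of degree $\sum_s(\la_1^{(s)}-1)=n-k$. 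The difference $\varphi_{\bla,\bm b}-\psi_{\bla,\bm b}$ has degree $k-1$ with leading coefficient $n$, so $\Phi(x):=\tfrac1n(\varphi_{\bla,\bm b}-\psi_{\bla,\bm b})(x)$ is monic of degree $k-1$, and its elementary symmetric functions of roots are the $\varepsilon_i$ of Theorem \ref{thm tensor irr}.

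The key step is a change of coordinates on $\Omega_l$. Division with remainder by the fixed monic polynomial $h$ is a linear isomorphism sending the coefficient vector $(g_1,\dots,g_{n-l-1})$ of $g(x)$ to the coefficients $(p_1,\dots,p_{k-l-1},r_1,\dots,r_{n-k})$ of the quotient $p(x)$ and remainder $r(x)$; the counts $(k-l-1)+(n-k)=n-l-1$ match. Hence $f_1,\dots,f_l,p_1,\dots,p_{k-l-1},r_1,\dots,r_{n-k},\vSi_n$ is another system of free polynomial generators of $\mc O_l$, and in these generators the ideal $I_{l,\bla,\bm b}^{\mc O}$ is generated by the $r_j$, by $\vSi_n-a_1\cdots a_n$, and by the coefficients of $\varphi_{\bla,\bm b}-\psi_{\bla,\bm b}-np(x)f(x)=n\bigl(\Phi(x)-f(x)p(x)\bigr)$. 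Killing the $r_j$ and $\vSi_n$ merely discards those generators, so
\[
\mc O_{l,\bla,\bm b}\cong \C[f_1,\dots,f_l,p_1,\dots,p_{k-l-1}]\big/\big\langle\text{coefficients of }f(x)p(x)-\Phi(x)\big\rangle .
\]
As $f$ and $p$ are monic with $\deg(fp)=\deg\Phi=k-1$, the top coefficient relation is trivial, leaving exactly $k-1$ relations.

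To compute the dimension I would identify this with a factorization algebra. Introduce variables $w_1,\dots,w_{k-1}$ and set $f(x)=\prod_{i=1}^l(x-w_i)$, $p(x)=\prod_{j=l+1}^{k-1}(x-w_j)$, so that $f_i=(-1)^ie_i(w_1,\dots,w_l)$ and $p_j=(-1)^je_j(w_{l+1},\dots,w_{k-1})$ freely generate $\C[w_1,\dots,w_{k-1}]^{\fkS_l\times\fkS_{k-l-1}}$. The coefficients of $f(x)p(x)$ are, up to sign, the elementary symmetric functions $\sigma_i(\bm w)$ in all $k-1$ variables, so imposing $f(x)p(x)=\Phi(x)$ is exactly imposing $\sigma_i(\bm w)=\varepsilon_i$ for $i=1,\dots,k-1$. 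Thus $\mc O_{l,\bla,\bm b}\cong \C[w_1,\dots,w_{k-1}]^{\fkS_l\times\fkS_{k-l-1}}/\langle\sigma_i(\bm w)-\varepsilon_i\rangle_{i=1}^{k-1}$, which also yields the algebra description in Theorem \ref{thm tensor irr}(i). Now $\C[w_1,\dots,w_{k-1}]^{\fkS_l\times\fkS_{k-l-1}}$ is a free module of rank $\binom{k-1}{l}$ over the subring $\C[w_1,\dots,w_{k-1}]^{\fkS_{k-1}}=\C[\sigma_1(\bm w),\dots,\sigma_{k-1}(\bm w)]$, exactly as in the fact used for $\C[z_1,\dots,z_n]^{\fkS_\ell\times\fkS_{n-\ell}}$ in Section \ref{sec space VS}. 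Since $\langle\sigma_i(\bm w)-\varepsilon_i\rangle$ is the extension of the maximal ideal $\langle\sigma_i-\varepsilon_i\rangle$ of this subring, the quotient is the fiber of a free module of rank $\binom{k-1}{l}$ at a point, whence $\dim_\C\mc O_{l,\bla,\bm b}=\binom{k-1}{l}$.

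I expect the main obstacle to be the careful verification of the middle step: checking that division-with-remainder by $h$ really is an invertible, grading-compatible change of free generators, and that afterward the three families generating $I_{l,\bla,\bm b}^{\mc O}$ decouple cleanly into the "coordinate" relations ($r_j$ and $\vSi_n$) and the genuine factorization relations among $f$ and $p$. Once this decoupling is established, the remaining identification of the $f_i,p_j$ with the generators of $\C[w]^{\fkS_l\times\fkS_{k-l-1}}$ and the standard freeness argument are routine.
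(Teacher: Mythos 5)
Your proof is correct and follows essentially the same route as the paper's: the change of generators $(g_1,\dots,g_{n-l-1})\mapsto(p_1,\dots,p_{k-l-1},r_1,\dots,r_{n-k})$ via division by $h$, the reduction to $\C[f_1,\dots,f_l,p_1,\dots,p_{k-l-1}]$ modulo the coefficients of $f(x)p(x)-\tfrac1n(\varphi_{\bla,\bm b}-\psi_{\bla,\bm b})$, the passage to the variables $w_1,\dots,w_{k-1}$, and the rank-$\binom{k-1}{l}$ freeness of $\C[w_1,\dots,w_{k-1}]^{\fkS_l\times\fkS_{k-l-1}}$ over the full symmetric subring. You merely spell out the decoupling steps that the paper dismisses with ``it is not hard to check.''
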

\begin{proof}
Note that $\C[p_1,\dots,p_{k-l-1},r_1,\dots,r_{n-k}]\cong \C[g_1,\dots,g_{n-l-1}]$, where $p_i$ and $r_j$ are defined in \eqref{eq coeff p} and \eqref{eq coeff r}. It is not hard to check that
\beq\label{eq iso algebra}
\mc O_{l,\bla,\bm b}\cong \C[f_1,\dots,f_l,p_1,\dots,p_{k-l-1}]/\tilde I_{l,\bla,\bm b}^{\mc O},
\eeq
where $\tilde I_{l,\bla,\bm b}^{\mc O}$ is the ideal of $\C[f_1,\dots,f_l,p_1,\dots,p_{k-l-1}]$ generated by the coefficients of the polynomial $\prod_{s=1}^k(x-b_s+\la_1^{(s)})-\prod_{s=1}^k(x-b_s)-np(x)f(x)$.

Introduce new variables $\bm w=(w_1,\dots,w_{k-1})$ such that
\[
f(x)=\prod_{i=1}^{l}(x-w_i),\quad p(x)=\prod_{i=1}^{k-l-1}(x-w_{l+i}).
\]Let $\bm \varepsilon=(\varepsilon_1,\dots,\varepsilon_{k-1})$ be complex numbers such that
\[
\prod_{s=1}^k(x-b_s+\la_1^{(s)})-\prod_{s=1}^k(x-b_s)=n\Big(x^{k-1}+\sum_{i=1}^{k-1}(-1)^i\varepsilon_ix^{k-1-i}\Big).
\]
Then 
\beq\label{eq iso algebra new}
\C[f_1,\dots,f_l,p_1,\dots,p_{k-l-1}]/\tilde I_{l,\bla,\bm b}^{\mc O}\cong \C[w_1,\dots,w_{k-1}]^{\fkS_{l}\times \fkS_{k-l-1}}/\langle \sigma_i(\bm w)-\varepsilon_i\rangle_{i=1,\dots,k-1}.
\eeq
The lemma follows from the fact that $\C[w_1,\dots,w_{k-1}]^{\fkS_{l}\times \fkS_{k-l-1}}$ is a free module over $\C[w_1,\dots,w_{k-1}]^\fkS$ of rank $\binom{k-1}{l}$.\end{proof}

Note that we have the projection $(\mc V^\fkS)_{(n-l,l)}^\sing \twoheadrightarrow (L(\bla,\bm b))_{(n-l,l)}^\sing$. Since by Theorem \ref{thm VS} the Bethe algebra $\mc B_l(\bm z)$ acts on $(\mc V^\fkS)_{(n-l,l)}^\sing$ cyclically, the Bethe algebra $\mc B_l(\bla,\bm b)$ acts on $(L(\bla,\bm b))_{(n-l,l)}^\sing$ cyclically as well. Therefore we have
\[
\dim \mc B_l(\bla,\bm b)=\dim (L(\bla,\bm b))_{(n-l,l)}^\sing=\binom{k-1}{l}.
\]

\begin{proof}[Proof of Theorem \ref{thm tensor irr}]
Part (i) follows from Lemma \ref{lem dimen count} and \eqref{eq epi new}, \eqref{eq iso algebra}, \eqref{eq iso algebra new}. Clearly, we have $\mc B_l(\bla,\bm b)\cong \mc O_{l,\bla,\bm b}$ is a Frobenius algebra. Moreover, the map $\rho_l$ from Theorem \ref{thm VS} induces a map
\[
\rho_{l,\bla,\bm b}:\mc O_{l,\bla,\bm b}\to (L(\bla,\bm b))_{(n-l,l)}^\sing
\]
which identifies the regular representation of $\mc O_{l,\bla,\bm b}$ with the $\mc B_{l}(\bla,\bm b)$-module $(L(\bla,\bm b))_{(n-l,l)}^\sing$. Therefore part (ii) is proved.

Since $\mc B_{l}(\bla,\bm b)$ is a Frobenius algebra, the regular and coregular representations of $\mc B_{l}(\bla,\bm b)$ are isomorphic to each other. Parts (iii)--(vi) follow from the general facts about the coregular representations, see e.g. \cite[Section 3.3]{MTV09}. 

Due to part (iv), it suffices to consider the algebraic multiplicity of every eigenvalue. It is well known that roots of a polynomial depend continuously on its coefficients. Hence the eigenvalues of $\mc T(x)$ depend continuously on $\bm b$. Part (vii) follows from deformation argument and Theorem \ref{thm complete}.
\end{proof}

\begin{proof}[Proof of Theorem \ref{thm tensor irr q}]
It is parallel to that of Theorem \ref{thm tensor irr} with the following minor modification.

The degree of $g(x)$ is $l$ instead of $l-1$. Divide the polynomial $g(x)$ in \eqref{eq poly f g} by $h(x)$ and let
\[
p(x)=x^{k-l}+p_1x^{k-l-1}+\dots+p_{k-l-1}x+p_{k-l},
\] 
\[
r(x)=r_{1}x^{n-k-1}+r_2x^{n-k-2}+\dots+r_{n-k-1}x+r_{n-k}
\]be the quotient and remainder, respectively. Clearly, $p_i,r_j\in\mc O^Q_l$. 

Denote by $I_{l,\bla,\bm b}^{Q,\mc O}$ the ideal of $\mc O^Q_l$ generated by $r_1,\dots,r_{n-k}$ and the coefficients of polynomial
\[
q_1\prod_{s=1}^k(x-b_s+\la_1^{(s)})-q_2\prod_{s=1}^k(x-b_s-\la_2^{(s)})-(q_1-q_2)p(x)f(x).
\]
Let $\mc O^Q_{l,\bla,\bm b}$ be the quotient algebra
\[
\mc O^Q_{l,\bla,\bm b}=\mc O^Q_l/I_{l,\bla,\bm b}^{Q,\mc O}.
\]
The rest of the proof is similar, we only note that $\dim(\mc O^Q_{l,\bla,\bm b})= {k \choose l}$.
\end{proof}

\begin{proof}[Proof of Theorem \ref{thm completeness general}]
We give a proof for the case $q_1=q_2$. The case $q_1\ne q_2$ is similar.

By part (v) of Theorem \ref{thm tensor irr}, all eigenvalues of $\cT(x)$ have the form \eqref{eq gl11 eigenvalue in y} with a monic divisor $y_{\bm t}$ of $\varphi_{\bla,\bm b}(x)-\psi_{\bla,\bm b}(x)$. Moreover, it follows from part (iv) of Theorem \ref{thm tensor irr} that all eigenspaces of $\cT(x)$ have dimension one. By Lemma \ref{lem nonzero}, all on-shell Bethe vectors are non-zero and the theorem follows.
\end{proof}

\section{Higher transfer matrices}\label{sec higher}
\subsection{Higher transfer matrices}
We have the standard action of symmetric group $\fkS_{m}$ on the space $(\C^{1|1})^{\otimes m}$ where $s_i$ acts as the graded flip operator $P^{(i,i+1)}$. We denote by $A_m$ and $H_m$ the images of the normalized anti-symmetrizer and symmetrizer,
\[
A_m=\frac{1}{m!}\sum_{\sigma\in\fkS_m}\mathrm{sign}(\sigma)\cdot \sigma,\qquad H_m=\frac{1}{m!}\sum_{\sigma\in\fkS_m} \sigma.
\]

Define the \emph{$m$-th transfer matrix associated to the diagonal matrix}  $Q=\mathrm{diag}(q_1,q_2)$ by
\beq\label{eq m-transfer}
\fkT^Q_m(x)=\str(A_mQ^{(1)}T^{(1,m+1)}(x)Q^{(2)}T^{(2,m+1)}(x-1)\cdots Q^{(m)}T^{(m,m+1)}(x-m+1)).
\eeq
Here the supertrace is taken over all copies of $\End(\C^{1|1})$. Clearly, $\fkT_1^Q(x)=\cT_Q(x)$. 

Expand $\fkT_m^Q(x)$ as a power series in $x^{-1}$,
\[
\fkT_m^Q(x)=\sum_{s=0}^\infty B^Q_{m,s}x^{-s}.
\]
We denote the unital subalgebra of $\Yone$ generated by elements $B^Q_{m,s}$ for all $m\gge 1$ and $s\gge 0$ by $\mathfrak B^Q$. When $q_1=q_2$, we simple write $\mathfrak B$ for $\mathfrak B^Q$. The subalgebra $\mathfrak B^Q$ does not change if $q_1,q_2$ are multiplied by the same non-zero number. Therefore if $q_1=q_2$, we assume further that $q_1=1$.

The following statements are standard.

\begin{prop}
The algebra $\mathfrak B^Q$ is commutative. If $q_1\ne q_2$, then $\mathfrak B^Q$ contains the algebra $\mathrm{U}(\h)$ and hence commutes with $\mathrm{U}(\h)$. If $q_1=q_2$, the algebra $\mathfrak B^Q$ commutes with the algebra $\Uone$.
\end{prop}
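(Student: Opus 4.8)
I would treat the three assertions separately, with the commutativity of $\mathfrak B^Q$ as the crux and the two commutation/containment statements as relatively short consequences. For commutativity the plan is to view each $\fkT^Q_m(x)$ as the transfer matrix attached to the auxiliary $\Yone$-module $A_m(\C^{1|1})^{\otimes m}$ produced by fusion from the evaluation modules $\C^{1|1}(x),\C^{1|1}(x-1),\dots,\C^{1|1}(x-m+1)$, and then to run the standard transfer-matrix argument. To prove $[\fkT^Q_m(x),\fkT^Q_{m'}(y)]=0$ I would form the product on the $m+m'$ auxiliary spaces, insert between the two blocks the intertwiner $\mathbb R(x-y)$ (a product of elementary factors $\cR^{(a,b')}(\cdots)$), and apply the defining relation \eqref{eq RTT} repeatedly to drag the second block of monodromy operators past the first. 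The two antisymmetrizers are absorbed because $A_m$ (resp.\ $A_{m'}$) is itself built from the degenerate values of $\cR$ on its own block via the fusion procedure coming out of \eqref{eq yangbaxter}; the twist is harmless because $Q$ is diagonal, so $[\cR(u),Q\otimes Q]=0$ and $Q^{\otimes m}$ commutes both with $A_m$ and with $\mathbb R(x-y)$. Super-cyclicity of $\str$ then gives $\fkT^Q_m(x)\fkT^Q_{m'}(y)=\fkT^Q_{m'}(y)\fkT^Q_m(x)$ for all $x,y$, whence all coefficients $B^Q_{m,s}$ commute. The bookkeeping of graded signs in this fusion computation is the only delicate point, and it is exactly what I expect to be the main obstacle, though it is entirely parallel to the even case.

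\textbf{Commutation with the Cartan and with $\g$.} For any $e_{rs}\in\g$, embedded as $(-1)^{|r|}T_{sr}^{(1)}$, relation \eqref{eq com gll} says $[1\otimes e_{rs},T(x)]=-[E_{rs}\otimes 1,T(x)]$, i.e.\ the coproduct is $\g$-equivariant. I would apply this through the (graded) Leibniz rule to the product $T^{(1,m+1)}(x)\cdots T^{(m,m+1)}(x-m+1)$ appearing in \eqref{eq m-transfer}: the adjoint action of $e_{rs}$ becomes $-[\sum_{a=1}^m E^{(a)}_{rs},\,\cdot\,]$ acting on the auxiliary spaces. Now $\sum_a E^{(a)}_{rs}$ is the diagonal $\g$-action, hence commutes with $A_m$, and it commutes with $Q^{(1)}\cdots Q^{(m)}$ precisely when $[Q,E_{rs}]=(q_r-q_s)E_{rs}=0$. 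Under that condition I can pull $\sum_a E^{(a)}_{rs}$ to the front of the supertrace, and $\str$ annihilates (super)commutators, giving $[e_{rs},\fkT^Q_m(x)]=0$. The vanishing $q_r=q_s$ holds for all $r,s$ exactly when $q_1=q_2$, so for scalar $Q$ the algebra $\mathfrak B^Q$ commutes with all of $\Uone$; for arbitrary diagonal $Q$ it holds only for $r=s$, so $\mathfrak B^Q$ always commutes with $\mathrm U(\h)$.

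\textbf{Containment $\mathrm U(\h)\subseteq\mathfrak B^Q$ for $q_1\ne q_2$.} Since $\mathrm U(\h)=\C[T^{(1)}_{11},T^{(1)}_{22}]$, it suffices to exhibit two linearly independent combinations of $T^{(1)}_{11},T^{(1)}_{22}$ inside $\mathfrak B^Q$. The $x^{-1}$-coefficient of $\fkT^Q_1(x)=\cT_Q(x)$ is $B^Q_{1,1}=q_1T^{(1)}_{11}-q_2T^{(1)}_{22}$. Expanding $\fkT^Q_2(x)=\str_{12}(A_2Q^{(1)}Q^{(2)}T^{(1,3)}(x)T^{(2,3)}(x-1))$ with $A_2=\tfrac12(1-P)$ to first order, the $x^{-1}$-coefficient is $\str_{12}\big((1-P)(Q\otimes Q)\,\mathcal T^{(1)}\big)$ with $\mathcal T=\sum_{i,j}E_{ij}\otimes T^{(1)}_{ij}$; evaluating the two supertraces using $\str(QE_{ij})=\delta_{ij}q_i(-1)^{|i|}$, $\str(Q)=q_1-q_2$, and $\str_{12}\big(P(C\otimes D)\big)=\str(DC)$ yields
\[
B^Q_{2,1}=-q_1q_2\,T^{(1)}_{11}+(2q_2^2-q_1q_2)\,T^{(1)}_{22}.
\]
The determinant of the coefficient matrix of $(B^Q_{1,1},B^Q_{2,1})$ in the basis $(T^{(1)}_{11},T^{(1)}_{22})$ equals $q_1q_2(q_2-q_1)$, which is nonzero precisely because $Q$ is invertible and $q_1\ne q_2$. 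Hence $T^{(1)}_{11},T^{(1)}_{22}\in\mathfrak B^Q$, so $\mathrm U(\h)\subseteq\mathfrak B^Q$; that $\mathfrak B^Q$ commutes with $\mathrm U(\h)$ is then immediate from the commutativity of $\mathfrak B^Q$ established above. (Consistently, setting $q_1=q_2$ collapses $B^Q_{2,1}$ to a multiple of $B^Q_{1,1}=T^{(1)}_{11}-T^{(1)}_{22}=e_{11}+e_{22}$, the central direction of $\g$, matching the commutation with all of $\Uone$.)
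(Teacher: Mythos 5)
Your proposal is correct and follows essentially the same route as the paper: commutativity from the RTT relation via the standard fusion argument, commutation with $\Uone$ for scalar $Q$ from \eqref{eq com gll}, and the containment $\mathrm U(\h)\subseteq\mathfrak B^Q$ from the linear independence of the $x^{-1}$-coefficients of the first two transfer matrices. Your formula $B^Q_{2,1}=-q_1q_2T^{(1)}_{11}+(2q_2^2-q_1q_2)T^{(1)}_{22}$ is consistent with the normalization \eqref{eq m-transfer} (and with \eqref{eq expansion higher}), differing from the expression displayed in the paper's proof by the harmless nonzero factor $-q_2$, so the linear-independence conclusion is unaffected.
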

\begin{proof}
The first statement follows from the RTT relation \eqref{eq RTT}, c.f. \cite[Proposition 4.5]{MTV}. The second is a corollary of the formulas
\[
B_{1,1}^Q=q_1T_{11}^{(1)}-q_2T_{22}^{(1)},\qquad B_{2,1}^Q=q_1T_{11}^{(1)}+(q_1-2q_2)T_{22}^{(1)}.
\]
The last statement is obtained from \eqref{eq com gll}, c.f. \cite[Proposition 4.7]{MTV}.
\end{proof}

As a subalgebra of $\Yone$, the algebra $\mathfrak B^Q$ acts naturally on any $\Yone$-module $M$. Since $\mc B^Q$ commutes with $\mathrm{U}(\h)$, it preserves the weight spaces $(M)_{\la}$. Moreover, $\mc B$ preserves the weight singular spaces $(M)_{\la}^\sing$. 

\begin{prop}
The algebra $\mc B^Q$ is stable under the anti-automorphism $\iota$ in \eqref{eq anti-in}, $\iota(\fkT_m^Q(x))=\fkT_m^Q(x)$.
\end{prop}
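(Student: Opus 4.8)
The plan is to prove the stronger pointwise statement $\iota(\fkT_m^Q(x))=\fkT_m^Q(x)$, from which stability of the algebra is immediate since $\iota$ is an anti-automorphism and the $\fkT_m^Q(x)$ generate it. The idea is to realize the Yangian anti-automorphism $\iota$, when applied to $\fkT_m^Q(x)$, as the \emph{total supertranspose} on the $m$ auxiliary copies of $\End(\C^{1|1})$, and then to exploit the invariance of the supertrace and of the antisymmetrizer under supertranspose. The case $m=1$ is immediate: since $|1|=\bar 0$ and $|2|=\bar 1$, formula \eqref{eq anti-in} gives $\iota(T_{11}(x))=T_{11}(x)$ and $\iota(T_{22}(x))=T_{22}(x)$, hence $\iota(\fkT_1^Q(x))=\iota(\cT_Q(x))=\cT_Q(x)=\fkT_1^Q(x)$.

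For general $m$ I would first record the matrix form of $\iota$. Writing $T(x)=\sum_{i,j}E_{ij}\otimes T_{ij}(x)$ and comparing \eqref{eq anti-in} with the supertranspose $t$, a direct check shows that applying $\iota$ to the Yangian entries agrees with the matrix supertranspose $T^t(x)$ up to conjugation by the diagonal parity operator $U=\mathrm{diag}(1,-1)$, namely $(\id\otimes\iota)(T(x))=(U\otimes 1)\,T^t(x)\,(U\otimes 1)$. Since $U$ and $Q$ are diagonal they commute, $U^2=1$, and $U^{\otimes m}$ is invariant under all graded permutations, hence commutes with $A_m$ and with each $Q^{(a)}$. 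I would also use two structural inputs: $A_m$ is fixed by the total supertranspose $t_{1\dots m}$ (because the sign character satisfies $\mathrm{sign}(\sigma)=\mathrm{sign}(\sigma^{-1})$), and $A_m$ commutes with the shifted monodromy product $T^{(1,m+1)}(x)T^{(2,m+1)}(x-1)\cdots T^{(m,m+1)}(x-m+1)$ by the fusion consequence of the RTT relation \eqref{eq RTT}; these are the $\gl_{1|1}$ analogues of the even-case facts used in \cite[Propositions 4.5 and 4.7]{MTV} and follow from \eqref{eq yangbaxter} and \eqref{eq RTT}.

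Next I would apply $\iota$ to the definition \eqref{eq m-transfer}. As an anti-homomorphism, $\iota$ reverses the order of the $m$ monodromy factors (producing the Koszul signs), and on each factor it acts as the supertranspose-up-to-$U$ above. Using $\str(A)=\str(A^t)$ from \eqref{eq transpose cyclic} I would apply the total supertranspose $t_{1\dots m}$ inside the supertrace at no cost, and then the reversal identity $(AB)^t=(-1)^{|A||B|}B^tA^t$ brings the reversed, transposed product back into its original left-to-right order. Here $Q^t=Q$ and $A_m^{\,t_{1\dots m}}=A_m$ eliminate the transposes on the diagonal and symmetrizer factors; the commutation of $A_m$ with the monodromy product together with the symmetry of $A_m$ lets me relabel the auxiliary spaces so that the reversed sequence of spectral shifts $x-m+1,\dots,x$ is restored to $x,\dots,x-m+1$. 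Finally the two conjugating copies of the even operator $U^{\otimes m}$ combine into a conjugation that drops out by cyclicity of the supertrace, leaving exactly $\fkT_m^Q(x)$.

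The hard part will be the sign bookkeeping: one must verify that the Koszul signs from reversing the Yangian product, the factors $(-1)^{|A||B|}$ from transposing products in the auxiliary spaces, and the parity factors from the conjugation by $U^{\otimes m}$ all cancel, leaving overall coefficient $+1$ rather than a signed version of $\fkT_m^Q(x)$. The two structural lemmas (that $A_m$ is $t_{1\dots m}$-invariant and commutes with the shifted monodromy product) also require care in the graded setting, but once they are in place the relabeling and cyclicity finish the argument, giving $\iota(\fkT_m^Q(x))=\fkT_m^Q(x)$ and hence the $\iota$-stability of $\mathfrak B^Q$.
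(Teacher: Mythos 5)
Your proposal is correct and follows essentially the same route as the paper, which simply notes that $\iota(T(x))=(T(x))^t$, $Q^t=Q$, and $(A_m)^t=A_m$ (total supertranspose) and declares the rest parallel to \cite[Proposition 4.11]{MTV}. Your extra care with the conjugation by $U^{\otimes m}$, $U=\mathrm{diag}(1,-1)$ (which the paper's identity $\iota(T(x))=(T(x))^t$ absorbs into its sign conventions) is a harmless refinement, since $U^{\otimes m}$ is even, commutes with $A_m$ and the $Q^{(a)}$, and drops out under the supertrace.
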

\begin{proof}
Note that $\iota(T(x))=(T(x))^t$, $Q^t=Q$, and $(A_m)^t=A_m$, the proof is parallel to that of \cite[Proposition 4.11]{MTV}. Here $t$ is the supertranspose and the supertransposition $(A_m)^t$ is taken over all copies of $\End(\C^{1|1})$ in $A_m$.
\end{proof}

\subsection{Berezinian and rational difference operator}	
Define the rational difference operator $\mc D^Q(x,\tau)$,
\[
\mc D^Q(x,\tau)=\mathrm{Ber}(1-Z^Q(x,\tau)),
\]
where as before $Z^Q(x,\tau)=T^t(x)Q\tau$.

Applying the supertransposition to all copies of $\End(\C^{1|1})$ and using cyclic property of supertrace, see \eqref{eq transpose cyclic},  one has
$$
\fkT^Q_m(x)=\str(A_m(T^t)^{(1,m+1)}(x)Q^{(1)}(T^t)^{(2,m+1)}(x-1)Q^{(2)}\cdots (T^t)^{(m,m+1)}(x-m+1)Q^{(m)}).
$$
Therefore by \cite[Theorem 2.13]{MR14} we have
\beq\label{eq diff trans}
\mc D^Q(x,\tau)=\sum_{m=0}^\infty (-1)^m \fkT_m^Q(x)\tau^m.
\eeq
By \eqref{ber}, we obtain
$$
\mc D^Q(x,\tau)=(1-q_1T_{11}(x)\tau+q_1T_{12}(x)\tau(1-q_2T_{22}(x)\tau)^{-1}q_2T_{21}(x)\tau)(1-q_2T_{22}(x)\tau)^{-1}.
$$
Expand $(1-q_2T_{22}(x)\tau)^{-1}$ as a power series in $\tau$,
\[
(1-q_2T_{22}(x)\tau)^{-1}=\sum_{m=0}^{\infty}(q_2T_{22}(x)\tau)^m=\sum_{m=0}^{\infty}q_2^m\prod_{i=1}^mT_{22}(x-i+1)\tau^m,
\] 
and compare to \eqref{eq diff trans}. It gives $\fkT^Q_{1}(x)=\mc T_Q(x)=q_1T_{11}(x)-q_2T_{22}(x)$ and for  $m\gge 2$,
\begin{align}
\widetilde \fkT^Q_m(x):= (-1)^m q_2^{1-m}\fkT^Q_m(x)=&\ -(q_1T_{11}(x)-q_2T_{22}(x))\prod_{i=1}^{m-1}T_{22}(x-i)+\nonumber \\&\sum_{s=1}^{m-1}q_1T_{12}(x)\Big(\prod_{i=1}^{s-1}T_{22}(x-i)\Big)T_{21}(x-s)\prod_{j=s+1}^{m-1}T_{22}(x-j).\label{eq expansion higher}
\end{align}

\begin{rem}
The expansion \eqref{eq expansion higher} (and other variations) of the higher transfer matrices $\fkT^Q_m(x)$ can also be obtained from \cite[Proposition 2.3, Remark 2.4]{MR14}.
\end{rem}

Let, as in Section \ref{sec BA}, $\bm b=(b_1,\dots,b_k)$ be a sequence of complex numbers, $\bla=(\la^{(1)},\dots,\la^{(k)})$ a sequence of $\gl_{1|1}$-weights. Let $\bm t=(t_1,\dots,t_l)$ be a solution of the Bethe ansatz equation \eqref{eq gl11 BAE}. Define two rational functions
\beq\label{eq highest l-weight}
\zeta_{1}(x)=\prod_{s=1}^k\frac{x-b_s+\la_1^{(s)}}{x-b_s},\qquad \zeta_{2}(x)=\prod_{s=1}^k\frac{x-b_s-\la_2^{(s)}}{x-b_s}.
\eeq
We also use the following notation,
\[
f^{[i]}:=\tau^i(f)=f(x-i)
\]
for any function $f$ in $x$.

Let $y=(x-t_1)\cdots(x-t_l)$. In \cite{HLM}, we associate a rational difference operator $\mc D_{\bm t,\bla,\bm b}^Q(x,\tau)$ (or $\mc D_{y,\bla,\bm b}^Q(x,\tau)$) to each solution $\bm t$ of the Bethe ansatz equation,
\beq\label{eq diff oper y}
\mc D_{\bm t,\bla,\bm b}^Q(x,\tau)=\mc D_{y,\bla,\bm b}^Q(x,\tau)=\Big(1-q_1\zeta_1\frac{y^{[1]}}{y}\tau\Big)\Big(1-q_2\zeta_2\frac{y^{[1]}}{y}\tau\Big)^{-1}.
\eeq
The operator $\mc D_{\bm t,\bla,\bm b}^Q(x,\tau)$ describes the eigenvalues of the algebra $\mathfrak B^Q$ acting on the corresponding on-shell Bethe vector $\widehat\bB_l(\bm t)$.

\begin{thm}\label{thm eigenvalue oper}
Assume that $t_i\ne t_j$ for $i\ne j$. We have 
\[
\mc D^Q(x,\tau)\widehat\bB_l(\bm t)=\mc D_{\bm t,\bla,\bm b}^Q(x,\tau)\widehat\bB_l(\bm t).
\]
\end{thm}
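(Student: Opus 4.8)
The plan is to show by direct computation that $\widehat{\bB}_l(\bm t)$ is a common eigenvector of all higher transfer matrices $\fkT^Q_m(x)$, with eigenvalues $\Lambda_m(x)$ that assemble, through \eqref{eq diff trans}, into the scalar rational difference operator $\mc D^Q_{\bm t,\bla,\bm b}(x,\tau)$. Once $\fkT^Q_m(x)\widehat{\bB}_l(\bm t)=\Lambda_m(x)\widehat{\bB}_l(\bm t)$ is established, the asserted operator identity applied to $\widehat{\bB}_l(\bm t)$ becomes, by \eqref{eq diff trans}, the single scalar identity $\sum_{m\gge 0}(-1)^m\Lambda_m(x)\tau^m=\mc D^Q_{\bm t,\bla,\bm b}(x,\tau)$, since $\widehat{\bB}_l(\bm t)\neq 0$ is $x$-independent. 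As the right-hand side is a ratio of two order-one difference operators, it suffices to exhibit each $\Lambda_m(x)$ and to verify the recursion obtained by clearing the scalar denominator $1-q_2\zeta_2\,(y^{[1]}/y)\,\tau$, reducing the whole statement to a finite check at each order in $\tau$.

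First I would take the explicit form \eqref{eq expansion higher} of $\fkT^Q_m(x)$, expressed through $T_{11}(x)$, $T_{22}(x-i)$, $T_{12}(x)$, $T_{21}(x-s)$, and record from \eqref{com relations} the algebraic Bethe ansatz relations needed to move $T_{22}(x-i)$ and $T_{21}(x-s)$ to the right through the creation string $T_{12}(t_1)\cdots T_{12}(t_l)$ defining $\widehat{\bB}_l(\bm t)$, see \eqref{eq bv gl11}. Throughout I would use the highest-weight relations $T_{11}(x)|0\rangle=\zeta_1(x)|0\rangle$ and $T_{22}(x)|0\rangle=\zeta_2(x)|0\rangle$ with $\zeta_1,\zeta_2$ from \eqref{eq highest l-weight}, together with $T_{12}(t_i)^2=0$ and the anticommutativity of the $T_{12}(t_i)$, so that $\widehat{\bB}_l(\bm t)$ is a fermionic string on which $T_{21}$ acts by removing a single factor.

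Next I would evaluate the two groups of terms in \eqref{eq expansion higher} separately. For the diagonal group $-(q_1T_{11}(x)-q_2T_{22}(x))\prod_{i=1}^{m-1}T_{22}(x-i)$, I would move each $T_{22}(x-i)$ rightward through the string; every commutation splits into a \emph{wanted} term (a scalar coefficient times the unchanged string) and an \emph{unwanted} term in which a Bethe parameter $t_j$ is replaced by the spectral parameter $x$. For the fermionic insertion group $\sum_{s}q_1T_{12}(x)\big(\prod_{i=1}^{s-1}T_{22}(x-i)\big)T_{21}(x-s)\prod_{j=s+1}^{m-1}T_{22}(x-j)$, I would first let $T_{21}(x-s)$ act, which on the fermionic string yields a sum of residue-type contributions, one for each removed factor $T_{12}(t_j)$ (the term reaching $|0\rangle$ vanishes since $T_{21}(x)|0\rangle=0$); these are then re-dressed by $T_{12}(x)$ and the surviving diagonal factors, feeding into both the wanted and unwanted totals. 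The wanted contributions of the two groups should combine into the scalar $\Lambda_m(x)$ matching the coefficient of $(-1)^m\tau^m$ in the ratio \eqref{eq diff oper y}.

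The hard part will be proving that all unwanted terms cancel. They arise in two ways—from moving the diagonal factors $T_{22}(x-i)$ in the first group, and from the $T_{21}$-annihilation in the second—and their cancellation is precisely where the on-shell condition enters: the Bethe ansatz equation \eqref{eq gl11 BAE rem}, namely $q_1\varphi_{\bla,\bm b}(t_j)=q_2\psi_{\bla,\bm b}(t_j)$ at every root $t_j$ of $y_{\bm t}$, forces the residue at $x=t_j$ of each accumulated unwanted coefficient to vanish, order by order in $\tau$. The hypothesis $t_i\ne t_j$ guarantees that these residues are simple and may be matched individually, while managing the fermionic signs and the nested products $\prod T_{22}(x-i)$ uniformly in $m$ is the principal source of tedium. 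After the unwanted terms are eliminated, assembling the surviving coefficients into a series in $\tau$ and identifying it with \eqref{eq diff oper y} finishes the argument; equivalently, one may verify the denominator-cleared recursion $\big(\sum_{m\gge 0}(-1)^m\Lambda_m(x)\tau^m\big)\big(1-q_2\zeta_2\,(y^{[1]}/y)\,\tau\big)=1-q_1\zeta_1\,(y^{[1]}/y)\,\tau$, which encodes the same cancellations more economically.
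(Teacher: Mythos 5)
Your proposal is correct and follows essentially the same route as the paper: both reduce, via \eqref{eq diff trans} and the expansion \eqref{eq expansion higher}, to showing that $\widehat\bB_l(\bm t)$ is an eigenvector of each $\fkT^Q_m(x)$ with eigenvalue equal to the coefficient of $(-1)^m\tau^m$ in $\mc D^Q_{\bm t,\bla,\bm b}(x,\tau)$, and both cancel the unwanted terms using the on-shell condition $q_1\zeta_1(t_i)=q_2\zeta_2(t_i)$. The only difference is organizational: the paper imports the multiple-action formulas of \cite{HLPRS16} (Lemmas \ref{lem comm mixed} and \ref{lem commu T22}) and the identity $T_{12}(x)T_{12}(x-1)=0$ to track which Bethe vectors survive, then finishes with two rational-function identities, whereas you propose carrying out the raw commutations through the creation string directly.
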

We give the proof of this theorem in the next section. 

\subsection{Proof of Theorem \ref{thm eigenvalue oper}}
Consider the expansion of the rational difference operator $\mc D_{\bm t,\bla,\bm b}^Q(x,\tau)$,
\[
\mc D_{\bm t,\bla,\bm b}^Q(x,\tau)=1-\sum_{m=1}^{\infty}q_2^{m-1}(q_1\zeta_1-q_2\zeta_2)\frac{y^{[m]}}{y}\Big(\prod_{i=1}^{m-1}\zeta_2^{[i]}\Big)\tau^m.
\]
Therefore, it suffices to show that
\beq\label{eq to show}
\widetilde \fkT_m^Q(x)\,\widehat\bB_{l}(\bm t)=-(q_1\zeta_1-q_2\zeta_2)\frac{y^{[m]}}{y}\Big(\prod_{i=1}^{m-1}\zeta_2^{[i]}\Big)\widehat\bB_{l}(\bm t).
\eeq
We split the proof into three steps. It would be convenient to work with the unrenormalized Bethe vector $\bB_l(\bm t)$.

\subsubsection{Actions of $T_{ij}(x)$ on Bethe vectors}
We prepare several lemmas for the proof. Following \cite{HLPRS16}, we set
\[
\mu(x_1,x_2)=\frac{x_1-x_2+1}{x_1-x_2},\quad \nu(x_1,x_2)=\frac{1}{x_1-x_2},\quad \kappa(x_1,x_2)=x_1-x_2+1.
\] Note that we have $\nu(x_1,x_2)\kappa(x_1,x_2)=\mu(x_1,x_2)$.

For a sequence of complex numbers $\bm t=(t_1,\dots,t_l)$ define sequences of complex numbers $\bti$ and $\btij$ by
\[
\bti=(t_1,\dots,t_{i-1},t_{i+1},\dots,t_l),\quad 1\lle i\lle l,
\]
\[
\btij=(t_1,\dots,t_{i-1},t_{i+1},\dots,t_{j-1},t_{j+1},\dots,t_l), \quad 1\lle i<j\lle l.
\]
We use the shorthand notation as follows. Let $\bm u=(u_1,\dots,u_r)$ and $\bm w=(w_1,\dots,w_s)$ be sequences of complex numbers. Set
\[
\mu(x,\bm u)=\prod_{i=1}^r\mu(x,u_i),\quad \mu(\bm u,\bm w)=\prod_{i=1}^r\prod_{j=1}^s\mu(u_i,w_j). 
\]The same convention also applies to functions $\nu(x_1,x_2)$, $\kappa(x_1,x_2)$, and currents $T_{ii}(x)$, etc. 

By $\bB_{l}(\bti,z)$ and $\bB_{l-1}(\btij,z)$, we mean the off-shell Bethe vectors \eqref{eq bv gl11} associated to the sequences $\bti \sqcup \{z\}$ and $\btij\sqcup\{z\}$, respectively.

\begin{lem}[\cite{HLPRS16}] \label{lem comm mixed}
We have $T_{12}(z)\bB_l(\bm t)=\zeta_1(z)\kappa(\bm t,z)\bB_{l+1}(\bm t,z)$,
\begin{align*}
T_{21}(z)\bB_l(\bm t)=&\ \zeta_1(z)\sum_{i=1}^l\mu(\bti,z)\nu(\bti,t_i)\nu(t_i,z)\Big(\frac{\zeta_2(z)}{\zeta_1(z)}-\frac{\zeta_2(t_i)}{\zeta_1(t_i)}\Big)\bB_{l-1}(\bti)
\\
+\,\zeta_1(z)\sum_{1\lle i<j\lle l}& \nu(z,t_i)\nu(z,t_j)\nu(\btij,t_i)\nu(\btij,t_j)\kappa(\btij,z)\nu(t_j,t_i)\Big(\frac{\zeta_2(t_i)}{\zeta_1(t_i)}-\frac{\zeta_2(t_j)}{\zeta_1(t_j)}\Big)\bB_{l-1}(\btij,z),\\
T_{11}(z)\bB_l(\bm t)=&\ \zeta_1(z)\mu(\bm t,z)\bB_l(\bm t)+\zeta_1(z)\sum_{i=1}^l \kappa(\bti,z)\nu(\bti,t_i)\nu(z,t_i)\bB_l(\bti, z).\qedd
\end{align*}
\end{lem}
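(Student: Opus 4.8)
The plan is to establish all three identities simultaneously by induction on $l$, treating the operators in increasing order of difficulty: the creation operator $T_{12}(z)$ first, then the diagonal operators $T_{11}(z)$ and $T_{22}(z)$, and finally the annihilation operator $T_{21}(z)$. The base case is the vacuum: from the definition of $L(\bla,\bm b)$ and \eqref{eq highest l-weight} one has $T_{11}(z)|0\rangle=\zeta_1(z)|0\rangle$ and $T_{22}(z)|0\rangle=\zeta_2(z)|0\rangle$, while $T_{21}(z)|0\rangle=0$ since $|0\rangle$ is singular. Throughout I will use that $\bB_l(\bm t)$ is symmetric in $t_1,\dots,t_l$, which lets me move whichever creation operator $T_{12}(t_i)$ I wish to either end of the product.

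The computation is driven by the two-current relations obtained by specializing the indices in the supercommutator form of \eqref{com relations}. The three I need are the creation--creation braiding
\[
T_{12}(z)T_{12}(w)=\frac{w-z+1}{z-w+1}\,T_{12}(w)T_{12}(z),
\]
the diagonal--creation relation (here $T_{11}$ is even, so the bracket is ordinary)
\[
T_{11}(z)T_{12}(w)=\frac{z-w}{z-w+1}\,T_{12}(w)T_{11}(z)+\frac{1}{z-w+1}\,T_{11}(w)T_{12}(z),
\]
together with its analogue for $T_{22}$, and the annihilation--creation contraction
\[
T_{21}(z)T_{12}(w)+T_{12}(w)T_{21}(z)=\nu(z,w)\bigl(T_{11}(z)T_{22}(w)-T_{11}(w)T_{22}(z)\bigr).
\]
The nontrivial rational factor in the first relation is the feature special to $\gl_{1|1}$, and it is exactly what produces the $\kappa$-products in the answer. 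Each diagonal relation splits into a \emph{pass-through} term (the diagonal keeps its rapidity $z$ and moves to the right) and an \emph{exchange} term (the rapidities $z$ and $w$ are swapped).

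For $T_{12}(z)$ I prepend $T_{12}(z)$ to \eqref{eq bv gl11} and braid it to the far end using the first relation; the accumulated factor $\prod_i\frac{w-z+1}{z-w+1}$ is matched against the normalization of $\bB_{l+1}(\bm t,z)$, and reading off the extra vacuum factor $\zeta_1(z)^{-1}$ gives the stated $\zeta_1(z)\kappa(\bm t,z)\bB_{l+1}(\bm t,z)$. For $T_{11}(z)$ (and $T_{22}(z)$ by the symmetry $1\leftrightarrow2$) I push the operator rightward through $T_{12}(t_1),\dots,T_{12}(t_l)$: the branch that always selects the pass-through term reaches $|0\rangle$ and yields the eigenvalue term $\zeta_1(z)\mu(\bm t,z)\bB_l(\bm t)$, whereas each time the exchange term is chosen the rapidity $z$ replaces a single $t_i$, producing the vectors $\bB_l(\bti,z)$; summing over $i$ and simplifying the accumulated coefficients gives the stated single sum. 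The operator $T_{21}(z)$ is the heart of the matter: moving it to the right, the purely anticommuting part annihilates $|0\rangle$, so only the terms in which $T_{21}(z)$ contracts through the third relation with exactly one $T_{12}(t_i)$ survive. Each such contraction replaces $T_{21}(z)\cdots T_{12}(t_i)$ by $\nu(z,t_i)\bigl(T_{11}(z)T_{22}(t_i)-T_{11}(t_i)T_{22}(z)\bigr)$ acting on the remaining product, i.e. on $\bB_{l-1}(\bti)$; applying the already-proven $T_{11}$ and $T_{22}$ formulas, their eigenvalue parts assemble the first (single) sum with the characteristic ratios $\zeta_2/\zeta_1$, while their exchange parts replace a second rapidity $t_j$ by $z$ and build the double sum with the vectors $\bB_{l-1}(\btij,z)$.

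The main obstacle is the $T_{21}(z)$ case, and specifically the bookkeeping that collapses the many contributions into the closed double sum: one must keep track of the signs coming from the oddness of $T_{21}$ and $T_{12}$ as they are permuted, and then verify that the accumulated rational coefficients combine --- via partial-fraction and symmetrization identities among $\mu$, $\nu$, and $\kappa$ --- into the stated factors $\nu(z,t_i)\nu(z,t_j)\nu(\btij,t_i)\nu(\btij,t_j)\kappa(\btij,z)\nu(t_j,t_i)$. I expect the cleanest route is to run the induction on $l$, peeling off one $T_{12}(t_1)$ and using the symmetry of $\bB_l(\bm t)$ to reduce the double sum at level $l$ to the single sum at level $l-1$; the decoupled (free-fermion) nature of the $\gl_{1|1}$ chain, reflected in the simple braiding relations above, is what makes these identities manageable.
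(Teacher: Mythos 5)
The paper does not actually prove this lemma: it is quoted from \cite{HLPRS16} (where the analogous multiple-action formulas are derived for $\gl(2|1)$-invariant models), so there is no in-paper argument to compare against line by line. Your route --- normal-ordering $T_{ij}(z)$ through the product $T_{12}(t_1)\cdots T_{12}(t_l)|0\rangle$ using the two-current consequences of \eqref{com relations} --- is exactly the standard algebraic-Bethe-ansatz derivation and is essentially what the cited reference does; your three exchange relations are correct specializations of \eqref{com relations}, and the treatment of $T_{12}(z)$ (compare normalizations of $\bB_{l+1}(\bm t,z)$; no braiding is even needed if you prepend $z$ as the first argument) and of $T_{21}(z)$ (only the contraction terms survive, then feed in the diagonal actions) is structurally right.

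One technical point would trip you up as written. With the form of the diagonal--creation relation you chose, $T_{11}(z)T_{12}(w)=\frac{z-w}{z-w+1}T_{12}(w)T_{11}(z)+\frac{1}{z-w+1}T_{11}(w)T_{12}(z)$, the exchange term still has the diagonal operator on the \emph{left} (now with argument $w$), so further commutations in that branch can regenerate $T_{12}(t_i)$ and feed back into the coefficient of $\bB_l(\bm t)$; consequently the pure pass-through branch gives $\prod_i\frac{z-t_i}{z-t_i+1}$, which is \emph{not} $\mu(\bm t,z)=\prod_i\frac{z-t_i-1}{z-t_i}$, and your claim that this branch alone yields the eigenvalue term fails. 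The standard fix is to use the equivalent second form of \eqref{com relations}, $T_{11}(z)T_{12}(w)=\frac{z-w-1}{z-w}T_{12}(w)T_{11}(z)+\frac{1}{z-w}T_{12}(z)T_{11}(w)$, in which the exchange term deposits a fully ordered $T_{12}(z)$ on the left; then no exchange branch can contribute to $\bB_l(\bm t)$, the pass-through coefficient is exactly $\mu(\bm t,z)$, and the coefficient of each $\bB_l(\bti,z)$ is extracted by the usual symmetry argument (put $t_i$ first, exchange once, then pass through). The same care is needed inside the $T_{21}$ computation, where the contractions $\nu(z,t_i)\bigl(T_{11}(z)T_{22}(t_i)-T_{11}(t_i)T_{22}(z)\bigr)$ sit between blocks of $T_{12}$'s and the reassembly into $\bB_{l-1}(\bti)$ and $\bB_{l-1}(\btij,z)$ relies on this bookkeeping together with the sign $(-1)^{i-1}$ from anticommuting the odd currents. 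With that adjustment your outline is a correct, if laborious, proof.
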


For $b\in\Z_{>0}$, let $\bm{z_b}=\{z,z-1,\dots,z-b+1\}$ and $\bm{z_b}^\circ=\{z-1,\dots,z-b+1\}$.
\begin{lem}[\cite{HLPRS16}]\label{lem commu T22} 
We have
\begin{align*}
T_{22}(\bm{z_b})\bB_l(\bm t)=\,&\zeta_2(\bm{z_b})\mu(\bm t,\bm{z_b})\bB_l(\bm t)\\ +\,&\zeta_2(\bm{z_b}^\circ)\zeta_1(z)\mu(\bti,\bm{z_b}^\circ)\sum_{i=1}^l\frac{\zeta_2(t_i)}{\zeta_1(t_i)} \mu(z,\bm{z_b}^\circ)\kappa(\bti,z)\nu(\bti,t_i)\nu(z,t_i)\bB_l(\bti,z).
\end{align*}
\end{lem}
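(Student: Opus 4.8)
The plan is to argue by induction on $b$, peeling off the leftmost (largest) factor $T_{22}(z)$ at each step and exploiting two vanishings that are special to a string of consecutive points. The base case $b=1$ is itself the single-current formula
\[
T_{22}(z)\bB_l(\bm t)=\zeta_2(z)\mu(\bm t,z)\bB_l(\bm t)+\zeta_1(z)\sum_{i=1}^l\frac{\zeta_2(t_i)}{\zeta_1(t_i)}\kappa(\bti,z)\nu(\bti,t_i)\nu(z,t_i)\bB_l(\bti,z),
\]
which I would establish by commuting $T_{22}(z)$ through the product $T_{12}(t_1)\cdots T_{12}(t_l)$ in \eqref{eq bv gl11} using the relations \eqref{com relations} together with the highest-weight conditions $T_{22}(z)|0\rangle=\zeta_2(z)|0\rangle$ and $T_{21}(z)|0\rangle=0$, see \eqref{eq highest l-weight}. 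This is the standard algebraic Bethe ansatz computation, entirely parallel to Lemma \ref{lem comm mixed}.

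For the inductive step I would write $T_{22}(\bm{z_b})\bB_l(\bm t)=T_{22}(z)\bigl(T_{22}(\bm{z_b}^\circ)\bB_l(\bm t)\bigr)$, substitute the inductive formula for the shorter string $\bm{z_b}^\circ=\{z-1,\dots,z-b+1\}$, and then apply the base case to $T_{22}(z)\bB_l(\bm t)$ and to each $T_{22}(z)\bB_l(\bti,z-1)$. The resulting expansion a priori contains four families of vectors: the diagonal $\bB_l(\bm t)$, the desired $\bB_l(\bti,z)$, the ``stuck'' vectors $\bB_l(\bti,z-1)$, and the ``doubled'' vectors $\bB_l(\btij,z-1,z)$ with $j\neq i$ carrying two consecutive parameters. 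The heart of the argument is that the last two families drop out: each stuck term is weighted by $\mu(\bti\sqcup\{z-1\},z)$, which vanishes since $\mu(z-1,z)=0$, while each doubled term is weighted by a factor $\kappa(\,\cdot\,,z)$ evaluated on a set containing $z-1$, which vanishes since $\kappa(z-1,z)=0$. These two identities are precisely where the consecutive-string hypothesis enters.

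It then remains to match coefficients of the two surviving families. The coefficient of $\bB_l(\bm t)$ is $\zeta_2(\bm{z_b}^\circ)\mu(\bm t,\bm{z_b}^\circ)\cdot\zeta_2(z)\mu(\bm t,z)=\zeta_2(\bm{z_b})\mu(\bm t,\bm{z_b})$, as claimed. The coefficient of $\bB_l(\bti,z)$ receives two contributions: one from the off-diagonal part of the outer $T_{22}(z)$ acting on the diagonal piece of the inductive formula, and one from promoting the parameter $z-1$ to $z$ in the off-diagonal part of $T_{22}(z)$ acting on $\bB_l(\bti,z-1)$. Using $\nu(z,z-1)=1$, the product rule $\nu\kappa=\mu$ and multiplicativity in the second argument, together with the telescoping evaluations $\mu(z,\bm{z_b}^\circ)=b$ and $\mu(z-1,\{z-2,\dots,z-b+1\})=b-1$, these two contributions combine, after factoring out the common weight, through the elementary identity
\[
\mu(t_i,\bm{z_b}^\circ)\,\nu(z,t_i)+\mu(z-1,\{z-2,\dots,z-b+1\})\,\nu(z-1,t_i)=\mu(z,\bm{z_b}^\circ)\,\nu(z,t_i)
\]
into exactly the coefficient $\zeta_2(\bm{z_b}^\circ)\zeta_1(z)\mu(\bti,\bm{z_b}^\circ)\tfrac{\zeta_2(t_i)}{\zeta_1(t_i)}\mu(z,\bm{z_b}^\circ)\kappa(\bti,z)\nu(\bti,t_i)\nu(z,t_i)$ appearing in the statement. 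I expect the main obstacle to be the organized bookkeeping of these rational coefficients in the general inductive step — isolating the clean telescoping identity above and correctly tracking the $j\neq i$ relabelings among the doubled terms — rather than any conceptual difficulty, since the two vanishings $\mu(z-1,z)=0$ and $\kappa(z-1,z)=0$ are what force the answer to collapse onto only two families of Bethe vectors.
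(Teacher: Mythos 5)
Your argument is correct, but it is not the route the paper takes: the paper's entire proof of Lemma \ref{lem commu T22} is a one-line citation, observing that the formula is the specialization of the general multiple-action formula \cite[equation (3.6)]{HLPRS16} to the consecutive string $\bm{z_b}$, with ``a number of simplifications'' left to the reader. You instead give a self-contained induction on $b$, with the single-current action of $T_{22}(z)$ (itself the $b=1$ case, and again really a quotation from \cite{HLPRS16}, parallel to Lemma \ref{lem comm mixed}) as the base case. I checked your inductive step: the stuck terms carry $\mu(z-1,z)=0$ and the doubled terms $\bB_l(\btij,z-1,z)$ carry $\kappa(z-1,z)=0$ (and are themselves finite when the parameters are ordered with $z-1$ preceding $z$, so there is no hidden $0\cdot\infty$ — though invoking $T_{12}(z)T_{12}(z-1)=0$, as the paper does in Section 6.3, makes this cleaner); after the simplifications $\nu(z,z-1)=1$, $\zeta_1(z-1)\cdot\tfrac{\zeta_2(z-1)}{\zeta_1(z-1)}=\zeta_2(z-1)$ and $\kappa(\bti,z-1)\nu(\bti,z-1)=\mu(\bti,z-1)$, the two surviving contributions to the coefficient of $\bB_l(\bti,z)$ share the common factor $\zeta_2(\bm{z_b}^\circ)\zeta_1(z)\tfrac{\zeta_2(t_i)}{\zeta_1(t_i)}\mu(\bti,\bm{z_b}^\circ)\kappa(\bti,z)\nu(\bti,t_i)$, and your telescoping identity (which is exactly the paper's Lemma \ref{lem raional 2} in different notation, there deployed for a different purpose in the proof of Theorem \ref{thm eigenvalue oper}) turns the remaining bracket into $\mu(z,\bm{z_b}^\circ)\nu(z,t_i)$, as required. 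What your approach buys is transparency: it isolates precisely where the consecutive-string hypothesis enters and avoids reliance on the full strength of the external multiple-action formula; what the citation buys is brevity and the fact that the $b=1$ input you still owe is already contained in the quoted result.
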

\begin{proof}
This is a particular case of \cite[equation (3.6)]{HLPRS16}. A number of simplifications occur for the special choice of $\bm{z_b}$.
\end{proof}

\subsubsection{Strategy of computation}\label{sec strategy}
We aim at \eqref{eq to show}. Let $\bm t$ be a solution of Bethe ansatz equation. Let $\bals=\{x-1,\dots,x-s+1\}$ and $\bbes=\{x-s-1,\dots,x-m+1\}$, $s=1,\dots,m-1$, where $\bm{\alpha_1}=\bm{\beta_{m-1}}=\varnothing $. Then
\[
\widetilde\fkT_m^Q(x)=-(q_1T_{11}(x)-q_2T_{22}(x))T_{22}(\bm{\alpha_{m}})+\sum_{s=1}^{m-1}q_1 T_{12}(x)T_{22}(\bals)T_{21}(x-s)T_{22}(\bbes).
\]

We consider the action of each term in the summation above on the on-shell Bethe vector $\bB_l(\bm t)$.

Consider the action of $(q_1T_{11}(x)-q_2T_{22}(x))T_{22}(\bm{\alpha_{m}})$ on $\bB_l(\bm t)$. By Lemma \ref{lem commu T22}, $T_{22}(\bm{\alpha_{m}}) \bB_l(\bm t)$ is a linear combination of Bethe vectors $\bB_l(\bm t)$ and $\bB_l(\bti,x-1)$. Note that $\bB_l(\bm t)$ is an eigenvector of $q_1T_{11}(x)-q_2T_{22}(x)$ and $$T_{ii}(x)T_{12}(x-1)=T_{12}(x)T_{ii}(x-1), \qquad i=1,2,$$ it follows that $(q_1T_{11}(x)-q_2T_{22}(x))T_{22}(\bm{\alpha_{m}})\bB_l(\bm t)$ is a linear combination of $\bB_l(\bm t)$ and $\bB_l(\bti,x)$.

Consider the vector 
\beq\label{eq:bvaction}
T_{12}(x)T_{22}(\bals)T_{21}(x-s)T_{22}(\bbes)\bB_l(\bm t).
\eeq
Again by Lemma \ref{lem commu T22}, $T_{22}(\bbes) \bB_l(\bm t)$ is a linear combination of Bethe vectors $\bB_l(\bm t)$ and $\bB_l(\bti,x-s-1)$. After the action of $T_{21}(x-s)$, it follows from Lemma \ref{lem comm mixed} that we get Bethe vectors $\bB_{l-1}(\bti)$ and $\bB_{l-1}(\btij,x-s)$. Then the action of $T_{22}(\bals)$ on $\bB_{l-1}(\bti)$ gives $\bB_{l-1}(\bti)$ and $\bB_{l-1}(\btij,x-1)$ while the action of $T_{22}(\bals)$ on $\bB_{l-1}(\btij,x-s)$ gives $\bB_{l-1}(\bti,x-1)$. Note that $$T_{12}(x)T_{12}(x-1)=0,$$ hence the final result only involves $\bB_{l}(\bti,x)$ for $i=1,\dots,l$. The vectors we are obtaining in \eqref{eq:bvaction} are described by the following picture.

\begin{center}
\begin{tikzpicture}[->,>=stealth',shorten >=1pt,auto,node distance=2.8cm]
\tikzstyle{every state}=[rectangle,draw=none,text=black]

\node[state]         (S) at (-11, 1.6)              {$\bB_l(\bm t)$};
\node[state]         (xin1) at (-8, 2.6)           {$\bB_l(\bm t)$};
\node[state]         (xin2) at (-4, 2.6)        {$\bB_{l-1}(\bti)$};
\node[state]         (xin3) at (0, 2.6)        {$\bB_{l-1}(\bti)$};
\node[state]         (xout1) at (0,0.6 )          {$\bB_{l-1}(\btij,x-1)$};
\node[state]         (xout2) at (-4, 0.6)        {$\bB_{l-1}(\btij,x-s)$};
\node[state]         (xout6) at (-8, 0.6)        {$\bB_l(\bti,x-s-1)$};
\node[state]         (xout8) at (3, 0.6)        {$0$};
\node[state]         (DC) at (3, 2.6)           {$\bB_l(\bti,x)$};

\path
(S) edge node[xshift=0.5cm,yshift=-0cm]{$\color{blue}T_{22}(\bbes)$}   (xin1)
(S) edge node[xshift=-0.9cm,yshift=-0.8cm]{$\color{blue}T_{22}(\bbes)$}  (xout6)
(xin1) edge node[xshift=0cm,yshift=-0cm]{$\color{blue}T_{21}(x-s)$}   (xin2)
(xin1) edge[dashed] node[xshift=0cm,yshift=-0cm]{}   (xout2)
(xin2) edge node[xshift=0cm,yshift=-0cm]{$\color{blue}T_{22}(\bals)$}  (xin3)
(xin2) edge[dashed]  node[xshift=0.3cm,yshift=-0.2cm]{} (xout1)
(xin3) edge node[xshift=0cm,yshift=-0cm]{$\color{blue}T_{12}(x)$}  (DC)		
(xout6) edge[dashed] node[xshift=0cm,yshift=-0.8cm]{$\color{blue}T_{21}(x-s)$} (xout2)
(xout6) edge node[xshift=-0.1cm,yshift=-0.2cm]{} (xin2)
(xout1) edge[dashed] node[xshift=0cm,yshift=-0.8cm]{$\color{blue}T_{12}(x)$} (xout8)
(xout2) edge[dashed] node[xshift=0cm,yshift=-0.8cm]{$\color{blue}T_{22}(\bals)$} (xout1);
\end{tikzpicture}
\end{center}

More precisely, the picture describes the result of action of operators to linear combinations of various Bethe vectors. We apply the same operator to all vectors in the same column. This operator is indicated on the top of the solid line in the first row and also at the bottom of the second row. Then the arrows show which vectors are obtained in each case. Dashed arrows correspond to terms which eventually become zero. Solid lines correspond to terms which have a non-trivial contribution.
For example, $T_{21}(x-s) \bB_l(\bm t)$ is a linear combination of Bethe vectors $\bB_{l-1}(\bm t_i)$ and $\bB_{l-1}(\btij,x-s)$ with $i=1,\dots,l$, $j=i+1,\dots,l$. The latter terms will be all annihilated by further applications of $T_{22}(\bs \alpha_{\bs s})$ and $T_{12}(x)$.

\subsubsection{End of proof}
Note that $q_1\zeta_1(t_i)=q_2\zeta_2(t_i)$ and $\nu(x_1,x_2)\kappa(x_1,x_2)=\mu(x_1,x_2)$. By Lemmas \ref{lem comm mixed} and \ref{lem commu T22}, following the way described in Section \ref{sec strategy}, we obtain that $T_{12}(x)T_{22}(\bals)T_{21}(x-s)T_{22}(\bbes)\bB_l(\bm t)$ is equal to
\begin{align}
\sum_{i=1}^l&\zeta_1(x)\zeta_2(\bm{\alpha_m})\nu(\bti,t_i)\kappa(\bti,x)\mu(\bti,\bm{\alpha_m}) \left(\mu(t_i,\bbes)\nu(x-s,t_i)\left(\frac{q_1\zeta_1(x-s)}{q_2\zeta_2(x-s)}-1\right)\right. \label{eq bethe 1}
\\  +&(1-\delta_{s,m-1})\mu(x-s-1,\bm{\beta_{s+1}})\nu(x-s-1,t_i)\left.\left(\frac{q_1\zeta_1(x-s)}{q_2\zeta_2(x-s)}-\frac{q_1\zeta_1(x-s-1)}{q_2\zeta_2(x-s-1)}\right)\right)\bB_l(\bti,x).\nonumber
\end{align}
Similarly, 
\begin{align}
&(q_1T_{11}(x)-q_2T_{22}(x))T_{22}(\bm{\alpha_{m}})\bB_l(\bm t)=(q_1\zeta_1(x)-q_2\zeta_2(x))\zeta_2(\bm{\alpha_m})\mu(\bm t,x)\mu(\bm t,\bm{\alpha_m})\bB_l(\bm t) \label{eq bethe 2}\\
+&\sum_{i=1}^lq_1\zeta_1(x)\zeta_2(\bm{\alpha_m})\nu(\bti,t_i)\kappa(\bti,x)\mu(\bti,\bm{\alpha_m})\mu(x-1,\bm{\alpha_m}^\circ)\nu(x-1,t_i)\left(\frac{q_1\zeta_1(x-1)}{q_2\zeta_2(x-1)}-1\right)\bB_l(\bti,x).\nonumber
\end{align}

Therefore, $\widetilde\fkT^Q_{m}(x)\bB_l(\bm t)$ is equal to the sum of \eqref{eq bethe 2} and the summation of \eqref{eq bethe 1} over $s=1,\dots,m-1$. For fixed $i\in \{1,\dots,l\}$ and $s\in \{1,\dots,m-1\}$, we combine all terms containing $\zeta_1(x)\zeta_2(\bm{\alpha_m})q_1\zeta_1(x-s)/(q_2\zeta_2(x-s))$ in \eqref{eq bethe 1}, \eqref{eq bethe 2} and consider the corresponding coefficient. To show \eqref{eq to show}, we first show that this coefficient vanishes. This follows from the following lemma.

\begin{lem}\label{lem raional 2}
We have $\mu(t_i,\bbes)\nu(x-s,t_i)+\mu(x-s-1,\bm{\beta_{s+1}})\nu(x-s-1,t_i)=\mu(x-s,\bbes)\nu(x-s,t_i)$, for $s=1,\dots,m-2$.\qed
\end{lem}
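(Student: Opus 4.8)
The plan is to prove this as a straightforward identity of rational functions in $t_i$, exploiting the fact that all three $\mu$-products telescope because the entries of $\bbes$ and $\bm{\beta_{s+1}}$ are consecutive integer shifts of $x$. Recall that $\bbes=\{x-s-1,x-s-2,\dots,x-m+1\}$, whose entries are $x-j$ for $j=s+1,\dots,m-1$, and $\bm{\beta_{s+1}}=\{x-s-2,\dots,x-m+1\}$, whose entries are $x-j$ for $j=s+2,\dots,m-1$. First I would evaluate each product using $\mu(a,x-j)=\tfrac{a-x+j+1}{a-x+j}$: writing $w=a-x$ turns $\mu(a,\,\cdot\,)$ over such a block into a telescoping product $\prod_j\frac{w+j+1}{w+j}$. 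Carrying this out gives
\[
\mu(t_i,\bbes)=\frac{t_i-x+m}{t_i-x+s+1},\qquad \mu(x-s,\bbes)=m-s,\qquad \mu(x-s-1,\bm{\beta_{s+1}})=m-s-1,
\]
where the last two are pure constants precisely because the first argument is itself one of the integer shifts of $x$.

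Next I would substitute these, together with $\nu(x-s,t_i)=(x-s-t_i)^{-1}$ and $\nu(x-s-1,t_i)=(x-s-1-t_i)^{-1}$, into both sides of the asserted equation. Setting $u=x-t_i$ for brevity and using $t_i-x+s+1=-(u-s-1)$ to rewrite $\mu(t_i,\bbes)=\tfrac{u-m}{u-s-1}$, the claim becomes
\[
\frac{u-m}{u-s-1}\cdot\frac{1}{u-s}+\frac{m-s-1}{u-s-1}=\frac{m-s}{u-s}.
\]
Clearing the common denominator $(u-s-1)(u-s)$ reduces everything to the polynomial identity $(u-m)+(m-s-1)(u-s)=(m-s)(u-s-1)$; both sides expand to $(m-s)u-m-ms+s+s^2$, so they coincide. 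Hence the lemma holds as a genuine algebraic identity in $t_i$, with no genericity or multiplicity hypothesis needed.

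The boundary case $s=m-2$ is automatically included: there $\bm{\beta_{s+1}}=\bm{\beta_{m-1}}=\varnothing$, and the empty product correctly gives $\mu(x-s-1,\varnothing)=1=m-s-1$. I do not expect any genuine obstacle in this computation; the single point requiring care is the sign flip $t_i-x+s+1=-(u-s-1)$, which is what allows the first term of the left-hand side to be put over the same denominator $u-s-1$ as the second term before combining. Everything else is bookkeeping with telescoping products.
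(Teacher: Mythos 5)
Your computation is correct. The paper states this lemma without proof (the \textsc{qed} mark is placed directly in the statement, treating it as an elementary rational-function identity), and your telescoping evaluation of the three $\mu$-products, namely $\mu(t_i,\bbes)=\frac{t_i-x+m}{t_i-x+s+1}$, $\mu(x-s,\bbes)=m-s$, $\mu(x-s-1,\bm{\beta_{s+1}})=m-s-1$, followed by clearing denominators in $u=x-t_i$, is exactly the routine verification the authors intend; the edge case $s=m-2$ via the empty product is handled correctly as well.
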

If we set $\mu(x-s-1,\bm{\beta_{s+1}})=0$, then the lemma also holds for $s=m-1$.

\medskip

We then combine the terms containing $\zeta_1(x)\zeta_2(\bm{\alpha_m})$ in this sum. The next lemma asserts this coefficient is equal to zero.
\begin{lem}\label{lem raional 1}
We have 
\[
\sum_{s=1}^{m-1}\mu(t_i,\bbes)\nu(x-s,t_i)=\mu(x-1,\bm{\alpha_m}^\circ)\nu(x-1,t_i).
\]
\end{lem}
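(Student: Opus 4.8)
The plan is to fix the index $i$, abbreviate $t_i$ by $t$, and pass to the single variable $w=x-t$. Since $\mu(a,b)$ and $\nu(a,b)$ depend only on the difference $a-b$, every quantity in the asserted identity becomes a rational function of $w$, so it suffices to check equality of two explicit rational functions. I would first read off the ranges from the definitions: $\bbes=\{x-s-1,\dots,x-m+1\}$ contributes the factors $\mu(t,x-j)$ for $j=s+1,\dots,m-1$, while $\bm{\alpha_m}^\circ=\{x-2,\dots,x-m+1\}$ (the sequence $\bm{\alpha_m}$ with its top term $x-1$ removed) contributes $\mu(x-1,x-j)$ for $j=2,\dots,m-1$.

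The key observation is that each $\mu$-product telescopes. Concretely,
\[
\mu(t,\bbes)=\prod_{j=s+1}^{m-1}\frac{t-x+j+1}{t-x+j}=\prod_{j=s+1}^{m-1}\frac{j+1-w}{j-w}=\frac{m-w}{s+1-w},
\]
so, combined with $\nu(x-s,t)=1/(w-s)$, the left-hand side of the lemma equals
\[
(m-w)\sum_{s=1}^{m-1}\frac{1}{(s+1-w)(w-s)}.
\]
Next I would apply partial fractions to the summand. Writing $(s+1-w)(w-s)=-(w-s-1)(w-s)$ and using $\tfrac{1}{(w-s)(w-s-1)}=\tfrac{1}{w-s-1}-\tfrac{1}{w-s}$ converts the sum into the telescoping series $\sum_{s=1}^{m-1}\big(\tfrac{1}{w-s}-\tfrac{1}{w-s-1}\big)=\tfrac{1}{w-1}-\tfrac{1}{w-m}$. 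Hence the left-hand side is $(m-w)\big(\tfrac{1}{w-1}-\tfrac{1}{w-m}\big)$, and since $(m-w)/(w-m)=-1$ this collapses to $\tfrac{m-w}{w-1}+1=\tfrac{m-1}{w-1}$.

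Finally I would evaluate the right-hand side directly: the product $\mu(x-1,\bm{\alpha_m}^\circ)=\prod_{j=2}^{m-1}\tfrac{j}{j-1}=m-1$ again by telescoping, and $\nu(x-1,t)=1/(w-1)$, giving $\tfrac{m-1}{w-1}$, which agrees with the left-hand side. The computation is therefore essentially mechanical once the telescoping structure is recognized; the only genuine obstacle is the index and sign bookkeeping, namely correctly identifying the ranges defining $\bbes$ and $\bm{\alpha_m}^\circ$ and tracking the minus sign produced by $(s+1-w)(w-s)=-(w-s)(w-s-1)$ in the partial-fraction step. (The degenerate case $s=m-1$, where $\bbes=\varnothing$ and $\mu(t,\bbes)=1$, is automatically consistent with the convention noted after Lemma~\ref{lem raional 2}.)
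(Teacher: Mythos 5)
Your proof is correct and follows essentially the same route as the paper's: telescope the product $\mu(t_i,\bbes)$ to a single ratio, apply the partial-fraction identity to the summand, and telescope the resulting sum; the substitution $w=x-t_i$ is only a cosmetic repackaging of the paper's computation. The evaluation $\mu(x-1,\bm{\alpha_m}^\circ)=m-1$ and the treatment of the degenerate case $s=m-1$ also match the paper.
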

\begin{proof}
We have
\begin{align*}
\sum_{s=1}^{m-1}\mu(t_i,\bbes)\nu(x-s,t_i)=&\sum_{s=1}^{m-1}\prod_{j=s+1}^{m-1}\frac{t_i-(x-j)+1}{t_i-(x-j)}\frac{1}{x-s-t_i}=\sum_{s=1}^{m-1}\frac{x-m-t_i}{(x-s-1-t_i)(x-s-t_i)}\\
=&(x-m-t_i)\sum_{s=1}^{m-1}\left(\frac{1}{x-s-1-t_i}-\frac{1}{x-s-t_i}\right)=\frac{m-1}{x-1-t_i}.
\end{align*}
Clearly, $\mu(x-1,\bm{\alpha_m}^\circ)=m-1$. Therefore the lemma follows.
\end{proof}

Thus we have
\[
\widetilde\fkT^Q_{m}(x)\,\mathbb B_{l}(\bm t)=(q_1\zeta_1(x)-q_2\zeta_2(x))\zeta_2(\bm{\alpha_m})\mu(\bm t,x)\mu(\bm t,\bm{\alpha_m})\bB_l(\bm t).
\]
Since $$\mu(\bm t,x)=\prod_{i=1}^l\frac{t_i-x+1}{t_i-x}=\frac{y(x-1)}{y(x)},$$
we have
\[
(q_1\zeta_1(x)-q_2\zeta_2(x))\zeta_2(\bm{\alpha_m})\mu(\bm t,x)\mu(\bm t,\bm{\alpha_m})=(q_1\zeta_1-q_2\zeta_2)\frac{y^{[m]}}{y}\prod_{i=1}^{m-1}\zeta_2^{[i]},
\]
which completes the proof of Theorem \ref{thm eigenvalue oper}.

\subsection{Relations between transfer matrices}
Similar to \eqref{eq m-transfer}, define
\[
\mathfrak H_m^Q(x)=\str(H_mQ^{(1)}T^{(1,m+1)}(x)Q^{(2)}T^{(2,m+1)}(x-1)\cdots Q^{(m)}T^{(m,m+1)}(x-m+1)).
\]
Using Theorem \ref{thm eigenvalue oper}, we are able to express $\fkT^Q_m(x)$ and $\mathfrak H_m^Q(x)$ in terms of the first transfer matrix $\cT_Q(x)$ and the center $\mathrm{Ber}^Q(x)$.

We start with the following technical lemma.
\begin{lem}\label{lem no killers}
No non-zero element in $\Yone$ acts by zero on $L(\bla,\bm b)$ for generic $\bla$ and $\bm b$.
\end{lem}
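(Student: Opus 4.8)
The plan is to show that the family $\{L(\bla,\bm b)\}$, with $(\bla,\bm b)$ ranging over a Zariski-dense set of parameters (and $k$ taken large depending on the given element), is jointly faithful. Equivalently, for each fixed nonzero $X\in\Yone$ I would treat the weights $\la^{(s)}$ and the points $b_s$ as formal parameters, so that $X$ acts on $L(\bla,\bm b)$ as a matrix with entries in $\C[\bla,\bm b]$, and reduce the statement to the assertion that this matrix is not identically zero; then $X$ acts nonzero for generic $(\bla,\bm b)$.

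First I would pass to the associated graded. On a single evaluation module $L_{\la^{(s)}}(b_s)$ the generator $T_{ji}^{(r)}$ acts as $(-1)^{|i|}b_s^{r-1}e_{ij}$ plus terms of lower degree in $b_s$; applying the coproduct, on $L(\bla,\bm b)$ the top-degree part of $T_{ji}^{(r)}$ in the variables $\bm b$ is exactly $\pm\sum_{s}b_s^{r-1}(e_{ij})_s$, i.e. the image of $e_{ij}[r-1]$ under the $\gl_{1|1}[t]$-action $e_{ij}[r]\mapsto \sum_s b_s^{r}(e_{ij})_s$. Grading the matrix entries of the action of $X$ by their degree in $\bm b$, this identifies the top-degree part of that action with the action of the symbol $\bar X\in \mathrm{gr}\,\Yone\cong \mathrm{U}(\gl_{1|1}[t])$ in this current-algebra representation. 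Since $X\ne0$ forces $\bar X\ne0$, it suffices to prove that $\bar X$ acts by a nonzero operator on $L(\bla,\bm b)$ via the $\gl_{1|1}[t]$-action for generic $\bla,\bm b$: this makes the top-degree part of the matrix of $X$ nonzero, hence the matrix itself nonzero over $\C[\bla,\bm b]$.

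Next I would reduce the current-algebra statement to one about $\mathrm{U}(\gl_{1|1})^{\otimes k}$. Let $R$ be the largest $t$-degree occurring in $\bar X$ and take $k\ge R+1$. For distinct $b_1,\dots,b_k$ the assignment $e_{ij}[r]\mapsto \sum_s b_s^r(e_{ij})_s$ restricts on $\gl_{1|1}[t]_{\le R}$ to an injective Lie superalgebra homomorphism into $\bigoplus_{s=1}^k\gl_{1|1}$ (injectivity is a Vandermonde condition in the $b_s$), so by PBW $\bar X$ maps to a nonzero element $\bar X_k$ of $\mathrm{U}(\bigoplus_{s}\gl_{1|1})\cong\mathrm{U}(\gl_{1|1})^{\otimes k}$, acting on $\bigotimes_s L_{\la^{(s)}}$ by the ordinary tensor action. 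I am thus reduced to proving that a nonzero element of $\mathrm{U}(\gl_{1|1})^{\otimes k}$ acts nonzero on $\bigotimes_{s=1}^k L_{\la^{(s)}}$ for generic weights. I would do this by induction on $k$. The base case is the computation that the maps $\mathrm{U}(\gl_{1|1})\to \End(L_\mu)$, as $\mu$ varies, are jointly faithful: on the PBW basis $e_{11}^ae_{22}^be_{12}^{\epsilon}e_{21}^{\delta}$ the four entries of the image in $\End(L_\mu)$ are, up to an overall factor $\mu_1+\mu_2$ in the relevant positions, the generating series $\sum_{a,b}c_{ab\epsilon\delta}\,\mu_1^a\mu_2^b$ in suitably shifted arguments, so every coefficient $c_{ab\epsilon\delta}$ is recovered by varying $\mu$. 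For the inductive step I would write a nonzero element of $\mathrm{U}(\gl_{1|1})\otimes \mathrm{U}(\gl_{1|1})^{\otimes(k-1)}$ as $\sum_p a_p\otimes w_p$ with $\{a_p\}$ linearly independent and some $w_{p_0}\ne0$, choose $\la^{(2)},\dots,\la^{(k)}$ by induction so that $w_{p_0}$ acts nonzero, and then use linear independence of the $a_p$ together with the base case to choose $\la^{(1)}$ making the full operator nonzero. Each such choice, together with the distinctness of the $b_s$, is a generic condition, giving the required Zariski-dense set.

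The main obstacle I anticipate is the associated-graded comparison underlying the first two paragraphs: one must verify carefully that taking the top degree in $\bm b$ is compatible with multiplication in $\Yone$, so that the symbol map on operators genuinely computes the $\gl_{1|1}[t]$-action and no cancellation lowers the degree, and that the Yangian filtration $\deg T_{ij}^{(r)}=r-1$ matches the $\bm b$-degree of the evaluation action (this is where the isomorphism $\mathrm{gr}\,\Yone\cong \mathrm{U}(\gl_{1|1}[t])$ is used in module form). Once this is in place, the representation-theoretic heart, namely joint faithfulness of $\mathrm{U}(\gl_{1|1})^{\otimes k}$ on tensor products of generic two-dimensional modules, is a clean induction resting only on the base-case generating-function computation.
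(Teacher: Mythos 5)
Your argument is correct and is essentially the same one the paper invokes by citing the proof of \cite[Proposition 2.2]{Naz99}: pass to the associated graded $\mathrm{gr}\,\Yone\cong\mathrm{U}(\gl_{1|1}[t])$, evaluate the current algebra at $k$ distinct points with $k$ large compared to the $t$-degree of the symbol, and then use joint faithfulness of $\mathrm{U}(\gl_{1|1})^{\otimes k}$ on tensor products of generic two-dimensional modules; so you have simply written out in full the argument the paper outsources. The only spot to phrase more carefully is the PBW step: $\gl_{1|1}[t]_{\lle R}$ is not a Lie subalgebra, so what you actually use is that for distinct $b_s$ and $R\lle k-1$ it is a vector-space complement to the kernel $\gl_{1|1}\otimes\prod_s(t-b_s)\C[t]$ of the evaluation map, whence ordered PBW monomials in a basis of that complement map bijectively onto a PBW basis of $\mathrm{U}(\gl_{1|1}^{\oplus k})$ and $\bar X\ne 0$ has non-zero image.
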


\begin{proof}
The lemma follows from the proof of \cite[Proposition 2.2]{Naz99}.
\end{proof}

\begin{thm}\label{thm transfer relations}
We have
\[
\fkT^Q_m(x)\prod_{i=1}^{m-1}(1-\mathrm{Ber}^Q(x-i))=\prod_{i=1}^m \cT_Q(x-i+1),
\]
\[
\mathfrak H_m^Q(x)\prod_{i=1}^{m-1}(\mathrm{Ber}^Q(x-i)-1)=\prod_{i=1}^m \cT_Q(x-i+1)\prod_{i=1}^{m-1}\mathrm{Ber}^Q(x-i).
\]
\end{thm}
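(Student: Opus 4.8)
The plan is to prove both identities as equalities in $\Yone$ by testing them on modules rich enough to separate elements of the algebra. By Lemma \ref{lem no killers} no non-zero element of $\Yone$ annihilates $L(\bla,\bm b)$ for generic $\bla$ and $\bm b$, so it is enough to show that the two sides of each identity act by the same operator on $L(\bla,\bm b)$ for generic parameters. For such parameters $L(\bla,\bm b)$ is irreducible and $q_1\varphi_{\bla,\bm b}-q_2\psi_{\bla,\bm b}$ has simple roots, so by Theorem \ref{thm complete} and Theorem \ref{thm completeness general} the transfer matrix $\cT_Q(x)$ is diagonalizable and the on-shell Bethe vectors $\widehat{\bB}_l(\bm t)$, with $\bm t$ running over the monic divisors of $q_1\varphi_{\bla,\bm b}-q_2\psi_{\bla,\bm b}$ (hence with distinct coordinates), form an eigenbasis of $L(\bla,\bm b)$ when $q_1\ne q_2$ and of $(L(\bla,\bm b))^\sing$ when $q_1=q_2$. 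Each operator occurring in the two relations has these vectors as eigenvectors: this is clear for $\cT_Q(x)$ and for the central series $\mathrm{Ber}^Q(x)$ (Proposition \ref{prop central}), and for $\fkT^Q_m(x)$ it is exactly Theorem \ref{thm eigenvalue oper}. In the case $q_1=q_2$ all these operators commute with $\Uone$, so it suffices to check the identities on singular vectors and extend by $\gl_{1|1}$-equivariance. Thus the relations reduce to scalar identities between the corresponding eigenvalues on each $\widehat{\bB}_l(\bm t)$.

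Next I would assemble the eigenvalues, writing $y=y_{\bm t}$, using $f^{[i]}=f(x-i)$, and recalling $\zeta_1,\zeta_2$ from \eqref{eq highest l-weight}, so that $q_1\zeta_1-q_2\zeta_2=(q_1\varphi_{\bla,\bm b}-q_2\psi_{\bla,\bm b})\prod_s(x-b_s)^{-1}$. Theorem \ref{thm gl11 eigenvalue in y} gives the eigenvalue of $\cT_Q(x)$ as $\mc E^Q_{y,\bla,\bm b}(x)=\tfrac{y^{[1]}}{y}(q_1\zeta_1-q_2\zeta_2)$. Since $\mathrm{Ber}^Q(x)$ is central and $L(\bla,\bm b)$ is irreducible its eigenvalue is computed on the vacuum; using $T_{21}(x)|0\rangle=0$, $T_{11}(x)|0\rangle=\zeta_1(x)|0\rangle$, $T_{22}(x)|0\rangle=\zeta_2(x)|0\rangle$ together with the formula \eqref{ber}, it equals $q_1\zeta_1/(q_2\zeta_2)$, consistent with the ratio of the $\tau$-coefficients in the numerator and denominator of $\mc D^Q_{\bm t,\bla,\bm b}(x,\tau)$. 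Finally \eqref{eq diff trans}, \eqref{eq to show} and the definition $\widetilde\fkT^Q_m=(-1)^mq_2^{1-m}\fkT^Q_m$ give the eigenvalue of $\fkT^Q_m(x)$ as $(-1)^{m+1}q_2^{m-1}(q_1\zeta_1-q_2\zeta_2)\tfrac{y^{[m]}}{y}\prod_{i=1}^{m-1}\zeta_2^{[i]}$.

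For the symmetrizer transfer matrices I would use the reciprocity of the two generating series, $\big(\sum_{m}\mathfrak H^Q_m(x)\tau^m\big)\,\mc D^Q(x,\tau)=1$ in $\Yone[[\tau]]$, which is the super analogue of the Jacobi--Trudi / MacMahon relation and follows from \cite{MR14}. On $\widehat{\bB}_l(\bm t)$ this means $\sum_m\mathfrak H^Q_m(x)\tau^m$ acts as $\mc D^Q_{\bm t,\bla,\bm b}(x,\tau)^{-1}=\big(1-q_2\zeta_2\tfrac{y^{[1]}}{y}\tau\big)\big(1-q_1\zeta_1\tfrac{y^{[1]}}{y}\tau\big)^{-1}$; expanding $\big(1-q_1\zeta_1\tfrac{y^{[1]}}{y}\tau\big)^{-1}$ and telescoping the $y$-factors yields the eigenvalue of $\mathfrak H^Q_m(x)$ as $(q_1\zeta_1-q_2\zeta_2)\tfrac{y^{[m]}}{y}q_1^{m-1}\prod_{i=1}^{m-1}\zeta_1^{[i]}$.

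With all eigenvalues in hand both relations become telescoping identities. Using $1-\mathrm{Ber}^Q(x-i)=-(q_1\zeta_1^{[i]}-q_2\zeta_2^{[i]})/(q_2\zeta_2^{[i]})$, the left side of the first relation collapses, after the powers of $q_2$ and the factors $\prod\zeta_2^{[i]}$ cancel, to $\tfrac{y^{[m]}}{y}\prod_{j=0}^{m-1}(q_1\zeta_1^{[j]}-q_2\zeta_2^{[j]})$, while the right side $\prod_{i=1}^m\cT_Q(x-i+1)$ telescopes through $\prod_{i=1}^m\tfrac{y^{[i]}}{y^{[i-1]}}=\tfrac{y^{[m]}}{y}$ to the same expression. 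The second relation is checked identically, now with $\zeta_1^{[i]}$ in place of $\zeta_2^{[i]}$ and with the extra factor $\prod_{i=1}^{m-1}\mathrm{Ber}^Q(x-i)=q_1^{m-1}\prod_{i=1}^{m-1}\zeta_1^{[i]}/\big(q_2^{m-1}\prod_{i=1}^{m-1}\zeta_2^{[i]}\big)$ on the right accounting for the difference. I expect the main obstacle to be justifying the eigenvalue of $\mathfrak H^Q_m(x)$, that is, pinning down the generating-function reciprocity with the correct $\tau$-shifts (or, alternatively, reproving it by a direct action computation parallel to the proof of Theorem \ref{thm eigenvalue oper}); the remaining bookkeeping of the $f^{[i]}$-shifts and the telescoping cancellations is routine.
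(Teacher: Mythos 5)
Your proposal is correct and follows essentially the same route as the paper: check both identities on on-shell Bethe vectors via Theorem \ref{thm eigenvalue oper} and \eqref{eq gl11 eigenvalue in y}, invoke completeness (Theorem \ref{thm complete}) to get an eigenbasis for generic $\bla$, $\bm b$, conclude in $\Yone$ via Lemma \ref{lem no killers}, and handle $\mathfrak H_m^Q$ through the inverse-Berezinian generating series of \cite{MR14}. Your explicit eigenvalue bookkeeping and telescoping are correct and simply spell out details the paper leaves implicit.
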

\begin{proof}
We prove it for the case $q_1\ne q_2$. The case $q_1=q_2$ is similar.

Note that for an on-shell Bethe vector $\widehat\bB_l(\bm t)$, where $\bm t=(t_1,\dots,t_l)$ with $t_i\ne t_j$ for $i\ne j$, by \eqref{eq to show} and \eqref{eq gl11 eigenvalue in y}, we have
\[
\fkT^Q_m(x)\prod_{i=1}^{m-1}(1-\mathrm{Ber}^Q(x-i))\widehat\bB_l(\bm t)=\prod_{i=1}^m \cT_Q(x-i+1)\widehat\bB_l(\bm t).
\]
By Theorem \ref{thm complete}, the transfer matrix $\mc T_Q(x)$ is diagonalizable and the Bethe ansatz is complete for generic $\bla$ and $\bm b$, namely there exists a basis of $L(\bla,\bm b)$ consisting of on-shell Bethe vectors. Therefore, the coefficients of the formal series $$\fkT^Q_m(x)\prod_{i=1}^{m-1}(1-\mathrm{Ber}^Q(x-i))-\prod_{i=1}^m \cT_Q(x-i+1)$$
act by zero on $\Yone$-modules $L(\bla,\bm b)$ for generic $\bla$ and $\bm b$. The first equality of the theorem follows from Lemma \ref{lem no killers}.

By \cite[Theorem 2.13]{MR14}, one has
\[
(\mc D^Q(x,\tau))^{-1}=(\mathrm{Ber}^Q(x))^{-1}=\sum_{m=0}^\infty \mathfrak H_m^Q(x)\tau^m.
\]Therefore, the second equality is proved similarly.
\end{proof}

\begin{rem}
Equation \eqref{eq tensor n} in Example \ref{eg vect rep} is the second equality of Theorem \ref{thm transfer relations} on the representation level. We explain it in more detail for the case $q_1=q_2$. Following e.g. \cite[Section 3.1]{FR99}, let $\mathfrak R$ be the universal R-matrix in the $\Yone$ Yangian double $\mathrm{Y}^*(\mathfrak{gl}_{1|1})\widehat\otimes \Yone$. For a finite-dimensional representation $(V,\rho_V)$ of the dual Yangian $\mathrm{Y}^*(\mathfrak{gl}_{1|1})$, let
\[
\mathscr T_V(x)=\mathrm{str}\big((\rho_{V(x)}\otimes \mathrm{id})\mathfrak R\big)\in \Yone[[x^{-1}]].
\]Similar to \cite[Lemma 2]{FR99}, one has $\mathscr T_{V\otimes W}(x)=\mathscr T_V(x)\mathscr T_W(x)$. Moreover $\mathscr T_W(x)=\mathscr T_V(x)+\mathscr T_U(x)$ for a short exact sequence $V\hookrightarrow W \twoheadrightarrow U$. We expect that after proper rescaling of the universal R-matrix $\mathfrak R$ one has
\[
\mathscr T_{L_{\omega_1}(0)}(x)=\mc T(x),\quad \mathscr T_{L_{m\omega_1}(0)}(x)=\mathfrak H_m(x),
\]
and $\mathscr T_{\C_{\bar 1,\xi_m}}(x)$ is a certain rational function in $\mathrm{Ber}(x)$. Note that the modules here should be replaced with corresponding $\mathrm{Y}^*(\mathfrak{gl}_{1|1})$-modules. The first equality of Theorem \ref{thm transfer relations} can be understood similarly.\qed
\end{rem}
\begin{rem}\label{two Bethe algebras}
Recall that we call $\mathcal B^Q$ the Bethe algebra.
Often, it is the algebra $\mathfrak B^Q$  which is named the Bethe algebra. Due to Theorem \ref{thm transfer relations}, the images of $\mathfrak B^Q$ and $\mathcal B^Q$ acting on the modules $\mc V^{\fkS}$, $L(\bla,\bm b)$ with $|\bla|\ne 0$ coincide.\qed
\end{rem}

\begin{cor}\label{universal oper}
We have
\begin{align*}
\mathcal D^Q(x,\tau)&=\Big(1-\frac{\mathrm{Ber}^Q(x)\mc T_Q(x)}{\mathrm{Ber}^Q(x)-1}\, \tau\Big)\Big(1-\frac{\mc T_Q(x)}{\mathrm{Ber}^Q(x)-1}\, \tau\Big)^{-1}\\&=\big(1-\mathrm{Ber}^Q(x+1)+\mathrm{Ber}^Q(x)\mc T_Q(x)\, \tau\big)\big(1-\mathrm{Ber}^Q(x+1)+\mc T_Q(x)\, \tau\big)^{-1}.
\end{align*}
\end{cor}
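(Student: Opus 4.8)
The plan is to derive both equalities as formal identities of rational difference operators, reading off the coefficients from the generating-function formula \eqref{eq diff trans}, namely $\mc D^Q(x,\tau)=\sum_{m=0}^\infty(-1)^m\fkT_m^Q(x)\tau^m$, and inserting the closed form for the higher transfer matrices supplied by Theorem \ref{thm transfer relations}. Recalling $\fkT_0^Q(x)=1$ and $\fkT_1^Q(x)=\mc T_Q(x)$, solving the first relation of Theorem \ref{thm transfer relations} for $\fkT_m^Q$ gives
\[
\fkT_m^Q(x)=\frac{\prod_{i=1}^m\mc T_Q(x-i+1)}{\prod_{i=1}^{m-1}(1-\mathrm{Ber}^Q(x-i))},\qquad m\gge 1.
\]
The whole computation takes place in the localization of $\Yone[[x^{-1}]][[\tau]]$ in which $\mathrm{Ber}^Q(x)-1$ (equivalently $1-\mathrm{Ber}^Q(x+1)$) is invertible: this is automatic when $q_1\neq q_2$, since then $\mathrm{Ber}^Q(x)\to q_1/q_2\neq 1$, while for $q_1=q_2$ it is precisely the localization at the central element $T_{11}^{(1)}-T_{22}^{(1)}$ discussed in the introduction, because $\mathrm{Ber}^Q(x)-1=(T_{11}^{(1)}-T_{22}^{(1)})x^{-1}+\cdots$. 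Since $\mathrm{Ber}^Q(x)$ is central by Proposition \ref{prop central}, its shifts commute with everything in sight, so the only non-commutativity I must track is the shift rule $\tau\,h(x)=h(x-1)\,\tau$.

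To prove the first equality I would clear the inverse, i.e. verify
\[
\mc D^Q(x,\tau)\Big(1-\tfrac{\mc T_Q(x)}{\mathrm{Ber}^Q(x)-1}\,\tau\Big)=1-\tfrac{\mathrm{Ber}^Q(x)\mc T_Q(x)}{\mathrm{Ber}^Q(x)-1}\,\tau .
\]
Expanding the left-hand side with the generating series and the shift rule, the coefficient of $\tau^m$ becomes $(-1)^m\big(\fkT_m^Q(x)+\fkT_{m-1}^Q(x)\,\tfrac{\mc T_Q(x-m+1)}{\mathrm{Ber}^Q(x-m+1)-1}\big)$. The cases $m=0,1$ are immediate, reproducing $1$ and $-\mathrm{Ber}^Q(x)\mc T_Q(x)/(\mathrm{Ber}^Q(x)-1)$. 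For $m\gge 2$ the closed form gives $\fkT_{m-1}^Q(x)\,\tfrac{\mc T_Q(x-m+1)}{\mathrm{Ber}^Q(x-m+1)-1}=-\fkT_m^Q(x)$: the extra factor $\mc T_Q(x-m+1)$ completes the numerator from $m-1$ to $m$ factors, while $\mathrm{Ber}^Q(x-m+1)-1$ restores, up to sign, the missing denominator factor $1-\mathrm{Ber}^Q(x-(m-1))$, so the coefficient vanishes. The second equality is handled identically by right-multiplying $\mc D^Q(x,\tau)$ by $1-\mathrm{Ber}^Q(x+1)+\mc T_Q(x)\tau$; matching powers of $\tau$ reduces the $m\gge 2$ terms to the single telescoping relation $\fkT_m^Q(x)\big(1-\mathrm{Ber}^Q(x-m+1)\big)=\fkT_{m-1}^Q(x)\,\mc T_Q(x-m+1)$, while the $m=0,1$ terms reproduce $1-\mathrm{Ber}^Q(x+1)$ and $\mathrm{Ber}^Q(x)\mc T_Q(x)$, matching the claimed numerator.

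The cancellations themselves are a one-line telescoping, so the genuine content of the Corollary sits entirely inside Theorem \ref{thm transfer relations}; no further appeal to Bethe vectors or to Lemma \ref{lem no killers} is needed here. I expect the only delicate point — and hence the main obstacle — to be the bookkeeping of the difference operator $\tau$ together with the choice of localization: one must consistently push each $\tau^m$ to the right past the central scalar coefficients via $\tau\,h(x)=h(x-1)\,\tau$ before collecting powers of $\tau$, and must confirm that in the chosen localization both inverses appearing in the two formulas are well defined as power series in $\tau$ (their $\tau^0$-parts being $1$ and $1-\mathrm{Ber}^Q(x+1)$, respectively). Once this is arranged, the two stated expressions follow simultaneously.
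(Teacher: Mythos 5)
Your proposal is correct and is essentially the computation the paper leaves implicit: the paper's proof of Corollary \ref{universal oper} is exactly ``direct computation from Theorem \ref{thm transfer relations},'' and your telescoping verification of the $\tau^m$-coefficients (using centrality of $\mathrm{Ber}^Q(x)$, commutativity of the transfer matrices, and the shift rule for $\tau$) carries that out faithfully, including the correct identification of the needed localization at $T_{11}^{(1)}-T_{22}^{(1)}$ when $q_1=q_2$.
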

\begin{proof}
The statement follows from Theorem \ref{thm transfer relations} by a direct computation.
\end{proof}
\begin{rem}
Note that
\[
\frac{\mathrm{Ber}^Q(x)\mc T_Q(x)}{\mathrm{Ber}^Q(x)-1}\, \bB_l(\bm t)=q_1\zeta_1\frac{y^{[1]}}{y}\, \bB_l(\bm t),\quad \frac{\mc T_Q(x)}{\mathrm{Ber}^Q(x)-1}\, \bB_l(\bm t)=q_2\zeta_2\frac{y^{[1]}}{y}\, \bB_l(\bm t),
\]
see \eqref{eq highest l-weight} and \eqref{eq diff oper y}.\qed
\end{rem}

\end{document}